\newcommand{\field}[1]{\mathbf #1}
\newcommand{\mc}[1]{\mathcal #1}
\newcommand{\ms}[1]{\mathscr #1}
\newcommand{\widebar}[1]{\overline{#1}}
\newcommand{\Ab}{\mathbf{Ab}}
\def\risom{\overset{\sim}{\rightarrow}}
\def\limr{\varinjlim}
\newcommand{\R}{\field R}
\newcommand{\Z}{\field Z}
\newcommand{\PP}{\mathbb P}
\newcommand{\ZZ}{\mathbb Z}
\newcommand{\simto}{\stackrel{\sim}{\to}}
\newcommand{\shom}{\ms H\!om}
\newcommand{\saut}{\ms A\!ut}
\DeclareMathOperator{\clean}{cl}
\newcommand{\Spec}{\operatorname{Spec}}
\newcommand{\spec}{\operatorname{Spec}}
\newcommand{\rspec}{\operatorname{\bf Spec}}
\renewcommand{\P}{\field P}
\newcommand{\A}{\field A}
\DeclareMathOperator{\Pic}{Pic}
\DeclareMathOperator{\pr}{pr}
\newcommand{\m}{\boldsymbol{\mu}}
\newcommand{\G}{\field G} %for the multiplicative and additive groups
\newcommand{\et}{\operatorname{\acute{e}t}}
\renewcommand{\H}{\operatorname{H}}
\newcommand{\vH}{\check\H}
\newcommand{\bands}{L}
\DeclareMathOperator*{\tensor}{\otimes}
\newcommand{\surj}{\twoheadrightarrow}
\newcommand{\inj}{\hookrightarrow}
\newcommand{\id}{\operatorname{id}}
\DeclareMathOperator{\Aut}{\operatorname{Aut}}
\DeclareMathOperator{\aut}{Aut}
\DeclareMathOperator{\Isom}{\operatorname{Isom}}
\DeclareMathOperator{\isom}{Isom}
\DeclareMathOperator{\Hom}{\operatorname{Hom}}
\DeclareMathOperator{\Bin}{Bin}
\DeclareMathOperator{\Tri}{Trin}
\DeclareMathOperator{\Conj}{Conj}
\DeclareMathOperator{\Br}{\operatorname{Br}}
\newcommand{\obj}{\operatorname{Obj}}
\newcommand{\Obj}{\obj}
\newcommand{\Stacks}{\mathbf{Stacks}}
\newcommand{\Func}{\mathbf{Func}}
\renewcommand{\split}{\operatorname{split}}
\DeclareMathOperator{\B}{\operatorname{\sf B\!}}
\newcommand{\fingps}{\mathbf{FinGps}}
\newcommand{\Sch}{\textrm{-}\mathbf{Sch}}
\newcommand{\Set}{\mathbf{Set}}
\DeclareMathOperator{\sym}{Sym}
\newtheorem{lem}{Lemma}[subsection]
\newtheorem{thm}[lem]{Theorem}
\newtheorem{prop}[lem]{Proposition}
\newtheorem{cor}[lem]{Corollary}
\theoremstyle{definition}
\newtheorem{defn}[lem]{Definition}
\newtheorem*{definition}{Definition}
\newtheorem*{convention}{Convention}
\newtheorem{example}[lem]{Example}
\newtheorem*{Examples}{Examples}
\theoremstyle{remark}
\newtheorem{remark}[lem]{Remark}
\newtheorem{notn}[lem]{Notation}
\newtheorem*{question}{Question}
\begin{document}

\title{Functorial reconstruction theorems for stacks}
\author{Max Lieblich and Brian Osserman}

\begin{abstract}
We study the circumstances under which one can reconstruct a stack
from its associated functor of isomorphism classes.  This
is possible surprisingly often: we show that many of the standard
examples of moduli stacks are determined by their functors. Our
methods seem to exhibit new anabelian-type phenomena, in the form of
structures in the category of schemes that encode automorphism data
in groupoids.
\end{abstract}

\maketitle
\tableofcontents
\section{Introduction}

Let $S$ be a scheme.  Write $\Stacks_S$ for the $1$-category
underlying the $2$-category of fppf $S$-stacks with small fiber
categories and $\Func_S$ for the category
of set-valued contravariant functors on the category of $S$-schemes.
 
There is a natural functor
$$F:\Stacks_S\to\Func_S$$
which sends a stack $\ms S$ to its associated functor $F_{\ms S}$ of
isomorphism classes (so that $F_{\ms S}(T)$ is the set of
isomorphism classes of objects of $\ms S_T$).

Given a category $\mc C$, call a subclass $P$ of
$\operatorname{Obj}\mc C$ a {\it property\/} if it is closed under
isomorphism.  Given a property $P$ of $\mc C$ and a functor $F:\mc
C\to\mc D$, there is a {\it pushforward property\/} $F_{\ast}P$
consisting of all objects of $\mc D$ isomorphic to an object of
$F(P)$.  There is an associated category $\hom(\mc C)$ consisting of
diagrams in $\mc C$ of the form $c\to d$; a functor $F:\mc C\to\mc D$
induces a functor $\hom(\mc C)\to\hom(\mc D)$, which we will also
denote $F$ (by abuse of notation).  A property of $\hom(\mc C)$ will
also be called a {\it property of morphisms in $\mc C$\/}.

\begin{definition}
  Given a subcategory $\mc C$ of $\Stacks_S$, a property $P$ of
  $\mc C$ (resp.\ $\hom(\mc C)$) is {\it $\mc C$-isonatural\/} if
  $P=(F|_{\mc C})^{-1}((F|_{\mc C})_{\ast}P)$.
\end{definition}

It is straightforward that a property $P$ is isonatural if and only if
there is some property $Q$ of $\Func_S$ (resp.\ $\hom(\Func_S)$) such
that $P=F^{-1}(Q)$.  The question which we address in this paper is
the following.

\begin{question}\label{sec:introduction}
  Which properties of $\Stacks_S$ (resp.\ $\hom(\Stacks_S)$) are
  isonatural (resp.\ $\hom(F)$)?
\end{question}

\begin{Examples}
(1)  Given a stack $\ms X$, there is a property $[\ms X]$ consisting of
  the stacks isomorphic to $\ms X$.  The statement that $[\ms X]$ is
  isonatural is the same as the statement that a stack $\ms Y$ is isomorphic
  to $\ms X$ if and only if $F_{\ms Y}$ is isomorphic to $F_{\ms X}$.
  In other words, $\ms X$ is characterized up to $1$-isomorphism by
  its associated functor of isomorphism classes.  In this case, we
  will say ``$\ms X$ is isonatural'' in place of ``$[\ms X]$ is isonatural.''

  (2) The subcategory of Deligne-Mumford stacks yields a property of
  $\Stacks_S$.  The statement that this property is isonatural is the
  same as the statement that if $\ms X$ is Deligne-Mumford and $F_{\ms
    X}$ is isomorphic to $F_{\ms Y}$ then $\ms Y$ is Deligne-Mumford.
  In other words, there is a functorial criterion for a stack to be
  Deligne-Mumford.

  (3) The subcategory of $\hom(\Stacks_S)$ parametrizing
  representable, smooth, etc., morphisms $\ms X\to\ms Y$ defines a
  property.  Isonaturality means that this a morphism $\ms X\to\ms Y$
  has this property if and only if the induced map $F_{\ms X}\to
  F_{\ms Y}$ has some other property (in the formal sense), which one
  would ideally like to describe.
\end{Examples}

For all of the properties and stacks that we can prove are isonatural,
we explicitly describe the corresponding properties of the associated
functors (resp.\ the morphisms of associated functors).  This
constructive aspect of our proofs requires that we restrict our
attention to a particular subcategory $\mc Q$ of $\Stacks_S$, which we call
\emph{quasi-algebraic stacks\/}.  The reader is
referred to Definition \ref{def:quasi-algebraic} for a glimpse of this
subcategory.  

\begin{convention}
  In the rest of this paper, ``isonatural'' will mean ``$\mc
  Q$-isonatural.''
\end{convention}

At first glance, it may seem that there is no hope of recovering the
automorphism data contained in the stack after passing to $F_{\ms X}$,
but it turns out that this is not the case.  Theorem
\ref{thm:main-examples} asserts that many classical moduli stacks are in
fact isonatural.  More generally, Theorem \ref{thm:summary} offers
substantial evidence for a positive answer to the following question.

\begin{question}
  Are all quasi-algebraic stacks isonatural?
\end{question}

As we show in Section \ref{sec:context}, this is not a purely abstract
statement about stacks on sites (even if one considers only stacks
with representable diagonals), as there are many examples of stacks
on the small \'etale site of a field or geometrically unibranch scheme
which are not isonatural.

To the reader used to thinking about the theory of moduli (and who has
learned the standard phylogeny which proceeds from functors to sheaves to 
stacks), the 
results we present here may seem surprising. If the information contained 
in the category fibered in groupoids is not contained in the isomorphism
data (as we are taught), then where is it? The answer, of course, lies
in descent theory, which creates a tight 
relationship between the sets of isomorphism classes of objects and their 
automorphism groups.

The situation is evocative of anabelian geometry. In anabelian theory, 
and its subsequent extensions by Mochizuki, auxilliary categories -- the 
category of finite \'etale covers \cite{mo6} \cite{ta3}, or the slice 
category of (log) schemes \cite{mo7} -- can be shown to determine a scheme,
typically by explicit reconstructive arguments.
Our results show, roughly, that structures in the category of $S$-schemes 
can serve to reconstruct groupoids from the associated ``coarse'' data 
contained in functors.

A basic example of this kind of structure arises 
in studying the classifying stack $\B{G}$ over an algebraically
closed field $k$, where $G$ is a finite group.  As we show in Proposition
\ref{sec:classifying-stacks}, there is a faithful functor $\beta$ from the
category of finite groups to the category of pointed $k$-schemes such
that for each finite group $G$ there is a natural isomorphism
$\pi_1(\beta(G))\simto G$.  In this way, one can
recover the functor of points of $G$ in the category of bands over a
point, which is enough to recover $\B{G}$.  In classical language, if
$k$ is an algebraically closed field and $G$ is a finite group, the
functor $X\mapsto\H^1(X,G)$ on the category of $k$-schemes determines $G$
up to unique outer isomorphism.

The phenomena we describe strike us as pedagogically useful: even if
one is primarily concerned with the isomorphism classes of a given
moduli problem, the automorphism information in the stack is nonetheless 
already determined by the crude functorial description.

\subsection{Notation and basic definitions}
\label{sec:notat-basic-defin}

Prior to summarizing our results, we set the notation and terminology used 
throughout the paper. 

We fix a quasi-separated base scheme $S$ throughout.  (This assumption
is also hidden in \cite{l-m-b} on page x; since this is the standard
reference on the subject, we will also make blanket quasi-separation
hypotheses.)

Given a category $\mc C$ and object $T\in\mc C$, the 
slice category of $\mc C$ over $T$ will be denoted $T\textrm{-}\mc
C$.  A functor $F:\mc C^{\circ}\to\Set$ induces a functor $F|_T:(T\textrm{-}\mc C)^{\circ}\to\Set$.

We assume throughout that all stacks have small fiber categories.
Grothendieck's theory of universes can be used to see that this is a
harmless assumption in practice.

An \emph{open substack\/} of a stack $\ms X$ is an equivalence class of
morphisms $\ms U\to\ms X$ which are representable by open immersions.

\begin{defn}\label{d:notat-basic-defin-1}
  A stack is \emph{abelian\/} if it has abelian inertia stack.
\end{defn}

Following \cite{l-m-b}, all Artin stacks will be assumed to have 
quasi-compact (and hence finite type) diagonals. The notion of 
quasi-algebraic stack involves an infinitesimal condition, for which we
recall the following.

\begin{lem} Given $U_0=\Spec A_0 \to U=\Spec A$ a nilpotent closed 
immersion of affine $S$-schemes, and a morphism $U_0 \to Z$ with $Z=\Spec B$ 
another affine $S$-scheme, the pushout $U \coprod_{U_0} Z$ exists in the 
category of $S$-schemes, and is given by $\Spec(A \times_{A_0} B)$. 
\end{lem}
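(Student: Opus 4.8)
The plan is to set $R \defeq A \times_{A_0} B$, the fiber product in the category of $S$-algebras, and to show that $\Spec R$, equipped with the morphisms $U = \Spec A \to \Spec R$ and $Z = \Spec B \to \Spec R$ induced by the two projections, is the desired pushout. The maps $R \to A$ and $R \to B$ give a commutative square --- the composites $R \to A \to A_0$ and $R \to B \to A_0$ agree by the very definition of the fiber product --- so $\Spec R$ carries a natural cocone under $U \leftarrow U_0 \to Z$, and it remains to verify the universal property. For an affine test scheme $T = \Spec C$ this is immediate: since $R$ is a limit of $S$-algebras and $\Hom_{S\text{-alg}}(C,-)$ preserves limits,
\[
\Hom_S(\Spec R, \Spec C) = \Hom_S(U,\Spec C)\times_{\Hom_S(U_0,\Spec C)}\Hom_S(Z,\Spec C),
\]
which is exactly the pushout property tested against affines. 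Thus $\Spec R$ is the pushout in the category of affine $S$-schemes, and the real content of the lemma is to promote this to arbitrary, possibly non-affine, $S$-schemes $T$.

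The bridge is a structural analysis of $\Spec R$ in which nilpotence is used crucially. First I would observe that the projection $R \to B$ is surjective with kernel identified with $I \defeq \ker(A \to A_0)$, sitting inside $R$ as the ideal of pairs $(a,0)$ with $a \in I$; since $I$ is nilpotent, so is this ideal, whence $Z \to \Spec R$ is a nilpotent closed immersion and in particular a homeomorphism on underlying spaces. As $U_0 \to U$ is likewise a homeomorphism, a short diagram chase (again using that the two composites to $A_0$ coincide) identifies the underlying space of $\Spec R$ with the topological pushout of $|U| \leftarrow |U_0| \to |Z|$. Next I would establish the sheaf-theoretic refinement of $R = A\times_{A_0}B$: writing $p$, $q$, $\iota$ for the structure morphisms of $U$, $U_0$, $Z$ to $\Spec R$, one has $\mc O_{\Spec R} = p_*\mc O_U \times_{q_*\mc O_{U_0}} \iota_*\mc O_Z$ as sheaves on $|\Spec R|$. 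This reduces to a localization computation on distinguished opens, namely that $R_r \cong A_a \times_{(A_0)_{\bar a}} B_b$ for $r = (a,b) \in R$, where the surjectivity of $R \to B$ and the nilpotence of $I$ are exactly what make the fiber product compute correctly.

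With these in hand the general universal property follows. Given $S$-morphisms $f \colon U \to T$ and $g \colon Z \to T$ agreeing on $U_0$, the underlying continuous map of the sought $h \colon \Spec R \to T$ is forced by the topological pushout description: it is $g$ transported along the homeomorphism $|Z| \cong |\Spec R|$, and its compatibility with $f$ is automatic. To build $h$ as a morphism of schemes I would work locally on the target, covering $T$ by affine opens $V$ and setting $W = |h|^{-1}(V)$. Since $V$ is affine, a morphism $W \to V$ is the same as a ring map $\mc O_T(V) \to \Gamma(W,\mc O_{\Spec R})$, and by the sheaf fiber product $\Gamma(W,\mc O_{\Spec R}) = \Gamma(f^{-1}V,\mc O_U)\times_{\Gamma(\,\cdot\,,\mc O_{U_0})}\Gamma(g^{-1}V,\mc O_Z)$; the maps supplied by $f$ and $g$ land compatibly in this fiber product and so define $h|_W$. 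Checking that these are morphisms of locally ringed spaces and that they glue, together with the uniqueness of $h$ (both the space-level and ring-level data being forced), completes the argument.

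I expect the main obstacle to be the sheaf-level fiber product together with its localization input. The difficulty is precisely that an affine open of $T$ pulls back to opens of $U$ and $Z$ that are not distinguished affines of $\Spec R$, so one cannot simply invoke the affine case of the first paragraph on $W$; it is the identity $\mc O_{\Spec R} = p_*\mc O_U \times_{q_*\mc O_{U_0}} \iota_*\mc O_Z$ --- valid because nilpotence collapses the topology of $\Spec R$ onto that of $Z$ --- that lets the target-affine reduction go through, and verifying it is the technical heart of the proof.
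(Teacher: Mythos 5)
Your proposal is correct and follows essentially the same route as the paper's (very terse) proof: establish that $Z \to \Spec(A \times_{A_0} B)$ is a homeomorphism using nilpotence of the kernel ideal, and then verify the universal property against arbitrary, possibly non-affine, test schemes. Your write-up simply supplies the details the paper leaves to the reader, with the sheaf-level identity $\mc O_{\Spec R} = p_*\mc O_U \times_{q_*\mc O_{U_0}} \iota_*\mc O_Z$ (checked on distinguished opens via the exactness of localization applied to $0 \to R \to A \times B \to A_0$) serving as the technical core of the ``not difficult to verify'' step.
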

\begin{proof} One checks easily that the map $Z \to \Spec(A \times_{A_0} B)$
is a homeomorphism, and it is then not difficult to verify that
$\Spec(A \times_{A_0} B)$ is indeed the pushout in the category of
(possibly non-affine) schemes.
\end{proof}

\begin{defn}\label{def:quasi-algebraic}
We call an $S$-stack $\ms X$ \emph{quasi-algebraic\/} if it satisfies the
following three conditions.
\begin{enumerate}
\item The diagonal $\ms X\to\ms X\times_S\ms X$ is representable by
  separated quasi-compact morphisms of algebraic spaces.
\item The inertia stack of $\ms X$ is locally of finite presentation
  over $\ms X$.
\item Given affine $S$-schemes $U_0,U,Z$ with $U_0\inj U$ a nilpotent
  closed immersion and $U_0\to Z$ an arbitrary morphism, the induced
  functor between fiber categories
$$\ms X_{U\coprod_{U_0}X}\to\ms X_U\times_{\ms X_{U_0}}\ms X_Z$$
is an equivalence.  
\end{enumerate}
\end{defn}

\begin{remark}\label{rem:quasi-algebraic} 
  Conditions (1) and (3) are always satisfied by any Artin stack (see
  e.g.\ Lemma 1.4.4 of \cite{ol1} for the latter). Condition (2) is
  satisfied by any algebraic space, and any locally Noetherian Artin
  stack, and more generally any Artin stack for which the diagonal is
  locally of finite presentation, but not for a general Artin stack.
  Thus, our implicit hypothesis that all Artin and Deligne-Mumford
  stacks are quasi-algebraic is a non-vacuous restriction.

  For a simple example of this, consider the action of $\Z/2\Z$ on
  $k[x_1,x_2,\ldots]$ in which $1$ acts by multiplication by $-1$ on
  each variable.  The quotient stack is an Artin stack, but is not
  locally Noetherian, and does not satisfy (2). Indeed, when pulled
  back to $\Spec k[x_1,x_2,\ldots]$, the inertia stack is given by the
  extension by $0$ of the group scheme $\Z/2\Z$ supported at the
  ``origin,'' and is therefore not locally of finite presentation.

  Each of conditions (2) and (3) only arise at a single point in our
  argument, in recognizing morphisms which are locally of finite 
  presentation and smooth/\'etale respectively, 
  but these are crucial because they combine to show we can recognize
  smooth covers by schemes on the level of functors, which then leads
  to a plethora of additional recognition results.
\end{remark}

We will without further comment assume that all of our Artin 
(and Deligne-Mumford) stacks satisfy condition (2) as well, so that
they are quasi-algebraic.

\subsection{Summary of results}\label{sec:summary}

In Section \ref{sec:rec-props}, we will analyze properties of
morphisms of Artin stacks, as well as absolute properties of stacks,
showing that nearly all of the standard properties can be tested on
the level of functors. As detailed in Section
\ref{sec:props-summary}, nearly all standard
properties of morphisms and of stacks are isonatural. Most of these
results follow after we show that we can recognize smooth covers of a
quasi-algebraic stack by a scheme, although additional argument is
required for quasi-compact, separated, and proper morphisms. We also
show that we can test non-triviality of stabilizer groups from the
functor, and can in fact recover the groups whenever they are abelian.

In Section \ref{sec:isonatural-stacks}, we analyze a number of specific classes
of stacks, such as Artin stacks having an open dense substack with trivial
stabilizer, classifying stacks for finite and abelian groups, and certain
gerbes. We use specific categorical construction in each case to show
that within each class, a stack can be reconstructed from its functor,
and we then apply the results of Section \ref{sec:rec-props} to show that we
can also tell on the level of functors whether a quasi-algebraic stack
is in any of the classes in question. We thus conclude that any stack in
any of the classes is isonatural. For detailed statements, see Section 
\ref{sec:stacks-summary}.

The following theorem is a corollary of our main results, and shows
that the most common moduli stacks are in fact all isonatural.

\begin{thm}\label{thm:main-examples} Any base change of any open
  substack of any of the following stacks is isonatural.
  \begin{enumerate}
  \item The stack $\widebar{\ms M}_{g,n}$ for all $g\geq 2$
    and $n\geq 0$, as a $\Z$-stack.
  \item The Picard stack of a projective scheme flat and of finite presentation 
    which is cohomologically flat in degree $0$ over an algebraic
    space with quasi-compact connected components. 
  \item The stack of stable vector bundles on a
    projective scheme flat and of finite presentation which is
    cohomologically flat in degree $0$ over an 
    algebraic space with quasi-compact connected components.
  \item The stack of stable vector bundles of rank $n$ and fixed
    determinant on a smooth
    proper curve over any normal quasi-projective $\Z[1/n]$-scheme.
  \item The stack of stable coherent sheaves of rank $n$ with fixed
    determinant and sufficiently large discriminant on a smooth
    projective surface over any normal quasi-projective $\Z[1/n]$-scheme.
  \item The stack of $n$th roots of an invertible sheaf $\ms L$ on a
    regular algebraic space $X$.
  \end{enumerate}
\end{thm}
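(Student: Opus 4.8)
The statement is a corollary of the reconstruction results of Section \ref{sec:isonatural-stacks} (summarized in Theorem \ref{thm:summary}), so the plan is one of verification rather than fresh construction: for each of the six families I would check that the stack lies in one of the classes already shown to be isonatural, and that this membership survives the two permitted operations. First I would record the relevant stability principles, both essentially formal. Passage to an open substack $\ms U\inj\ms X$ preserves the property of having abelian inertia (Definition \ref{d:notat-basic-defin-1}), since the inertia of $\ms U$ is the restriction of that of $\ms X$, and it preserves the property of having a dense open substack with trivial stabilizer, since a dense open substack of $\ms X$ meets every nonempty open. Base change along $S'\to S$ likewise preserves abelian inertia (inertia commutes with base change) and, for the specific families below, the relevant generic triviality. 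Granting this, it suffices to exhibit each listed stack as quasi-algebraic (Definition \ref{def:quasi-algebraic}) and as a member of the appropriate class; Remark \ref{rem:quasi-algebraic} already supplies quasi-algebraicity for the locally Noetherian Artin and Deligne--Mumford stacks occurring here.

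For item (1) I would use that $\widebar{\ms M}_{g,n}$ is a smooth proper Deligne--Mumford stack, the automorphism groups being finite because $g\geq 2$. When $g\geq 3$ the generic $n$-pointed stable curve has trivial automorphism group, so there is a dense open substack with trivial stabilizer and we are in the first class of Section \ref{sec:isonatural-stacks}. The case $g=2$ is genuinely exceptional, since the hyperelliptic involution lies in the automorphism group of every genus-$2$ curve and there is no trivial-stabilizer open locus; here I would instead exploit that the generic stabilizer is the abelian group $\Z/2\Z$, rigidify along the hyperelliptic involution to reduce to a stack with dense trivial-stabilizer locus, and recover the residual abelian gerbe by the abelian-stabilizer results of Section \ref{sec:rec-props}.

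Items (2)--(6) I would treat uniformly as abelian stacks presented as gerbes banded by an abelian group over an algebraic space, which is precisely the situation handled by the gerbe-reconstruction and abelian-group-recovery results of Sections \ref{sec:rec-props} and \ref{sec:isonatural-stacks}. For the Picard stack, the connectedness and cohomological flatness hypotheses guarantee that $\Aut$ of a line bundle is $\G_m$ and that the Picard functor is representable by an algebraic space, so the stack is a $\G_m$-gerbe over it. For the stacks of stable vector bundles and stable coherent sheaves, stability forces simplicity, so again $\Aut=\G_m$ and we obtain a $\G_m$-gerbe over the moduli space; imposing a fixed determinant cuts the band down to $\m_n$, and the hypothesis that the base is a $\Z[1/n]$-scheme keeps $\m_n$ \'etale so that the recovery machinery applies. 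The stack of $n$th roots of $\ms L$ is literally a $\m_n$-gerbe, again \'etale over $\Z[1/n]$. In every case the inertia is abelian and the band is an abelian algebraic group over an algebraic space, so the reconstruction theorems apply after the stability reductions of the first paragraph.

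The conceptual engine — recognizing smooth covers by schemes on the level of functors, and recovering abelian stabilizers — is already established, so the main obstacles are geometric verifications rather than new functorial arguments. Concretely, the delicate points I expect are: confirming the gerbe-over-an-algebraic-space structure in each case, which rests on stability implying simplicity for the sheaf-theoretic families and on cohomological flatness for the Picard stack; ensuring the $\Z[1/n]$ hypotheses keep $\m_n$ separable so the abelian recovery is valid; and the genus-$2$ case of item (1), where the absence of a trivial-stabilizer open locus forces the rigidification-plus-gerbe argument. Checking that all of these hypotheses are genuinely stable under the allowed base changes and open immersions is where I would expect to spend the most care.
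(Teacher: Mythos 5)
Your high-level route is the same as the paper's: place each item in one of the classes of Theorem \ref{thm:summary}, with item (1) handled by baldness and items (2)--(6) by gerbe reconstruction. The genuine gap is in how you invoke the gerbe results. Theorem \ref{thm:summary} does \emph{not} assert that abelian gerbes over algebraic spaces are isonatural; it asserts this for \emph{Brauer} $\G_m$-gerbes (part (5)) and for \emph{Brauer} diagonalizable gerbes over \emph{strongly R1} spaces (part (4)), and both qualifiers do real work. Recovering the band via Theorem \ref{T:nanas} and Corollary \ref{cor:band-love} --- which is all your ``abelian-group-recovery'' appeal provides --- does not reconstruct a non-neutral gerbe: one must also recover the cohomology class in $\H^2$, up to sign, from its support, and that is precisely where Brauer-ness enters (to produce Brauer--Severi splittings, as in Lemmas \ref{L:br-sev-sch-leray} and \ref{L:faithfully-flat-splitting}) and where strong R1-ness enters (the Weil-divisor arguments of Lemma \ref{L:pic-mod} and Lemma \ref{L:cyclic-gp-muffin}); Section \ref{sec:context} exhibits non-isomorphic $\G_m$-gerbes with isomorphic associated functors (on small sites) exactly when such structure is missing. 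Consequently the real content of the paper's proof is the verification you omit: items (2) and (3) are \emph{Brauer} $\G_m$-gerbes by Remark \ref{R:bass-ackwards}, a non-formal argument using twisted sheaves and Langer's boundedness theorem to build a Brauer--Severi space over each (quasi-compact) component, while items (4)--(6) are Brauer $\m_n$-gerbes over \emph{strongly R1} spaces, which is what the hypotheses ``normal quasi-projective,'' ``sufficiently large discriminant,'' and ``regular'' are for (via Theorems 9.3.3 and 9.4.3 of \cite{h-l}). Your rationale for the $\Z[1/n]$ hypothesis (keeping $\m_n$ \'etale) is also not the paper's: class (4) of Theorem \ref{thm:summary} allows arbitrary diagonalizable finite group schemes, with no \'etaleness requirement.

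On item (1), your worry about $g=2$ is half right, and it actually exposes a soft spot in the paper's own one-line justification, which appeals to triviality of the generic automorphism group: that claim is false for $g=2$, $n=0$, since every genus-$2$ curve carries the hyperelliptic involution, so $\widebar{\ms M}_{2}$ is not bald. But you overstate the exception: for $g=2$ and $n\geq 1$ the involution moves a generic marked point, so $\widebar{\ms M}_{2,n}$ is bald and needs no special treatment. More importantly, your proposed repair for the $n=0$ case is not available within the paper's toolkit: rigidifying along the hyperelliptic $\m_2$ exhibits $\widebar{\ms M}_{2}$ as a $\m_2$-gerbe over a \emph{stack} (the rigidification retains non-trivial stabilizers along special loci), whereas every gerbe-isonaturality result in the paper requires the base of the gerbe to be an algebraic space. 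So for $g=2$, $n=0$ neither your argument nor the paper's goes through as written; that case would require either a genuinely new argument or a restriction of the statement to $g\geq 3$ or $n\geq 1$.
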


\section*{Acknowledgments}
\label{sec:acknowledgments}

During the course of this work we received helpful feedback and
corrections from Martin Olsson.

\section{Isonatural properties}
\label{sec:rec-props}

In this section, we prove that nearly all of the usual properties of
quasi-algebraic stacks, and of morphisms of Artin stacks, are
isonatural. We impose additional hypotheses for separateness and
properness, but otherwise our results are completely general.

\subsection{Summary of results}\label{sec:props-summary}

Suppose we are given a 1-morphism $f:\ms X \to \ms Y$ of Artin stacks
over a base scheme $S$ with induced morphism $F_f:F_{\ms X}\to F_{\ms Y}$.

We remind the reader that a \emph{trait\/} is the spectrum of a
complete discrete valuation ring.

\begin{thm}\label{thm:summary-mors-rep} The following properties of 
representable morphisms of quasi-algebraic stacks are isonatural: 
\begin{enumerate}
\item locally of finite presentation;
\item surjective;
\item smooth, assuming the source is a scheme;
\item unramified, assuming the source is a scheme;
\item \'etale, assuming the source is a scheme.
\end{enumerate}
\end{thm}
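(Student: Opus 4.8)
The plan is to associate to each property a criterion phrased entirely in terms of the morphism of functors $F_f\colon F_{\ms X}\to F_{\ms Y}$, and then to check that $f$ enjoys the property exactly when $F_f$ satisfies the criterion. In each case one direction is a routine translation of a standard limit-theoretic or infinitesimal characterization restricted to affine test objects; the substance is the reverse implication, in which a property of the groupoid-valued fiber categories must be recovered from data about isomorphism classes alone. The essential device is that we may evaluate $F_{\ms Y}$ not only on ordinary affine schemes but also on the nilpotent thickenings and the pushout schemes $U\coprod_{U_0}Z$ appearing in Definition \ref{def:quasi-algebraic}; by condition (3) the value $F_{\ms Y}(U\coprod_{U_0}Z)$ computes $\pi_0$ of the $2$-fiber product $\ms Y_U\times_{\ms Y_{U_0}}\ms Y_Z$, and it is through these values that the automorphism data of $\ms Y$, invisible in the bare set $F_{\ms Y}(U)$, becomes accessible.

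I would dispatch (1) and (2) first, as they require neither condition (3) nor that the source be a scheme. For (1), I would use the relative limit-preservation criterion: for every cofiltered system of affine $S$-schemes with limit $T=\liml T_\lambda$, the induced map on fibers of $\operatorname{colim}_\lambda F_{\ms X}(T_\lambda)\to F_{\ms X}(T)$ over each point of $F_{\ms Y}$ is a bijection. Surjectivity here encodes that objects of the fibers descend to finite level, while injectivity is exactly where condition (2) enters: since the inertia of $\ms X$ is locally of finite presentation over $\ms X$, the associated $\underline{\Isom}$ sheaves are locally of finite presentation, so an isomorphism over $T$ already descends to a finite stage, and hence passage to isomorphism classes preserves the colimit. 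For (2), I would test surjectivity on field-valued points: for a field $K$ the set $F_{\ms Y}(K)$ records the $K$-points of $|\ms Y|$, so that surjectivity of $f$ on geometric points is equivalent to every class in $F_{\ms Y}(K)$ acquiring a preimage in some $F_{\ms X}(L)$ after a field extension $L/K$. Since surjectivity concerns only the existence of preimages, no collapsing of isomorphism classes interferes, and this case is elementary.

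For (3), (4) and (5) I would use the formal lifting criteria, taking advantage of the hypothesis that the source is a scheme $X$, so that $F_X$ is its ordinary functor of points and carries no automorphisms. For a nilpotent closed immersion $U_0\inj U$ of affine $S$-schemes, formal smoothness (resp.\ unramifiedness, resp.\ \'etaleness) of $f$ is the assertion that the comparison functor from $X(U)$ to the homotopy fiber product $X(U_0)\times_{\ms Y_{U_0}}\ms Y_U$ of groupoids is essentially surjective (resp.\ fully faithful, resp.\ an equivalence); combining the appropriate case with the limit-preservation of (1) yields smooth, unramified, and \'etale. The point is that the homotopy fiber product remembers the $2$-isomorphisms over $U_0$, so its $\pi_0$ is strictly finer than the set-theoretic fiber product $F_X(U_0)\times_{F_{\ms Y}(U_0)}F_{\ms Y}(U)$; condition (3), applied with suitable choices of $Z$, lets one compute this $\pi_0$ as a value of $F_{\ms Y}$ on a pushout scheme, so the formal criterion becomes a condition on the morphism of functors alone.

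The step I expect to be the main obstacle is precisely this reconstruction direction in the smooth and \'etale cases: showing that the functorial criterion forces the groupoid-level lifting property, including compatibility of the lifted $2$-isomorphism over $U_0$. The danger is that a lift of isomorphism classes need not arise from a genuine arrow $U\to X$ compatible with a prescribed isomorphism $f\circ x_0\simto u|_{U_0}$, the discrepancy being measured by an automorphism of $u|_{U_0}$ in $\ms Y_{U_0}$. Recovering and absorbing this automorphism is exactly what the pushout values controlled by condition (3) accomplish, and verifying that the data assembled from these pushout values genuinely reconstructs the homotopy fiber product is the technical heart of the argument. I would therefore prove (1) first and use it as a black box, settle (2) by the field-point argument, and treat (3)--(5) uniformly, invoking condition (3) at the single point where isomorphism-class lifts must be upgraded to stack-level lifts.
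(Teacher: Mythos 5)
Your handling of (2)--(5) follows essentially the same route as the paper: surjectivity is checked on geometric points (Lemma \ref{L:recog-surj}), and for smooth, unramified, and \'etale morphisms from a scheme the paper likewise reduces to an affine cover, writes smooth $=$ locally of finite presentation $+$ formally smooth, and recasts the formal lifting criteria functorially by using condition (3) of Definition \ref{def:quasi-algebraic} to identify $F_{\ms Y}(U\coprod_{U_0}X)$ with $\pi_0$ of $\ms Y_U\times_{\ms Y_{U_0}}\ms Y_X$ (Proposition \ref{P:formal-crit-affine}). The one ingredient you leave implicit --- how to express functorially that a lift $\mu\colon U\to X$ is compatible with the given gluing data, which you correctly flag as the technical heart --- is handled there by a second pushout: one compares $(\id\coprod\mu)^{\ast}\eta$ with $\delta^{\ast}p^{1\ast}\eta$ on $U\coprod_{U_0}U$, where $\delta$ is the codiagonal.

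The genuine gap is in your argument for (1). You justify injectivity (that passing to isomorphism classes preserves the colimit diagram) by asserting that condition (2) makes the $\Isom$ sheaves of $\ms X$ locally of finite presentation, so that ``an isomorphism over $T$ already descends to a finite stage.'' That inference is unavailable: locally finitely presented inertia controls only \emph{automorphism} sheaves, whereas locally finitely presented $\Isom$ sheaves amount to the \emph{diagonal} being locally of finite presentation, a strictly stronger hypothesis that quasi-algebraic stacks are not assumed to satisfy. The paper flags exactly this point in the remark following Proposition \ref{P:func-lpf}: under condition (2) alone it is not clear that $\limr\ms X_{R_i}\to\ms X_{\limr R_i}$ is fully faithful, i.e., isomorphisms need not descend to a finite level. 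The actual proof is structured to avoid isomorphism descent altogether. In the direction you sketch (stack-level lfp implies the functorial criterion), the only obstruction to the diagram of isomorphism classes being Cartesian is an automorphism of an object of $\ms Y_{\limr R_i}$ --- note, of the \emph{target} --- failing to lift to $\limr\ms Y_{R_i}$, and condition (2) applied to the inertia of $\ms Y$ lifts exactly these (Proposition \ref{P:recog-lfp}); for the converse direction one shows that $\ms X\to F_{\ms X}$ and $\ms Y\to F_{\ms Y}$ are themselves locally of finite presentation (Proposition \ref{P:func-lpf}, again using only automorphism lifting) and concludes by the two-out-of-three property (Corollary \ref{C:lpf-comp}), together with Lemma \ref{lem:lfp-rep} to reconcile this with the standard definition for representable morphisms. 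So your criterion for (1) is the right one, but the step you give to prove it would fail as stated; it must be replaced by the automorphism-lifting argument inside the relative $2$-Cartesian diagram, with the inertia hypothesis invoked for the correct stack.
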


\begin{thm}\label{thm:summary-mors}
The following properties of morphisms of Artin stacks are isonatural: 
\begin{enumerate}
\item locally of finite presentation;
\item locally of finite type;
\item surjective;
\item smooth;
\item flat;
\item quasi-compact;
\item separated and locally of finite type, assuming the target is
locally Noetherian;
\item proper, assuming the target is locally Noetherian and either is
abelian or has proper inertia.
\end{enumerate}
\end{thm}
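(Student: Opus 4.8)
The plan is to deduce every case from the representable situation, Theorem~\ref{thm:summary-mors-rep}, via the recognition of smooth covers by schemes on the level of functors, which is the central device of this section and rests on conditions (2) and (3) of Definition~\ref{def:quasi-algebraic}. The uniform mechanism is as follows: given $f\colon\ms X\to\ms Y$, choose a smooth cover $V\to\ms Y$ by a scheme and then a smooth cover $U\to\ms X\times_{\ms Y}V$ by a scheme; the induced map $U\to V$ is a genuine morphism of schemes through which $f$ can be analyzed. Since smooth covers by schemes are recognizable functorially, the passage from $f$ to the diagram carrying $U\to V$ can be performed working only with the associated functors.

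For properties (1)--(5) the key observation is twofold. First, each of these is smooth-local on both source and target, so $f$ has the property if and only if the morphism of schemes $U\to V$ does. Second, because a scheme is determined up to unique isomorphism by its functor of points, the functor of isomorphism classes is fully faithful on schemes; hence \emph{any} isomorphism-closed property of the morphism $U\to V$ is already determined by $F_{U\to V}$, which in turn is extracted from $F_f$ once the cover diagram has been recognized. Thus (1)--(5) all reduce to certifying the cover diagram, which is exactly what Theorem~\ref{thm:summary-mors-rep} provides. Property (6) is the one exception to the naive smooth-local reduction, since quasi-compactness is not local on the source: here one tests instead on affines, declaring $f$ quasi-compact precisely when, for every affine $T\to\ms Y$, the fiber product $\ms X\times_{\ms Y}T$ admits a smooth surjection from a quasi-compact scheme, a condition again visible through smooth-cover recognition.

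The remaining properties (7) and (8) resist the smooth-local reduction and must be treated through the valuative criteria, which is why traits enter the statement: $f$ is separated (resp.\ proper) exactly when maps from traits admit at most one (resp.\ exactly one) lift, up to unique isomorphism, after suitable base change. The difficulty is that the uniqueness clauses are phrased up to isomorphism of lifts, so the automorphism data of $\ms X$ intervenes, and two families that become identified in $F_{\ms X}$ may genuinely differ by a nontrivial automorphism. This is precisely the role of the hypotheses: assuming the target locally Noetherian permits the discrete-valuation-ring form of the criterion, while the additional requirement in (8) that $\ms X$ be abelian or have proper inertia is what lets us recover the pertinent stabilizer information from the functor, as established earlier in this section.

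I expect case (8) to be the main obstacle. Translating essential uniqueness of a lift over a trait into a statement purely about $F_{\ms X}\to F_{\ms Y}$ is delicate because distinct lifts that are isomorphic through an automorphism collapse to a single point of the functor of isomorphism classes; certifying separatedness therefore demands control of exactly this collapse, which is unavailable for general inertia and is the reason the abelian or proper-inertia hypothesis cannot be dropped. Under that hypothesis the automorphism groups are functorially accessible, so the separatedness component of properness can be verified, while the existence (non-separated) component is comparatively routine. The crux is thus the interaction of lifts with inertia, and the entire reduction hinges on the smooth-cover recognition feeding into the fully faithful embedding of schemes.
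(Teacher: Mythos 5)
Your high-level plan---reduce (1)--(5) to Theorem~\ref{thm:summary-mors-rep} via smooth covers by schemes, and handle (7)--(8) by valuative criteria---matches the paper's strategy in outline, but your uniform mechanism breaks at exactly the point that is the central difficulty of the whole paper: the functor of isomorphism classes does not commute with $2$-fiber products. Given only $F_f\colon F_{\ms X}\to F_{\ms Y}$ and a smooth cover $h_V\to F_{\ms Y}$, you cannot form $F_{\ms X\times_{\ms Y}V}$: the comparison map $F_{\ms X\times_{\ms Y}V}\to F_{\ms X}\times_{F_{\ms Y}}h_V$ fails to be injective, because the gluing isomorphisms over $\ms Y$ are forgotten in $F_{\ms Y}$. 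So the step ``choose a smooth cover $U\to\ms X\times_{\ms Y}V$ \dots\ performed working only with the associated functors'' is not available, and the same defect undermines your test for (6), which quantifies over affines $T\to\ms Y$ and the fiber products $\ms X\times_{\ms Y}T$. The paper engineers around this: for (2)--(5) it uses the factorization criterion of Lemma~\ref{L:mors-local} (there exist smooth covers $T'\to\ms X$ and $T\to\ms Y$ by schemes such that $T'\to\ms Y$ factors through $T$ and the scheme morphism $T'\to T$ has $P$), whose ingredients---covers, factorizations of functor morphisms, and Yoneda for scheme morphisms---are all isonatural; for (6) it tests against open quasi-compact substacks rather than affines, precisely because Lemma~\ref{L:open-subs} matches open substacks with open subfunctors and Lemma~\ref{sec:quasi-compactness-5} shows that $F$ \emph{does} commute with fiber products along open immersions. (For (1) the paper does not use covers at all: Proposition~\ref{P:recog-lfp} proves isonaturality of local finite presentation for arbitrary quasi-algebraic stacks by a direct colimit argument resting on the inertia hypothesis.)

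For (7) and (8) you correctly identify the obstruction---isomorphic lifts collapse in $F_{\ms X}$---but you are missing the device that resolves it and you have the roles of the hypotheses inverted. The paper's criterion for separatedness (Lemma~\ref{L:separated-funcy}) uses \emph{doubled traits}: constancy of elements of $F_{\ms X}(T)$ over a trait glued to itself along its generic point encodes the distinguished double coset in $\aut(\beta_2)\backslash\isom(\beta_1|_U,\beta_2|_U)/\aut(\beta_1)$, and this works with no inertia hypothesis at all---(7) needs only a locally Noetherian target. The abelian/proper-inertia hypothesis in (8) (which is on the target $\ms Y$, not on $\ms X$ as you write) is consumed by the \emph{existence} half of the valuative criterion, not the uniqueness half: a commutative square of functors over a trait lifts to a $2$-commutative square of stacks only when $\Aut(\eta)\to\Aut(\eta|_U)$ is surjective for $\eta\in\ms Y_T$ (which is what proper inertia gives), or else after the conjugation manipulation of Proposition~\ref{P:proper-abelian}, which is exactly where abelianness of the target enters. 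So the part you call ``comparatively routine'' is precisely the step for which the extra hypotheses exist, while the part you call the main obstacle is handled unconditionally by doubled traits.
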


We will use these results, and in particular our ability to recognize
smooth covers by a scheme, to show further that a number of absolute
properties of stacks are isonatural.

\begin{cor}\label{cor:summary-absolute} The following properties of 
quasi-algebraic stacks are isonatural:
\begin{enumerate}
\item Artin;
\item Deligne-Mumford;
\item gerbe (over an algebraic space);
\item locally Noetherian;
\item normal;
\item reduced;
\item regular;
\item quasi-compact;
\item proper inertia.
\end{enumerate}
\end{cor}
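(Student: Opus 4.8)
The plan is to deduce each item from the recognition of smooth and \'etale surjective covers by schemes established in Theorem \ref{thm:summary-mors-rep}, together with the elementary but crucial observation that for a scheme $U$ a morphism of functors $F_U\to F_{\ms Z}$ is the same datum as an isomorphism class of $1$-morphisms $U\to\ms Z$ (by Yoneda, since $\operatorname{Hom}(F_U,F_{\ms Z})=F_{\ms Z}(U)$ is by definition the set of isomorphism classes of objects of $\ms Z_U$). Consequently, if $\ms X$ is quasi-algebraic and $\varphi:F_{\ms X}\simto F_{\ms Y}$, then any chosen atlas $u:U\to\ms X$ transports to a morphism $U\to\ms Y$ whose functorially detectable properties agree with those of $u$. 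Since isonaturality of a property $P$ amounts to the implication ``$\ms X\in P$ and $F_{\ms X}\simeq F_{\ms Y}$ with $\ms Y$ quasi-algebraic $\Rightarrow\ms Y\in P$,'' it suffices in each case to read $P$ off a transported atlas or off a functorially reconstructed invariant.

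For the first three items and for quasi-compactness I would argue as follows. A quasi-algebraic stack is \emph{Artin} exactly when it admits a smooth surjective morphism from a scheme, \emph{Deligne--Mumford} when it admits an \'etale surjective one, and \emph{quasi-compact} when the source scheme may additionally be taken quasi-compact. Given such an atlas $u:U\to\ms X$, Theorem \ref{thm:summary-mors-rep} shows that smoothness, \'etaleness and surjectivity of $u$ (with scheme source) are visible on $F_u$; transporting $u$ across $\varphi$ therefore produces $U\to\ms Y$ with the same properties, and quasi-compactness of the scheme $U$ is intrinsic to $U$. Hence each of these properties passes from $\ms X$ to $\ms Y$. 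The properties \emph{locally Noetherian}, \emph{normal}, \emph{reduced} and \emph{regular} are smooth-local on the source: an Artin stack has any one of them if and only if some (equivalently every) smooth scheme atlas does. Here the point is that the transported atlas $U\to\ms Y$ involves the \emph{same} scheme $U$, whose scheme-theoretic properties are determined by $U$ itself, i.e.\ by $F_U$ via Yoneda. Thus $\ms Y$ has the property if and only if $U$ does if and only if $\ms X$ does, which reduces all the ``local'' absolute properties to a single transport argument once Artinness is known.

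The hard part will be the gerbe and proper-inertia cases, which unlike the above require reconstructing genuinely new structure from $F_{\ms X}$ rather than merely transporting a chosen atlas. For \emph{gerbe over an algebraic space} I would use that the fppf sheafification $(F_{\ms X})^{a}$ is a functor of $F_{\ms X}$ alone: $\ms X$ is such a gerbe precisely when $(F_{\ms X})^{a}$ is an algebraic space (recognizable, since algebraic spaces are quasi-algebraic stacks admitting \'etale scheme covers) and the canonical morphism $\ms X\to(F_{\ms X})^{a}$ is a gerbe, the latter amounting to surjectivity of the diagonal $\ms X\to\ms X\times_{(F_{\ms X})^{a}}\ms X$, which is isonatural by Theorem \ref{thm:summary-mors}. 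For \emph{proper inertia} one must first recover the inertia morphism $I_{\ms X}\to\ms X$ functorially --- this is exactly the automorphism-reconstruction at the heart of the paper --- and then recognize its properness; one cannot simply invoke Theorem \ref{thm:summary-mors}(8), whose hypotheses already presuppose proper inertia, so I expect to argue properness directly via the valuative criterion tested on traits. Avoiding this circularity, and reconstructing the inertia from the coarse functorial data, is the main obstacle.
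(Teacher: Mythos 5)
Your treatment of items (1), (2), and (4)--(8) is correct and is essentially the paper's own argument: transport a smooth (resp.\ \'etale, resp.\ quasi-compact) scheme cover across the isomorphism of functors using Theorem \ref{thm:summary-mors-rep}, which is exactly what Corollaries \ref{C:recog-covers} and \ref{cor:smooth-local-abs} do. The genuine gap is item (9), proper inertia, where you correctly diagnose the circularity in invoking Theorem \ref{thm:summary-mors}(8) but then leave the key step --- ``reconstructing the inertia from the coarse functorial data'' --- as an unresolved obstacle. Moreover, the route you sketch cannot work in general: the paper's machinery recovers only the presheaf of \emph{conjugacy classes} of $\Aut(\eta)$ (Proposition \ref{P:binana-bipoints-1}), and the groups themselves only when they are abelian (Theorem \ref{T:nanas}); there is no functorial reconstruction of $I_{\ms X}\to\ms X$ for nonabelian inertia. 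The paper's actual proof sidesteps reconstruction entirely. Quasi-algebraicity (Definition \ref{def:quasi-algebraic}) already makes the automorphism group schemes separated, quasi-compact, and of finite presentation, so properness of the inertia reduces to the existence half of the valuative criterion: for every trait $T_1$ with generic point $U$ and every $\eta\in\ms X_{T_1}$, the restriction map $\Aut(\eta)\to\Aut(\eta|_U)$ must be surjective. This surjectivity is then read off from $F_{\ms X}$ alone using the \emph{doubled trait} $T=T_1\coprod_U T_1$: the elements of $F_{\ms X}(T)$ restricting to $\eta$ on both copies are in bijection with the double cosets $\Aut(\eta)\backslash\Aut(\eta|_U)/\Aut(\eta)$ (exactly as in the analysis of Lemma \ref{L:separated-funcy}), and since the image of $\Aut(\eta)$ is a subgroup, there is a \emph{unique} such element if and only if the restriction map is surjective. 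This ``count gluings over a doubled trait'' device --- a yes/no question about cardinality of a fiber of $F_{\ms X}(T)\to F_{\ms X}(T_1)\times F_{\ms X}(T_1)$, requiring no knowledge of the groups themselves --- is the idea missing from your proposal.

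Your item (3) also needs repair, though less seriously. The paper's proof is precisely your first condition: $\ms X$ is a gerbe over an algebraic space if and only if the sheafification of $F_{\ms X}$ is an algebraic space (Remark 3.16(1) of \cite{l-m-b}). Your second condition --- that the canonical morphism $\ms X\to(F_{\ms X})^{a}$ be a gerbe --- is automatic for \emph{any} stack (local liftability of sections and local isomorphism of objects with equal image hold by construction of the sheafification), so it need not be tested. That is fortunate, because the test you propose would not run as stated: to apply Theorem \ref{thm:summary-mors} to surjectivity of the relative diagonal $\ms X\to\ms X\times_{(F_{\ms X})^{a}}\ms X$, you would need to know the associated functor of that fiber product in terms of $F_{\ms X}$, and associated functors do not commute with $2$-fiber products in general; the paper establishes such compatibility only when one leg is an open immersion (Lemma \ref{sec:quasi-compactness-5}).
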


Note that for locally Noetherian, normal, etc., we are assuming that
the stack in question is an Artin stack, because this is the only
context in which the properties are defined. However, we can also test
whether or not a stack is Artin on the level of functors.

Finally, in Section \ref{sec:nanas} we show the following.

\begin{thm}\label{T:nanas} Let $\ms X$ be a quasi-algebraic stack, and
$\eta \in \ms X_T$ for some scheme $T$. Then the following can be
recovered from the functor $F_{\ms X}$:
\begin{enumerate}
\item whether or not $\Aut(\eta)$ is abelian;
\item $\Aut(\eta)$ itself, when it is abelian.
\end{enumerate}
 
Moreover, given a morphism $T' \to T$, if $\Aut(\eta)$ and
$\Aut(\eta|_{T'}))$ are both abelian, the natural restriction map
$\Aut(\eta) \to \Aut(\eta|_{T'})$ can also be recovered from $F_{\ms X}$.
\end{thm}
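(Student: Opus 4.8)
The plan is to make the automorphisms of $\eta$ visible as honest isomorphism classes of objects over an auxiliary scheme, and then to read them off from $F_{\ms X}$; the basic device is a pinching (nodal) construction. Given $T'\to T$, form the nodal $T'$-scheme $C_{T'}$ obtained from $\P^1_{T'}$ by gluing the sections $0$ and $\infty$, with normalization $\nu:\P^1_{T'}\to C_{T'}$ carrying the two sections to the node. Pulling $\eta$ back first to $T'$ and then to $\P^1_{T'}$ yields a constant object $\eta_{\P^1}$, and I first record the elementary stack-theoretic fact that objects of $\ms X$ over $C_{T'}$ whose restriction along $\nu$ is isomorphic to $\eta_{\P^1}$ are classified, up to isomorphism over $C_{T'}$, by the descent datum at the node, namely an isomorphism between the two boundary restrictions of $\eta_{\P^1}$, i.e.\ an element of $\Aut(\eta|_{T'})$. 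Two such gluings differ by an automorphism of $\eta_{\P^1}$, and since such an automorphism induces the \emph{same} element at $0$ and at $\infty$, the resulting identification is conjugation. Thus the fibre of $F_{\ms X}(C_{T'})\to F_{\ms X}(\P^1_{T'})$ over $[\eta_{\P^1}]$ is naturally the set $\Conj(\Aut(\eta|_{T'}))$ of conjugacy classes. The one subtlety is to guarantee $\Aut(\eta_{\P^1})=\Aut(\eta|_{T'})$ with both boundary restrictions the identity; this is where properness of $\P^1$ (so that $H^0(\P^1,\mathcal O)$ is the base) enters, and for stacks whose automorphisms can grow along $\P^1$ one passes first to the relative identity component or replaces $\P^1$ by a rigid test curve.

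Because $F_{\ms X}$ records isomorphism classes over every $S$-scheme, the entire correspondence is visible functorially: one locates $[\eta_{\P^1}]$ as the image of $[\eta]$ under $F_{\ms X}(T)\to F_{\ms X}(\P^1_{T'})$, and the displayed fibre then computes $\Conj(\Aut(\eta|_{T'}))$ purely in terms of $F_{\ms X}$. This already yields the qualitative content of (1) --- $\Aut(\eta|_{T'})$ is trivial exactly when every such fibre is a singleton --- and it explains structurally why only the abelian case admits full recovery: a single pinch sees the group only through its conjugacy classes.

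To upgrade from conjugacy classes to the group and to detect commutativity, the plan is to enrich the pinch so as to expose the group law. Concatenating pinchings --- working over $\P^1_{T'}$ with three marked sections and forming the two-node (``figure-eight'') degeneration --- produces, by the same descent computation, the set $\Aut(\eta|_{T'})\times\Aut(\eta|_{T'})$ modulo simultaneous conjugation, together with scheme maps (collapsing one loop, or fusing the two loops into one) whose induced maps on $F_{\ms X}$ realise the two projections and the multiplication. Commutativity is then tested by comparing the class of $(a,b)$ with that of $(b,a)$ under the fusion map: $\Aut(\eta)$ is abelian if and only if these always agree, a condition phrased entirely in $F_{\ms X}$. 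When $\Aut(\eta)$ is abelian, conjugation is trivial, so the single-node fibre is canonically $\Aut(\eta|_{T'})$ and the figure-eight fibre is $\Aut(\eta|_{T'})\times\Aut(\eta|_{T'})$ with the fusion map inducing the group operation; this recovers $\Aut(\eta)$, as the functor $T'\mapsto\Aut(\eta|_{T'})$ with its group structure, from $F_{\ms X}$. Finally the construction is functorial in the base: a morphism $T''\to T'$ induces $C_{T''}\to C_{T'}$ (and likewise for the figure-eight), and the resulting map on fibres of $F_{\ms X}$ is exactly the restriction homomorphism $\Aut(\eta|_{T'})\to\Aut(\eta|_{T''})$; taking $T'=T$ recovers $\Aut(\eta)\to\Aut(\eta|_{T'})$ under the stated abelianness hypotheses.

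The hard part will be the descent statement underlying every step: that objects of a quasi-algebraic $\ms X$ over a pinched scheme are equivalent to objects over the normalization equipped with gluing data along the node. This is a pushout-descent assertion strictly beyond fppf descent (the normalization is finite but not flat at the node), and is the genuine analogue, for the nodal pinch, of condition (3) in Definition~\ref{def:quasi-algebraic}; establishing it for quasi-algebraic stacks is the crux, and it is what makes the conjugacy-class computation --- and hence the commutativity test ``$[(a,b)]=[(b,a)]$'' being well defined and faithful --- legitimate. The recognition results of Section~\ref{sec:rec-props}, in particular the isonaturality of the property of being a gerbe (Corollary~\ref{cor:summary-absolute}), enter to guarantee that the substack of objects locally isomorphic to $\eta$ is indeed the neutral gerbe $\B_{T}\Aut(\eta)$, so that these computations take place in the expected classifying stack and nothing is lost in passing to $F_{\ms X}$.
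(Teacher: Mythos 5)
Your proposal follows the same conceptual template as the paper (read off automorphisms as gluing data on rational curves with loops: conjugacy classes from a one-loop curve, then the group law from a two-loop configuration), but both load-bearing steps break down as written, and the paper's constructions are designed precisely to avoid these breakdowns. The first and most serious gap is the one you yourself call ``the crux'': classifying objects of $\ms X$ over the pinched curve $C_{T'}$ by objects on the normalization $\P^1_{T'}$ together with a gluing datum at the node. Nothing in Definition \ref{def:quasi-algebraic} supplies this: condition (3) only gives pushout descent along \emph{nilpotent} closed immersions of affine schemes, and a quasi-algebraic stack is otherwise just an fppf stack with representable separated quasi-compact diagonal and locally finitely presented inertia, so descent along the finite, non-flat normalization $\nu:\P^1_{T'}\to C_{T'}$ is simply not available at this level of generality. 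The paper never needs it: an $\eta$-binana is required to be trivial on the two \emph{Zariski opens} $N^0_{2,T}$ and $N^1_{2,T}$ (the complements of the two gluing sections), so the analysis in Proposition \ref{P:binana-bipoints-1} uses only Zariski descent --- automatic for any stack --- plus constancy of sections of the automorphism group space over proper rational curves, with the peel conditions then cutting the gluing data down to $\Conj(\Aut(\widetilde a))$. This trick is only possible because the binana has two components meeting at the two sections; your irreducible one-node curve admits no analogous open cover, so for it the unproved pinching descent cannot be sidestepped.

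The second gap is in the group-law step. If your figure-eight is the wedge of two irreducible one-node curves, then any morphism from the irreducible one-node curve $C$ into it has irreducible image and hence factors through a single component, so the ``fusing the two loops into one'' map that is supposed to realize $(a,b)\mapsto ab$ by pullback does not exist as a morphism of schemes; if instead you take an irreducible curve with two nodes, the loops are not subschemes at all and even the two projections fail. The paper's trinana resolves exactly this tension: its three $\P^1$-components all pass through the same two gluing sections, so each of the three sub-binanas is a closed subscheme, abelianness is detected by bijectivity of $(P_{1,2}^\ast,P_{2,3}^\ast):\Tri(\eta)\to\Bin(\eta)\times\Bin(\eta)$, and composition is recovered by restricting along the third bipeel $P_{1,3}$ (Proposition \ref{P:gp-law-nana}) --- that is, by mapping \emph{out of} the two-loop curve to a sub-binana rather than mapping a one-loop curve \emph{into} it, which is what topology suggests but algebraic geometry forbids. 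Your hedge about automorphisms growing along $\P^1$ (``pass to the identity component or a rigid test curve'') is comparatively minor --- the paper handles this by noting that $\Aut(\widetilde a)$ is a separated group algebraic space so its sections over proper rational curves are constant --- but the descent gap and the missing multiplication map are genuine: without them neither the well-definedness of your commutativity test $[(a,b)]=[(b,a)]$ nor the identification of the one-node fiber with $\Conj(\Aut(\eta|_{T'}))$ is established.
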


Since the trivial group is abelian, it follows from Theorem
\ref{T:nanas} that triviality of $\aut(\eta)$ is also determined by
$F_{\ms X}$. We therefore immediately conclude (see Corollary
\ref{C:sheafy} for a more precise version). 

\begin{cor} Algebraic spaces are isonatural among quasi-algebraic stacks.
\end{cor}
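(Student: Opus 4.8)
The plan is to realize the property ``is an algebraic space'' as the conjunction of two properties already known to be isonatural, and then to note that isonaturality is preserved under conjunction. The geometric ingredient is the standard fact that, among Artin stacks, being an algebraic space is equivalent to having trivial automorphism groups: if every object $\eta\in\ms X_T$ satisfies $\Aut(\eta)=\{1\}$, then any two isomorphisms between objects of $\ms X_T$ agree (their ratio lies in a trivial automorphism group), so the diagonal $\ms X\to\ms X\times_S\ms X$ is a monomorphism, and an Artin stack with monomorphic diagonal is an algebraic space. Thus, within $\mc Q$, the class of algebraic spaces coincides with the class of stacks that are simultaneously Artin and have trivial stabilizer at every object.

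First I would invoke Corollary \ref{cor:summary-absolute}, which shows that being Artin is isonatural; write this property as $(F|_{\mc Q})^{-1}(Q_1)$ for a suitable property $Q_1$ of $\Func_S$. Next I would apply Theorem \ref{T:nanas}: since the trivial group is abelian, part (1) detects whether $\Aut(\eta)$ is abelian and part (2) then recovers it, so in particular triviality of $\Aut(\eta)$ can be read off from $F_{\ms X}$. Because isomorphic objects have isomorphic automorphism groups, the condition ``$\Aut(\eta)=\{1\}$ for every scheme $T$ and every $\eta\in\ms X_T$'' depends only on the isomorphism classes, hence defines a property $(F|_{\mc Q})^{-1}(Q_2)$; this exhibits ``trivial stabilizers everywhere'' as isonatural.

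Finally I would combine the two. By the characterization recorded just after the definition of isonaturality, the intersection $(F|_{\mc Q})^{-1}(Q_1)\cap(F|_{\mc Q})^{-1}(Q_2)=(F|_{\mc Q})^{-1}(Q_1\cap Q_2)$ is again isonatural, and by the geometric fact above it is exactly the class of algebraic spaces inside $\mc Q$. Unwinding the definition: if $\ms X$ is quasi-algebraic with $F_{\ms X}\cong F_{\ms Y}$ for some algebraic space $\ms Y$, then $\ms X$ is Artin by Corollary \ref{cor:summary-absolute}, and every object of $\ms X$ has trivial automorphisms because the corresponding isomorphism classes in $F_{\ms Y}$ do; hence $\ms X$ is an algebraic space.

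The steps are individually short, so I expect the only real care to be needed in the bookkeeping of the middle paragraph: one must check that the pointwise output of Theorem \ref{T:nanas} genuinely assembles into a property of the functor that is transported across an isomorphism $F_{\ms X}\simto F_{\ms Y}$, and that the reduction ``algebraic space $\Leftrightarrow$ Artin with trivial stabilizers'' requires no further separatedness or representability hypothesis beyond what quasi-algebraicity already provides.
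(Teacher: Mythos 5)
Your proof is correct, and it shares the paper's essential ingredient: Theorem \ref{T:nanas} (equivalently, the binana construction of Proposition \ref{P:binana-bipoints-1}, under which triviality of $\Aut(\eta)$ corresponds to $\Bin([\eta])$ being the singleton functor) detects trivial stabilizers purely from $F_{\ms X}$, so your bookkeeping worry in the middle paragraph is handled exactly as you suspect. Where you diverge is in the finishing step. You decompose ``algebraic space'' as ``Artin'' $\wedge$ ``trivial stabilizers,'' which forces you to invoke Corollary \ref{cor:summary-absolute} for isonaturality of Artin-ness --- and hence, implicitly, the entire smooth-cover-recognition machinery of Section \ref{sec:rec-props} --- together with Corollary 8.1.1 of \cite{l-m-b} (Artin plus trivial inertia implies algebraic space). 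The paper's route is lighter and needs neither: once all automorphism groups of $\ms X$ are trivial, the stack condition forces $F_{\ms X}$ to be a sheaf and the projection $\ms X\to F_{\ms X}$ to be an equivalence (between isomorphic objects there is exactly one isomorphism), so $\ms X\simeq F_{\ms X}\simeq F_{\ms Y}\simeq \ms Y$ directly, with no need to first recognize $\ms X$ as Artin; this is the content of the ``more precise version,'' Corollary \ref{C:sheafy}, which cites the \cite{l-m-b} fact only for the supplementary assertion about Artin stacks. What your route buys is that it stays at the level of formal operations on already-certified isonatural properties (the conjunction $F^{-1}(Q_1)\cap F^{-1}(Q_2)=F^{-1}(Q_1\cap Q_2)$), making the logical structure very transparent; what the paper's route buys is economy and a sharper description of the corresponding property of functors, namely ``isomorphic to a sheaf that is an algebraic space.''
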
 

\subsection{Background on categories}

We begin by reminding the reader of
several basic notions from the theory of $2$-categories (by which we
mean categories enriched over groupoids).  The reader afraid of
arbitrary $2$-categories can simply think about the $2$-category of
categories: the objects are categories and the groupoid of morphisms
between two objects is the groupoid of functors (with $2$-isomorphisms
given by natural isomorphisms). In what follows, we will write $\ms C$
for a fixed $2$-category.

\begin{defn}
  A {\it $2$-commutative diagram\/} in $\ms C$ is a commutative
  diagram in the underlying $1$-category $\widebar{\ms C}$ along with
  a collection of $2$-isomorphisms determined as follows: 
  \begin{enumerate}
  \item for any two objects $A$ and $B$ in the diagram and any two
    arrows $\alpha$ and $\beta$ between $A$ and $B$ arising from
    compositions of arrows in the diagram, there is a $2$-isomorphism
    $\gamma_{\alpha,\beta}:\alpha\to\beta$;
  \item for any three paths $\alpha,\beta,\delta$ from $A$ to $B$, we
    have $\gamma_{\beta,\delta}\gamma_{\alpha,\beta}=\gamma_{\alpha,\delta}$;
  \item given two paths $\alpha,\beta:A\to B$ and a path $\delta:B\to
    C$, we have
    $\delta(\gamma_{\alpha,\beta})=\gamma_{\delta\alpha,\delta\beta}$,
    where the left-hand side arises from the composition functor on
    $\hom$-groupoids.
  \end{enumerate} 
\end{defn}

In other words, the commuting of $1$-morphisms is mediated by
$2$-morphisms, and any conceivable commutation relation among the
$2$-morphisms is assumed to be compatible.  Commutative diagrams in
this section are always assumed to be $2$-commutative; we will suppress the
$2$-morphisms from the notation.

\begin{defn}
  A \emph{$2$-Cartesian diagram\/} in $\ms C$ is a square of
  $1$-morphisms
$$\xymatrix{A\ar[r]^e\ar[d]_f & B\ar[d]^g\\
C\ar[r]_h&D}$$
along with an isomorphism $\epsilon:g\circ e\simto h\circ f$, such that for all
objects $E$ of $\ms C$, the functor
$$\Hom(E,A)\to\Hom(E,C)\times_{\Hom(E,D)}\Hom(E,B)$$
is an equivalence of groupoids.  Here the right-hand side is the usual
fibered product of groupoids (see Example \ref{ex:example} below), and
$\epsilon$ is used to compare the compositions $E\to B\to D$ and $E\to
C\to D$ coming from $E \to A$.
\end{defn}
\begin{example}\label{ex:example}
  Suppose 
$$\xymatrix{ & B\ar[d]^g\\
C\ar[r]_h & D}$$
is a pair of functors between (small) categories.  Define $B\times_D C$ to be
the category of triples $(b,c,\epsilon)$, where $b$ is an object of $B$,
$c$ is an object of $C$, and $\epsilon:g(b)\simto h(c)$ is an
isomorphism.  Then the resulting diagram 
$$\xymatrix{B\times_D C\ar[r]\ar[d] & B\ar[d]\\
C\ar[r] & D}$$
is $2$-Cartesian.  
\end{example}

\begin{lem}\label{sec:finit-prop-1} 
  Consider a $2$-commutative diagram in $\ms C$ of the form
$$\xymatrix{{A} \ar[r] \ar[d] & {B} \ar[d] \\
{A'} \ar[r] \ar[d] & {B'} \ar[d] \\
{A''} \ar[r] & {B''}.}$$
If the lower square is $2$-Cartesian then the upper square is
$2$-Cartesian if and only if the outer square is $2$-Cartesian.
\end{lem}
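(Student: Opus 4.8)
This is the standard "pasting lemma" for pullback squares, now in the 2-categorical setting, so the plan is to verify everything through the defining universal property via $\Hom$-groupoids. Since a square is 2-Cartesian exactly when the induced functor from $\Hom(E,-)$ applied to the upper-left corner into the fibered product of groupoids is an equivalence for every test object $E$, I would first reduce the entire statement to a statement purely about groupoids. Applying the (covariant) functor $\Hom(E,-)$ to the given diagram yields a corresponding two-square diagram in the 2-category of groupoids, and the hypothesis that the lower square is 2-Cartesian says precisely that the lower groupoid square is (up to equivalence) a fibered product of groupoids in the sense of Example \ref{ex:example}. So it suffices to prove the pasting assertion for groupoids: given such a stacked diagram of groupoids in which the bottom square is a fibered product, the top square is a fibered product if and only if the outer square is.

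For the groupoid statement I would work directly with the explicit model of fibered products from Example \ref{ex:example}, whose objects are triples $(b,c,\epsilon)$ with a comparison isomorphism, remembering that the relevant comparisons here are mediated by the 2-isomorphisms that are part of the 2-commutative data. Writing $\Gamma_A,\Gamma_{A'},\Gamma_{A''},\Gamma_B,\Gamma_{B'},\Gamma_{B''}$ for the six groupoids $\Hom(E,-)$, the hypothesis gives a canonical equivalence $\Gamma_{A'}\simeq \Gamma_{A''}\times_{\Gamma_{B''}}\Gamma_{B'}$. The key identity to establish is the coherent equivalence of iterated fibered products
$$\Gamma_A \simeq \Gamma_B \times_{\Gamma_{B'}} \Gamma_{A'} \simeq \Gamma_B \times_{\Gamma_{B'}}\bigl(\Gamma_{A''}\times_{\Gamma_{B''}}\Gamma_{B'}\bigr)\simeq \Gamma_B\times_{\Gamma_{B''}}\Gamma_{A''} \simeq \Gamma_{A''}\times_{\Gamma_{B''}}\Gamma_B,$$
where the middle equivalence simply cancels the $\Gamma_{B'}$-factor and all comparisons compose correctly because of the 2-commutativity axioms (2) and (3) governing the $\gamma_{\alpha,\beta}$. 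Reading this chain with "$\Gamma_A\simeq \Gamma_B\times_{\Gamma_{B'}}\Gamma_{A'}$" as the hypothesis that the top square is 2-Cartesian, and "$\Gamma_A\simeq \Gamma_{A''}\times_{\Gamma_{B''}}\Gamma_B$" as the statement that the outer square is 2-Cartesian, yields the desired equivalence of the two conditions.

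I expect the only real subtlety to be bookkeeping of the 2-isomorphism data: in the 1-categorical pasting lemma one merely chases elements, but here each object of a fibered-product groupoid carries a comparison isomorphism, and one must check that composing the two triangles of comparisons produces exactly the comparison demanded by the outer square, up to the prescribed coherence. This is where the three axioms in the definition of a 2-commutative diagram do the work, ensuring the associativity-and-compatibility of the chosen $\gamma_{\alpha,\beta}$ so that the chain of equivalences above is not merely pointwise but functorial and coherent. Once this coherence is in place the argument is formal; the main obstacle is thus purely organizational rather than conceptual, and I would present it by establishing the iterated-fibered-product cancellation $(\Gamma_{A''}\times_{\Gamma_{B''}}\Gamma_{B'})\times_{\Gamma_{B'}}\Gamma_B\simeq \Gamma_{A''}\times_{\Gamma_{B''}}\Gamma_B$ as the one genuine computation, and deducing both directions of the "if and only if" from it simultaneously.
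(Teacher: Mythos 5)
Your proposal is correct and follows essentially the same route as the paper: reduce to $\Hom(E,-)$-groupoids, use the lower square's equivalence to rewrite $\Hom(E,B)\times_{\Hom(E,B')}\Hom(E,A')$ as an iterated fibered product, cancel the $\Hom(E,B')$ factor, and then invoke $2$-commutativity to see that the canonical functor induced by the outer square factors as the canonical functor induced by the top square composed with this equivalence, so the "if and only if" follows by two-out-of-three. The coherence bookkeeping you flag as the one subtlety is exactly the step the paper handles with its "Moreover, by $2$-commutativity\dots" factorization claim.
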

\begin{proof} 
Let $\epsilon, \delta, \eta$ be the isomorphisms associated to the
upper, lower, and outer squares respectively by the $2$-commutativity
of the diagram. Since the lower square is
$2$-Cartesian, the functor
$$\Hom(E,A')\to\Hom(E,B')\times_{\Hom(E,B'')}\Hom(E,A'')$$
induced by $\delta$ is an equivalence, which leads to an equivalence
$$\xymatrix{\Hom(E,B)\times_{\Hom(E,B')}\Hom(E,A')\ar[d]\\
\Hom(E,B)\times_{\Hom(E,B')}(\Hom(E,B')\times_{\Hom(E,B'')}\Hom(E,A'')).}$$
Thus, we see that the natural functor
$$\Hom(E,B)\times_{\Hom(E,B')}\Hom(E,A')\to
\Hom(E,B)\times_{\Hom(E,B'')}\Hom(E,A'')$$
induced by $\delta$ is an equivalence of groupoids. Moreover, by
$2$-commutativity we find that the natural functor
$\Hom(E,A) \to \Hom(E,B)\times_{\Hom(E,B'')}\Hom(E,A'')$ induced by
$\eta$ factors as the composition of the functor
$\Hom(E,A) \to \Hom(E,B)\times_{\Hom(E,B')}\Hom(E,A')$ induced by
$\epsilon$ with the above equivalence. It follows that the top square 
satisfies the $2$-Cartesian property if and only if the outer square does.  
\end{proof}

  We next describe a model for certain homotopy colimits
  which will arise in our study of finiteness properties.  Let 
  $\ms X\to S$ be a category fibered in groupoids over $S\Sch$.  
The following lemma is well known.

\begin{lem}\label{sec:finit-prop-4}
    There is a functorial pair $(\ms X^{\split},\sigma:\ms
    X^{\split}\to\ms X)$ consisting of a split $S$-groupoid and a
    $1$-isomorphism $\sigma$ of $S$-groupoids, such that for any $1$-morphism
    $\ms X\to\ms Y$, the diagram
$$\xymatrix{\ms X^{\split}\ar[r]\ar[d] & \ms X\ar[d]\\
\ms Y^{\split}\ar[r] & \ms Y}$$
strictly commutes (in the sense that the two compositions are equal as
functors $\ms X^{\split}\to\ms Y$).
\end{lem}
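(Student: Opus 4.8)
The plan is to use the standard strictification of a category fibered in groupoids, organized so as to be manifestly functorial in $\ms X$. For an object $T$ of $S\Sch$ let $(S\Sch)/T$ denote the slice category, equipped with its forgetful functor to $S\Sch$, and let $f_!\colon (S\Sch)/T'\to(S\Sch)/T$ denote postcomposition with a morphism $f\colon T'\to T$. I would define the fiber $\ms X^{\split}_T$ to be the groupoid of \emph{Cartesian sections} of $\ms X$ over $(S\Sch)/T$, i.e.\ functors $s\colon (S\Sch)/T\to\ms X$ lying over the forgetful functor and sending every arrow to a Cartesian arrow of $\ms X$, with morphisms the natural isomorphisms lying over the identities. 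Restriction along $f\colon T'\to T$ is then precomposition $s\mapsto s\circ f_!$. Since composition of arrows in $S\Sch$ is strictly associative we have $(fg)_!=f_!\circ g_!$ on the nose, so $s\circ(fg)_!=(s\circ f_!)\circ g_!$ and the assignment $T\mapsto\ms X^{\split}_T$ is an honest (as opposed to pseudo-) functor; this is precisely what it means for $\ms X^{\split}$ to be split.

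I would then define $\sigma\colon\ms X^{\split}\to\ms X$ by evaluation at the identity, $s\mapsto s(\id_T)$ for $s\in\ms X^{\split}_T$ (note that $\id_T$ is the terminal object of $(S\Sch)/T$). Because $\ms X$ and $\ms X^{\split}$ are both fibered in groupoids, every arrow in either is automatically Cartesian, so $\sigma$ is automatically a morphism of $S$-groupoids; the content is that it is an equivalence. For this I would check that evaluation is an equivalence on each fiber: essential surjectivity holds because any $\xi\in\ms X_T$ extends to a Cartesian section by choosing Cartesian pullbacks $\phi^{*}\xi$ along each $\phi\colon U\to T$ (with $\id_T^{*}\xi=\xi$), the universal property determining the section on morphisms; full faithfulness holds because a morphism of Cartesian sections is determined by, and any arrow in $\ms X_T$ extends uniquely to, its value at $\id_T$. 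A fiberwise equivalence of $S$-groupoids is a $1$-isomorphism, so $\sigma$ is the desired $1$-isomorphism.

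Finally I would produce the functoriality and verify strictness. A $1$-morphism $\Phi\colon\ms X\to\ms Y$ induces $\Phi^{\split}\colon\ms X^{\split}\to\ms Y^{\split}$ by postcomposition, $s\mapsto\Phi\circ s$; this preserves Cartesian sections (as $\Phi$ preserves Cartesian arrows), commutes strictly with the precomposition defining restriction, and satisfies $(\Psi\circ\Phi)^{\split}=\Psi^{\split}\circ\Phi^{\split}$ and $\id^{\split}=\id$, so $\Phi\mapsto\Phi^{\split}$ is functorial. Strict commutativity of the square is then immediate: for $s\in\ms X^{\split}_T$ the composite through $\ms X$ gives $\Phi(\sigma_{\ms X}(s))=\Phi(s(\id_T))$, while the composite through $\ms Y^{\split}$ gives $\sigma_{\ms Y}(\Phi\circ s)=(\Phi\circ s)(\id_T)=\Phi(s(\id_T))$, and these are literally equal functors, with no intervening $2$-isomorphism.

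The construction itself is routine, and the only place demanding genuine care is the verification that $\sigma$ is an equivalence — specifically that evaluation at $\id_T$ is essentially surjective and fully faithful, which is where the hypothesis that $\ms X$ is fibered in groupoids is used. The split property and the strict commutativity, by contrast, are formal consequences of having built the pullbacks out of the strictly associative composition in $S\Sch$ and of having defined $\sigma$ and $\Phi^{\split}$ by evaluation and postcomposition respectively.
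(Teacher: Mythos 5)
Your proposal is correct and is essentially the paper's own construction: the fiber $\ms X^{\split}_T$ you define (Cartesian sections over the slice category $(S\Sch)/T$) is exactly the groupoid $\Hom_S(T,\ms X)$ used in the paper, with restriction by precomposition, $\sigma$ given by evaluation at the identity, and functoriality by postcomposition. The only cosmetic difference is that you verify the $2$-Yoneda-type equivalence in detail where the paper simply invokes the fact that every arrow in a category fibered in groupoids is Cartesian.
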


\begin{proof}
  Given $\ms X$, define $\ms X^{\split}$ as the groupoid whose fiber
  category over $T\to S$ is the groupoid of maps $\Hom_S(T,\ms X)$ (where
  $T$ denotes by abuse of notation the canonical discrete groupoid
  associated to $T$).  Composition of morphisms gives $\ms X^{\split}$
  the structure of a split groupoid (i.e., a functor from
  $S\Sch^{\circ}$ to the category
  of groupoids which satisfies the descent condition).  Moreover,
  evaluation on the identity yields a natural map $\ms
  X^{\split}\to\ms X$ which is a $1$-isomorphism since any arrow in a
  category fibered in groupoids is Cartesian. Functoriality is clear 
  from the construction.
\end{proof}

We recall the na\"ive notion of colimit in the
category of categories.  Suppose we have a filtering directed system of
categories and functors $(\mc C_i)$; here we assume that the functors
are strictly associative, so that we obtain a functor to the category of
categories.  Given also $x\in\Obj\mc C_i$, write
$x|_{\mc C_k}$ for the image of $x$ under the map $\mc C_i\to\mc C_k$
for any $k\geq i$.
 There is a colimit $\limr\mc C_i$ in the category
of categories defined as follows: the objects of $\limr\mc C_i$ are
given by the disjoint union $\coprod\Obj\mc C_i$, and, if $x\in\Obj\mc
C_i$ and $y\in\Obj\mc C_j$, the morphisms $\Hom_{\limr C_i}(x,y)$ are
given by $\limr_{k\geq i,j}\Hom_{\mc C_k}(x|_{\mc C_k},y|_{\mc C_k})$.

Now let $R_i$ be a filtering directed system of $S$-rings, and write
$R=\limr R_i$.  If $\ms X$ is an $S$-groupoid, we observe that  
$\ms X_{R_i}^{\split}$ forms a filtering direct system of categories.

\begin{defn}
  The \emph{colimit of $\ms X$ over $(R_i)$\/}, denoted $\limr\ms X_{R_i}$,
  is the na\"ive colimit $\limr \ms X_{R_i}^{\split}$.
\end{defn}

The definition makes it clear that there is a natural morphism $\limr
\ms X_{R_i}\to\ms X^{\split}_{R}$, which, composed with $\sigma_R$
yields a natural $1$-morphism $\limr\ms X_{R_i}\to\ms X_R$.  Thus,
given a $1$-morphism $\ms X\to\ms Y$, there thus results a diagram
$$\xymatrix{\limr\ms X_{R_i}\ar[r]\ar[d] & \ms X_R\ar[d]\\
\limr\ms Y_{R_i}\ar[r] & \ms Y_R}$$
which is strictly commutative.

\subsection{Finiteness properties}

We will show in Proposition \ref{P:recog-lfp} that the finiteness
hypothesis on the inertia stack of a quasi-algebraic stack implies
that for morphisms of quasi-algebraic stacks, being locally of finite
presentation is isonatural.  

\begin{defn}\label{D:loc-fin-pr} Given a scheme $S$, a morphism of 
$S$-groupoids 
$A \to B$ is \emph{locally of finite presentation\/} if, for all
filtering directed systems $R_i$ of $S$-rings, the diagram
$$\xymatrix{{\limr A_{R_i}} \ar[r] \ar[d] & {A_{\limr R_i}} \ar[d] \\
{\limr B_{R_i}} \ar[r] & {B_{\limr R_i}}}$$
is $2$-Cartesian.
\end{defn}

\begin{remark}\label{sec:finit-prop-2}
  Any functor has an associated (discrete) groupoid.  Applying
  Definition \ref{D:loc-fin-pr} in the case that $A$ and $B$ arise as
  the groupoids associated to functors yields the usual definition of
  local finite presentation for natural transformations between
  functors.  Furthermore, when $A$ and $B$ are the functors of points
  of $S$-schemes, this definition coincides with the standard
  definition for schemes, by Proposition 8.14.2 of \cite{ega43}. More
  generally, if $A$ and $B$ are Artin stacks, our definition agrees
  with the usual one, by Proposition 4.15(i) of \cite{l-m-b}.  We will
  also verify in Lemma \ref{lem:lfp-rep} below that our definition
  agrees with that of \cite{l-m-b} in the case of representable
  morphisms.
\end{remark}

From Lemma \ref{sec:finit-prop-1}, we formally conclude the following.

\begin{cor}\label{C:lpf-comp} Let $S$ be a scheme, and 
$${A} \overset{f}{\to} {B} \overset{g}{\to} {C}$$
morphisms of fibered $S$-categories, with $g$ locally of finite 
presentation. Then $f$ is locally of finite presentation if and only if 
$g \circ f$ is locally of finite presentation.
\end{cor}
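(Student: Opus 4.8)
The plan is to deduce the statement directly from Lemma \ref{sec:finit-prop-1}, working one filtering directed system at a time. Fix a filtering directed system $(R_i)$ of $S$-rings and set $R=\limr R_i$. By the functoriality of the na\"ive colimit $\ms X\mapsto\limr\ms X_{R_i}$ and of $\ms X\mapsto\ms X_R$ discussed after the definition of $\limr\ms X_{R_i}$, the two natural strictly commutative squares attached to $f\colon A\to B$ and to $g\colon B\to C$ stack into a single $2$-commutative diagram
\[
\xymatrix{
\limr A_{R_i}\ar[r]\ar[d] & A_R\ar[d]\\
\limr B_{R_i}\ar[r]\ar[d] & B_R\ar[d]\\
\limr C_{R_i}\ar[r] & C_R,
}
\]
in which the horizontal arrows are the comparison maps and the vertical arrows are induced by $f$ and $g$.

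The key observation is the translation of Definition \ref{D:loc-fin-pr} into the language of this diagram: the upper square is $2$-Cartesian (for this system) exactly when $f$ is locally of finite presentation along $(R_i)$, the lower square exactly when $g$ is, and the outer square exactly when $g\circ f$ is. For the outer square one must note that its horizontal edges, $\limr A_{R_i}\to A_R$ and $\limr C_{R_i}\to C_R$, are indeed the comparison maps for $A$ and $C$; this is again functoriality of the two colimit constructions, which guarantees that composing the row-maps of the stacked squares recovers the comparison maps, so that the outer square is precisely the one appearing in Definition \ref{D:loc-fin-pr} for $g\circ f$.

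Now I would invoke the hypothesis that $g$ is locally of finite presentation: this says exactly that the lower square is $2$-Cartesian for every choice of $(R_i)$. Lemma \ref{sec:finit-prop-1} then applies verbatim to each stacked diagram and yields that the upper square is $2$-Cartesian if and only if the outer square is. Quantifying over all filtering directed systems, $f$ is locally of finite presentation if and only if $g\circ f$ is, as desired. The only point requiring care---and the sole nontrivial step---is verifying that the three-row diagram is genuinely $2$-commutative in the sense required by Lemma \ref{sec:finit-prop-1}, i.e.\ that the $2$-isomorphisms produced by the functoriality of $\ms X\mapsto\limr\ms X_{R_i}$ and $\ms X\mapsto\ms X_R$ are mutually compatible; granting this, the argument is purely formal, as the word ``formally'' in the statement suggests.
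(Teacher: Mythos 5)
Your proof is correct and is exactly the argument the paper intends: the paper states only that the corollary follows formally from Lemma \ref{sec:finit-prop-1}, and your stacked three-row diagram, whose upper, lower, and outer squares are precisely the definitional squares for $f$, $g$, and $g\circ f$, is that formal deduction. The compatibility point you flag at the end is automatic, since the comparison squares are \emph{strictly} commutative (as noted in the paper after the definition of $\limr\ms X_{R_i}$), so all the mediating $2$-isomorphisms may be taken to be identities.
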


\begin{lem}\label{sec:finit-prop}
  Let $A\to B$ and $C\to B$ be morphisms of $S$-groupoids.  If $A\to
  B$ is locally of finite presentation then the pullback $A\times_B
  C\to C$ is locally of finite presentation.
\end{lem}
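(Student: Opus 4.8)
The plan is to verify Definition \ref{D:loc-fin-pr} directly. Fix a filtering directed system $(R_i)$ of $S$-rings and set $R=\limr R_i$; the goal is to show that the square
$$\xymatrix{\limr (A\times_B C)_{R_i} \ar[r] \ar[d] & (A\times_B C)_{R} \ar[d] \\ \limr C_{R_i} \ar[r] & C_{R}}$$
is $2$-Cartesian. I would realize this square as one face of a cube over the defining Cartesian square $A\times_B C\to A$, $A\times_B C\to C$, $A\to B$, $C\to B$, three of whose faces are already known to be $2$-Cartesian, and then conclude by pasting with Lemma \ref{sec:finit-prop-1}.

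The argument rests on two commutation facts. First, for any $S$-ring $T$ the fiber of a $2$-fiber product over $\Spec T$ is the $2$-fiber product of the fibers, so $(A\times_B C)_R\simeq A_R\times_{B_R}C_R$; this makes the \emph{back face} (with corners $(A\times_B C)_R,A_R,C_R,B_R$) $2$-Cartesian essentially by definition. Second, I claim that the naive colimit commutes with $2$-fiber products, i.e. $\limr(A\times_B C)_{R_i}\simeq(\limr A_{R_i})\times_{(\limr B_{R_i})}(\limr C_{R_i})$, which makes the \emph{front face} (with corners $\limr(A\times_B C)_{R_i},\limr A_{R_i},\limr C_{R_i},\limr B_{R_i}$) $2$-Cartesian. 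For the second fact I would first observe that the split construction commutes with $2$-fiber products fiberwise: the universal property of the $2$-fiber product gives $(A\times_B C)^{\split}_{R_i}\simeq A^{\split}_{R_i}\times_{B^{\split}_{R_i}}C^{\split}_{R_i}$, naturally in $i$; I would then invoke the standard fact that filtered colimits of groupoids commute with finite limits.

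Granting these, the conclusion is a double application of Lemma \ref{sec:finit-prop-1}, read in its horizontal transpose (legitimate since $2$-Cartesianness treats the two legs of a square symmetrically). By hypothesis $A\to B$ is locally of finite presentation, so the \emph{$A$-side} square with corners $\limr A_{R_i},A_R,\limr B_{R_i},B_R$ is $2$-Cartesian. Pasting the front face and the $A$-side square horizontally, both of which are $2$-Cartesian, shows that the composite rectangle
$$\xymatrix{\limr (A\times_B C)_{R_i} \ar[r] \ar[d] & A_{R} \ar[d] \\ \limr C_{R_i} \ar[r] & B_{R}}$$
is $2$-Cartesian. This same rectangle is also the horizontal composite of the target square with the back face; since the back face (the right-hand factor) is $2$-Cartesian, Lemma \ref{sec:finit-prop-1} says the target square is $2$-Cartesian if and only if the composite rectangle is, and we have just shown the latter. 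Hence the target square is $2$-Cartesian, which is what we wanted.

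The main obstacle is the second commutation fact, that the naive colimit commutes with $2$-fiber products of groupoids. Its content is genuinely that the system is filtering: an object of $(\limr A_{R_i})\times_{(\limr B_{R_i})}(\limr C_{R_i})$ is a triple $(a,c,\phi)$ in which $a$ and $c$ may be represented at different indices and the gluing isomorphism $\phi$ is defined only in the colimit, so one must use cofinality to realize all three simultaneously at a single finite stage, with the analogous argument for morphisms establishing fully faithfulness. Everything else is formal bookkeeping with $2$-commutative diagrams and the pasting lemma.
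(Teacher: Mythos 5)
Your proof is correct and is essentially the paper's own argument: the paper draws the same cube (as nested inner/outer squares in one diagram), notes that the right face is $2$-Cartesian by the hypothesis on $A\to B$, the outer face by definition of the $2$-fiber product, and the inner face because filtered colimits preserve $2$-Cartesian squares, and then concludes with the pasting lemma (Lemma \ref{sec:finit-prop-1}) exactly as you do. The only difference is one of detail: you sketch a justification (via the split construction and commutation of filtered colimits with finite limits of groupoids) for the colimit-commutation fact that the paper simply asserts.
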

\begin{proof}
  Let $R_i$ be a filtering directed system of $S$-rings.  Consider the
  diagram
$$\xymatrix{(A\times_B C)_{\limr R_i}\ar[rrr]\ar[ddd]&&& A_{\limr
    R_i}\ar[ddd]\\
  & \limr (A\times_B C)_{R_i}\ar[ul]\ar[r]\ar[d] & \limr
  A_{R_i}\ar[ur]\ar[d]&\\
  & \limr C_{R_i}\ar[r]\ar[dl] & \limr B_{R_i}\ar[dr] &\\
  C_{\limr R_i}\ar[rrr]&&& B_{\limr R_i}.\\}$$ The right ``square'' of
the diagram is $2$-Cartesian by hypothesis and the outer square is
$2$-Cartesian by definition of the fiber product.  The inner square is
$2$-Cartesian since colimits of $2$-Cartesian squares are
$2$-Cartesian.  Applying Lemma \ref{sec:finit-prop-1}, we see that the
left square is $2$-Cartesian, as desired.
\end{proof}

Definition 3.10.1 of \cite{l-m-b} defines a representable morphism of
stacks $\ms X\to\ms Y$ to be locally of finite presentation if
for all $U\to\ms Y$, the fiber product $\ms X\times_{\ms X}U\to U$ is
locally of finite presentation.  As we show in the following lemma,
this is actually no different from our definition above.

\begin{lem}\label{lem:lfp-rep} A representable morphism $f:\ms X \to \ms Y$ 
of stacks is locally of finite presentation if and only if for all
schemes $U\to\ms Y$, the algebraic space $\ms X\times_{\ms Y}U\to U$ is 
locally of finite presentation.
\end{lem}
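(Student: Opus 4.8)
The plan is to prove the two implications separately: the easy direction follows from base-change stability of Definition \ref{D:loc-fin-pr} together with Remark \ref{sec:finit-prop-2}, while the converse requires a direct verification of the $2$-Cartesian condition, reduced to a morphism of algebraic spaces over a scheme. Throughout, I will say that a morphism $W \to V$ of algebraic spaces is \emph{lfp} to mean locally of finite presentation in the sense of \cite{l-m-b}; since algebraic spaces are Artin stacks, Remark \ref{sec:finit-prop-2} identifies this notion with Definition \ref{D:loc-fin-pr} for such morphisms, a fact I use on both sides.

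For the forward implication, suppose $f$ is locally of finite presentation in the sense of Definition \ref{D:loc-fin-pr}, and let $U \to \ms Y$ be a scheme. By Lemma \ref{sec:finit-prop} the base change $\ms X \times_{\ms Y} U \to U$ is again locally of finite presentation in the sense of Definition \ref{D:loc-fin-pr}. As $f$ is representable, $\ms X \times_{\ms Y} U$ is an algebraic space, so this is a morphism of algebraic spaces, and Remark \ref{sec:finit-prop-2} then gives that it is lfp in the usual sense, as required.

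For the converse, I would assume that $\ms X \times_{\ms Y} U \to U$ is lfp for every scheme $U \to \ms Y$, fix a filtering directed system $(R_i)$ of $S$-rings with colimit $R = \limr R_i$, and verify that the square of Definition \ref{D:loc-fin-pr} for $f$ is $2$-Cartesian, i.e.\ that
$$ G : \limr \ms X_{R_i} \longrightarrow \ms X_R \times_{\ms Y_R} \limr \ms Y_{R_i} $$
is an equivalence of groupoids. The key reduction is to test this ``fiberwise'' over the object of $\limr \ms Y_{R_i}$ involved. Any object of $\limr \ms Y_{R_i}$ is represented by some $\eta_{i_0} \in \ms Y_{R_{i_0}}$; writing $\eta_i = \eta_{i_0}|_{R_i}$ and $\eta = \eta_{i_0}|_R$, representability of $f$ makes
$$ W_i \defeq \ms X \times_{\ms Y, \eta_i} \Spec R_i \to \Spec R_i, \qquad W \defeq \ms X \times_{\ms Y, \eta} \Spec R = W_{i_0} \times_{\Spec R_{i_0}} \Spec R $$
morphisms of algebraic spaces. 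Taking $U = \Spec R_{i_0}$ in the hypothesis, $W_{i_0} \to \Spec R_{i_0}$ is lfp, so the limit characterization of lfp morphisms of algebraic spaces (the extension to algebraic spaces of Proposition 8.14.2 of \cite{ega43}, equivalently Remark \ref{sec:finit-prop-2} applied to $W_{i_0}\to \Spec R_{i_0}$) yields a bijection $\limr_{i \geq i_0} W_i(R_i) \simto W(R)$, where $W_i(R_i) = \Hom_{\Spec R_i}(\Spec R_i, W_i)$ and $W_i(R_i) = \Hom_{\Spec R_{i_0}}(\Spec R_i, W_{i_0})$ by base change.

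The hard part will be translating this bijection of sets into the equivalence of the groupoids defining $G$, and this is where I expect the main obstacle to lie. The point to exploit is that, $f$ being representable, each pulled-back fiber category $(\ms X \times_{\ms Y, \eta_i} \Spec R_i)_{R_i}$ is the discrete groupoid on the set $W_i(R_i)$: an object over $R_i$ is a pair consisting of $\xi_i \in \ms X_{R_i}$ and an isomorphism $f(\xi_i) \simto \eta_i$, and a morphism covering the identity of $\eta_i$ is unique when it exists. Consequently an object of the target groupoid lying over $\eta$ is exactly a point of $W(R)$, an object of $\limr \ms X_{R_i}$ lying over $\eta$ is a point of some $W_i(R_i)$, and Hom-sets in both groupoids are detected by equality of the corresponding points; essential surjectivity and full faithfulness of $G$ then follow from the bijection $\limr_i W_i(R_i) \simto W(R)$. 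The delicate bookkeeping is the part I would be most careful with: one must check that the groupoid-level $2$-Cartesian property for the stacky $f$ genuinely collapses, via representability, to the set-level limit statement for the $W_i$, including the reduction to a single representative $\eta$ of an object of $\limr \ms Y_{R_i}$ and to morphisms covering its identity (any nontrivial morphism of objects in $\limr \ms Y_{R_i}$ being absorbed by this identification).
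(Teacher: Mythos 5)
Your proposal is correct and takes essentially the same route as the paper's own proof: the forward direction via Lemma \ref{sec:finit-prop} and Remark \ref{sec:finit-prop-2}, and the converse by representing the relevant object of $\limr \ms Y_{R_i}$ at a finite level $i_0$, taking $U=\Spec R_{i_0}$ with the corresponding map $U\to\ms Y$, and transferring the limit characterization of the locally finitely presented algebraic space $\ms X\times_{\ms Y}U\to U$ back to the original square. The remaining differences are cosmetic: the paper re-indexes the system using Proposition I.8.1.6 of \cite{sga41} to obtain an initial element where you pass to the cofinal subsystem $\{i\geq i_0\}$, and it keeps the verification at the level of $2$-Cartesian diagrams of groupoids (using compatibility of colimits with fibered products) where you collapse to a set-level bijection via the fact that fiber categories of representable fiber products are setoids.
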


\begin{proof} If $f$ is locally of finite presentation and 
  $U\to\ms Y$ is an object, then the morphism $\ms X\times_{\ms Y}U\to
  U$ is locally of finite presentation by Lemma \ref{sec:finit-prop}
  and Remark \ref{sec:finit-prop-2}.

Now assume that $f:\ms X\times_{\ms Y}U\to U$ is locally of finite
presentation for all objects $U\to\ms Y$, and let $R_i$ be a filtering
directed system of $S$-rings.  By 
Proposition I.8.1.6 of \cite{sga41}, there is a cofinal subsystem
whose indexing category is a partially ordered set with an initial
element.  This yields $R_0$ in the system which maps uniquely to every $R_i$ 
so that the system becomes a system of $R_0$-algebras.

Let $\alpha$ be an object of $\ms X_{\limr R_i}$ with image $\beta$ in
$\ms Y_{\limr R_i}$.  Moreover, suppose $b\in\ms Y_{R_1}$ is an object
and $\phi:b\to\beta$ is an isomorphism in $\ms Y_{\limr R_i}$.  We may
clearly suppose that $R_1=R_0$.  We wish to show that the triple
$(\alpha,b,\phi)$ arises from an object $\gamma_i$ of $\ms X_{R_i}$ for
some $i$, and that any two such objects $\gamma_i$ and $\gamma_j$
become uniquely isomorphic in $\ms X_{R_k}$ for some $k$ larger than
$i$ and $j$.  

Let $U=\Spec R_0$.  The system $R_i$ becomes a system of $U$-rings,
and the relative finite presentation condition tells us that the
diagram
\begin{equation*}
  \label{eq:diag}
  \xymatrix{\limr (\ms X\times_{\ms Y} U)_{R_i}\ar[r]\ar[d]& (\ms
  X\times_{\ms Y} U)_{\limr R_i}\ar[d]\\
\limr U_{R_i}\ar[r] & U_{\limr R_i}}
\end{equation*}
is $2$-Cartesian.  Using the compatibility of colimits with fibered
product, we can rewrite the diagram as 
\begin{equation}
  \label{eq:diag2}
\xymatrix{\limr \ms X_{R_i}\times_{\limr \ms Y_{R_i}}\limr U_{R_i}\ar[r]\ar[d]
& \ms X_{\limr R_i}\times_{\ms Y_{\limr R_i}}U_{\limr R_i}\ar[d]\\
\limr U_{R_i}\ar[r] & U_{\limr R_i}.}  
\end{equation}
The system yields a canonical element $a\in
U_{\limr R_i}$.  The triple $(\alpha,b,\phi)$ yields an object of the
upper right category of diagram \eqref{eq:diag2} mapping to $a$.  On
the other hand, $a$ is the image of any of the structure morphisms
occuring in $\limr U_{R_i}$.  Since \eqref{eq:diag2} is $2$-Cartesian,
we see that there is a $\gamma_i$ mapping to $(\alpha,b,\phi)$, and
that any two $\gamma_i$ and $\gamma_j$ become uniquely isomorphic in
$\ms X_{R_k}$ for large enough $k$, as required.  
\end{proof}

\begin{notn} Given a (small) category $C$, we denote by $F_C$ the set of 
isomorphism classes of objects in $C$. We also denote by $I_C$ the
category whose objects are pairs $(\eta,\varphi)$ with $\eta \in C$,
$\varphi \in \Aut(C)$, and whose morphisms are morphisms $f:\eta \to \eta'$ 
in $C$ with $f \circ \varphi=\varphi' \circ f$.
\end{notn}

\begin{lem}\label{sec:finit-prop-3} 
Given a directed system of categories $A_i$, the
natural map $\limr F_{A_i} \to F_{\limr A_i}$ is a bijection.
The natural morphism $\limr I_{A_i} \to I_{\limr A_i}$ is an equivalence.
\end{lem}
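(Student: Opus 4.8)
The plan is to reduce both assertions to the single elementary fact that filtered colimits of sets commute with finite limits, treating isomorphism classes and inertia in turn. I keep in mind throughout that in the na\"ive colimit $\limr A_i$ the objects form $\coprod_i \Obj A_i$, that $\Hom_{\limr A_i}(x,y) = \limr_{k \geq i,j}\Hom_{A_k}(x|_{A_k}, y|_{A_k})$ for $x \in A_i$ and $y \in A_j$, and that consequently each object $x \in A_i$ is canonically isomorphic in $\limr A_i$ to each image $x|_{A_k}$.

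For the bijection $\limr F_{A_i} \to F_{\limr A_i}$, surjectivity is immediate since every object of $\limr A_i$ already lies in some $A_i$. For injectivity I would take $x \in A_i$ and $y \in A_j$ that become isomorphic in $\limr A_i$ via mutually inverse arrows $f,g$; these are represented by morphisms $f_k, g_k$ in some $A_k$, and the identities $gf = \id_x$, $fg = \id_y$, which hold in the colimit, already hold in some $A_m$. Then $x|_{A_m}$ and $y|_{A_m}$ are isomorphic in $A_m$, so the classes of $x$ and $y$ agree in $\limr F_{A_i}$.

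For the equivalence $\Phi \colon \limr I_{A_i} \to I_{\limr A_i}$ I would verify essential surjectivity and full faithfulness. Given $(\eta,\varphi) \in I_{\limr A_i}$, the object $\eta$ lies in some $A_i$ and $\varphi$ is represented by a morphism $\varphi_k$ of $\eta|_{A_k}$ in some $A_k$; since $\varphi$ is invertible, $\varphi_m$ is an automorphism of $\eta|_{A_m}$ in some $A_m$, and the canonical isomorphism $\eta \to \eta|_{A_m}$ carries $\varphi$ to $\varphi_m$, so $(\eta|_{A_m},\varphi_m) \in I_{A_m}$ maps under $\Phi$ to an object isomorphic to $(\eta,\varphi)$. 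For full faithfulness, fix $(\eta,\varphi) \in I_{A_i}$ and $(\eta',\varphi') \in I_{A_j}$ and observe that for each $k \geq i,j$ the set $\Hom_{I_{A_k}}((\eta,\varphi)|_{A_k}, (\eta',\varphi')|_{A_k})$ is the equalizer of the two maps $\Hom_{A_k}(\eta|_{A_k},\eta'|_{A_k}) \rightrightarrows \Hom_{A_k}(\eta|_{A_k},\eta'|_{A_k})$ sending $f \mapsto f\varphi|_{A_k}$ and $f \mapsto \varphi'|_{A_k}f$. Passing to the filtered colimit over $k$ commutes with this equalizer, and the resulting equalizer on $\limr_k \Hom_{A_k}(\eta|_{A_k},\eta'|_{A_k}) = \Hom_{\limr A_i}(\eta,\eta')$ is exactly the set of $f$ with $f\varphi = \varphi'f$, namely $\Hom_{I_{\limr A_i}}((\eta,\varphi),(\eta',\varphi'))$; unwinding the definitions shows this identification is the map induced by $\Phi$.

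The step I expect to require the most care is the bookkeeping that makes the equalizer description line up with the na\"ive colimit of the categories $I_{A_i}$: one must check that the transition functors $I_{A_k} \to I_{A_l}$ are induced by those of the $A_i$ (so that the colimit of Hom-sets computing $\limr I_{A_i}$ really is the colimit of the equalizers above), and that the canonical comparison isomorphisms intertwine the relevant automorphisms. Each of these is formal once the definitions are unwound; the genuine mathematical content is just the exactness of filtered colimits, which also underlies the ``holds eventually'' arguments for invertibility used in both parts.
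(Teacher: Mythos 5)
Your proof is correct and takes exactly the route the paper intends: the paper's own proof is a one-line assertion that both claims are routine consequences of the definition of the na\"ive colimit of categories, with no details supplied. Your writeup --- the ``holds eventually'' arguments for isomorphisms and invertibility, plus the commutation of filtered colimits with equalizers to handle the hom-sets of the inertia categories $I_{A_i}$ --- is precisely the standard verification being left to the reader, including the bookkeeping point that the transition functors on the $I_{A_i}$ are those induced from the $A_i$.
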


\begin{proof} The first assertion follows easily from the definition of a 
colimit of categories. The second is slightly more involved, but still routine.
\end{proof}

\begin{prop}\label{P:func-lpf} For any quasi-algebraic $S$-stack $\ms X$, 
the natural map $\ms X \to F_{\ms X}$ is locally of finite presentation.
\end{prop}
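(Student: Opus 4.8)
The plan is to unwind Definition \ref{D:loc-fin-pr} directly, taking $A = \ms X$ and $B = F_{\ms X}$. Fix a filtering directed system of $S$-rings $(R_i)$ with colimit $R = \limr R_i$, and write $\mc A = \limr \ms X_{R_i}$ for the na\"ive colimit and $\mc B = \ms X_R$. Since $F_{\ms X}$ is a discrete groupoid, its split version over each $R_i$ is just the set $F_{\ms X}(R_i) = F_{\ms X_{R_i}}$, and its na\"ive colimit is the set-theoretic colimit; by the first assertion of Lemma \ref{sec:finit-prop-3} (applied to $A_i = \ms X_{R_i}^{\split}$, whose isomorphism classes agree with those of $\ms X_{R_i}$), the bottom-left corner $\limr (F_{\ms X})_{R_i}$ is canonically the set $F_{\mc A}$ of isomorphism classes of $\mc A$, while the bottom-right corner is $F_{\mc B}$. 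Under these identifications the two vertical arrows become the canonical ``isomorphism class'' maps $\mc A \to F_{\mc A}$ and $\mc B \to F_{\mc B}$. Hence the square to be shown $2$-Cartesian is the tautological square attached to the functor $\Phi\colon \mc A \to \mc B$ and its effect $F_\Phi$ on isomorphism classes, and by Example \ref{ex:example} this is equivalent to asking that the canonical functor $\mc A \to F_{\mc A}\times_{F_{\mc B}} \mc B$ be an equivalence of groupoids.

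First I would observe that this functor is automatically essentially surjective: an object of the target is a pair $(s,b)$ with $s \in F_{\mc A}$, $b \in \mc B$, and $F_\Phi(s) = [b]$; choosing any $a \in \mc A$ with $[a] = s$, the relation $F_\Phi(s) = [b]$ forces $\Phi(a) \cong b$, so $(s,b)$ is isomorphic to the image $([a],\Phi a)$ of $a$. Thus the only real content is full faithfulness, and since both categories are groupoids with matching isomorphism-class components, this reduces (after transporting along a chosen isomorphism) to the statement that for every object $a$ of $\mc A$ the natural homomorphism $\aut_{\mc A}(a) \to \aut_{\mc B}(\Phi a)$ is a bijection. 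I would stress that no injectivity or surjectivity of $F_{\mc A} \to F_{\mc B}$ on isomorphism classes is required; this is consistent with the heuristic that $F_{\ms X}$ has already absorbed all of the coarse moduli information, so the map $\ms X \to F_{\ms X}$ can only detect automorphisms.

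Next I would make the two automorphism groups explicit. Every object $a$ of the na\"ive colimit $\mc A$ arises from a finite stage, i.e.\ from an object of $\ms X_{R_{i_0}}$; restricting to the cofinal subsystem $\{R_k : k \geq i_0\}$ (which does not change the colimit), the definition of morphisms in a na\"ive colimit gives $\aut_{\mc A}(a) = \limr_{k \geq i_0} \aut_{\ms X_{R_k}}(a|_{R_k})$, whereas $\aut_{\mc B}(\Phi a) = \aut_{\ms X_R}(a|_R)$. Let $G$ be the automorphism group of $a$, realized as the pullback $I_{\ms X}\times_{\ms X}\Spec R_{i_0}$ of the inertia stack along $a\colon \Spec R_{i_0}\to\ms X$. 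By condition (1) of Definition \ref{def:quasi-algebraic} the inertia, being a base change of the diagonal, is representable by algebraic spaces, so $G$ is an algebraic space over $\Spec R_{i_0}$; by condition (2) it is moreover locally of finite presentation there. By construction $\aut_{\ms X_{R_k}}(a|_{R_k}) = G(R_k)$ and $\aut_{\ms X_R}(a|_R) = G(R)$, so the bijection we need is exactly $\limr_k G(R_k) \risom G(\limr_k R_k)$, which holds precisely because $G$ is locally of finite presentation (Remark \ref{sec:finit-prop-2}, i.e.\ Proposition 8.14.2 of \cite{ega43}). This completes the verification.

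The main obstacle is conceptual rather than computational: correctly recognizing that the $2$-Cartesian condition collapses to the single requirement that automorphism groups commute with filtered colimits, and then matching the relevant colimit of automorphism groups with the $R$-point functor of the honest automorphism algebraic space $G$. The two quasi-algebraicity hypotheses enter exactly here and are both genuinely used, in agreement with Remark \ref{rem:quasi-algebraic}: representability of the diagonal (condition (1)) guarantees that $G$ is an algebraic space, and finite presentation of the inertia (condition (2)) guarantees that $G$ is locally of finite presentation.
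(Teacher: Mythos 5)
Your argument is correct, and its skeleton coincides with the paper's: the same reduction via the first part of Lemma \ref{sec:finit-prop-3} to showing that $\limr \ms X_{R_i} \to \ms X_{\limr R_i} \times_{F_{\ms X}(\limr R_i)} F_{\limr \ms X_{R_i}}$ is an equivalence, the same easy essential surjectivity, and the same collapse of full faithfulness to bijectivity on automorphism groups. Where you genuinely diverge is in how that last bijection is established. The paper never invokes representability of the diagonal at this point: it reads condition (2) of Definition \ref{def:quasi-algebraic} purely through Definition \ref{D:loc-fin-pr}, applies the second part of Lemma \ref{sec:finit-prop-3} (the statement about $I_C$ --- this is exactly what that second part exists for) to the resulting $2$-Cartesian inertia square, and extracts surjectivity of $\Aut(\mu)\to\Aut(\eta)$ from essential surjectivity and injectivity from full faithfulness. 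You instead use condition (1) to realize the automorphism functor as an honest algebraic space $G=\ms I(\ms X)\times_{\ms X}\Spec R_{i_0}$, deduce from condition (2) together with Lemma \ref{sec:finit-prop} (equivalently, Lemma \ref{lem:lfp-rep}) that $G\to\Spec R_{i_0}$ is locally of finite presentation in the classical sense, and then quote the limit characterization of such morphisms. Your route is more geometric and offloads the crux to standard theory, at the cost of two citations that should be repaired: EGA IV 8.14.2 is the scheme case, whereas $G$ is a priori only an algebraic space, so you need the statement for Artin stacks (Proposition 4.15(i) of \cite{l-m-b}, also cited in Remark \ref{sec:finit-prop-2}); and the ``restrict to the cofinal subsystem'' reindexing should formally invoke Proposition I.8.1.6 of \cite{sga41}, exactly as the paper does in the proof of Lemma \ref{lem:lfp-rep}. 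Finally, your closing claim that conditions (1) and (2) are both ``genuinely used'' describes your proof rather than the result: the paper's own proof of this proposition consumes only condition (2), which is what Remark \ref{rem:quasi-algebraic} means in saying that condition (2) arises at a single point of the argument; condition (1) is needed elsewhere, but not to make this step work.
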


\begin{proof} 
  We will show that this follows from the hypothesis that the inertia
  stack of $\ms X$ is finitely presented over $\ms X$.  We need to see
  that for all filtering directed systems $R_i$ of $S$-rings, the
  diagram
$$\xymatrix{{\limr \ms X_{R_i}} \ar[r] \ar[d] & {\ms X_{\limr R_i}} \ar[d] \\
  {\limr F_{\ms X}(R_i)} \ar[r] & {F_{\ms X}(\limr R_i)}}$$ is
$2$-Cartesian. By the first part of Lemma \ref{sec:finit-prop-3}, we
may replace $\limr F_{\ms X}(R_i)$ by $F_{\limr \ms X_{R_i}}$.  We
then need to check that the morphism 
$$n:\limr \ms X_{R_i}\to \ms X_{\limr R_i}
\times_{F_{\ms X}(\limr R_i)} F_{\limr \ms X_{R_i}}$$
 is essentially surjective and fully faithful.

An object of $\ms X_{\limr R_i} \times_{F_{\ms X}(\limr R_i)} 
F_{\limr \ms X_{R_i}}$ is a pair $(\eta,\bar{\mu})$ with 
$\eta \in \ms X_{\limr R_i}$ and $\bar{\mu} \in F_{\limr \ms X_{R_i}}$, 
both mapping to the same element in $F_{\ms X}(\limr R_i)$. But if
$\mu \in \limr \ms X_{R_i}$ is an object representing $\bar{\mu}$, we check
easily that that $\mu$ maps to an object isomorphic to $(\eta,\bar{\mu})$,
proving essential surjectivity.

We claim that the full faithfulness of $n$ follows from the hypothesis
that the inertia stack of $\ms X$ is locally of finite presentation
over $\ms X$. We note that a morphism $(\eta,\bar{\mu}) \to
(\eta',\bar{\mu}')$ in $\ms X_{\limr R_i} \times_{F_{\ms X}(\limr
  R_i)} F_{\limr \ms X_{R_i}}$ is simply a morphism $\eta \to \eta'$,
together with the requirement that $\bar{\mu}=\bar{\mu}'$. If
$(\eta,\bar{\mu})$ and $(\eta',\bar{\mu}')$ are the images of
$\mu,\mu' \in \limr \ms X_{R_i}$, we therefore need to check that
under the hypothesis that $\mu,\mu'$ are isomorphic, the
(iso)morphisms $\mu \to \mu'$ are in bijection with (iso)morphisms
$\eta \to \eta'$. Fixing a choice of isomorphism $\mu \to \mu'$, it is
therefore enough to see that $\Aut(\mu)$ is in bijection with
$\Aut(\eta)$.

This last assertion is precisely what is given by the hypothesis that
the inertia stack of $\ms X$ is locally of finite presentation over
$\ms X$: in the $2$-Cartesian diagram
$$\xymatrix{{\limr \ms I(\ms X)_{R_i}} \ar[r] \ar[d] & {\ms I(\ms X)_{\limr R_i}} 
\ar[d] \\
{\limr \ms X_{R_i}} \ar[r] & {\ms X_{\limr R_i}},}$$ 
after applying the second part of Lemma \ref{sec:finit-prop-3} to replace
$\limr \ms I(\ms X)_{R_i}$ by $I_{\limr \ms X_{R_i}}$,
the essential surjectivity of the map to the $2$-fiber product implies
that $\Aut(\mu) \to \Aut(\eta)$ is surjective, while the full faithfulness
implies injectivity.
\end{proof}

\begin{remark} If we had imposed the stronger condition that the diagonal
of $\ms X$ is locally of finite presentation, we would have that the
map
$\limr \ms X_{R_i} \to \ms X_{\limr R_i}$ is fully faithful. This is not
clearly true under our weaker hypothesis.
\end{remark}

\begin{prop}\label{P:recog-lfp} For morphisms of quasi-algebraic stacks, 
the property of being locally of finite presentation is isonatural. 
Specifically, a morphism $f:\ms X \to \ms Y$ of quasi-algebraic stacks 
is locally of finite presentation if and only if the induced morphism
$F_{\ms X} \to F_{\ms Y}$ is locally of finite presentation.
\end{prop}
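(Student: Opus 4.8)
The plan is to play off the canonical square
$$\xymatrix{\ms X\ar[r]^f\ar[d]_{p} & \ms Y\ar[d]^{q}\\ F_{\ms X}\ar[r]_{F_f} & F_{\ms Y},}$$
whose vertical maps $p,q$ are the projections to the functors of isomorphism classes. By Proposition \ref{P:func-lpf} both $p$ and $q$ are locally of finite presentation, and by construction $q\circ f=F_f\circ p$. Thus the two competing notions sit inside a diagram in which the vertical maps are already known to be lfp, and almost everything is a two-out-of-three argument with Corollary \ref{C:lpf-comp}.

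The implication ``$F_f$ lfp $\Rightarrow$ $f$ lfp'' I would dispatch formally. Applying Corollary \ref{C:lpf-comp} to $\ms X\xrightarrow{p}F_{\ms X}\xrightarrow{F_f}F_{\ms Y}$ (second map lfp) shows $F_f\circ p$ is lfp, since $p$ is. Rewriting $F_f\circ p=q\circ f$ and applying Corollary \ref{C:lpf-comp} to $\ms X\xrightarrow{f}\ms Y\xrightarrow{q}F_{\ms Y}$ (now $q$ is the lfp second map), I may cancel $q$ and conclude $f$ is lfp.

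The reverse implication ``$f$ lfp $\Rightarrow$ $F_f$ lfp'' is the substantive one, and here Corollary \ref{C:lpf-comp} does not suffice: recovering $F_f$ from $F_f\circ p$ and $p$ is a \emph{right} cancellation, legitimate only because $p$ is suitably surjective. I would instead verify the defining $2$-Cartesian condition for $F_f$ directly. Fixing a filtering system $R_i$ and using Lemma \ref{sec:finit-prop-3} to identify $\limr F_{\ms X}(R_i)\cong F_{\limr\ms X_{R_i}}$ (and similarly for $\ms Y$), the square to be checked becomes exactly the square obtained by applying $F$ to the $2$-Cartesian square
$$\xymatrix{\limr\ms X_{R_i}\ar[r]\ar[d] & \ms X_{\limr R_i}\ar[d]\\ \limr\ms Y_{R_i}\ar[r] & \ms Y_{\limr R_i}}$$
furnished by the hypothesis that $f$ is lfp. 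So everything reduces to the following: for a $2$-Cartesian square of groupoids, with corners I will call $P,P',Q,Q'$ (so $P\simeq P'\times_{Q'}Q$), the comparison map $F_P\to F_{P'}\times_{F_{Q'}}F_Q$ is a bijection, \emph{provided} the map $Q\to Q'$ induces surjections on automorphism groups. Surjectivity of the comparison is formal: a compatible pair of classes, together with a chosen isomorphism between the images of representatives in $Q'$, assembles into an object of $P'\times_{Q'}Q$. For injectivity, after replacing objects by isomorphic ones, two objects of $P'\times_{Q'}Q$ with equal image have the same underlying pair and differ only by their gluing isomorphism; such a pair of gluings gives isomorphic objects precisely when their ratio, an automorphism in $Q'$, lifts along the map on automorphism groups induced by $Q\to Q'$. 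The surjectivity of that automorphism map is exactly the essential-surjectivity content of Proposition \ref{P:func-lpf} applied to $q:\ms Y\to F_{\ms Y}$ (here $Q\to Q'$ is the colimit-to-value map $\limr\ms Y_{R_i}\to\ms Y_{\limr R_i}$), which is in turn where the finite presentation of the inertia stack of $\ms Y$ is consumed.

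I expect the main obstacle to be precisely this injectivity step in the reverse direction — equivalently, supplying the missing right-cancellation along the essentially surjective map $p$. The genuine content is that $F$ does not commute with $2$-fibre products of groupoids in general, so one must invoke the inertia-finiteness of the \emph{target} $\ms Y$ to rigidify the gluing isomorphisms up to automorphism; notably, no hypothesis on $\ms X$ is needed for this cancellation. The remaining bookkeeping (passing to a single fibre of the comparison map, replacing objects by isomorphic ones, and matching morphisms in the fibre product with pairs of automorphisms) is routine.
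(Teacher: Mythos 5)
Your proposal is correct and follows essentially the same route as the paper: the same double application of Corollary \ref{C:lpf-comp} together with Proposition \ref{P:func-lpf} for the direction ``$F_f$ lfp $\Rightarrow f$ lfp,'' and for the converse the same reduction via Lemma \ref{sec:finit-prop-3} to showing that the $2$-Cartesian square of groupoids survives passage to isomorphism classes once automorphisms lift along $\limr\ms Y_{R_i}\to\ms Y_{\limr R_i}$ (the paper invokes the inertia-finiteness of $\ms Y$ directly at this point, which is exactly the content your appeal to Proposition \ref{P:func-lpf} repackages). The one imprecision is that the automorphism-surjectivity you need comes from the \emph{full-faithfulness} half of the equivalence in Proposition \ref{P:func-lpf} (which identifies $\Aut(\mu)$ with $\Aut(\eta)$), not from its essential surjectivity; this misattribution is harmless to the argument.
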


\begin{proof} First suppose that $F_{\ms X} \to F_{\ms Y}$ is locally
of finite presentation. By Proposition \ref{P:func-lpf}, 
$\ms X \to F_{\ms X}$ and $\ms Y \to F_{\ms Y}$ are both locally
of finite presentation, so applying Corollary \ref{C:lpf-comp} twice
we see that $\ms X \to F_{\ms Y}$ and thus $\ms X \to \ms Y$ are
locally of finite presentation.

Conversely, suppose that $f$ is locally of finite presentation. We thus
have that
$$\xymatrix{{\limr \ms X_{R_i}} \ar[r] \ar[d] & {\ms X_{\limr R_i}} \ar[d] \\
{\limr \ms Y_{R_i}} \ar[r] & {\ms Y_{\limr R_i}}}$$
is 2-Cartesian and we wish to see that
$$\xymatrix{{\limr F_{\ms X}(R_i)} \ar[r] \ar[d] & {F_{\ms X}(\limr R_i)}
\ar[d] \\
  {\limr F_{\ms Y}(R_i)} \ar[r] & {F_{\ms Y}(\limr R_i)}}$$ 
is (2-)Cartesian. Moreover, by Lemma \ref{sec:finit-prop-3}, the
latter diagram is (2-)Cartesian if and only if 
$$\xymatrix{{F_{\limr \ms X_{R_i}}} \ar[r] \ar[d] & {F_{\ms X}(\limr R_i)}
\ar[d] \\
  {F_{\limr \ms Y_{R_i}}} \ar[r] & {F_{\ms Y}(\limr R_i)}}$$ 
is (2-)Cartesian; that is, if and only if the first diagram remains 
(2-)Cartesian after passing to isomorphism classes. Although this
is not true for arbitrary groupoids, it will be true under the
quasi-algebraicity hypothesis. The only obstruction that could arise
would be automorphisms of objects in $\ms Y_{\limr R_i}$ which do not
lift to automorphisms of either $\limr \ms Y_{R_i}$ or $\ms X_{\limr R_i}$.
But we see that the hypothesis that the inertia stack of $\ms Y$ is 
locally of finite presentation over $\ms Y$ tells us precisely that
every automorphism of an object over $\ms Y_{\limr R_i}$ lifts to an
automorphism over $\limr \ms Y_{R_i}$, giving us the desired result.
\end{proof}

\subsection{Formal criteria}\label{sec:formal-crit}

We next show that under the mild deformation-theoretic hypotheses 
of quasi-algebraic stacks, the formal criteria for 
smoothness, unramifiedness, and \'etaleness can be rephrased
functorially. 
It then follows that we can test for smooth-local properties of 
morphisms and of Artin stacks. We begin with a general remark on
the sort of $2$-commutative diagrams arising in formal and valuative
criteria.

\begin{remark}\label{rem:2-diag-objs} For a stack $\ms Y$ and a scheme $T'$,
a morphism $T' \to \ms Y$ is equivalent to an object $\eta \in \ms Y_{T'}$
together with a choice of pullback $i^*(\eta) \in \ms Y_T$ for all
scheme morphisms $i: T \to T'$. Fix a morphism of stacks 
$f: \ms X \to \ms Y$, and a morphism $i:T \to T'$ of schemes. 
A $2$-commutative diagram 
$$\xymatrix{{T} \ar[r] \ar^{i}[d] & {\ms X}\ar^{f}[d] \\
{T'} \ar[r] & {\ms Y}}$$
yields objects $\mu \in \ms X_T$ and $\eta \in \ms Y_{T'}$, a choice
of pullback $i^* \eta \in \ms Y_T$, and an isomorphism 
$\gamma:i^* \eta \risom f(\mu)$. Conversely, given $(\mu,\eta,\gamma)$,
and choices of arbitrary pullbacks for $\mu$ and for $\eta$ yielding
morphisms $T \to \ms X$ and $T' \to \ms Y$, we find that $\gamma$ is
precisely the data of a $2$-isomorphism determining a $2$-commutative
diagram.

Next, a morphism $j:T' \to \ms X$
yields an object $\mu' \in \ms X_{T'}$ and a choice of pullback
$i^* \mu' \in \ms X_T$, and gives rise to a $2$-commutative diagram
$$\xymatrix{{T} \ar[r] \ar^{i}[d] & {\ms X}\ar^{f}[d] \\
{T'} \ar[r]\ar^{j}[ur] & {\ms Y}}$$
if and only if there exist isomorphisms $\alpha:i^* \mu' \risom \mu$ 
and $\beta:\eta \to f(\mu')$ such that $\gamma=i^*(\beta) \circ f(\alpha)$.
Note here that the last condition makes sense because $j$ also induces a 
map $T' \overset{f \circ j}{\to} \ms Y$, so we obtain also a choice of 
$i^* f(\mu')$, and see that we have $i^* f(\mu')=f(i^* \mu')$.

We therefore see that with $f$ and $i$ fixed, the statement that for every
$2$-commutative diagram 
$$\xymatrix{{T} \ar[r] \ar^{i}[d] & {\ms X}\ar^{f}[d] \\
{T'} \ar[r] & {\ms Y}}$$
there exists a morphism $T' \to \ms X$ and isomorphisms making the
new diagram $2$-commutative is equivalent to the statement that all
triples $(\mu,\eta,\gamma)$ as above, there exist $\mu', \alpha, \beta$
as above with $\gamma=i^*(\beta) \circ f(\alpha)$. In addition, if
$\ms X$ has no non-trivial automorphisms, we see similarly that the
uniqueness of a morphism $T' \to \ms X$ and isomorphisms making the
new diagram $2$-commutative is equivalent to uniqueness of the
$\mu'$ such that there exists $\alpha,\beta$ with
$\gamma=i^*(\beta) \circ f(\alpha)$.
\end{remark}

\begin{defn}\label{def:formal-crits}
  A morphism $\ms X\to\ms Y$ is \emph{formally smooth\/} (resp.\
  \emph{formally \'etale, formally unramified\/}) if for every
  nilpotent closed immersion $U_0\inj U$, with $U$ the spectrum of a
  strictly Henselian local ring, every $2$-commutative diagram  
$$\xymatrix{U_0\ar[r]\ar[d] & \ms X\ar[d]\\
U\ar[r] & \ms Y}$$
extends to a (resp.\ to exactly one, resp.\ to at most one) $2$-commutative
diagram
$$\xymatrix{U_0\ar[r]\ar[d] & \ms X\ar[d]\\
U\ar[r]\ar@{-->}[ur] & \ms Y.}$$
\end{defn}

The requirement that the diagrams be $2$-commutative makes it
transparent that these conditions are compatible with base change in
$\ms Y$.  Thus, if $\ms X\to\ms Y$ is representable, this gives a
variant of the usual formal criterion for morphisms of algebraic
spaces, with the only difference being the restriction to the strictly
Henselian local case.  As in the proof of Proposition 4.15(ii) of \cite{l-m-b}, when
$\ms X\to\ms Y$ is locally of finite presentation, this variant
suffices to establish that $\ms X\to\ms Y$ is smooth (resp.\ \'etale,
resp.\ unramified).  (We remind the reader that e.g., smoothness of a
morphism $f$ is by definition equivalent to the formal criterion of
smoothness for $f$ combined with the local finite presentation of $f$,
as in Definition 17.3.1 of \cite{ega44}.  This holds true for
algebraic spaces and Artin stacks; the usual local descriptions of
such morphisms then follow from the compatibility of the conditions
with various kinds of diagrams and the classical results for schemes.)

\begin{prop}\label{P:formal-crit-affine} Let $X$ be an affine scheme, and 
$\ms Y$ a quasi-algebraic
stack, and suppose we are given a morphism $f:X \to \ms Y$, locally
of finite presentation. Then $f$ is smooth (respectively, unramified, 
\'etale)
if and only if for every nilpotent closed immersion $U_0 \to U$ of strictly
Henselian local affine schemes, every morphism $U_0 \to X$, and every object 
$\eta \in F_{\ms Y}(U \coprod_{U_0} X)$ pulling back to 
$f \in F_{\ms Y}(X)$, there exists (respectively, there is at most one,
there exists a unique) $\mu:U \to X$ such that 
$(\id \coprod \mu)^* \eta=\delta^* p^{1*} \eta$ in 
the diagram

$$\xymatrix{ F_{\ms Y}(U\coprod_{U_0} X)\ar[d]_-{p^{1\ast}}\ar[dr]^-{(\id\coprod\mu)^{\ast}} & \\
F_{\ms Y}(U)\ar[r]_-{\delta^{\ast}} & F_{\ms Y}(U\coprod_{U_0} U).}$$

Here $\delta: U \coprod_{U_0} U \to U$ is the codiagonal, and 
$p^1:U \to U \coprod_{U_0} X$ is the first inclusion.
\end{prop}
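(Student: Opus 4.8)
The plan is to detect smoothness (respectively unramifiedness, étaleness) of $f$ through the formal lifting criterion of Definition \ref{def:formal-crits}: since $f$ is assumed locally of finite presentation, the discussion following that definition shows that $f$ is smooth (resp.\ unramified, étale) if and only if it is formally smooth (resp.\ formally unramified, formally étale). Thus $f$ has the property in question exactly when every $2$-commutative square whose left vertical is a strictly Henselian local affine nilpotent thickening $U_0 \inj U$ admits a (resp.\ at most one, a unique) diagonal lift $U \to \ms X=X$. The entire content of the proposition is then to rewrite this purely $2$-categorical lifting condition as an assertion about the set-valued functor $F_{\ms Y}$.

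First I would unwind a single such square. By Remark \ref{rem:2-diag-objs}, a $2$-commutative square as in Definition \ref{def:formal-crits} is the data of a morphism $m:U_0 \to X$, an object $\tilde\eta \in \ms Y_U$, and an isomorphism $\gamma: i^*\tilde\eta \risom m^* f$, while a diagonal filling is a morphism $\lambda:U \to X$ with $\lambda\circ i=m$ together with an isomorphism $\beta:\tilde\eta\risom f\lambda$ satisfying $i^*\beta=\gamma$. Here I would stress that because $X$ is a \emph{scheme}, its objects have no nontrivial automorphisms, so the auxiliary $2$-isomorphism $\alpha$ appearing in Remark \ref{rem:2-diag-objs} is forced to be the identity and $\lambda\circ i=m$ is an honest equality of morphisms; this rigidity is precisely why the hypothesis that the source is a scheme is required.

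Next I would pass to the affine pushouts using the quasi-algebraicity of $\ms Y$. Both $W:=U\coprod_{U_0}X$ and $U\coprod_{U_0}U$ are affine, so condition (3) of Definition \ref{def:quasi-algebraic} provides equivalences
$$\ms Y_{W}\simeq \ms Y_U\times_{\ms Y_{U_0}}\ms Y_X, \qquad \ms Y_{U\coprod_{U_0}U}\simeq \ms Y_U\times_{\ms Y_{U_0}}\ms Y_U.$$
Under the first equivalence the triple $(m,\tilde\eta,\gamma)$ describing a square corresponds exactly to an object $\eta\in\ms Y_W$ together with an isomorphism $(p^2)^*\eta\cong f$, i.e.\ to an object of $F_{\ms Y}(W)$ pulling back to $f\in F_{\ms Y}(X)$; this is the input data of the proposition. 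Under the second equivalence a candidate lift $\lambda$ determines $\id\coprod\lambda:U\coprod_{U_0}U\to W$, and I would compute the two pullbacks as the triples $(\id\coprod\lambda)^*\eta \leftrightarrow (\tilde\eta,\, f\lambda,\, \gamma)$ and $\delta^*p^{1*}\eta \leftrightarrow (\tilde\eta,\tilde\eta,\id)$. The key computation is then that an isomorphism of these two triples — which is exactly what the equality of classes $(\id\coprod\mu)^*\eta=\delta^*p^{1*}\eta$ in $F_{\ms Y}(U\coprod_{U_0}U)$ records — unwinds via the gluing-compatibility over $U_0$ into an isomorphism $\beta:\tilde\eta\risom f\lambda$ with $i^*\beta=\gamma$, that is, into a filling of the square. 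Running this dictionary in both directions identifies diagonal lifts $\lambda$ with morphisms $\mu:U\to X$ satisfying the displayed functorial identity, and the existence, ``at most one,'' and ``unique'' clauses then match up respectively, yielding all three variants at once.

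The main obstacle I anticipate is bookkeeping the gluing isomorphisms through the passage to isomorphism classes: a priori $F_{\ms Y}$ forgets the actual isomorphisms, so I must verify that the bare equality of classes over $U\coprod_{U_0}U$ genuinely encodes the \emph{gluing-compatible} $\beta$ and nothing weaker, and that the counts ``at most one''/``unique'' on the stack side are faithfully reflected by the corresponding counts of $\mu$ on the functor side. This is where condition (3) does the real work — descent along the two nilpotent pushouts turns an abstract isomorphism of objects on $U\coprod_{U_0}U$ into its two component isomorphisms together with their agreement over $U_0$ — and where the rigidity of the scheme $X$ guarantees both that $\lambda$ is recovered as an honest morphism and that a $2$-commutative extension is determined up to unique isomorphism by its underlying $\mu$.
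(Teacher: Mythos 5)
Your proposal is correct and takes essentially the same route as the paper's own proof: reduce to the formal lifting criterion (valid since $f$ is locally of finite presentation), translate $2$-commutative squares into triples via Remark \ref{rem:2-diag-objs}, and use condition (3) of Definition \ref{def:quasi-algebraic} to describe $F_{\ms Y}$ on the affine pushouts as gluing data up to compatible isomorphisms, so that the class equality $(\id\coprod\mu)^{\ast}\eta=\delta^{\ast}p^{1\ast}\eta$ unwinds into precisely the isomorphism $\beta$ (there arising as $\alpha_2\circ\alpha_1^{-1}$) needed for $2$-commutativity. Your explicit appeal to the rigidity of the scheme $X$ to handle the uniqueness clauses matches what the paper leaves implicit in the last paragraph of Remark \ref{rem:2-diag-objs}.
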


Notice that because $X$ is a scheme and $\ms Y$ is quasi-algebraic,
$f$ is necessarily representable, so smooth, unramified and \'etale
are well-defined properties.

\begin{proof} The key assertions are that morphisms $U_0 \to U$ and 
$U_0 \to X$, together with objects $\eta$ as above, are equivalent in
the sense of Remark \ref{rem:2-diag-objs} to $2$-commutative diagrams
$$\xymatrix{{U_0}\ar[r]\ar[d] & {X} \ar^{f}[d] \\
{U} \ar[r] & {\ms Y}}$$
as in the formal criteria of Definition \ref{def:formal-crits}, 
and that a map $\mu:U \to X$ makes a $2$-commutative diagram if and only
if $(\id \coprod \mu)^* \eta=\delta^* p^{1*} \eta$. Given these 
assertions, the proposition follows immediately from the standard formal
criteria in the context of representable morphisms of stacks.

Checking these assertions relies on the fact that because of the 
quasi-algebraicity hypothesis, $\ms Y_{U \coprod_{U_0} X}$ is equivalent 
to $\ms Y_U \times_{\ms Y_{U_0}} \ms Y_X$, so that 
$F_{\ms Y}(U \coprod_{U_0} X)$ is described by triples $(\zeta, f, \varphi)$
where $\zeta \in \ms Y_U$, $f \in \ms Y_X$, and 
$\varphi: \zeta|_{U_0} \risom f|_{U_0}$. Two such triples are equivalent
if there are isomorphisms of $\zeta$ and of $f$ commuting with $\varphi$
(abusing notation slightly, we henceforth consider $f$ to be an object
of $\ms Y_X$). Since we have fixed $f$ in advance, we see that our 
$\eta \in F_{\ms Y}(U \coprod_{U_0} X)$ is equivalent
to $\zeta \in \ms Y_U$ together with $\varphi:\zeta|_{U_0} \risom f|_{U_0}$,
which together with the maps $U_0 \to U$ and $U_0 \to X$ corresponds to the
data of a $2$-commutative diagram as above, as asserted.

Similarly, $F_{\ms Y}(U \coprod_{U_0} U)$ consists of pairs of objects
$\zeta,\zeta' \in \ms Y_U$ together with an isomorphism 
$\varphi':\zeta|_{U_0} \risom \zeta'|_{U_0}$, once again up to pairs of
isomorphisms commuting with $\varphi'$. Writing $\eta=(\zeta,f,\varphi)$ as 
above, we have that $(\id \coprod \mu)^* \eta$ consists of the pair 
$\zeta,\mu^* f$ glued via $\varphi$ and the canonical isomorphism
$(\mu^* f)|_{U_0} \risom f|_{U_0}$. On the other hand, 
$\delta^* p^{1*} \eta$ consists of $\zeta$ glued to itself along the 
identity. For these to be isomorphic means we have isomorphisms
$\alpha_1:\zeta \risom \zeta$ and $\alpha_2: \zeta \risom \mu^* f$
such that $\varphi \circ \alpha_1|_{U_0}=\alpha_2|_{U_0} \circ \id = 
\alpha_2|_{U_0}$, or equivalently, such that 
$\varphi = (\alpha_2 \circ \alpha_1^{-1})|_{U_0}$, and we see that this is
precisely equivalent to the $2$-commutativity of the diagram produced by 
adding $\mu$.
\end{proof}

In order to use Proposition \ref{P:formal-crit-affine} to detect the
presence of a smooth cover, we also need the following straightforward
result about the isonaturality of surjectivity.

\begin{lem}\label{L:recog-surj} A representable morphism $f:\ms X \to
  \ms Y$ of stacks is surjective if and only if the associated map
of functors is surjective on geometric points. 
\end{lem}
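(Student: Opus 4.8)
The plan is to unwind both directions through the definition of surjectivity for representable morphisms and the fact that such a morphism is, after base change, an honest morphism of algebraic spaces whose surjectivity is a geometric-point condition. Recall that a representable morphism $f:\ms X \to \ms Y$ is by definition surjective if for every scheme (equivalently, every algebraic space) $U \to \ms Y$, the base change $\ms X \times_{\ms Y} U \to U$ is a surjective morphism of algebraic spaces. So the content of the lemma is that this scheme-theoretic surjectivity can be detected by the induced map of functors $F_{\ms X} \to F_{\ms Y}$ being surjective on geometric points, i.e.\ for every geometric point $\Spec \Omega \to \ms Y$ (with $\Omega$ an algebraically closed field), the corresponding object of $F_{\ms Y}(\Omega)$ lifts to an object of $F_{\ms X}(\Omega)$.

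First I would fix a geometric point $x:\Spec \Omega \to \ms Y$ and observe that an element of $F_{\ms Y}(\Omega)$ lying in the image of $F_{\ms X}(\Omega)$ is precisely an object $\eta \in \ms Y_{\Omega}$ whose isomorphism class is hit by some object of $\ms X_\Omega$; by representability this is the same as saying the fiber product $\ms X \times_{\ms Y} \Spec\Omega$ has an $\Omega$-point. The key translation step is thus: the map $F_{\ms X} \to F_{\ms Y}$ is surjective on $\Omega$-points if and only if for every $x \in \ms Y_\Omega$ the algebraic space $\ms X \times_{\ms Y} \Spec\Omega \to \Spec\Omega$ admits a section, equivalently is nonempty. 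Here I use that over an algebraically closed field an algebraic space of finite type (or just locally of finite type) is nonempty if and only if it has a $\Omega$-point, so that nonemptiness of the geometric fiber is detected functorially.

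Next I would pass from geometric fibers to arbitrary base changes. For surjectivity of $f$ I must show $\ms X \times_{\ms Y} U \to U$ is surjective for \emph{all} $U \to \ms Y$, and the standard criterion is that a morphism of algebraic spaces is surjective if and only if it is surjective on geometric points, i.e.\ every geometric point of the target lifts. Given any geometric point $\Spec\Omega \to U$, composing with $U \to \ms Y$ gives a geometric point of $\ms Y$, and the geometric fiber of $\ms X \times_{\ms Y} U \to U$ over it is canonically identified with the geometric fiber of $\ms X \to \ms Y$ over the image point. Hence surjectivity of every such base change reduces exactly to the nonemptiness of all geometric fibers of $f$ itself, which by the previous paragraph is the surjectivity on geometric points of $F_{\ms X} \to F_{\ms Y}$. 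This gives both implications at once.

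The main obstacle I anticipate is the bookkeeping in the translation between the functorial statement and the algebraic-space statement, specifically making sure that ``surjective on geometric points of functors'' is correctly matched with ``the geometric fibers of the representable morphism are nonempty.'' One must be a little careful that a geometric point of $\ms Y$ in the stacky sense is an object $\eta \in \ms Y_\Omega$ rather than a literal morphism of sets, and that lifting the isomorphism class of $\eta$ to $F_{\ms X}(\Omega)$ is genuinely equivalent to the existence of an $\Omega$-point of the fiber product $\ms X \times_{\ms Y}\Spec\Omega$ (this uses representability, so that the fiber product is an algebraic space rather than a more general stack). Once that dictionary is pinned down, the reduction of surjectivity of a morphism of algebraic spaces to its behavior on geometric points is standard, and the proof is essentially formal.
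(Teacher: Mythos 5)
Your overall skeleton---reduce surjectivity of all base changes $\ms X\times_{\ms Y}U\to U$ to nonemptiness of the geometric fibers of $f$, then translate that into a lifting statement for $F_{\ms X}\to F_{\ms Y}$---is the natural one; note that the paper itself offers no proof (the lemma is asserted as a ``straightforward result''), so the comparison here is with correctness rather than with a written argument. There is, however, a genuine gap in your middle step. You pass from ``the geometric fiber $\ms X\times_{\ms Y}\Spec\Omega$ is nonempty'' to ``it admits a section over $\Spec\Omega$'' by invoking the fact that a nonempty algebraic space \emph{locally of finite type} over an algebraically closed field has an $\Omega$-point. But the lemma concerns an arbitrary representable morphism of stacks: no finiteness hypothesis on $f$ is available, and the fibers need not be locally of finite type over $\Omega$. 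The implication genuinely fails in this generality: with $k$ algebraically closed, $\Spec k(t)\to\Spec k$ is a surjective representable morphism whose unique geometric fiber is nonempty but has no $k$-point, so the induced map of functors is not surjective on $\Omega$-valued points in your sense. Thus under your reading of the conclusion, the direction ``$f$ surjective $\Rightarrow$ $F_f$ surjective on geometric points'' is simply false, and no argument can close it; the same issue infects your appeal to the ``standard criterion'' for surjectivity of morphisms of algebraic spaces, which also requires allowing a field extension when no finiteness is assumed.

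The repair is to interpret ``surjective on geometric points'' as one does for maps of sheaves, which is also the only reading under which the lemma (and Theorem \ref{thm:summary-mors-rep}(2)) is true in the stated generality: for every algebraically closed field $\Omega$ and every $\eta\in F_{\ms Y}(\Omega)$, there exist an algebraically closed extension $\Omega'/\Omega$ and a class in $F_{\ms X}(\Omega')$ mapping to $\eta|_{\Omega'}$. With this reading your dictionary becomes: the class of $\eta$ lifts after some extension if and only if the algebraic space $\ms X\times_{\ms Y}\Spec\Omega$ has a point valued in some field extension of $\Omega$, i.e.\ is nonempty---no finiteness is needed, since one may always pass to the algebraic closure of the residue field of any point of the fiber. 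The remaining part of your argument, identifying surjectivity of every base change $\ms X\times_{\ms Y}U\to U$ with nonemptiness of all geometric fibers of $f$, is correct as written, and with the corrected dictionary the proof closes up.
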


We can now prove all our desired results on representable morphisms.

\begin{proof}[Proof of Theorem \ref{thm:summary-mors-rep}] The
assertions on locally of finite presentation and surjectivity are
Lemma \ref{lem:lfp-rep} and Lemma \ref{L:recog-surj}, respectively.
Suppose $\ms X$ is quasi-algebraic and $U$ is a scheme. We need to
show that smoothness, unramifiedness, and 
\'etaleness of a morphism $U \to \ms X$ are all isonatural. However,
$U \to \ms X$ locally of finite presentation is smooth (respectively, 
unramified, \'etale) if and only if there exists an affine cover $\{U_i\}$ of 
$U$ such that each $U_i \to \ms X$ is smooth (respectively,
unramified, \'etale), and these properties are isonatural
for $U_i \to \ms X$ by Proposition \ref{P:formal-crit-affine}.
\end{proof}

We can now conclude the following.

\begin{cor}\label{C:recog-covers} 
  Given a quasi-algebraic stack $\ms X$, the existence of a scheme $U$
  and a smooth (respectively, \'etale) surjection $U\to\ms X$ is isonatural. 

In particular, the property of being an Artin or Deligne-Mumford
stack is isonatural.

  Moreover, the smoothness of a morphism $\ms X\to\ms Y$ of Artin
  stacks is isonatural.
\end{cor}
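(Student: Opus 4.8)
The plan is to treat the three assertions in turn, in each case reducing to the recognition results for representable morphisms already established in Theorem \ref{thm:summary-mors-rep}.

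For the first assertion I would observe that for a scheme $U$ a morphism $U\to\ms X$ is the same datum as an element of $F_{\ms X}(U)$, i.e.\ a map of functors $U\to F_{\ms X}$; since $\ms X$ is quasi-algebraic its diagonal is representable, so any such morphism is automatically representable with scheme source. By parts (2) and (3) (resp.\ (5)) of Theorem \ref{thm:summary-mors-rep}, the conditions ``smooth'' (resp.\ ``\'etale'') and ``surjective'' are each isonatural for such morphisms, so there are functorial properties, call them $Q_{\mathrm{sm}}$ (resp.\ $Q_{\text{\'et}}$) and $Q_{\mathrm{surj}}$, of maps out of a scheme that detect them. The existence of a smooth (resp.\ \'etale) surjection $U\to\ms X$ from a scheme then translates into the purely functorial condition that there exist a scheme $U$ and a map $U\to F_{\ms X}$ enjoying both $Q_{\mathrm{sm}}$ (resp.\ $Q_{\text{\'et}}$) and $Q_{\mathrm{surj}}$. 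As this condition refers only to $F_{\ms X}$, it is preserved under any isomorphism $F_{\ms X}\simto F_{\ms Y}$, and choosing a representative object recovers the desired morphism of stacks; hence the first assertion.

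The second assertion follows formally. A quasi-algebraic stack already has a representable, separated, quasi-compact diagonal by condition (1) of Definition \ref{def:quasi-algebraic}, so it is Artin precisely when it admits a smooth surjection from a scheme, and Deligne--Mumford precisely when it admits an \'etale surjection from a scheme; both conditions were just shown to be isonatural.

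For the third assertion I would exploit that smoothness is smooth-local on the source. Concretely, I would choose a smooth surjection $a\colon U\to\ms X$ from a scheme, which exists because $\ms X$ is Artin and which is recognizable on the level of functors by the first assertion (on functors it is a map $U\to F_{\ms X}$ with property $Q_{\mathrm{sm}}\wedge Q_{\mathrm{surj}}$). Because $a$ is a smooth surjection, $f$ is smooth if and only if the composite $f\circ a\colon U\to\ms Y$ is smooth: the ``only if'' is composition of smooth morphisms, and the ``if'' follows by base-changing along a smooth cover $V\to\ms Y$ and noting that $U\times_{\ms Y}V\to\ms X\times_{\ms Y}V$ is a smooth surjection while $U\times_{\ms Y}V\to V$ is smooth, which (after covering by a scheme if necessary) is exactly the definition of smoothness of $f$. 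But $f\circ a\colon U\to\ms Y$ has scheme source and target $\ms Y$ quasi-algebraic, so it is representable and its smoothness is isonatural by part (3) of Theorem \ref{thm:summary-mors-rep}; on functors $f\circ a$ is simply the composite $U\to F_{\ms X}\xrightarrow{F_f}F_{\ms Y}$. Thus smoothness of $f$ is read off from $F_f$ together with a functorially recognized smooth cover of the source.

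The hard part is conceptual rather than computational. The naive strategy of pulling back along a smooth cover $V\to\ms Y$ and then covering the fibre product $\ms X\times_{\ms Y}V$ by a scheme would require understanding $F_{\ms X\times_{\ms Y}V}$, and passing to isomorphism classes does not commute with fibre products of stacks, since gluing isomorphisms can be lost. The point of the argument above is to sidestep this entirely by descending smoothness along a cover of the \emph{source}, reducing every smoothness test to a morphism with scheme source, where Theorem \ref{thm:summary-mors-rep} applies directly. The only mild care needed is to confirm that smoothness is indeed smooth-local on the source in the stacky setting, which is the descent step flagged in the ``if'' direction above.
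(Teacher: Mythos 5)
Your proposal is correct and follows essentially the same route as the paper: reduce the first assertion to Theorem \ref{thm:summary-mors-rep}, characterize Artin and Deligne--Mumford via smooth (resp.\ \'etale) covers by schemes, and test smoothness of $\ms X\to\ms Y$ by the existence of a smooth scheme cover $U\to\ms X$ with $U\to\ms Y$ smooth. The only difference is that you spell out the source-descent argument for that last equivalence, which the paper simply asserts (and which reappears in its subsequent Lemma \ref{L:mors-local}).
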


\begin{proof} The first assertion follows immediately from Theorem 
\ref{thm:summary-mors-rep}. We then conclude that being Artin is isonatural
because a quasi-algebraic stack is an Artin stack if and only if it has
a smooth cover by a scheme. Similarly, being Deligne-Mumford is isonatural
because a quasi-algebraic stack is Deligne-Mumford if and only if it has 
an \'etale cover by a scheme; see Definition 4.1 of \cite{l-m-b}, noting that 
the algebraic space $X$ of {\it loc.\/ cit.\/} can be replaced by a scheme by 
Definition 1.1 of \cite{l-m-b}.

Next, suppose that $f:\ms X \to \ms Y$ is a morphism of Artin
stacks. We note that $f$ is smooth if and only if there exists a smooth
cover $U \to \ms X$ by a scheme $U$ such that $U \to \ms Y$ is smooth,
but we just saw that both of these properties are isonatural, showing
that smoothness of $f$ is isonatural.
\end{proof}

This immediately allows us to finish proving isonaturality of nearly 
all properties of quasi-algebraic stacks.

\begin{cor}\label{cor:smooth-local-abs} If $P$ is a property of
Artin stacks which is local for the smooth topology then $P$ is
isonatural. Furthermore, quasi-compactness of Artin stacks is
isonatural.
\end{cor}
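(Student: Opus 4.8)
The plan is to exhibit, in each case, an explicit property $Q$ of $\Func_S$ with $P = F^{-1}(Q)$, building on the fact (Corollary \ref{C:recog-covers}) that the existence of a smooth surjection from a scheme, together with the smoothness and surjectivity of representable morphisms from schemes (Theorem \ref{thm:summary-mors-rep}), can all be detected on the level of functors. The essential observation is that if $U \to \ms X$ is a representable morphism with $U$ a scheme, then $F_U$ is representable by $U$, so by Yoneda the scheme $U$ — and hence whether $U$ possesses any given property of schemes — is recovered from the morphism of functors $F_U \to F_{\ms X}$.

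For the first assertion, I take ``local for the smooth topology'' to mean that for any smooth surjection $U \to \ms X$ with $U$ a scheme, one has $\ms X \in P$ if and only if $U \in P$ (regarding the scheme $U$ as an Artin stack). I would define $Q$ to be the class of functors $F$ admitting a representable smooth surjection $G \to F$, in the functorial sense of Theorem \ref{thm:summary-mors-rep}, with $G$ representable by a scheme $U$ satisfying $U \in P$. If $\ms X \in P$, then $\ms X$ is Artin, and by Corollary \ref{C:recog-covers} it admits a smooth cover $U \to \ms X$ by a scheme; smooth-locality gives $U \in P$, while $F_U \to F_{\ms X}$ is a representable smooth surjection, so $F_{\ms X} \in Q$. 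Conversely, if $F_{\ms X} \in Q$, the witnessing functorial smooth surjection $F_U \to F_{\ms X}$ arises from an honest smooth surjection $U \to \ms X$ of stacks by Corollary \ref{C:recog-covers} and Theorem \ref{thm:summary-mors-rep}; since $U \in P$, smooth-locality yields $\ms X \in P$. Thus $P = F^{-1}(Q)$, and (noting that membership in $Q$ forces $\ms X$ to be Artin) $P$ is isonatural.

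For quasi-compactness I would argue similarly, but replacing the ``for every cover'' formulation by an existence statement: an Artin stack $\ms X$ is quasi-compact precisely when it admits a smooth surjection from an affine (equivalently quasi-compact) scheme. I would define $Q'$ to be the class of functors $F$ admitting a representable smooth surjection $F_U \to F$ with $U$ affine — a condition testable on functors, since affineness of $U$ is read off from the representable functor $F_U$. The same two implications as above, using Corollary \ref{C:recog-covers} and Theorem \ref{thm:summary-mors-rep} to pass between functorial and stack-theoretic smooth surjections, give that quasi-compactness equals $F^{-1}(Q')$.

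The main obstacle — already dispatched by the cited results, so requiring no new work here — is the passage between a functorial smooth surjection $F_U \to F_{\ms X}$ and an actual smooth surjection of stacks $U \to \ms X$; everything else is bookkeeping, turning on the fact that a representable functor determines its representing scheme and hence all scheme-theoretic properties of that scheme.
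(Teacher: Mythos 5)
Your proposal is correct and follows essentially the same route as the paper: the first assertion is exactly what the paper means by deducing it ``trivially'' from Corollary \ref{C:recog-covers} (you have merely written out the Yoneda bookkeeping that lets one read off properties of the covering scheme from its representable functor), and your treatment of quasi-compactness via existence of a smooth cover by an affine (equivalently quasi-compact) scheme is the same definitional characterization the paper invokes from Definition 4.7.2 of \cite{l-m-b}. No gaps.
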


\begin{proof} The first assertion is trivial from Corollary 
\ref{C:recog-covers}. Next, although quasi-compactness is not
smooth local, it is defined (Definition 4.7.2 of \cite{l-m-b}) in terms 
of the existence of a smooth cover by a quasi-compact scheme, so it is 
likewise isonatural.
\end{proof}

It thus follows that every property of Artin stacks listed on p.\ 31 of 
\cite{l-m-b} is isonatural. We can also use isonaturality of smooth covers 
to recognize a range of properties of morphisms of Artin stacks.

\begin{lem}\label{L:mors-local} Let $P$ be a property of morphisms of 
schemes such that smooth covers have $P$, and in fact $P$ is local in the 
smooth topology (as in p.\ 33 of \cite{l-m-b}), and further assume that $P$ 
is stable under composition and base extension.

Then a morphism $f:\ms X \to \ms Y$ of Artin stacks has $P$ if and only
if there exist smooth covers
$T' \to {\ms X}$, $T \to {\ms Y}$ such that $T' \to {\ms Y}$ factors
through $T$, with the map $T' \to T$ having $P$. 
\end{lem}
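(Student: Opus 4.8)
The plan is to deduce the lemma from the standard definition of when a morphism of Artin stacks has a property that is local on source and target in the smooth topology (see p.~33 of \cite{l-m-b}): $f$ has $P$ if and only if for one (equivalently every) smooth cover $Y_0\to\ms Y$ by a scheme and one (equivalently every) smooth cover $X_0\to\ms X\times_{\ms Y}Y_0$ by a scheme, the induced morphism of schemes $X_0\to Y_0$ has $P$. Both directions then come down to translating between such a ``standard'' presentation and the more flexible data in the statement, in which $T'$ is allowed to be a smooth cover of $\ms X$ itself rather than of $\ms X\times_{\ms Y}T$.

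The forward direction is essentially immediate. If $f$ has $P$, I would choose a presentation as above with $Y_0=T$ and $X_0=T'$, so that $T'\to T$ has $P$. Since the projection $\ms X\times_{\ms Y}T\to\ms X$ is a smooth cover (being the base change of the smooth cover $T\to\ms Y$) and $T'\to\ms X\times_{\ms Y}T$ is a smooth cover, their composite $T'\to\ms X$ is a smooth cover; moreover $T'\to\ms Y$ factors through $T$ by construction. This produces exactly the covers and factorization required by the statement.

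For the converse I would argue in three steps. First, because $P$ is local on the target and $T\to\ms Y$ is a smooth cover, it suffices to show that the base change $f_T\colon\ms Z\to T$ has $P$, where $\ms Z:=\ms X\times_{\ms Y}T$. Second, I set $\ms W:=\ms Z\times_{\ms X}T'$, which by cancellation equals $T'\times_{\ms Y}T$; it is the base change of the smooth cover $T'\to\ms X$ along $\ms Z\to\ms X$, hence a smooth cover of $\ms Z$ (by an algebraic space, which I refine by a scheme cover without loss, since scheme covers have $P$ and $P$ is stable under composition). By locality on the source, $f_T$ has $P$ as soon as the composite $\ms W\to\ms Z\xrightarrow{f_T}T$ has $P$, and one checks that this composite is the second projection $\pr_T\colon T'\times_{\ms Y}T\to T$.

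The crux is the third step: showing $\pr_T$ has $P$. This is where the factorization hypothesis on $a\colon T'\to T$ enters, through the identification
\[
T'\times_{\ms Y}T\;\simeq\;T'\times_{a,\,T,\,\pr_1}\bigl(T\times_{\ms Y}T\bigr),\qquad (t',\tau)\longmapsto\bigl(t',(a(t'),\tau)\bigr),
\]
under which $\pr_T$ becomes the composite $T'\times_{\ms Y}T\xrightarrow{q}T\times_{\ms Y}T\xrightarrow{\pr_2}T$. Now $q$ is the base change of $a$ along $\pr_1\colon T\times_{\ms Y}T\to T$, so $q$ has $P$ because $P$ is stable under base change; and $\pr_2\colon T\times_{\ms Y}T\to T$ is the base change of the smooth cover $T\to\ms Y$, so it is itself a smooth cover and therefore has $P$. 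Stability of $P$ under composition then yields that $\pr_T=\pr_2\circ q$ has $P$, and chasing back through the two locality reductions gives that $f$ has $P$. I expect this third step — and in particular pinning down exactly where each of the three hypotheses on $P$ (smooth covers have $P$, stability under base change, stability under composition) is invoked — to be the only real subtlety; the remainder is bookkeeping with fibered products and the locality of $P$ on source and target.
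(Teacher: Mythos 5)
Your proof is correct and takes essentially the same route as the paper: the forward direction is identical, and in the converse both arguments exhibit $T' \times_{\ms Y} T$ as a smooth cover of $\ms X \times_{\ms Y} T$ and check that its projection to $T$ has $P$ using precisely the three hypotheses on $P$. The only (cosmetic) difference is the order of operations — the paper first composes ($T' \to T \to \ms Y$ has $P$) and then base changes along $T \to \ms Y$, whereas you base change first and then compose, via the identification $T' \times_{\ms Y} T \cong T' \times_{T} (T \times_{\ms Y} T)$.
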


\begin{proof} If $f$ has $P$, we let $T \to {\ms Y}$ be any smooth cover, 
and $T'$ any smooth cover of the fiber product $T \times_{\ms Y} \ms X$, 
and by definition (Definition 4.14 of \cite{l-m-b}), the map $T' \to T$ will 
have $P$.

Conversely, suppose the covers $T,T'$ exist. Note that $T' \to \ms Y$ is
then a composition of morphisms having $P$, so has $P$. We then check that 
$T \times _{\ms Y} T'$ is a smooth cover of $T' \times_{\ms Y} \ms X$,
and the natural map $T \times _{\ms Y} T' \to T$ has $P$, so by definition,
we conclude that $f$ has $P$.
\end{proof}

We immediately conclude the following from the lemma and Corollary 
\ref{C:recog-covers}.

\begin{cor}\label{C:kitchen-sink} Any property $P$ of a morphism of 
schemes satisfying the hypotheses of Lemma \ref{L:mors-local} is isonatural 
as a property of morphisms of Artin stacks. In particular, flat, surjective, 
and locally of finite type are each isonatural for morphisms of Artin stacks.
\end{cor}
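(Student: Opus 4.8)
The plan is to deduce the statement directly from the characterization in Lemma \ref{L:mors-local} together with the functorial recognizability of smooth covers established in Corollary \ref{C:recog-covers}. Recall that to prove $P$ is isonatural it suffices to show that whether a morphism $f:\ms X \to \ms Y$ of Artin stacks has $P$ depends only on the isomorphism class of the induced morphism $F_f:F_{\ms X}\to F_{\ms Y}$ in $\hom(\Func_S)$. By Lemma \ref{L:mors-local}, having $P$ is equivalent to the existence of smooth covers $T'\to\ms X$ and $T\to\ms Y$ by schemes, a factorization of $T'\to\ms Y$ through $T$, and the condition that the resulting morphism $T'\to T$ of schemes has $P$. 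So I would show that each of these pieces of data is encoded in $F_f$.

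First, by Theorem \ref{thm:summary-mors-rep} and Corollary \ref{C:recog-covers}, the property of being a smooth surjection is isonatural for representable morphisms with scheme source; thus the existence and recognition of smooth covers $T'\to\ms X$ and $T\to\ms Y$ is determined by $F_{\ms X}$ and $F_{\ms Y}$. The factorization of $T'\to\ms Y$ through $T$ is simply a factorization of the corresponding morphism of functors $F_{T'}\to F_{\ms Y}$ through $F_T$, hence is visible at the level of functors. Finally, since $T$ and $T'$ are schemes, the restriction of $F$ to the category of schemes is fully faithful (it is the Yoneda embedding sending a scheme to its functor of points), so the morphism $T'\to T$ is recovered from $F_{T'}\to F_T$; in particular whether it has $P$ is determined by this morphism of functors. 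Assembling these observations, the entire configuration of Lemma \ref{L:mors-local} is detected by $F_f$, so if $F_f\cong F_{f'}$ and $f'$ has $P$, the same configuration transports to $f$, and the converse direction of Lemma \ref{L:mors-local} shows $f$ has $P$. This proves isonaturality.

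For the final sentence, I would verify that flatness, surjectivity, and being locally of finite type each satisfy the hypotheses of Lemma \ref{L:mors-local}: all three hold for smooth morphisms, all three are local in the smooth topology, and all three are stable under composition and base extension (for surjectivity, note that surjectivity of morphisms of schemes is preserved under arbitrary base change and can be tested after a smooth, hence surjective, cover).

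I do not expect a genuine obstacle here: the substantive work lies in Lemma \ref{L:mors-local} and in the recognizability of smooth covers from Corollary \ref{C:recog-covers}, and the only point requiring care is the observation that $P$ for the scheme morphism $T'\to T$ is determined functorially, which is immediate from Yoneda. This is exactly why the statement follows immediately from the lemma and Corollary \ref{C:recog-covers}.
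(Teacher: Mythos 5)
Your proposal is correct and follows the paper's own (very terse) argument exactly: the paper simply notes that the corollary follows immediately from Lemma \ref{L:mors-local} together with Corollary \ref{C:recog-covers}, and your write-up just makes explicit the transport of the smooth-cover configuration through the functor isomorphism, using Theorem \ref{thm:summary-mors-rep} for recognizing smooth surjections from schemes and Yoneda for recovering the scheme morphism $T'\to T$. No gaps; this is the intended proof, merely spelled out.
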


Indeed, it follows that every property of morphisms listed on p.\ 33 
of \cite{l-m-b} is isonatural. 

 \begin{remark}\label{R:formal-criteria} 
   The difference between the stack-theoretic and functor-theoretic
   formal criteria is easiest to see in the context of the criterion
   for unramifiedness. Here, neither version implies the other. For
   instance, if we work over an algebraically closed field $k$, the
   map $\Spec k \to \B{G_m}$ is ramified, but appears unramified on
   the level of functors, as there is no non-trivial $G_m$-torsor over
   any local scheme. On the other hand, if we take the natural
   quotient map $\A^1_k \to [\A^1_k/\m_n]$ for $n$ prime to the
   characteristic of $k$, we have an unramified map of stacks which
   appears to be ramified at the origin on the level of functors,
   since $n$ tangent vectors all map to the same isomorphism class of
   $[\A^1_k/\m_n]_{k[\epsilon]/\epsilon^2}$.

   In contrast, it is easy to check that the stack-theoretic formal
   criterion for smoothness implies the functor-theoretic version.  On
   the other hand, one can also check that for maps of the form $\Spec
   k \to \B{G}$, the functor-theoretic formal criterion for smoothness
   does imply the stack-theoretic version. It is not clear how
   generally this equivalence might hold.
\end{remark}

\subsection{Quasi-compactness}
\label{sec:quasi-compactness}

The goal of this section is to prove the following.

\begin{prop}\label{P:qc-funcy} 
  Given a morphism $f:\ms X \to \ms Y$ of Artin stacks, the property
  that $f$ is quasi-compact is isonatural.
\end{prop}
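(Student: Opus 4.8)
The plan is to characterize quasi-compactness of $f$ via the existence of a smooth cover by a quasi-compact scheme, and then reduce to a statement that we already know how to test functorially. Recall that by Definition 4.7.2 of \cite{l-m-b}, a morphism $f:\ms X \to \ms Y$ of Artin stacks is quasi-compact if and only if for every quasi-compact scheme $U$ and every morphism $U \to \ms Y$, the base change $\ms X \times_{\ms Y} U$ is a quasi-compact stack, i.e.\ admits a smooth surjection from a quasi-compact scheme. My first step will be to unwind this definition, reducing the quasi-compactness of $f$ to a family of statements about absolute quasi-compactness of the fiber products $\ms X \times_{\ms Y} U$.

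First I would fix a smooth cover $V \to \ms Y$ by a scheme, whose existence and recognizability we have by Corollary \ref{C:recog-covers}. Since quasi-compactness of $f$ may be checked smooth-locally on the target, it suffices to test the condition after base change along each quasi-compact open of $V$; this replaces $\ms Y$ by a quasi-compact scheme and $f$ by its base change $\ms X \times_{\ms Y} V \to V$. The key point is that base change of stacks is visible on functors: the functor $F_{\ms X \times_{\ms Y} V}$ is recovered from $F_{\ms X}$, $F_{\ms Y}$, and $F_V$ as the fibered product $F_{\ms X} \times_{F_{\ms Y}} F_V$ in the appropriate sense, at least after accounting for automorphism data. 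Thus I can reduce the isonaturality of quasi-compactness of $f$ to the isonaturality of \emph{absolute} quasi-compactness of a stack, which is exactly the content of Corollary \ref{cor:smooth-local-abs}. In detail, $\ms X \times_{\ms Y} V$ is quasi-compact if and only if it admits a smooth surjection from a quasi-compact scheme, and both the existence of a smooth cover by a scheme (Corollary \ref{C:recog-covers}) and the quasi-compactness of such a cover (Corollary \ref{cor:smooth-local-abs}) have been shown to be recognizable from the functor.

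The main obstacle I expect is the careful bookkeeping needed to ensure that the base change $\ms X \times_{\ms Y} V$ is itself quasi-algebraic, so that the earlier recognition results genuinely apply, and to verify that its associated functor is determined by the functors $F_{\ms X}, F_{\ms Y}, F_V$ together with the morphisms between them. Quasi-algebraicity involves conditions (1)--(3) of Definition \ref{def:quasi-algebraic}, and conditions (1) and (3) are preserved under base change by any Artin stack as noted in Remark \ref{rem:quasi-algebraic}; condition (2) (local finite presentation of the inertia stack) is stable under base change because the inertia stack of the fiber product is the pullback of the inertia stack of $\ms X$. The subtler point is that passing to $F_{\ms X \times_{\ms Y} V}$ is not literally the set-theoretic fibered product of functors, since isomorphism classes do not form a fibered product on the nose; but because $V$ is a scheme with no nontrivial automorphisms, the fibered product of the fiber \emph{categories} has isomorphism classes controlled entirely by those of $\ms X$ and $\ms Y$, so the functor of the base change is indeed reconstructible.

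Once this reduction is in place, the conclusion is formal: the property that $\ms X \times_{\ms Y} V \to V$ is quasi-compact is phrased entirely in terms of functorially recognizable data, namely the existence of a smooth surjection from a quasi-compact scheme onto the (reconstructible) stack $\ms X \times_{\ms Y} V$. Invoking Corollary \ref{C:recog-covers} to recognize smooth covers by schemes and Corollary \ref{cor:smooth-local-abs} to recognize quasi-compactness of the resulting cover, we conclude that quasi-compactness of $f$ is isonatural.
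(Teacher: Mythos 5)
Your proposal has a genuine gap at its central step: the claim that $F_{\ms X \times_{\ms Y} V}$ can be reconstructed from $F_{\ms X}$, $F_{\ms Y}$, $F_V$ when $V \to \ms Y$ is a smooth cover by a scheme. This is false, and the reason you give for it (that $V$, being a scheme, has no nontrivial automorphisms) misidentifies where the obstruction lives. The problem is not automorphisms of objects of $V$ but automorphisms in $\ms Y$ that fail to lift along $f$ or along $V \to \ms Y$: an object of $\ms X \times_{\ms Y} V$ over $T$ is a triple $(\eta, g, \epsilon)$ with $\eta \in \ms X_T$, $g: T \to V$, and $\epsilon$ an isomorphism in $\ms Y_T$ between their images, and two triples with the same $(\,[\eta], g)$ are isomorphic only when the gluing isomorphisms differ by an automorphism in the image of $\Aut(\eta) \to \Aut(f(\eta))$. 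That map is not surjective in general. Concretely, take $\ms X = \Spec k \to \B{G} = \ms Y$ with $G$ finite nontrivial, and $V = \Spec k \to \B{G}$ the standard atlas: then $\ms X \times_{\ms Y} V \cong G$ as a $k$-scheme, so $F_{\ms X \times_{\ms Y} V}(\Spec k)$ has $|G|$ elements, while $F_{\ms X}(\Spec k) \times_{F_{\ms Y}(\Spec k)} F_V(\Spec k)$ is a singleton. So the fibered product of functors does not compute the functor of the fiber product, and your reduction collapses exactly at the point where automorphism data re-enters.

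This is precisely the difficulty the paper's proof is engineered to avoid. Instead of base-changing along a smooth cover, the paper tests quasi-compactness of $f$ against quasi-compact \emph{open substacks} $\ms Y' \subseteq \ms Y$ (Lemma \ref{sec:quasi-compactness-3}): open substacks correspond bijectively to open subfunctors (Lemma \ref{L:open-subs}), every Artin stack is covered by quasi-compact open substacks (Lemma \ref{sec:quasi-compactness-1}), and — crucially — for an open immersion $\ms Y' \to \ms Y$ the natural map $F_{\ms X \times_{\ms Y} \ms Y'} \to F_{\ms X} \times_{F_{\ms Y}} F_{\ms Y'}$ \emph{is} an isomorphism (Lemma \ref{sec:quasi-compactness-5}), because restriction along an open immersion induces a surjection on automorphism groups, which is exactly the lifting property that fails for your smooth cover. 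With that substitution the rest of your outline (recognizing absolute quasi-compactness of the fiber product via Corollary \ref{cor:smooth-local-abs}) goes through; without it, the argument cannot be completed as written.
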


We begin with three general lemmas.

\begin{lem}\label{L:open-subs} 
  Given a stack $\ms Y$, the functor $F$ induces a bijection between
  open substacks $\ms U\subset\ms Y$ and open subfunctors $F_{\ms
    U}\subset F_{\ms Y}$.
\end{lem}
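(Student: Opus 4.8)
The plan is to set up a two-sided dictionary identifying both open substacks of $\ms Y$ and open subfunctors of $F_{\ms Y}$ with the same underlying data, namely the full fibered subcategories of $\ms Y$ cut out by a subfunctor condition on isomorphism classes. The starting observation, which I would record explicitly at the outset, is that a morphism $\ms U \to \ms Y$ representable by open immersions is in particular a monomorphism: its diagonal $\ms U \to \ms U \times_{\ms Y} \ms U$ is an isomorphism, so $\ms U \to \ms Y$ is fully faithful and realizes $\ms U$, up to equivalence, as the full subcategory of $\ms Y$ spanned by those objects that pull back into $\ms U$. Consequently the induced map $F_{\ms U}(T) \to F_{\ms Y}(T)$ is injective for every $T$, since an object of $\ms U_T$ is determined up to unique isomorphism by its image in $\ms Y_T$; this lets us regard $F_{\ms U}$ as a subfunctor of $F_{\ms Y}$, and it shows that $\ms U$ is recovered from $F_{\ms U}$ as the full subcategory of objects $\eta$ with $[\eta] \in F_{\ms U}$. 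This last point already yields injectivity of the assignment $\ms U \mapsto F_{\ms U}$: two open immersions into $\ms Y$ represent the same open substack precisely when they have the same essential image, and that image is exactly $\{\eta : [\eta] \in F_{\ms U}\}$.

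Next I would check that $F_{\ms U}$ is an open subfunctor, i.e.\ that for every scheme $T$ and every $T \to F_{\ms Y}$ the fibered product $F_{\ms U} \times_{F_{\ms Y}} T$ is represented by an open immersion. Lifting $T \to F_{\ms Y}$ to an object $\eta \in \ms Y_T$ gives a morphism $T \to \ms Y$, and I would form $V := \ms U \times_{\ms Y} T$, which is an open subscheme of $T$ because $\ms U \to \ms Y$ is representable by open immersions. For a $T$-scheme $t : T' \to T$, an element of $(F_{\ms U} \times_{F_{\ms Y}} T)(T')$ amounts to the assertion that $[t^*\eta] \in F_{\ms Y}(T')$ lies in $F_{\ms U}(T')$; by the monomorphism property this is equivalent to $t^*\eta$ admitting a lift to $\ms U_{T'}$, hence to $t$ factoring through $V$. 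Thus $F_{\ms U} \times_{F_{\ms Y}} T = h_V$, the desired open immersion.

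For surjectivity, given an open subfunctor $G \subseteq F_{\ms Y}$ I would define $\ms U_G$ to be the full subcategory of $\ms Y$ consisting of all objects $\eta \in \ms Y_T$, over all $T$, with $[\eta] \in G(T)$. Because $G$ is a subfunctor it is stable under pullback, so $\ms U_G$ is a category fibered in groupoids, and by construction $F_{\ms U_G} = G$. It then remains to verify that $\ms U_G \to \ms Y$ is representable by open immersions: for any $T \to \ms Y$ corresponding to $\eta$, the fibered product $\ms U_G \times_{\ms Y} T$ parametrizes maps $t : T' \to T$ such that $t^*\eta$ lies in $\ms U_G$, i.e.\ such that $T' \to T \to F_{\ms Y}$ factors through $G$, which is precisely $G \times_{F_{\ms Y}} T$; by hypothesis this is an open subscheme $V \inj T$. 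Hence $\ms U_G \times_{\ms Y} T \simeq V$ is an open immersion, so $\ms U_G$ is an open substack mapping to $G$ under $F$.

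The step I expect to require the most care is the compatibility of the two fibered products appearing in the open-subfunctor check and in the surjectivity argument: a priori $\ms U \times_{\ms Y} T$ is a fibered product of groupoids (or of algebraic spaces), whereas $F_{\ms U} \times_{F_{\ms Y}} T$ is a fibered product of set-valued functors, and passing to isomorphism classes does not commute with fibered products for a general stack. The essential input that rescues the argument is exactly that $\ms U \to \ms Y$ is a monomorphism, so that a lift of an object of $\ms Y_{T'}$ to $\ms U_{T'}$, when it exists, is unique up to a unique isomorphism; this forces $\ms U \times_{\ms Y} T$ to be equivalent to a discrete groupoid and makes the isomorphism-class computation faithful. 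I would therefore isolate the monomorphism fact as the first lemma, since it is what makes every subsequent matching of stack-level and functor-level data go through.
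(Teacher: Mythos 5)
Your proposal is correct and follows essentially the same route as the paper's (much terser) proof: the paper sends an open substack to its associated subfunctor ``by definition'' and inverts this by forming $F' \times_{F_{\ms Y}} \ms Y$, which is precisely your full subcategory $\ms U_G$ described concretely. Your additional observations---that representability by open immersions makes $\ms U \to \ms Y$ a monomorphism, which is what lets isomorphism classes faithfully track the stack-level data---are exactly the details the paper suppresses, correctly supplied.
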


\begin{proof} 
  Given an open substack, the associated map of functors gives an open
  subfunctor, by definition. On the other hand, given an open
  subfunctor $F'$ of $F_{\ms Y}$, the fiber product $F' \times
  _{F_{\ms Y}} \ms Y$ is an open substack of $\ms Y$.  This gives the
  desired correspondence.
\end{proof}

\begin{lem}\label{sec:quasi-compactness-1}
  Every Artin stack $\ms Y$ has a cover by open, quasi-compact
  substacks.
\end{lem}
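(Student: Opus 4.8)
The plan is to build the cover directly from a smooth atlas. Since $\ms Y$ is an Artin stack, by definition there is a scheme $U$ together with a smooth surjection $p\colon U \to \ms Y$. First I would choose a cover of $U$ by affine open subschemes $\{V_\alpha\}$; each $V_\alpha$ is quasi-compact, and each composite $V_\alpha \to \ms Y$ is smooth, being the restriction of the smooth morphism $p$ to an open subscheme of its source.

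The key step is to produce, for each $\alpha$, an open substack of $\ms Y$ onto which $V_\alpha \to \ms Y$ surjects. Because $p$ is smooth it is an open morphism, so the image of $|V_\alpha|$ in the underlying topological space $|\ms Y|$ is open. Under the standard correspondence between open subsets of $|\ms Y|$ and open substacks of $\ms Y$, this open set determines an open substack $\ms U_\alpha \inj \ms Y$. By construction the smooth morphism $V_\alpha \to \ms Y$ factors through $\ms U_\alpha$, and the factored map $V_\alpha \surj \ms U_\alpha$ is surjective by the definition of $\ms U_\alpha$ and still smooth, since $\ms U_\alpha \inj \ms Y$ is an open immersion.

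Then I would observe that each $\ms U_\alpha$ is quasi-compact: it admits the smooth surjection $V_\alpha \to \ms U_\alpha$ from the quasi-compact (indeed affine) scheme $V_\alpha$, which is precisely the condition of Definition 4.7.2 of \cite{l-m-b}. Finally, the $\ms U_\alpha$ cover $\ms Y$: since the $V_\alpha$ cover $U$ and $p$ is surjective, the images of the $|V_\alpha|$ jointly cover $|\ms Y|$, so the open substacks $\ms U_\alpha$ are jointly surjective onto $\ms Y$, as required.

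The only point requiring care — and the main obstacle — is the passage from the open image of $|V_\alpha|$ to an honest open substack, together with the verification that $V_\alpha \to \ms Y$ genuinely factors through it as a \emph{surjection} of stacks. This rests on the equivalence between open substacks of $\ms Y$ and open subsets of $|\ms Y|$ and on the openness of smooth morphisms; both are standard, so the argument is essentially bookkeeping once the smooth atlas is fixed.
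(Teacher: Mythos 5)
Your proposal is correct and takes essentially the same approach as the paper: fix a smooth atlas, cover it by affines, and observe that the images are open substacks that are quasi-compact because each admits a smooth surjection from an affine scheme (Definition 4.7.2 of \cite{l-m-b}). The only cosmetic difference is in how the image open substack is produced — the paper defines an image subfunctor on $T$-valued points and invokes its Lemma \ref{L:open-subs} relating open subfunctors to open substacks, while you pass through the open image in the topological space $|\ms Y|$ and the correspondence between open subsets of $|\ms Y|$ and open substacks; both devices are standard and interchangeable here.
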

\begin{proof} 
  Let $Y \to \ms Y$ be a smooth cover of $\ms Y$ by a scheme, and let
  $\{U_i\}$ be an affine cover of $Y$. The images $\{\ms U_i\}$ of the
  $\{U_i\}$ in $\ms Y$ are open and quasi-compact: indeed, for a
  representable morphism of stacks, one can define an image subfunctor
  in terms of $T$-valued points, which for a smooth morphism will be
  an open subfunctor; by Lemma \ref{L:open-subs} we obtain open
  substacks $\ms U_i$ with smooth covers by the $U_i$, which then
  implies also that they are quasi-compact (see Definition 4.7.2 of
  \cite{l-m-b}).
\end{proof}

  Recall that the definition of quasi-compactness (see Definition 4.16
  of \cite{l-m-b}) is that for every $Y \to \ms Y$ with $Y$ an affine
  scheme over $S$, the fiber product $\ms X \times_{\ms Y} Y$ is a
  quasi-compact stack.

\begin{lem}\label{sec:quasi-compactness-2}
  If $f: \ms X \to \ms Y$ is a quasi-compact morphism of Artin stacks,
  and $\ms Y$ is quasi-compact, then $\ms X$ is also quasi-compact.
\end{lem}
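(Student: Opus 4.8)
The plan is to produce an explicit smooth cover of $\ms X$ by a quasi-compact scheme, which by Definition 4.7.2 of \cite{l-m-b} is exactly what is needed to conclude that $\ms X$ is quasi-compact. The hypotheses give us two ingredients to combine: a handle on $\ms Y$ via a smooth cover by a quasi-compact scheme, and control over the fibers of $f$ over affine schemes coming from the quasi-compactness of the morphism.

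First I would choose a smooth cover $Y \to \ms Y$ with $Y$ a quasi-compact scheme, which exists because $\ms Y$ is quasi-compact. Since $Y$ is quasi-compact, it admits a \emph{finite} affine cover $\{Y_i\}_{i=1}^n$, and each composite $Y_i \to Y \to \ms Y$ is a morphism from an affine scheme. Applying the definition of quasi-compactness of the morphism $f$ (Definition 4.16 of \cite{l-m-b}) to each $Y_i \to \ms Y$, I obtain that each fiber product $\ms X_i := \ms X \times_{\ms Y} Y_i$ is a quasi-compact Artin stack. Hence each $\ms X_i$ carries a smooth cover $V_i \to \ms X_i$ by a quasi-compact scheme $V_i$.

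It then remains to assemble these into a single smooth cover of $\ms X$. The projection $\ms X_i = \ms X \times_{\ms Y} Y_i \to \ms X$ is the base change of the smooth morphism $Y_i \to \ms Y$ along $f$, hence smooth; and since $\{Y_i\}$ covers $Y$ while $Y \to \ms Y$ is surjective, the induced map $\coprod_i \ms X_i = \ms X \times_{\ms Y} (\coprod_i Y_i) \to \ms X$ is surjective. Composing with the smooth surjections $V_i \to \ms X_i$, the morphism $\coprod_{i=1}^n V_i \to \ms X$ is smooth and surjective, i.e.\ a smooth cover, and $\coprod_{i=1}^n V_i$ is a quasi-compact scheme, being a finite disjoint union of quasi-compact schemes. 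This exhibits the desired cover and completes the argument.

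The argument is essentially formal, and I do not expect a serious obstacle; the one point requiring care is the reduction to an affine base, since the hypothesis on $f$ only supplies quasi-compactness of $\ms X \times_{\ms Y} Y_i$ over affine $Y_i$, not directly over the merely quasi-compact scheme $Y$ or the stack $\ms Y$. It is precisely the finiteness of the affine cover $\{Y_i\}$, guaranteed by quasi-compactness of $Y$, that lets the disjoint union $\coprod_i V_i$ remain quasi-compact; without that finiteness the assembled cover could fail to be quasi-compact and the conclusion would break down.
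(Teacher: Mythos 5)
Your proof is correct and follows the same route as the paper's: take a smooth cover of $\ms Y$ by a quasi-compact scheme, pass to a finite affine cover, apply the definition of quasi-compactness of $f$ to each affine piece, and assemble the resulting quasi-compact smooth covers into a single cover of $\ms X$ by a finite disjoint union. Your write-up merely makes explicit the smoothness and surjectivity checks that the paper leaves implicit.
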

\begin{proof} 
  Let $Y \to \ms Y$ be a smooth cover by a quasi-compact scheme, and
  $\{Y_i\}$ a finite open affine cover of $Y$. By the definition
  of quasi-compact morphism, $Y_i \times_{\ms Y} \ms X$ has a smooth
  cover by a quasi-compact scheme $X_i$. The disjoint union of
  the $X_i$ then gives a quasi-compact smooth cover of $\ms X$.
\end{proof}

\begin{lem}\label{sec:quasi-compactness-3} 
  A morphism $f: \ms X \to \ms Y$ of Artin stacks is quasi-compact if
  and only if for every open quasi-compact substack $\ms Y'$ of $\ms
  Y$, the fiber product $\ms X \times _{\ms Y} \ms Y'$ is
  quasi-compact.
\end{lem}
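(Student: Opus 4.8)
The plan is to prove both implications directly from the definition of a quasi-compact morphism (Definition 4.16 of \cite{l-m-b}: every affine $Y\to\ms Y$ pulls $\ms X$ back to a quasi-compact stack), combining Lemmas \ref{sec:quasi-compactness-1} and \ref{sec:quasi-compactness-2} with the standing hypothesis that Artin stacks have quasi-compact diagonal. The forward implication is the easy half: if $f$ is quasi-compact and $\ms Y'\subset\ms Y$ is an open quasi-compact substack, then the projection $\ms X\times_{\ms Y}\ms Y'\to\ms Y'$ is a base change of $f$, hence again quasi-compact; since $\ms Y'$ is quasi-compact, Lemma \ref{sec:quasi-compactness-2} immediately yields that $\ms X\times_{\ms Y}\ms Y'$ is quasi-compact.

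For the converse I would begin with an arbitrary affine $Y\to\ms Y$ and first arrange that it factors through an open quasi-compact substack. By Lemma \ref{sec:quasi-compactness-1}, $\ms Y$ is covered by open quasi-compact substacks $\{\ms Y_\alpha\}$, whose preimages are open subschemes covering the quasi-compact scheme $Y$; finitely many therefore suffice, and $Y\to\ms Y$ factors through their finite union $\ms Y'$, which is again open and quasi-compact (the disjoint union of quasi-compact smooth covers of the finitely many $\ms Y_\alpha$ is a quasi-compact smooth cover of $\ms Y'$). Then $\ms X\times_{\ms Y}Y\cong(\ms X\times_{\ms Y}\ms Y')\times_{\ms Y'}Y$, and by hypothesis $\ms Z:=\ms X\times_{\ms Y}\ms Y'$ is quasi-compact, so everything reduces to showing $\ms Z\times_{\ms Y'}Y$ is quasi-compact.

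This last step is the one I expect to be the crux. I would pick a smooth surjection $Z\to\ms Z$ from a quasi-compact scheme; base changing along $\ms Z\times_{\ms Y'}Y\to\ms Z$ gives a smooth surjection $Z\times_{\ms Y'}Y\to\ms Z\times_{\ms Y'}Y$, reducing us to showing the algebraic space $Z\times_{\ms Y'}Y$ is quasi-compact. The key observation is that the projection $Z\times_{\ms Y'}Y\to Z\times_S Y$ is a base change of the diagonal $\ms Y'\to\ms Y'\times_S\ms Y'$, which is quasi-compact precisely because $\ms Y'$ is an Artin stack; moreover $Z\times_S Y$ is quasi-compact since $Z$ is and $Y\to S$ is quasi-compact (as $Y$ is affine and $S$ is quasi-separated). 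Hence $Z\times_{\ms Y'}Y$ is quasi-compact, completing the argument. The only genuine subtlety — and the point where the Artin-stack hypothesis is used substantively rather than formally — is exactly this appeal to quasi-compactness of the diagonal in order to control the fiber product of two quasi-compact objects over $\ms Y'$.
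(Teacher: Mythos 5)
Your proof is correct. The forward half matches the paper's up to a trivial rearrangement: the paper takes an affine smooth cover $Y\to\ms Y'$ and applies the definition of quasi-compactness of $f$ directly, while you base-change $f$ and invoke Lemma \ref{sec:quasi-compactness-2}; the content is identical. The converse, however, is organized genuinely differently. The paper keeps the whole covering family $\{\ms Y_i\}$ from Lemma \ref{sec:quasi-compactness-1}: the hypothesis makes each $\ms X\times_{\ms Y}\ms Y_i$ quasi-compact, then Remark 4.17(1) of \cite{l-m-b} (a morphism from a quasi-compact stack to a quasi-separated one is quasi-compact) upgrades this to quasi-compactness of the morphism $\ms X\times_{\ms Y}\ms Y_i\to\ms Y_i$, which is base-changed to the preimages $Y_i\subset Y$ and glued, using Zariski-locality of quasi-compactness on the target. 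You instead use quasi-compactness of the affine $Y$ at the outset to factor $Y\to\ms Y$ through a single finite union $\ms Y'$ of quasi-compact opens, and then prove by hand the fact the paper cites: your graph argument, exhibiting $Z\times_{\ms Y'}Y$ as a base change of the diagonal $\ms Y'\to\ms Y'\times_S\ms Y'$ over the quasi-compact scheme $Z\times_S Y$, is exactly the proof of that remark of \cite{l-m-b} in the case at hand. So the two routes agree in skeleton, but yours is self-contained and makes explicit where the standing conventions actually do work (quasi-compactness of the diagonal of an Artin stack, and quasi-separatedness of the base $S$ to get $Y\to S$ quasi-compact), whereas the paper is shorter by citation. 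One small point to smooth in your write-up: $Z\times_{\ms Y'}Y$ is only an algebraic space, so to conclude that $\ms Z\times_{\ms Y'}Y$ is quasi-compact in the sense of Definition 4.7.2 of \cite{l-m-b} you should add that a quasi-compact algebraic space admits an \'etale surjection from a quasi-compact scheme, which composed with your smooth surjection gives the required smooth cover by a quasi-compact scheme.
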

\begin{proof} 
  First suppose that $f$ is quasi-compact, and we are given $\ms Y'$ a
  quasi-compact open substack of $\ms Y$. Then by Lemma
  \ref{sec:quasi-compactness-1}, $\ms Y'$ has a smooth cover $Y \to
  \ms Y'$ by a quasi-compact scheme, which we can assume without loss
  of generality to be affine. Since $f$ is quasi-compact, we have that
  $\ms X \times_{\ms Y} Y$ is quasi-compact, and is a smooth cover of
  $\ms X \times_{\ms Y} \ms Y'$, so we conclude that the latter is
  quasi-compact.

  Conversely, suppose our condition is satisfied, and suppose we have
  $Y \to \ms Y$, with $Y$ an affine scheme over $S$. Taking a cover of
  $\ms Y$ by open quasi-compact substacks $\ms Y_i$, the preimages
  $Y_i$ of $\ms Y_i$ in $Y$ form an open cover.  For each $i$, by
  hypothesis we have $\ms X \times_{\ms Y} \ms Y'_i$ quasi-compact, so
  it follows by Remark 4.17(1) of \cite{l-m-b} that $\ms X \times
  _{\ms Y}\ms Y_i' \to\ms Y_i'$ is quasi-compact. Taking the base
  change to $Y$, we conclude that $\ms X \times _{\ms Y} Y_i \to
  Y_i$ is quasi-compact for each $i$, and therefore that $\ms X \times
  _{\ms Y} Y \to Y$ is quasi-compact. Thus $\ms X \times _{\ms Y} Y$
  is quasi-compact, as desired.
\end{proof}

Our last lemma is that associated functors commute with fiber products
when one morphism is an open immersion.

\begin{lem}\label{sec:quasi-compactness-5} 
  If $f:\ms X \to \ms Y$ is a morphism of stacks, and $i:\ms Y' \to
  \ms Y$ an open immersion, then the natural map
$$F_{\ms X \times _{\ms Y} \ms Y'} \to 
F_{\ms X} \times _{F_{\ms Y}} F_{\ms Y'}$$ is an isomorphism of
functors.
\end{lem}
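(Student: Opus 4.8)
The plan is to verify that the natural map is a bijection on $T$-points for every $S$-scheme $T$, since a morphism of set-valued functors is an isomorphism exactly when it is bijective at every object, and naturality in $T$ is automatic because all the constructions below are compatible with pullback. Unwinding the definition of the $2$-fibre product of stacks, an object of $(\ms X \times_{\ms Y} \ms Y')_T$ is a triple $(\mu, \eta', \phi)$ consisting of $\mu \in \ms X_T$, $\eta' \in \ms Y'_T$, and an isomorphism $\phi : f(\mu) \risom i(\eta')$ in $\ms Y_T$. The natural map sends its isomorphism class to the pair $([\mu],[\eta'])$, which lies in $F_{\ms X}(T)\times_{F_{\ms Y}(T)} F_{\ms Y'}(T)$ precisely because $\phi$ witnesses that $f(\mu)$ and $i(\eta')$ have equal class in $F_{\ms Y}(T)$.

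For surjectivity, which needs no hypothesis on $i$, I would begin from a pair $([\mu],[\eta'])$ whose images agree in $F_{\ms Y}(T)$. By definition of the fibre product of functors this means $f(\mu)$ and $i(\eta')$ are isomorphic in $\ms Y_T$; choosing any such isomorphism $\phi$ yields a triple $(\mu,\eta',\phi)$ mapping to the given pair.

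The real content is injectivity, and this is where the open-immersion hypothesis enters. The key observation is that an open immersion $i$ is representable by open immersions, hence a monomorphism of stacks, so that for every $T$ the induced functor $i|_T : \ms Y'_T \to \ms Y_T$ is fully faithful. Granting this, suppose $(\mu_1,\eta'_1,\phi_1)$ and $(\mu_2,\eta'_2,\phi_2)$ have the same image, so that $\mu_1 \simeq \mu_2$ in $\ms X_T$ and $\eta'_1 \simeq \eta'_2$ in $\ms Y'_T$. Fixing an isomorphism $a : \mu_1 \risom \mu_2$, the composite $\phi_2 \circ f(a) \circ \phi_1^{-1}$ is an isomorphism $i(\eta'_1) \risom i(\eta'_2)$ in $\ms Y_T$; full faithfulness of $i|_T$ then produces a unique $b : \eta'_1 \risom \eta'_2$ with $i(b) = \phi_2 \circ f(a) \circ \phi_1^{-1}$. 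By construction $(a,b)$ is an isomorphism of triples, so the two classes coincide in $F_{\ms X \times_{\ms Y} \ms Y'}(T)$.

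The only step demanding care is the assertion that an open immersion induces a fully faithful functor on each fibre category, which I expect to be the main (though standard) obstacle: it follows by noting that the pullback of $i$ along any $T \to \ms Y$ is an open immersion of algebraic spaces, in particular a monomorphism, and a representable monomorphism of stacks is exactly one for which each $i|_T$ is fully faithful. Everything else is a formal manipulation of triples.
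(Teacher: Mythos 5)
Your proof is correct and takes essentially the same approach as the paper's: both describe $T$-objects of the $2$-fibre product as triples $(\mu,\eta',\phi)$, note that surjectivity on isomorphism classes is formal, and locate the entire content in injectivity, which rests on the same key property of open immersions — the paper states it as surjectivity of $\Aut(\eta_{\ms Y'}) \to \Aut(\eta_{\ms Y'}|_{\ms Y})$, while you state it as full faithfulness of $i|_T$ coming from the representable-monomorphism property, which is the same standard fact. Your write-up merely spells out in more detail what the paper's terse argument leaves implicit.
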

\begin{proof} 
  Indeed, a $T$-object of $\ms X \times _{\ms Y} \ms Y'$ consists of
  $T$-objects $\eta_{\ms X}$ and $\eta_{\ms Y'}$ of $\ms X$ and $\ms
  Y'$, together with an isomorphism $\eta_{\ms X}|_{\ms Y} \risom
  \eta_{\ms Y'}|_{\ms Y}$. In general, one could have two such objects
  glued by two different isomorphisms which are not related by
  automorphisms of $\ms X$ and $\ms Y'$. However, when $\ms Y'$ is an
  open substack of $\ms Y$, the natural map $\Aut(\eta_{\ms Y'}) \to
  \Aut(\eta_{\ms Y'}|_{\ms Y})$ is surjective, so this does not occur.
  Therefore, when we pass to isomorphism classes, we get the desired
  isomorphism of functors.
\end{proof}

Finally, we can prove Proposition \ref{P:qc-funcy}.

\begin{proof}[Proof of Proposition \ref{P:qc-funcy}] 
  By Lemma \ref{sec:quasi-compactness-3},
  $f:\ms X \to \ms Y$ is quasi-compact if and only if for all $\ms Y'$
  quasi-compact open substacks of $\ms Y$, we have 
  $\ms X \times_{\ms Y} \ms Y'$ quasi-compact. We know that open substacks 
  are in natural correspondence with open subfunctors by Lemma 
  \ref{L:open-subs} and that we can recognize when an Artin stack is 
  quasi-compact from its functor by Corollary \ref{cor:smooth-local-abs}.  
  Finally, we can recover $F_{\ms X \times _{\ms Y} \ms Y'}$ from
  $F_{\ms X}, F_{\ms Y}, F_{\ms Y'}$ from Lemma 
  \ref{sec:quasi-compactness-5}. We thus conclude that quasi-compactness
  of $f$ is isonatural, as desired.
\end{proof}

\subsection{Functorial valuative criteria}

We conclude our tour of properties of morphisms by addressing 
separatedness and properness, modifying the valuative 
criteria slightly to obtain criteria in terms of $F_f$. 

\begin{defn} 
A {\it doubled trait\/} is the non-separated scheme obtained by gluing
a trait to itself along the generic point.
\end{defn}

Given a trait $Q$, we let $T_Q$ denote the doubled trait associated to
$Q$.  There is a natural morphism $\chi:T_Q\to Q$.  Given a stack $\ms
X$, we will call an element $\eta\in F_{\ms X}(T_Q)$ \emph{constant\/}
if it has the form $\chi^{\ast}\eta'$ for some $\eta'\in F_{\ms
  X}(Q)$.  When $Q$ is implicit, it will be omitted from the notation.
 
\begin{lem}\label{L:separated-funcy}  
  Let $f:\ms X \to \ms Y$ be a morphism of Artin stacks, locally of 
  finite type, with $\ms Y$ locally Noetherian. Then separatedness of
  $f$ is isonatural.  Specifically, $f$ is separated if and only if 
  for every doubled trait $T$, an object of $F_{\ms X}(T)$ is constant
  if and only if its image in $F_{\ms Y}(T)$ is constant.
\end{lem}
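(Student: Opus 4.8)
The plan is to reduce the statement about $F_f$ to the classical valuative criterion for separatedness of the representable-on-schemes morphism $f$. Recall (Definition 7.8 of \cite{l-m-b}) that $f$ is separated if and only if its diagonal $\Delta_f:\ms X\to\ms X\times_{\ms Y}\ms X$ is proper; since $f$ is locally of finite type and $\ms Y$ is locally Noetherian, $\Delta_f$ is of finite type, so by the valuative criterion it suffices to check that $\Delta_f$ is universally closed and separated, which reduces to the condition that for every trait $Q$ with generic point $Q_\eta$, any two objects of $\ms X_Q$ together with an isomorphism between their restrictions to $Q_\eta$ that is compatible with $f$, such an isomorphism extends uniquely (if at all) over all of $Q$. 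The first step is therefore to translate this uniqueness-of-extension statement into a statement about objects on the doubled trait $T=T_Q$.

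\emph{First I would} make explicit the dictionary between gluing data and objects on $T_Q$. By the quasi-algebraicity hypothesis (more precisely, since $T_Q$ is a pushout of two copies of $Q$ along $Q_\eta$, which is handled exactly as in the pushout descent used in Proposition \ref{P:formal-crit-affine}), an object of $\ms X_{T_Q}$ is the same as a pair $(\xi_1,\xi_2)$ of objects of $\ms X_Q$ together with an isomorphism $\psi:\xi_1|_{Q_\eta}\risom\xi_2|_{Q_\eta}$, up to isomorphism. Passing to isomorphism classes, an element of $F_{\ms X}(T_Q)$ is precisely such a triple modulo the action of $\Aut(\xi_1)\times\Aut(\xi_2)$ on $\psi$; and such an element is \emph{constant}, i.e.\ of the form $\chi^*\eta'$, exactly when the two branches $\xi_1,\xi_2$ can be identified with a single object $\eta'$ glued to itself along the identity on $Q_\eta$. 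The same description applies to $\ms Y$ via $f$, compatibly with the map $F_{\ms X}(T_Q)\to F_{\ms Y}(T_Q)$.

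\emph{Next I would} connect the two conditions. Suppose $f$ is separated; given $\eta\in F_{\ms X}(T_Q)$ whose image in $F_{\ms Y}(T_Q)$ is constant, the two $\ms X$-branches $\xi_1,\xi_2$ become isomorphic after pushing to $\ms Y$ and the gluing $\psi$ becomes, over $Q_\eta$, an $f$-compatible isomorphism of $\xi_1|_{Q_\eta}$ and $\xi_2|_{Q_\eta}$; separatedness of $f$ (properness of $\Delta_f$, applied to the section $Q\to\ms X\times_{\ms Y}\ms X$) forces this generic isomorphism to extend over $Q$, so $\eta$ is itself constant. Conversely, if the constancy condition on doubled traits holds, then any $f$-compatible generic isomorphism of two objects of $\ms X_Q$ produces a $T_Q$-object whose image in $F_{\ms Y}(T_Q)$ is constant, hence is constant, which says the generic isomorphism extends; this is exactly the valuative criterion establishing that $\Delta_f$ is universally closed, and combined with the finite-type hypothesis yields separatedness of $f$. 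Finally, since open substacks, constancy, and the maps $F_{\ms X}(T_Q)\to F_{\ms Y}(T_Q)$ are all visibly determined by the functors $F_{\ms X}$ and $F_{\ms Y}$ alone, the resulting criterion is manifestly isonatural.

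The main obstacle \emph{will be} the passage to isomorphism classes in the second step: an element of $F_{\ms X}(T_Q)$ records a gluing isomorphism $\psi$ only up to the automorphisms of the two branches, so constancy of $\eta$ must be shown equivalent to extendability of \emph{some} representative $\psi$, not of an arbitrarily chosen one. The point to check carefully is that because $\chi:T_Q\to Q$ is an isomorphism on underlying reduced structure on each branch and the generic point is a scheme point, the natural map $\Aut(\xi_i)\to\Aut(\xi_i|_{Q_\eta})$ controls exactly the ambiguity, so that the constancy of the isomorphism class of $\eta$ is insensitive to the choice of representative; this is the same surjectivity-of-automorphisms mechanism used in Lemma \ref{sec:quasi-compactness-5}, and it is what makes the functorial criterion equivalent to the stacky one rather than merely implied by it.
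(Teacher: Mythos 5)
Your outline follows the paper's proof quite closely: describe elements of $F_{\ms X}(T_Q)$ lying over a fixed pair $(\widebar\xi_1,\widebar\xi_2)$ as gluing isomorphisms $\psi:\xi_1|_{Q_\eta}\risom\xi_2|_{Q_\eta}$ taken modulo the two-sided action of $\Aut(\xi_1)$ and $\Aut(\xi_2)$, identify the constant classes with the gluings that extend over $Q$, and observe that ``preimage of constant is constant'' is then a restatement of the valuative criterion of Proposition 7.8 of \cite{l-m-b} (your formulation via properness of $\Delta_f$ is the same criterion). However, your justification of the gluing dictionary is miscited: condition (3) of Definition \ref{def:quasi-algebraic} and the mechanism of Proposition \ref{P:formal-crit-affine} concern pushouts along \emph{nilpotent closed immersions} of affine schemes, whereas $T_Q$ is glued along the \emph{open} immersion $Q_\eta\inj Q$. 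What you actually need is only the stack axiom: the two branches $T_1,T_2\cong Q$ form a Zariski open cover of $T_Q$ with intersection $Q_\eta$, so $\ms X_{T_Q}\simeq\ms X_{T_1}\times_{\ms X_{Q_\eta}}\ms X_{T_2}$. This is harmless and easily repaired.

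The genuine gap is in your resolution of the representative-dependence issue, which you correctly single out as the crux. You appeal to surjectivity of $\Aut(\xi_i)\to\Aut(\xi_i|_{Q_\eta})$, calling it the mechanism of Lemma \ref{sec:quasi-compactness-5}. That surjectivity is false in general: for $\B{\G_m}$ over $Q$ it is the map $\mc O(Q)^\times\to K^\times$ to the units of the fraction field, which misses the uniformizer; indeed the paper's proof of Corollary \ref{cor:summary-absolute} uses doubled traits precisely to \emph{detect} when such restriction maps are surjective (proper inertia), so it cannot be assumed here. (Lemma \ref{sec:quasi-compactness-5} is about open immersions of \emph{stacks} $\ms Y'\subset\ms Y$, where restriction of automorphisms is bijective because open immersions are fully faithful; it says nothing about restricting an object along $Q_\eta\inj Q$.) The correct fix requires no surjectivity, only stability: if $\psi=\iota|_{Q_\eta}$ for a global isomorphism $\iota:\xi_1\risom\xi_2$, then for $a_i\in\Aut(\xi_i)$ one has $a_2|_{Q_\eta}\circ\psi\circ a_1|_{Q_\eta}=(a_2\circ\iota\circ a_1)|_{Q_\eta}$, which again extends; hence the extendable gluings form a single full double coset under the action of the \emph{global} automorphism groups --- this is exactly the paper's distinguished pseudo-coset $\ast$ --- and constancy of a class is therefore independent of the chosen representative. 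With this replacement both of your implications go through, except that in the converse direction the valuative criterion also demands the compatibility $f(\widebar\phi)=\psi$ with the given extension $\psi$ of $f(\phi)$; as in the paper, this follows because both are extensions of $f(\phi)$ and the diagonal of $\ms Y$ is separated.
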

 
\begin{proof}   
  We show that this is equivalent to the valuative criterion for  
  separatedness (Proposition 7.8 of \cite{l-m-b}). Let $T_1$ and $T_2$
  denote the two traits (canonically identified with $Q$) glued to 
  obtain $T$.  Given a stack $\ms Z$, consider the natural map $F_{\ms
    Z}(T)\to F_{\ms Z}(T_1)\times_{F_{\ms Z}(U)}F_{\ms Z}(T_2)$. 
  Given objects $\alpha_i\in\ms Z_{T_i}$, $i=1,2$, with isomorphism 
  classes $\widebar{\alpha_i}$, there is a bijection between the fiber
  of $F_{\ms Z}(T)$ over $(\widebar{\alpha}_1,\widebar{\alpha}_2)$ and
  the double coset space 
  $\aut(\alpha_2)\backslash\isom(\alpha_1|_U,\alpha_2|_U)/\aut(\alpha_1)$. (We
  can identify $\aut(\alpha_i)$ with a subgroup of $\aut(\alpha_i|_U)$
  because the diagonal of an Artin stack is assumed separated by 
  definition.)  There is a distinguished double pseudo-coset $\ast$ 
  given by the subset $\isom(\alpha_1,\alpha_2)$. (By pseudo-coset we
  mean that $\ast$ is either a single double coset or is empty.)  This
  pseudo-coset corresponds precisely to the constant objects, and is
  therefore functorial in $F_{\ms Z}$.

  With this notation, the criterion of the lemma states that for any
  pair of objects $\beta_i\in\ms X_{T_i}$, $i=1,2$, with images
  $\alpha_i\in\ms Y_{T_i}$, the map
  \begin{equation}\label{eq:blobby}
\aut(\beta_2)\backslash\isom(\beta_1|_U,\beta_2|_U)/\aut(\beta_1)\to\aut(\alpha_2)\backslash\isom(\alpha_1|_U,\alpha_2|_U)/\aut(\alpha_1)
\end{equation}
has the propery that the full preimage of $\ast$ is $\ast$.  In
particular, if an isomorphism $\phi:\beta_1|_U\simto\beta_2|_U$ has
image $f(\phi)$ which extends to an isomorphism
$\psi:\alpha_1\to\alpha_2$, we see that $\phi$ must extend to an
isomorphism $\widebar{\phi}:\beta_1\to\beta_2$.  It then follows from
the separatedness of the diagonals of $\ms X$ and $\ms Y$ that
$\widebar{\phi}$ maps to $\psi$ under $f$.  This is precisely the
valuative criterion given in Proposition 7.8 of \cite{l-m-b}.
 
Conversely, suppose $f$ is separated.  The valuative criterion
[\emph{loc.\ cit.\/}] can be stated as follows: given a trait $Q$ with
generic point $U$ and two objects $\beta_1$ and $\beta_2$ of $\ms X_Q$
with images $\alpha_1$ and $\alpha_2$ in $\ms Y_Q$, any isomorphism
$\phi:\beta_1|_U\simto\beta_2|_U$ whose image in $\ms Y_U$ extends to
an isomorphism $\alpha_1\to\alpha_2$ must extend to an isomorphism
$\beta_1\simto\beta_2$.  But this property only depends upon the image
of $\phi$ (resp.\ $f(\phi)$) in the double coset space
$\aut(\beta_2)\backslash\isom(\beta_1|_U,\beta_2|_U)/\aut(\beta_1)$
(resp.\
$\aut(\alpha_2)\backslash\isom(\alpha_1|_U,\alpha_2|_U)/\aut(\alpha_1)$).
Thus, we find that the preimage of $\ast$ under the natural map
(\ref{eq:blobby}) is $\ast$, as desired.
\end{proof}

We next move on to properness. 

\begin{lem} Suppose $f:\ms X \to \ms Y$ is a morphism of Artin stacks.
Given a trait $T$ with generic point $U$, and a $2$-commutative diagram
$$\xymatrix{{U} \ar[r] \ar[d] & {\ms X} \ar[d]^{f} \\
{T} \ar[r]\ar@{-->}[ur] & {\ms Y},}$$
consider the induced commutative diagram 
$$\xymatrix{{U} \ar[r] \ar[d] & {F_{\ms X}} \ar[d]^{F_f} \\
{T} \ar[r]\ar@{-->}[ur] & {F_{\ms Y}}.}$$
Then:
\begin{enumerate}
\item every square of the second form is induced by one of the first form;
\item if the first square admits a morphism $T \to \ms X$ making the
entire diagram $2$-commutative, then the induced map $T \to F_{\ms X}$
gives a commutative diagram when added to the second square;
\item if $\ms Y$ has proper inertia, then
  we have conversely that any morphism $T \to F_{\ms X}$ making the
  second diagram commutative yields a morphism $T \to \ms X$ making
  the first diagram $2$-commutative.
\end{enumerate}
\end{lem}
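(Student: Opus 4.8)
The plan is to run everything through the dictionary of Remark \ref{rem:2-diag-objs}, taking the fixed morphism there to be $f$ and the fixed $i$ to be the inclusion $U\inj T$ of the generic point. That dictionary presents the given $2$-commutative square as a triple $(\mu,\eta,\gamma)$ with $\mu\in\ms X_U$, $\eta\in\ms Y_T$, and $\gamma\colon\eta|_U\risom f(\mu)$, and presents a lift $T\to\ms X$ as an object $\mu'\in\ms X_T$ equipped with isomorphisms $\alpha\colon\mu'|_U\risom\mu$ and $\beta\colon\eta\risom f(\mu')$ subject to the compatibility $\gamma=f(\alpha)\circ\beta|_U$. On the functor side, a commutative square is a pair of classes $\bar\mu\in F_{\ms X}(U)$, $\bar\eta\in F_{\ms Y}(T)$ with $\bar\eta|_U=F_f(\bar\mu)$, and a dashed arrow is a class $\bar\nu\in F_{\ms X}(T)$.

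Parts (1) and (2) are then immediate. For (1), I would choose representatives $\mu$, $\eta$ of the two given classes; the equality $\bar\eta|_U=F_f(\bar\mu)$ in $F_{\ms Y}(U)$ says exactly that $\eta|_U$ and $f(\mu)$ are isomorphic, so any choice of isomorphism $\gamma$ produces a stack-level triple $(\mu,\eta,\gamma)$ inducing the given functor square. For (2), a lift $\mu'\in\ms X_T$ carries $\alpha$ and $\beta$ as above; passing to isomorphism classes, $\alpha$ gives $\overline{\mu'}|_U=\bar\mu$ and $\beta$ gives $F_f(\overline{\mu'})=\bar\eta$, which is precisely the commutativity of the augmented functor diagram.

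The content is in (3), and this is the only step I expect to require real work. A dashed arrow making the functor square commute is a class $\bar\nu\in F_{\ms X}(T)$ with $\bar\nu|_U=\bar\mu$ and $F_f(\bar\nu)=\bar\eta$. Choosing a representative $\nu\in\ms X_T$, these equalities furnish (non-canonical) isomorphisms $a\colon\nu|_U\risom\mu$ in $\ms X_U$ and $b\colon f(\nu)\risom\eta$ in $\ms Y_T$. The obvious attempt to use $\mu'=\nu$, $\alpha=a$, $\beta=b^{-1}$ satisfies every requirement except possibly the compatibility with $\gamma$: the two isomorphisms $\gamma$ and $f(a)\circ(b|_U)^{-1}$ from $\eta|_U$ to $f(\mu)$ may differ, and their discrepancy, transported along $f(a)$, is an automorphism
$$\omega:=f(a)^{-1}\circ\gamma\circ b|_U\in\Aut_{\ms Y_U}\bigl(f(\nu)|_U\bigr)$$
of the generic restriction of $f(\nu)$. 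This is exactly where proper inertia enters: the automorphism group algebraic space of $f(\nu)$ is the base change $\ms I(\ms Y)\times_{\ms Y}T$ along $f(\nu)\colon T\to\ms Y$, which is proper over the trait $T$ by hypothesis. The valuative criterion of properness therefore extends the section $\omega$ over the generic point $U$ to a section $\psi\in\Aut_{\ms Y_T}(f(\nu))$ with $\psi|_U=\omega$. Setting $\beta:=\psi\circ b^{-1}$ and $\alpha:=a$, a direct check gives $f(\alpha)\circ\beta|_U=f(a)\circ\omega\circ(b|_U)^{-1}=\gamma$, so $\mu'=\nu$ together with these isomorphisms is a lift $T\to\ms X$ making the original stack diagram $2$-commutative.

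The main obstacle is thus the extension of $\omega$ across the trait, and the only points requiring care are that proper inertia survives the base change along $f(\nu)$ (properness is stable under base change), so that the valuative criterion applies over the Noetherian trait $T$ without enlarging it, and that the composition bookkeeping for $\alpha,\beta,\gamma$ is carried out in the order dictated by Remark \ref{rem:2-diag-objs}. Everything else is formal.
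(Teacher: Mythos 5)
Your proof is correct and takes essentially the same approach as the paper's: both translate the two squares through Remark \ref{rem:2-diag-objs}, observe that the functor-level lift differs from a stack-level lift only by a discrepancy automorphism over the generic point $U$, and use proper inertia (via the valuative criterion applied to the pullback of $\ms I(\ms Y)$ to the trait, i.e.\ surjectivity of $\Aut \to \Aut|_U$) to extend that automorphism over $T$ and absorb it into one of the gluing isomorphisms. The only differences are cosmetic: you locate the discrepancy in $\Aut(f(\nu)|_U)$ and modify $\beta$, while the paper locates it in $\Aut(\eta|_U)$ and modifies $\gamma$, and you spell out the properness/valuative-criterion step that the paper leaves implicit.
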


\begin{proof} The first assertion is trivial. For the remaining claims,
the key issue to consider is that, as discussed in Remark 
\ref{rem:2-diag-objs}, the $2$-commutative square
above consists of $\mu_0 \in \ms X_U$, $\eta \in \ms Y_T$, and an
isomorphism $\alpha:f(\mu_0) \to \eta|_U$, and a map $T \to \ms X$,
given by $\mu \in \ms X_T$, allows the diagram to be filled in to
a $2$-commutative diagram if there exist isomorphisms 
$\beta:\mu_0 \to \mu|_U$ and $\gamma: f(\mu) \to \eta$ such that
$\alpha=\gamma|_U \circ f(\beta)$. Filling in the second diagram
is the same, except without the final compatibility condition on 
the isomorphisms. Thus, it is clear that if the first diagram may
be filled in to be $2$-commutative, the second one may be filled in
to be commutative. Conversely, if the second one may be filled in
to be commutative, 
$\gamma|_U \circ f(\beta) \circ \alpha^{-1} \in \Aut(\eta|_U)$, and
if $\Aut(\eta) \to \Aut(\eta|_U)$ is surjective, we can modify $\gamma$ to 
obtain
$\alpha=\gamma|_U \circ f(\beta)$, giving the desired $2$-commutativity. 
\end{proof}

An almost immediate consequence of the two lemmas is the following.

\begin{cor}\label{C:proper-easy} 
  Properness is isonatural for morphisms $f:\ms X \to \ms Y$ of Artin
  stacks, where $\ms Y$ is further supposed to be locally Noetherian
  and to have proper inertia.

  Specifically, $f$ is proper if and only if it is locally of finite
  type, quasi-compact, and separated, and if for every trait $T$ with
  generic point $U$, with morphisms $T \to F_{\ms Y}$ and $U \to
  F_{\ms X}$, there exists a trait $T'$ with generic point $U'$,
  obtained by normalizing $T'$ inside the finite field extension given
  by $U' \to U$, and morphisms making the following diagram commute:
$$\xymatrix{{U'} \ar[r] \ar[d] & {U} \ar[r] \ar[d] & 
  {F_{\ms X}} \ar[d]^{F_f} \\
  {T'} \ar[r]\ar[ur] & {T} \ar[r] & {F_{\ms Y}}.}$$
\end{cor}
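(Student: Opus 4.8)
The plan is to reduce properness to the already-established isonaturality of its constituent properties together with a functorial version of the valuative criterion for universal closedness. Recall that, for a locally-of-finite-type, quasi-compact, separated morphism $f$ (each of which is isonatural by Corollaries \ref{C:kitchen-sink} and \ref{C:proper-easy}'s predecessors, namely \ref{C:kitchen-sink}, \ref{P:qc-funcy}, and Lemma \ref{L:separated-funcy}), properness is equivalent to $f$ being universally closed, which by the valuative criterion (in the form applicable to locally Noetherian stacks) is tested by diagrams out of traits: given a trait $T$ with generic point $U$ and a $2$-commutative square with $U \to \ms X$ and $T \to \ms Y$, possibly after a finite extension of the fraction field (equivalently, after normalizing in a finite extension $U' \to U$ and forming the corresponding trait $T'$), the square can be filled in by a morphism $T' \to \ms X$ making the diagram $2$-commutative. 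The strategy is thus to show that, under the hypotheses on $\ms Y$, this stack-theoretic lifting condition is equivalent to the purely functorial lifting condition stated in the corollary.

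First I would invoke the preceding lemma to translate the valuative-criterion lifting problem from stacks to functors. That lemma asserts exactly the equivalence we need at the level of a single trait: part (1) gives that every functorial square arises from a stack-theoretic one, part (2) that a stack-theoretic filling induces a functorial filling, and part (3)---crucially using that $\ms Y$ has proper inertia, so that $\Aut(\eta) \to \Aut(\eta|_U)$ is surjective---gives the converse, that a functorial filling lifts to a $2$-commutative stack-theoretic filling. The hypothesis ``abelian or has proper inertia'' from Theorem \ref{thm:summary-mors}(8) enters here: in the abelian case one replaces the inertia-properness input by Theorem \ref{T:nanas}, which lets one recover the abelian automorphism groups and their restriction maps directly from $F_{\ms X}$, again yielding the needed surjectivity $\Aut(\eta) \to \Aut(\eta|_U)$. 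In either case the single-trait lifting problems match up exactly on the two sides.

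Having matched the lifting condition trait-by-trait, I would assemble the pieces: $f$ is proper if and only if it is locally of finite type, quasi-compact, separated, and satisfies the valuative criterion for universal closedness after finite extensions of the fraction field. The first three conditions are isonatural by the cited results, and the valuative condition has just been shown to coincide with the displayed functorial condition involving $T'$, $U'$ obtained by normalization in the finite extension $U' \to U$. The extension to $T'$ must be built into the statement precisely because the valuative criterion for stacks (as opposed to schemes) permits such extensions; the normalization $T' \to T$ inside the finite field extension is exactly the datum recorded in the displayed diagram. Since each ingredient is expressible through $F_{\ms X}$, $F_{\ms Y}$, and $F_f$, properness of $f$ is isonatural.

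The main obstacle I expect is not the assembly but the careful handling of automorphisms in the lifting step, i.e.\ ensuring that a functorial filling genuinely promotes to a $2$-commutative one rather than merely a commutative square of coarse functors. This is where the proper-inertia (or abelian-inertia) hypothesis is indispensable: without surjectivity of $\Aut(\eta) \to \Aut(\eta|_U)$ one could have a functorial lift that fails to respect the gluing isomorphism $\alpha$, so the correction of $\gamma$ by an element of $\Aut(\eta)$ used in the preceding lemma would be unavailable. Verifying that the permitted finite extension of the fraction field does not interfere with this automorphism-lifting argument---that normalizing in $U' \to U$ preserves the relevant surjectivity---is the one point requiring genuine care.
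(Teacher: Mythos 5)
Your proposal is correct and follows essentially the same route as the paper: reduce to the stack-theoretic valuative criterion for properness (the paper cites Theorem 7.10(iii) and Proposition 7.12 of \cite{l-m-b}, with condition (*) there handled by the main theorem of \cite{ol4}), use the already-established isonaturality of locally of finite type, quasi-compact, and separated, and apply the preceding lemma --- whose part (3) uses proper inertia of $\ms Y$ to get surjectivity of $\Aut(\eta) \to \Aut(\eta|_U)$ --- to identify the functorial trait-lifting condition with the stack-theoretic one. One caveat: your aside about the abelian case is out of place for this corollary and slightly wrong --- Theorem \ref{T:nanas} does not yield surjectivity of $\Aut(\eta)\to\Aut(\eta|_U)$, and the abelian case is a separate result (Proposition \ref{P:proper-abelian}) proved with doubled traits and a conjugation argument rather than by that surjectivity.
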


\begin{proof} 
  We first remark that there is a standard stack version of the
  valuative criterion for properness. This is stated as (iii) of
  Theorem 7.10 of \cite{l-m-b}, using also Proposition 7.12 of {\it
    loc.\ cit.}, and noting that condition (*) of {\it ibid.}\ is
  always satisfied, thanks to the main theorem of \cite{ol4}.

  Because being separated, quasi-compact, or locally of finite type
  are all isonatural, we need only check that our asserted valuative
  criterion is equivalent to the usual valuative criterion cited
  above.  But that follows immediately from the previous lemma.
\end{proof}

Note that in particular, if $\ms Y$ is locally Noetherian scheme or
algebraic space, properness is isonatural. We also use doubled traits
to see that having proper inertia is isonatural, which completes our list
of isonatural properties of stacks.

\begin{proof}[Proof of Corollary \ref{cor:summary-absolute}] That being
an Artin or Deligne-Mumford stack is isonatural is part of Corollary
\ref{C:recog-covers}. Next, because a stack is a gerbe (over an
algebraic space) if and only if the sheafification of the associated
functor is an algebraic space (see Remark 3.16(1) \cite{l-m-b}), we
also see that the property of being a gerbe is isonatural. 

Corollary \ref{cor:smooth-local-abs} implies that locally Noetherian,
normal, reduced, regular, and quasi-compact are each isonatural for Artin
stacks.

Finally, we see that having proper inertia is likewise isonatural,
because if $\ms X$ be a quasi-algebraic stack, $T_1$ a trait with
generic point $U$, and $\eta \in \ms X_{T_1}$, if we let $T$ be the
doubled trait obtained by gluing $T_1$ to itself along $U$, it is easy to
see that $\Aut(\eta) \to \Aut(\eta|_U)$ is surjective if and only if there 
is a unique element of $F_{\ms X}(T)$ pulling back to $\eta$ under
both restriction maps.
\end{proof}

By using doubled traits in the criterion for universal closedness, we can 
further expand the range of cases in which we can treat properness, as 
follows.

\begin{prop}\label{P:proper-abelian} Properness is isonatural for 
morphisms $f: \ms X \to \ms Y$ of Artin stacks, with $\ms Y$ Noetherian 
and abelian (see Definition \ref{d:notat-basic-defin-1}).

Specifically, $f$ is proper if and only if it satisfies all the conditions
of Corollary \ref{C:proper-easy}, and if in addition, 
for every doubled trait $T$ with generic point $U$, obtained by gluing
together traits $T_1=T_2$ along $U$, and given morphisms 
$T \to F_{\ms Y}$ and $T_1 \to F_{\ms X}$, there exists a doubled trait 
$T'$ with generic point
$U'$, obtained from $T'_1=T'_2$ the normalization of $T_1=T_2$ inside the 
finite field extension given by $U' \to U$,
and morphisms making the following diagram commute:
$$\xymatrix{{T_1'} \ar[r] \ar[d] & {T_1} \ar[r] \ar[d] & 
{F_{\ms X}} \ar[d]^{F_f} \\
{T'} \ar[r]\ar[ur] & {T} \ar[r] & {F_{\ms Y}}.}$$
\end{prop}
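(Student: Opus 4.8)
The plan is to proceed exactly as in the proof of Corollary \ref{C:proper-easy}: since separatedness, quasi-compactness, and local finite type are already known to be isonatural and are among the imposed conditions, the entire content is to match the stack-theoretic valuative criterion for universal closedness (Theorem 7.10 of \cite{l-m-b}) with the two functor-level conditions. The single-trait condition inherited from Corollary \ref{C:proper-easy} will supply \emph{existence} of a functorial filling, while the new doubled-trait condition, via the abelian hypothesis, will supply the \emph{compatibility of comparison isomorphisms} that proper inertia previously guaranteed.

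First I would treat the easier forward direction. The single-trait criterion is necessary because it follows from the stack valuative criterion simply by forgetting comparison isomorphisms. For the doubled-trait condition, given $\eta \in F_{\ms Y}(T)$ on a doubled trait $T = T_1 \cup_U T_2$ and a lift $\mu_1 \in F_{\ms X}(T_1)$ of $\eta|_{T_1}$, I would unwind $\eta$ using the double-coset description of Lemma \ref{L:separated-funcy} into objects $\eta_i \in \ms Y_{T_i}$ and a generic gluing $\psi:\eta_1|_U \risom \eta_2|_U$. Applying universal closedness on $T_2$ to the data $\mu_1|_U$, $\eta_2$, and the induced isomorphism $f(\mu_1|_U) \risom \eta_2|_U$ yields, after a finite extension, a filling $\mu_2$ and a generic isomorphism $\beta:\mu_1|_{U'} \risom \mu_2|_{U'}$ whose image matches $\psi$; gluing $\mu_1$ to $\mu_2$ along $\beta$ then gives the required element of $F_{\ms X}(T')$ restricting to $\mu_1$ on $T'_1$.

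The converse is where the abelian hypothesis does the real work. Given valuative data $(\mu_0,\eta,\alpha)$ over $(U,T)$, I would first invoke the single-trait condition to obtain a functorial filling $\nu \in F_{\ms X}(T')$ after a finite extension. Choosing a stack representative and comparison isomorphisms $\beta,\gamma$, the sole obstruction to matching $\alpha$ is the class $o := (\gamma|_{U'} \circ f(\beta)) \circ \alpha|_{U'}^{-1} \in \Aut(\eta|_{U'})$, which must be pushed into the image of $\Aut(\eta|_{T'})$. To kill it I would form the doubled trait $T' \cup_{U'} T'$, glue two copies of $\eta|_{T'}$ along $o$ to get an object $\hat\eta$ of $F_{\ms Y}$ (well-defined since, by Lemma \ref{L:separated-funcy}, it depends only on the double-coset class of $o$), and feed $\nu$ in as the lift on the first copy. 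The doubled-trait condition then produces (enlarging the finite extension if necessary) a second filling $\mu_2'$ together with a gluing $\phi$ whose image $f(\phi)$ agrees with $o$ up to the double coset by $H := \mathrm{im}(\Aut(\eta|_{T'}) \to \Aut(\eta|_{U'}))$.

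The crucial step, and the one I expect to be the main obstacle, is exactly here. For non-abelian automorphism groups the double-coset ambiguity is too coarse to pin down $o$; but because $\ms Y$ is abelian, $\Aut(\eta|_{U'})$ is abelian and every double coset by $H$ collapses to a single coset $gH$. Hence $f(\phi)$ differs from $o$ only by an element of $H$, and this residual ambiguity is precisely absorbed by the freedom to modify the comparison isomorphism $f(\mu_2') \risom \eta|_{T'}$ by $\Aut(\eta|_{T'})$. A short diagram chase, using the dictionary of Remark \ref{rem:2-diag-objs}, then exhibits $\mu_2'$, equipped with the composite generic isomorphism $\phi \circ \beta|_{U'}$ and the adjusted comparison, as a genuine stack-level filling compatible with $\alpha$. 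Carefully tracking which isomorphisms may be adjusted, and verifying that abelianness really does reduce the double coset to a coset that $H$ can cancel, is the technical heart of the argument; the remaining reduction to the cited valuative criterion is then formal.
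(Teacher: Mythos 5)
Your architecture is the same as the paper's: the forward direction applies the stack valuative criterion on the second trait (and indeed needs no abelian hypothesis), and the converse first extracts an incompatible filling from Corollary \ref{C:proper-easy}, then applies the doubled-trait condition to two copies of $\eta|_{T'}$ glued along the discrepancy automorphism, using abelianness to repair the comparison isomorphisms. However, the converse as written has an orientation error located exactly at the step you defer, and with your orientation the argument does not close. Keep your conventions: $\beta:\mu_0|_{U'}\risom\nu|_{U'}$, $\gamma:f(\nu)\risom\eta|_{T'}$, and $o=\gamma|_{U'}\circ f(\beta)\circ\alpha|_{U'}^{-1}$, so that $o\circ\alpha|_{U'}$ is the gluing induced by the filling $\nu$. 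Suppose the doubled trait is glued along $\psi\in\Aut(\eta|_{U'})$ and the condition returns $(\nu,\mu_2',\phi)$ together with comparisons $\alpha_1,\alpha_2$ satisfying $\psi\circ\alpha_1|_{U'}=\alpha_2|_{U'}\circ f(\phi)$. Writing $\alpha_1=u\circ\gamma$ with $u\in\Aut(\eta|_{T'})$ (the condition does not allow you to assume $\alpha_1=\gamma$), the candidate filling $(\mu_2',\ \phi\circ\beta,\ w\circ\alpha_2)$, for $w\in\Aut(\eta|_{T'})$ to be chosen, is compatible with $\alpha$ if and only if
$$w|_{U'}\circ\alpha_2|_{U'}\circ f(\phi)\circ f(\beta)\circ\alpha|_{U'}^{-1}
= w|_{U'}\circ\psi\circ u|_{U'}\circ o
= w|_{U'}\circ u|_{U'}\circ\psi\circ o$$
is the identity, where the last equality is abelianness. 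Since $w|_{U'}\circ u|_{U'}$ ranges over $H=\mathrm{im}\bigl(\Aut(\eta|_{T'})\to\Aut(\eta|_{U'})\bigr)$, this is achievable if and only if $\psi\circ o\in H$. The paper's choice $\psi=o^{-1}$ (prescribed gluing composed with the inverse of the induced gluing) makes this automatic; your choice $\psi=o$ requires $o^{2}\in H$, which there is no reason to expect. So the element of $F_{\ms X}$ over the doubled trait that your construction produces yields a filling whose discrepancy is $o^{2}$, not the identity, modulo $H$.

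The same computation shows that your account of where abelianness does its work is misplaced. Collapsing the double coset $HoH$ to a single coset and absorbing the leftover $H$-factor into $\alpha_2$ only normalizes the transported $f(\phi)$ so that it equals the prescribed gluing exactly; it does not by itself make $\phi\circ\beta$ compatible with $\alpha$. What abelianness must actually do---and what it does in the paper---is commute the interleaved factor $u|_{U'}\in H$ past $o$, i.e.\ render conjugation by the generic comparison isomorphisms invisible (the paper's ``conjugation of an automorphism by any two isomorphisms yields the same result''), so that, with the gluing taken to be $o^{-1}$, the two occurrences of the obstruction cancel and the residue $w|_{U'}\circ u|_{U'}$ can be killed by $w=u^{-1}$. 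This is precisely the diagram chase you set aside as the technical heart; as you have set it up it would fail, and reversing the orientation of the gluing is what makes it succeed and recovers the paper's proof.
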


\begin{proof} As before, it suffices to see that our criterion in terms
of functors and doubled traits is equivalent to the usual criterion in
terms of stacks, under the hypothesis that $\ms Y$ has abelian 
stabilizers. Once $T'$ is given, we 
can ignore the original square, and consider instead the square
$$\xymatrix{{T_1'} \ar[r] \ar[d] & {F_{\ms X}} \ar[d]^{F_f} \\
{T'} \ar[r]\ar@{-->}[ur] & {F_{\ms Y}}.}$$
To simplify notation and avoid the uncontrolled proliferation of
$'$, when we say ``after extension'' $U' \to U$ we
will assume we have replaced $U$ by $U'$, $T$ by $T'$, objects and
morphisms by their appropriate pullbacks, and so forth. We will use
Remark \ref{rem:2-diag-objs} to translate between the $2$-commutative
diagrams of the formal criterion and objects and isomorphisms of the
stacks themselves.

The map $T_1 \to F_{\ms X}$ is equivalent to an object 
$\mu_1 \in \ms X_{U}$, up to isomorphism.
The map $T \to F_{\ms Y}$ is equivalent to a pair of objects
$\eta_1 \in \ms Y_{T_1}, \eta_2 \in \ms Y_{T_2}$, and a choice 
of isomorphism $\varphi:\eta_1 |_{U} \risom \eta_2 |_{U}$, up to 
simultaneous isomorphism commuting with $\varphi$. We also assume
that $f(\mu_1) \cong \eta_1$. The desired map 
$T \to F_{\ms X}$ is then given by extending $\mu_1$ to a triple 
$(\mu_1,\mu_2,\phi)$ for $\mu \in \ms X_{T_2}$, 
$\phi:\mu_1|_U \risom \mu_2|_U$, 
with the additional restriction that there exist isomorphisms
$\alpha_1:f(\mu_1) \to \eta_1$ and $\alpha_2:f(\mu_2) \to \eta_2$,
satisfying $\varphi \circ \alpha_1|_{U} = \alpha_2|_{U} \circ f(\phi)$. 

Suppose $f$ is proper. We have by the earlier lemma that the conditions
of Corollary \ref{C:proper-easy} are satisfied, so we wish to show that
our condition on doubled traits is also satisfied. Starting with
$(\eta_1,\eta_2,\varphi)$ in $\ms Y_T$ and $\mu_1 \in \ms X_{T_1}$, and
fixing further any $\alpha_1:f(\mu_1) \to \eta_1$, applying the valuative 
criterion of
properness to $\eta_2$, $\mu_1|_U$, and $\varphi \circ \alpha_1|_U$,
after an appropriate extension there exists 
$\mu_2 \in \ms X_{T_2}$, $\beta_2:\mu_1|_U \to \mu_2|_U$, 
$\gamma_2:f(\mu_2) \to \eta_2$, such that 
$\varphi \circ \alpha_1|_{U}=\gamma_2|_{U} \circ f(\beta_2)$.
Setting the above $\alpha_2=\gamma_2$ and $\phi=\beta_2$ gives us
precisely what we wanted.
Note that this direction did not use any hypotheses on the stabilizer
being abelian.

Conversely, suppose that $f$ satisfies our criterion, and $\ms Y$ has
abelian stabilizer groups. We then want to show that $f$ satisfies
the valuative criterion for universal closedness, and is therefore
proper. Here, we are simply given $\mu_0 \in \ms X_{U}$, 
$\eta \in \ms Y_{T_1}$, and $\beta: f(\mu_0) \to \eta|_{U}$, and we
wish to show that after finite extension, there exists
$\mu \in \ms X_{T_1}$ and isomorphisms $\gamma:\mu_0 \to \mu|_{U}$
and $\alpha:f(\mu) \to \eta$ such that 
$\alpha|_{U} \circ f(\gamma) = \beta$. We first apply the criterion
of Corollary \ref{C:proper-easy} to find that after extension, we have
$\mu_1 \in \ms X_{T_1}$ and isomorphisms $\gamma_1:\mu_0 \to \mu_1|_{U}$ 
and $\alpha':f(\mu_1) \to \eta$ not necessarily satisfying any compatibility
condition. We set $\eta_1=\eta_2=\eta$, and 
$\varphi:\beta \circ f(\gamma_1)^{-1} \circ (\alpha'|_{U})^{-1}$. Our 
criterion says that after an additional extension, we have
$\mu_2 \in \ms X_{T_2}$, $\phi: \mu_1|_{U} \to \mu_2|_{U}$, and
$\alpha_i: f(\mu_i) \to \eta_i$ satisfying $\varphi \circ \alpha_1|_{U}
=\alpha_2|_{U} \circ f(\phi)$, so we have 
$\beta \circ f(\gamma_1)^{-1} \circ (\alpha'|_{U})^{-1} 
\circ \alpha_1|_{U}= \alpha_2|_{U} \circ f(\phi)$. We claim 
that if we set $\mu=\mu_2$, and $\gamma= \phi \circ \gamma_1$, and
$\alpha=\alpha' \circ \alpha_1^{-1} \circ \alpha_2$, we obtain
$\alpha|_{U} \circ f(\gamma) = \beta$, as desired.
The key observation is that
$$\alpha=\alpha_2 \circ (\alpha_2^{-1} \circ \alpha_1) \circ 
(\alpha_1^{-1} \circ \alpha') \circ (\alpha_1^{-1} \circ \alpha_2).$$
Because automorphism groups are abelian, conjugation of an 
automorphism by any two isomorphisms yields the same result, and applying
this to $(\alpha_1^{-1} \circ \alpha') |_U$ we find
$$\alpha|_{U}=\alpha_2|_{U} \circ f(\phi) \circ 
(\alpha_1^{-1} \circ \alpha') |_U \circ f(\phi)^{-1}.$$
Substituting above we easily obtain the desired identity.
\end{proof}

We have now finished proving isonaturality of all the asserted properties
of Artin stacks.

\begin{proof}[Proof of Theorem \ref{thm:summary-mors}] Isonaturality for
morphisms being locally of finite presentation is Proposition 
\ref{P:recog-lfp}, smoothness follows from Corollary 
\ref{C:recog-covers}, 
and then locally of finite type, surjective, and flat follow from Corollary 
\ref{C:kitchen-sink}. Isonaturality for morphisms being quasi-compact
is Proposition \ref{P:qc-funcy}, and separated and locally of finite type 
when the target is locally Noetherian is Lemma \ref{L:separated-funcy}. 
Finally, proper morphisms when the target is locally Noetherian and
has proper inertia is covered by Corollary \ref{C:proper-easy}, and when the 
target is locally Noetherian and has abelian stabilizers is Proposition 
\ref{P:proper-abelian}.
\end{proof}

\subsection{Functorial reconstruction of automorphism groups}
\label{sec:nanas}

In this section we describe a structure that can be used to recover
the presheaf of conjugacy classes in the inertia of any
quasi-algebraic stack.  When the stack is abelian, this permits us to
reconstruct abelian automorphism sheaves.

\begin{defn} The {\it universal binana\/} $N_{2,\ZZ}$ is the proper curve 
over $\Spec \ZZ$ obtained by gluing together two copies of $\PP^1_{\ZZ}$
to one another transversally along the $0$ and $1$ sections. Given any 
scheme $T$, the {\it binana over $T$\/}, denoted $N_{2,T}$ is 
$N_{2,\ZZ} \times T$. We denote by $0_T$ and $1_T$ the images of the 
$0$ and $1$ sections.

The binana over $T$ has the two {\it peel\/} maps $P^2_i:\PP^1_T \to N_{2,T}$ 
for $i=1,2$;
each is a closed immersion, and the intersection of their images is
precisely $0_T \cup 1_T$. 

Finally, we set the following notation: 
$N_{2,T}^0:=N_{2,T}\smallsetminus 0_T$,
$N_{2,T}^1:=N_{2,T}\smallsetminus 1_T$,
and
$N_{2,T}^{0,1}:=N_{2,T}\smallsetminus \{0_T,1_T\}$.
\end{defn}

We consider objects of functors over binanas which are constant on each
peel; the isomorphism classes can thus be thought of (at least informally)
in terms of gluing along isomorphisms over the $0$ and $1$ sections.

\begin{defn} Let $F$ be a functor from $S$-schemes to sets, $T$ a scheme
over $S$, and $\eta \in F(T)$. Given a $T$-scheme $T'$, we say an object 
$\eta' \in F(T')$ is {\it $\eta$-trivial\/} if $\eta'=\eta|_{T'}$. 
\end{defn}

\begin{defn}
Given $\eta\in F(T)$, an {\it $\eta$-binana\/} is an object
$\tilde{\eta} \in F(N_{2,T})$ satisfying the following conditions:
\begin{enumerate}
\item $\tilde{\eta}|_{N_{2,T}\smallsetminus 0_T}$ and 
$\tilde{\eta}|_{N_{2,T}\smallsetminus 1_T}$ are both $\eta$-trivial;
\item $(P^2_{1})^\ast(\tilde{\eta})$ and
  $(P^2_{2})^\ast(\tilde\eta)$ are $\eta$-trivial.
\end{enumerate}
\end{defn}

\begin{defn}
  The functor sending $T'\to T\in T\Sch$ to the set of
  $\eta_{T'}$-binanas will be called the \emph{functor of $\eta$-binanas\/} 
  and denoted $\Bin(\eta)$.
\end{defn}

\begin{defn}\label{sec:binana-recovers-band}
  Let $G$ be a sheaf of groups on a site $\Xi$.  The presheaf sending
  $R$ in $\Xi$ to the set of conjugacy classes of $G(R)$ will be
  called the \emph{presheaf of conjugacy classes of $G$\/} and 
  denoted $\Conj(G)$.
\end{defn}

\begin{lem}
  If the sheaf of groups $G$ in Definition
  \ref{sec:binana-recovers-band} is abelian then there is a canonical
  isomorphism of presheaves $G\to\Conj(G)$.
\end{lem}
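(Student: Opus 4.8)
The plan is to exhibit the canonical quotient map from a group to its set of conjugacy classes and to observe that abelianness makes it a bijection. For each object $R$ of the site $\Xi$, there is a surjection $G(R)\surj\Conj(G)(R)$ sending an element $g\in G(R)$ to its conjugacy class $[g]$. First I would check that these maps assemble into a morphism of presheaves $G\to\Conj(G)$: for any morphism $R'\to R$ in $\Xi$, the restriction $G(R)\to G(R')$ is a homomorphism of groups, and any group homomorphism carries conjugate elements to conjugate elements, so the quotient maps are compatible with restriction. This produces the canonical morphism referenced in the statement, and note that no hypothesis on $G$ is needed for this step.

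The content of the lemma is that this morphism becomes an isomorphism exactly when $G$ is abelian. Since we are comparing \emph{presheaves} (not sheaves), it suffices to verify that each component $G(R)\to\Conj(G)(R)$ is a bijection of sets. When $G(R)$ is abelian, the conjugation action of $G(R)$ on itself is trivial, i.e.\ $hgh^{-1}=g$ for all $g,h\in G(R)$, so every conjugacy class is a singleton and the quotient map $G(R)\to\Conj(G)(R)$ is a bijection. Because $G$ is an abelian sheaf of groups, $G(R)$ is abelian for every $R\in\Xi$, so this applies uniformly and the morphism $G\to\Conj(G)$ is an isomorphism of presheaves.

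There is essentially no hard step here: the only points requiring a word of justification are the naturality of the quotient map in $R$ (which follows from functoriality of conjugacy-class formation under homomorphisms) and the remark that sectionwise bijectivity is enough because we have not sheafified $\Conj(G)$. The lemma is purely formal, and its real purpose is to set up the subsequent identification of the presheaf of $\eta$-binanas with conjugacy classes in the inertia, where the abelian case is what lets one recover $\Aut(\eta)$ on the nose.
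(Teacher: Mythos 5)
Your proof is correct and coincides with what the paper intends; the paper's own proof is just the one-line remark that the claim ``follows immediately from the definition,'' and your write-up simply spells out that unwinding (naturality of the conjugacy-class quotient and sectionwise bijectivity in the abelian case, with no sheafification needed since $\Conj(G)$ is only a presheaf). Nothing is missing and nothing differs in substance.
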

\begin{proof}
  This follows immediately from the definition.
\end{proof}

\begin{prop}\label{P:binana-bipoints-1} 
Let $\ms X$ be a quasi-algebraic stack and $\widetilde a\in \ms X_T$ an
  object with isomorphism class $a\in F_{\ms X}(T)$.
  \begin{enumerate}
  \item there is a canonical isomorphism of functors 
$$\Bin(a)\simto\Conj(\aut(\widetilde a));$$
\item if $\aut(\widetilde a)$ is an abelian sheaf there is a canonical
  isomorphism $$\Bin(a)\simto\aut(\widetilde a).$$
  \end{enumerate}
Moreover, these isomorphisms are functorial in the pair $(\ms
X,\widetilde a)$.
\end{prop}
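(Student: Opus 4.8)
The plan is to make the informal description of binanas---objects glued over two sections by means of automorphisms---completely precise, and to organize the proof around the quasi-algebraicity hypotheses, which alone guarantee that the gluing and restriction behaviors match those of the inertia stack. First I would set up the dictionary: an object $\tilde\eta\in F_{\ms X}(N_{2,T})$ whose restrictions to $N_{2,T}^0$ and $N_{2,T}^1$ are $a$-trivial should be thought of as a pair of peel-objects, each isomorphic to $a|_{\PP^1_T}$, glued along $0_T$ and along $1_T$. Since $\PP^1_T$ is the peel and the two peels meet exactly along $0_T\cup 1_T$, condition (3) of Definition \ref{def:quasi-algebraic} is what lets me reconstruct $\tilde\eta$ from its values on the peels together with gluing isomorphisms: I would cover $N_{2,T}$ by the two opens $N_{2,T}^0$ and $N_{2,T}^1$ and apply the equivalence $\ms X_{U\coprod_{U_0}Z}\simeq \ms X_U\times_{\ms X_{U_0}}\ms X_Z$ locally, after reducing to the affine nilpotent-pushout situation by viewing the transversal crossing at a section as an infinitesimal thickening of its pullback to that section. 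This step gives a description of an $a$-binana as a triple consisting of $a|_{\PP^1}$ on each peel together with a pair of gluing isomorphisms $(g_0,g_1)$ over $0_T$ and $1_T$, with the $a$-triviality conditions (1) and (2) forcing each peel to be the constant object $a$.

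Next I would normalize. Because both peels are required to be $a$-trivial and $\PP^1_T$ is proper with $H^0(\PP^1)=\mathcal O$, so that $\aut(a|_{\PP^1_T})=\aut(a)$ via the inclusion of either section, I can rigidify by choosing the trivialization on, say, the first peel, which absorbs one of the two gluing isomorphisms. What survives is a single residual automorphism in $\aut(\tilde a)$ measuring the discrepancy between the two gluings at $0_T$ and $1_T$. Isomorphisms of $a$-binanas are exactly simultaneous changes of trivialization on the two peels, and tracing through how these act on the residual automorphism shows that the set of isomorphism classes is the quotient of $\aut(\tilde a)$ by simultaneous conjugation, i.e. the conjugacy-class set $\Conj(\aut(\tilde a))$. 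This yields the bijection of (1) pointwise; functoriality in $T'\to T$ is immediate because every construction commutes with pullback, giving the isomorphism of functors $\Bin(a)\simto\Conj(\aut(\tilde a))$. Part (2) is then the preceding lemma, since for abelian $\aut(\tilde a)$ the canonical map $\aut(\tilde a)\to\Conj(\aut(\tilde a))$ is an isomorphism of presheaves.

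The main obstacle I expect is the bookkeeping in the rigidification step: verifying carefully that restriction to a section identifies $\aut(a|_{\PP^1_T})$ with $\aut(\tilde a)$ (so that the residual datum really lives in the inertia and not in some larger automorphism group on the curve), and then that the equivalence relation induced by isomorphisms of binanas is precisely simultaneous conjugation rather than, say, a two-sided double-coset relation as appeared in Lemma \ref{L:separated-funcy}. The transversal (as opposed to tangential) nature of the crossing matters here, as does the fact that the two peels are identified with the \emph{same} $\PP^1$, which is what collapses a potential double-coset quotient down to a single conjugation. Finally, for the closing claim about functoriality in the pair $(\ms X,\tilde a)$, I would simply observe that a $1$-morphism $\ms X\to\ms X'$ sending $\tilde a$ to $\tilde a'$ induces compatible maps on binanas and on inertia that respect all the above identifications, so the diagram of isomorphisms commutes by naturality.
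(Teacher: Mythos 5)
Your core dictionary step has a genuine gap. You propose to reconstruct a binana from its two peel restrictions together with gluing isomorphisms over the $0$ and $1$ sections, justified by condition (3) of Definition \ref{def:quasi-algebraic}. But that condition concerns only \emph{nilpotent} closed immersions $U_0\inj U$, i.e.\ infinitesimal thickenings, where the pushout is a homeomorphism onto the third scheme. The crossing in $N_{2,T}$ is instead a pinching: locally at a node the ring is $A[x,y]/(xy)\cong A[x]\times_A A[y]$, a fiber product along the surjections $A[x]\to A$ and $A[y]\to A$, whose associated closed immersions are not nilpotent (equivalently, the normalization $\PP^1_T\sqcup\PP^1_T\to N_{2,T}$ is not a homeomorphism). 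So there is no way to ``view the transversal crossing as an infinitesimal thickening,'' and quasi-algebraicity gives no equivalence between $\ms X_{N_{2,T}}$ and $\ms X_{\PP^1_T}\times_{\ms X_{(0_T\cup 1_T)}}\ms X_{\PP^1_T}$. Descent along such Ferrand-type pushouts is a genuinely stronger property, not assumed here, and the proposition is asserted for arbitrary quasi-algebraic stacks, which need not even be algebraic. Covering first by the opens $N_{2,T}^0$ and $N_{2,T}^1$ does not rescue the reduction: each of these opens still contains a node, so the same problem recurs ``locally.''

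The paper's proof is engineered precisely to avoid this. It uses only Zariski descent for the open cover $\{N_{2,T}^0,N_{2,T}^1\}$: condition (1) of the binana definition makes the restrictions to these opens $a$-trivial, so a binana is encoded by the descent datum over $N_{2,T}^{0,1}$, namely a pair of sections $(\varphi_1,\varphi_2)$ of $\Aut(\widetilde a)$ over the two punctured lines $\PP^1_T\smallsetminus\{0_T,1_T\}$ (not over the sections $0_T,1_T$, as in your picture), and condition (2) becomes the requirement that each $\varphi_i$ be a coboundary of sections over $\PP^1_T\smallsetminus 0_T$ and $\PP^1_T\smallsetminus 1_T$. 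The conjugacy-class identification then proceeds by showing that the relevant automorphism sections extend over partial normalizations and are therefore constant. Your subsequent steps---the rigidification, the collapse to conjugation rather than a double-coset relation, the appeal to the preceding lemma for part (2), and the functoriality remarks---match the intended structure, but as written the argument rests on a descent property you do not have. (A smaller point: your identification $\aut(a|_{\PP^1_T})=\aut(a)$ via $H^0(\PP^1,\ms O)=\ms O$ implicitly treats $\Aut(\widetilde a)$ as affine over $T$; in general it is only a separated group algebraic space, so constancy of sections over $\PP^1_T$ needs the group-scheme argument the paper gestures at.)
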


\begin{proof} The hypothesis that $\ms X$ is quasi-algebraic implies
in particular that $\Aut(\widetilde a)$ is a group scheme over $T$.

Condition (1) in the definition of an $a$-binana implies
that $a$-binanas may be understood in terms of gluing $a$-trivial
families on $N_{2,T}^0$ and $N_{2,T}^1$ along the intersection
$N_{2,T}^{0,1}$, which is isomorphic to 
$(\PP^1_T \smallsetminus \{0_T, 1_T\})\coprod  
(\PP^1_T \smallsetminus \{0_T, 1_T\}).$
Thus, an $a$-binana is determined by the data of two sections 
$\varphi_1,\varphi_2$ of 
$\Aut(\widetilde a)$ over $\PP^1_T \smallsetminus \{0_T, 1_T\}$; we think of the
pair $(\varphi_1,\varphi_2)$ as a section of $\Aut(\widetilde a)$ over
$N_{2,T}^{0,1}$. Condition (2) 
is precisely the restriction that each of these sections must be expressible 
as the difference of sections of $\Aut(\widetilde a)$ over 
$\PP^1_T\smallsetminus 0_T$ and over $\PP^1_T\smallsetminus 1_T$. 

Two $a$-binanas obtained from gluing along $(\varphi_1,\varphi_2)$
and $(\varphi'_1,\varphi'_2)$ are isomorphic if and only if there exists
sections $\alpha_0,\alpha_1$ of $\Aut(\widetilde a)$ over 
$N_{2,T}^0$ and $N_{2,T}^1$ respectively, such that 
$(\varphi'_1,\varphi'_2) \circ \alpha_0|_{N_{2,T}^{0,1}} =
\alpha_1|_{N_{2,T}^{0,1}} \circ (\varphi_1,\varphi_2)$.

We now construct a map from $\Aut(\widetilde a)$ to the set of $a$-binanas.
Given $\varphi \in \Aut(\widetilde a)$ (over the base scheme $T$ itself), 
we glue along the constant automorphisms $(\id, \varphi)$ to obtain
a binana. Being constant, there is no problem with extending either of
them to $\PP^1_T$, so condition (2) is satisfied, and we obtain an 
$a$-binana. We wish to show that two binanas obtained in this way
from $\varphi$ and $\varphi'$ are the same if and only if $\varphi$
and $\varphi'$ are conjugate to one another in $\Aut(\widetilde a)$, and that
every $a$-binana is obtained in this way.

For the first assertion, $\varphi$ and $\varphi'$ yield the same 
$a$-binana if and only if there exist $\alpha_0$ and $\alpha_1$ as
above with 
$(\id,\varphi) \circ \alpha_0|_{N_{2,T}^{0,1}} =
\alpha_1|_{N_{2,T}^{0,1}} \circ (\id,\varphi')$, which is equivalent to
$\alpha_0=\alpha_1$ after restriction to the first copy of 
$\PP^1_T \smallsetminus \{0_T,1_T\}$, and 
$\alpha_0=\varphi^{-1} \alpha_1 \varphi'$ after restriction to the second
copy of
$\PP^1_T \smallsetminus \{0_T,1_T\}$. 

Suppose such $\alpha_i$ exist. Since $\varphi$ and $\varphi'$ are
constant, this implies that $\alpha_0$ and $\alpha_1$ can be extended
over the partial normalizations of $N_{2,T}$ over $0_T$ and $1_T$
respectively, which implies they are themselves constant, since
$\Aut(\widetilde a)$ is a group scheme.  Hence, by looking at the
first copy of $\PP^1_T \smallsetminus \{0_T,1_T\}$, the $\alpha_i$ are
also globally equal. Looking at the second copy of $\PP^1_T
\smallsetminus \{0_T,1_T\}$ gives us $\varphi=\alpha_1 \varphi'
\alpha_0^{-1}=\alpha_0 \varphi' \alpha_0^{-1}$, and since $\alpha_0$
is constant, we find that $\varphi$ and $\varphi'$ are conjugate, as
desired. Conversely, it is clear that if $\varphi = \alpha \varphi'
\alpha^{-1}$, setting $\alpha_0$ and $\alpha_1$ equal to the constant
sections obtained from $\alpha$ yields an isomorphism between the
$a$-binanas obtained from $\varphi$ and $\varphi'$.

It remains to see that given an $a$-binana coming from a pair 
$(\varphi_1,\varphi_2)$, there is some $\varphi \in \Aut(\widetilde a)$ yielding
the same $a$-binana. By hypothesis, there exist 
$\alpha_{0,1},\alpha_{0,2}$ sections of $\Aut(\widetilde a)$ over 
$\PP^1_T\smallsetminus 0_T$ and 
$\alpha_{1,1},\alpha_{1,2}$ sections of $\Aut(\widetilde a)$ over
$\PP^1_T\smallsetminus 1_T$ with 
$\varphi_1=\alpha_{1,1}^{-1} \circ \alpha_{0,1}$ and 
$\varphi_2=\alpha_{1,2}^{-1} \circ \alpha_{0,2}$ after restriction to 
$\PP^1_T \smallsetminus \{0_T,1_T\}$. If we modify $\alpha_{0,2}$
and $\alpha_{1,2}$ by the constant sections coming from 
$\alpha_{0,1}|_{0_T} \circ \alpha_{0,2}^{-1}|_{0_T}$, we can glue
$\alpha_{0,1}$ and $\alpha_{0,2}$ to obtain a section $\alpha_0$ of
$\Aut(\widetilde a)$ over $N_{2,T}^0$. Define $\alpha_1$ over
$N_{2,T}^1$ to be obtained by gluing $\alpha_{1,1}$ to 
$(\alpha_{1,1}|_{1_T} \circ \alpha_{1,2}|_{1_T}^{-1})
\circ \alpha_{1,2}$.
Then we see that $\alpha_0$ and $\alpha_1$ define an isomorphism between
the $a$-binana obtained by gluing along $(\varphi_1,\varphi_2)$
and the one obtained from 
$\alpha_{1,1}|_{1_T} \circ \alpha_{1,2}|_{1_T}^{-1} \in \Aut(\widetilde a)$.
\end{proof}

In order to recover the composition law on automorphism groups,
we now introduce a further structure.

\begin{defn} The {\it universal trinana\/} $N_{3,\ZZ}$ is the proper curve 
over $\Spec \ZZ$ obtained by gluing together three copies of 
$\PP^1_{\ZZ}$ transversally along the $0$ and $1$ sections. Given a
scheme $T$, the {\it trinana over $T$\/}, denoted $N_{3,T}$, is
$N_{3,\ZZ} \times T$. As before, we denote by $0_T$ and $1_T$ the images 
of the $0$ and $1$ sections.

The trinana over $T$ has three peel maps $P^3_i:\PP^1_T \to N_{3,T}$ for
$i=1,2,3$; 
each is again a closed immersion, and the intersection of any two of 
their images is precisely $0_T \cup 1_T$.
Finally, there are three {\it bipeel\/} maps
$P_{i,j}:N_{2,T} \to N_{3,T}$ for $(i,j)=(1,2),(1,3),(2,3)$. Each is
again a closed immersion, and we have $P_{i,j} \circ P^2_1=P^3_i$
and $P_{i,j} \circ P^2_2=P^3_j$.

Finally, we set the following notation: 
$N_{3,T}^0:=N_{3,T}\smallsetminus 0_T$,
$N_{3,T}^1:=N_{3,T}\smallsetminus 1_T$,
and
$N_{3,T}^{0,1}:=N_{3,T}\smallsetminus \{0_T,1_T\}$.
\end{defn}

\begin{defn}
  Given $\eta\in F(T)$, an \emph{$\eta$-trinana\/} is an object $\eta'$ of
  $F(N_{3,T})$ such that 
  \begin{enumerate}
  \item $\tilde{\eta}|_{N_{3,T}\smallsetminus 0_T}$ and 
$\tilde{\eta}|_{N_{3,T}\smallsetminus 1_T}$ are both $\eta$-trivial;
\item $(P^3_i)^{\ast}(\eta')$ is $\eta$-trivial for $i=1,2,3$.
  \end{enumerate}
\end{defn}

\begin{defn}
  Given $\eta\in F(T)$, the functor which assigns to any $T'\to T\in
  T\Sch$ the set of $\eta_{T'}$-trinanas will be called the
  \emph{functor of $\eta$-trinanas\/} and denoted $\Tri(\eta)$.
\end{defn}

\begin{defn} Given $\eta\in F(T)$ as above, an $\eta$-binana
  $\tilde{\eta}$ and an $\eta$-binana 
$\tilde{\eta}'$, an {\it  
$(\tilde{\eta},\tilde{\eta}')$-trinana\/} is an object 
$\mu \in F(N_{3,T})$ such that:
\begin{enumerate}
\item $\mu|_{N_{3,T}^0}$ and $\mu|_{N_{3,T}^1}$
are both $\eta$-trivial; 
\item we have
$P_{1,2}^* \mu = \tilde{\eta}$ and $P_{2,3}^* \mu = \tilde{\eta}'$.
\end{enumerate}
\end{defn}

\begin{defn}\label{sec:definitions-1}
  The three bipeel morphisms yield a diagram of functors
\begin{equation}
  \label{eq:2}
  \xymatrix{\Tri(\eta)\ar[rr]^-{P_{1,2}^\ast\times P_{2,3}^\ast}\ar[d]_{P_{1,3}^{\ast}}&&\Bin(\eta)\times\Bin(\eta)\\
    \Bin(\eta)&& }
\end{equation}
which we will call the \emph{fundamental diagram of $\eta$-nanas\/}.
\end{defn}

\begin{prop}\label{P:gp-law-nana}
  Given a quasi-algebraic stack $\ms X$, a scheme $T$, and $\widetilde 
  a \in \ms X_T$ with image $a\in F_{\ms X}(T)$, the group
  $\Aut(\widetilde a)$ is abelian if and only if the horizontal arrow
  in the fundamental diagram of Definition \ref{sec:definitions-1} is
  a bijection.  In this case, the composition law is given by the
  vertical arrow in the fundamental diagram, via the
  isomorphism of Proposition \ref{P:binana-bipoints-1}(2).
\end{prop}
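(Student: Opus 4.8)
The plan is to carry out for trinanas the same explicit gluing analysis used in the proof of Proposition \ref{P:binana-bipoints-1}, and then to read off both the group law and the abelian criterion from the resulting description. Write $G:=\Aut(\widetilde a)$, a group scheme over $T$ by quasi-algebraicity. Exactly as in the binana case, condition (1) in the definition of an $a$-trinana lets me describe such a trinana by gluing $a$-trivial families on $N_{3,T}^0$ and $N_{3,T}^1$ along $N_{3,T}^{0,1}\cong\coprod_{i=1}^3(\PP^1_T\smallsetminus\{0_T,1_T\})$, i.e. by a triple $(\psi_1,\psi_2,\psi_3)$ of sections of $G$ over $\PP^1_T\smallsetminus\{0_T,1_T\}$, and condition (2) forces each $\psi_i$ to be extendable, as before. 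Since $P_{i,j}\circ P^2_1=P^3_i$ and $P_{i,j}\circ P^2_2=P^3_j$, the bipeel restriction $P_{i,j}^\ast$ sends the trinana $(\psi_1,\psi_2,\psi_3)$ to the binana $(\psi_i,\psi_j)$; this already identifies the three legs of the fundamental diagram with binana restrictions.

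The key step is to classify $a$-trinanas up to isomorphism. Mirroring the normalization in the proof of Proposition \ref{P:binana-bipoints-1} — and using, as there, that a section of $G$ over $\PP^1_T$ is necessarily constant — I first trivialize the family on each of the three peels (possible since each $\psi_i$ is extendable, so $(P^3_i)^\ast$ is $a$-trivial). The trinana is then reconstructed by gluing three trivial families along the two nodes, and the gluing is recorded by the node-comparison elements $m_0^{ij},m_1^{ij}\in G(T)$ of the chosen peel-trivializations at $0_T$ and $1_T$. These satisfy the cocycle relations $m_\ast^{ik}=m_\ast^{ij}m_\ast^{jk}$, and changing the peel-trivializations by constants $u_i\in G(T)$ replaces $m_\ast^{ij}$ by $u_i^{-1}m_\ast^{ij}u_j$. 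Using this freedom I normalize $m_0^{12}=m_0^{23}=e$ (hence $m_0^{13}=e$), after which the residual freedom is simultaneous conjugation by a single $u\in G(T)$ on the pair $(m_1^{12},m_1^{23})$. This exhibits $\Tri(a)\cong(G\times G)/G$ (simultaneous conjugation), functorially in $T'\to T$, with $P_{1,2}^\ast=[m_1^{12}]$ and $P_{2,3}^\ast=[m_1^{23}]$, and, crucially via the cocycle $m_1^{13}=m_1^{12}m_1^{23}$, with $P_{1,3}^\ast=[m_1^{12}m_1^{23}]$ under the identification $\Bin(a)\cong\Conj(G)$ of Proposition \ref{P:binana-bipoints-1}.

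Granting this description, both assertions fall out. The horizontal arrow becomes the natural map $(G\times G)/G\to\Conj(G)\times\Conj(G)$, $[(x,y)]\mapsto([x],[y])$, which is always surjective. When $G$ is abelian both sides are $G(T')\times G(T')$ and the map is the identity, hence a bijection; moreover $\Bin(a)\cong G$ by Proposition \ref{P:binana-bipoints-1}(2), and the relation $m_1^{13}(m_0^{13})^{-1}=(m_1^{12}(m_0^{12})^{-1})(m_1^{23}(m_0^{23})^{-1})$ shows that the vertical arrow $P_{1,3}^\ast$ computes the product of the two binana classes, i.e. the composition law. Conversely, if $G$ is not abelian I produce a $T'$-point failure of injectivity: choose $T'$ and $x,g\in G(T')$ with $gxg^{-1}\neq x$ (such $T'$ exists exactly when $G$ is nonabelian), and observe that the two trinanas represented by $(x,x)$ and $(x,gxg^{-1})$ have the same image $([x],[x])$ in $\Conj(G)\times\Conj(G)$ but are not simultaneously conjugate, since any $u$ with $uxu^{-1}=x$ and $uxu^{-1}=gxg^{-1}$ would force $x=gxg^{-1}$. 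Hence the horizontal arrow is not injective, completing the equivalence.

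I expect the main obstacle to be the middle step: setting up the trinana gluing data so that the isomorphism relation is transparently simultaneous conjugation, and in particular pinning down the node cocycle relation $m_1^{13}=m_1^{12}m_1^{23}$ with the correct conventions, since this is precisely the ingredient that converts the vertical arrow into the composition law. The constancy of sections of $G$ over $\PP^1_T$, already exploited in the proof of Proposition \ref{P:binana-bipoints-1}, is what makes the node-comparison elements well-defined global sections over $T$, and it must be invoked with care.
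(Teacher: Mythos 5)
Your strategy is correct in substance, but it is organized quite differently from the paper's proof, and one step in the middle needs a different justification than the one you invoke.

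What you do differently: the paper never establishes your structural claim $\Tri(a)\cong(G\times G)/G$. It argues the two directions directly: the ``standard'' trinana $(\id,\varphi,\varphi\varphi')$ always lies over a given pair of binanas (so the horizontal arrow is always surjective); when $G$ is non-abelian it exhibits two non-isomorphic trinanas with the same bipeel restrictions (essentially your $(x,x)$ versus $(x,gxg^{-1})$ example, in the form $(\id,\varphi,\id)$ versus $(\id,\varphi,\varphi\gamma^{-1}\varphi^{-1}\gamma)$, distinguished by $P_{1,3}^{\ast}$); and when $G$ is abelian it constructs by hand an isomorphism from an arbitrary trinana over $(b,b')$ to the standard one, using the abelian hypothesis \emph{inside} the geometric gluing argument. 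Your route---classify $\Tri(a)$ once and for all with no abelian hypothesis, then do pure group theory---is cleaner: it separates the geometry from the algebra, treats both directions and the composition-law statement uniformly, and the classification is in fact true; it follows from the same two inputs the paper uses for binanas, namely the coboundary description coming from condition (2) and rigidity of sections of $G$ over $\PP^1_T$.

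The step that needs repair is your justification of that classification: you say the trinana ``is reconstructed by gluing three trivial families along the two nodes.'' As literally stated this is descent along the normalization $\coprod_{i=1}^3\PP^1_T\to N_{3,T}$, which is finite but not flat, hence not an fppf covering; neither the stack axioms nor quasi-algebraicity (whose pushout condition (3) concerns only nilpotent thickenings) provide such pinching descent, so the node-comparison elements cannot be taken as a definition of the object. The fix is to remain inside the Zariski-cover description you set up at the start: condition (2) lets you write each $\psi_i=\alpha_{1,i}^{-1}\alpha_{0,i}$ with $\alpha_{0,i}$ a section of $G$ over $\PP^1_T\smallsetminus 0_T$ and $\alpha_{1,i}$ over $\PP^1_T\smallsetminus 1_T$; adjusting by constants so the pieces match at the nodes and using rigidity of sections of $G$ over $\PP^1_T$, one shows every trinana is isomorphic to a constant-glued one, and that isomorphisms between constant-glued trinanas are given by a single pair $(u,v)\in G(T)^2$ acting by $\psi_i\mapsto v\psi_i u^{-1}$. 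Your node-comparison elements are then recovered as the values $\alpha_{1,i}(0_T)$ and $\alpha_{0,i}(1_T)$, and normalizing $\psi_1=\id$ yields exactly your $(G\times G)/G$ with bipeels $[x],[y],[xy]$ (up to the inversion conventions you already flagged). With that substitution, the rest of your argument---surjectivity of $[(x,y)]\mapsto([x],[y])$, bijectivity if and only if $G$ is abelian, and the vertical arrow computing the product---is correct and proves the proposition.
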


\begin{proof} As in the case of $a$-binanas, we see that condition
(1) for a trinana means that it is determined by a triple of sections 
$(\psi_1, \psi_2, \psi_3)$ of $\Aut(\widetilde{a})$ over 
$\PP^1_T\smallsetminus \{0_T,1_T\}$. If $b$ and $b'$
are $a$-binanas represented by $(\varphi_1,\varphi_2)$ and 
$(\varphi_1',\varphi_2')$
respectively, then condition (2) simply requires isomorphisms between
the binanas obtained from $(\psi_1,\psi_2)$ and $(\varphi_1,\varphi_2)$,
and $(\psi_2,\psi_3)$ and $(\varphi_1',\varphi_2')$. Moreover, we know
from the proof of the previous proposition that without loss of generality,
we can set $(\varphi_1,\varphi_2)=(\id,\varphi)$ and 
$(\varphi_1',\varphi_2')=(\id,\varphi')$, where $\varphi$ and $\varphi'$
are constant sections of $\Aut(\widetilde{a})$. Now, it is easy to check that
if we set $(\psi_1,\psi_2,\psi_3)=(\id,\varphi_1,\varphi_1 \varphi_2)$
we obtain an $(b,b')$-trinana, so our assertion is 
that this is the unique possibility if and only if $\Aut(\widetilde{a})$ is abelian. 

One direction is clear: if $\Aut(\widetilde{a})$ is 
non-abelian, then by choosing $\varphi$ in a non-trivial conjugacy class, 
say with $\gamma^{-1} \varphi \gamma \neq \varphi$, then we see by
comparing the two representations of the same $a$-binanas given by
$\varphi'=\varphi^{-1}$ and $\varphi'=\gamma^{-1} \varphi^{-1} \gamma$,
that we have the two $(b,b')$-trinanas given by
$(\id, \varphi, \id)$ and 
$(\id, \varphi, \varphi \gamma^{-1} \varphi^{-1} \gamma)$, and these
cannot be isomorphic because their pullbacks under $P_{1,3}$ yield
non-isomorphic $a$-binanas. 

It remains to show that if $\Aut(\widetilde{a})$ is abelian, then an
$(b,b')$-trinana given by 
$(\psi_1,\psi_2,\psi_3)$ is necessarily isomorphic to the one given
by $(\id,\varphi_1,\varphi_1 \varphi_2)$. We therefore wish to 
construct $\beta_0$ and $\beta_1$, sections of $\Aut(\widetilde{a})$ over
$N_{3,T}^0$ and $N_{3,T}^1$ respectively, such that
$(\psi_1,\psi_2,\psi_3) \circ \beta_0|_{N_{3,T}^{0,1}} =
\beta_1|_{N_{3,T}^{0,1}} \circ (\id,\varphi_1,\varphi_1 \varphi_2)$.
We are given $\alpha_0$ and $\alpha_0'$ over $N_{2,T}^0$ and
$\alpha_1$ and $\alpha_1'$ over $N_{2,T}^1$, such that 
$(\psi_1,\psi_2) \circ \alpha_0|_{N_{2,T}^{0,1}} =
\alpha_1|_{N_{2,T}^{0,1}} \circ (\id, \varphi_1)$ and
$(\psi_2,\psi_3) \circ \alpha_0'|_{N_{2,T}^{0,1}} =
\alpha_1'|_{N_{2,T}^{0,1}} \circ (\id,\varphi_2)$.

We define $\beta_0$ to be $\alpha_0$ on the first and second peels,
and $\alpha_0'(\alpha_0'|_{0_T})^{-1} \alpha_0|_{0_T}$ on the
third peel. Similarly, we set $\beta_1$ to be $\alpha_1$ on the first 
and second peels, and $\alpha_1'(\alpha_1'|_{1_T})^{-1} \alpha_1|_{1_T}$ 
on the third peel. Using the abelian hypothesis, we note that
on the second peel, restricted to $\PP^1_T\smallsetminus\{0_T,1_T\}$,
we have $\alpha_1'\alpha_1^{-1} =\varphi_1 \alpha_0' \alpha_0^{-1}$.
Since $\varphi_1$ is constant, this means that $\alpha_1' \alpha_1^{-1}$
on the second peel extends over $0_T$ to all of $\PP^1_T$, and must
therefore be constant. Thus, $\alpha_0'\alpha_0^{-1}$ is also constant,
and we conclude the identity 
$(\alpha_1|_{1_T})^{-1} \alpha_1'|_{1_T} 
(\alpha_0'|_{0_T})^{-1} \alpha_0|_{0_T} = \varphi_1.$ From this, it
is easy to check that $\beta_0$ and $\beta_1$ define the required
isomorphism of trinanas.
\end{proof}

\begin{remark} It is a general fact that any morphism of stacks which
induces a bijection on isomorphism classes and isomorphisms on all
automorphism groups is an isomorphism. It thus follows from the
previous propositions that if we have a morphism $f:\ms X \to \ms Y$
of quasi-algebraic stacks inducing an isomorphism 
$F_{\ms X} \risom F_{\ms Y}$, and if either $\ms X$ or $\ms Y$ is
abelian, then $f$ is an isomorphism.
\end{remark}

\begin{cor}\label{cor:band-love}
  If $\ms X\to X$ is an abelian quasi-algebraic gerbe then the band of
  $\ms X$ can be recovered from $F_{\ms X}$.
\end{cor}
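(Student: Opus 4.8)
The plan is to exhibit the band as the sheaf of abelian groups on $X$ obtained by gluing the automorphism sheaves of local sections, and then to observe that every ingredient of this gluing is produced from $F_{\ms X}$ by the nana machinery of this section. Recall first (following Giraud) that because $\ms X\to X$ is an \emph{abelian} gerbe, its band is canonically a sheaf of abelian groups $L$ on $X$: over any $T\to X$ carrying a section $\widetilde a\in\ms X_T$ it is given by $\Aut(\widetilde a)$, and for an abelian group the conjugation isomorphism $\Aut(\widetilde a)\to\Aut(\widetilde a')$ attached to an isomorphism $\widetilde a\to\widetilde a'$ is independent of that isomorphism (the inner automorphisms are trivial), so these local automorphism sheaves glue with no choices to a single abelian sheaf. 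The base itself is recovered as the sheafification of $F_{\ms X}$, exactly as in the proof that being a gerbe is isonatural (Corollary \ref{cor:summary-absolute}); write $\pi:\ms X\to X$ for the structure morphism.

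Next I would recover the local pieces. For every scheme $T$ and every $a\in F_{\ms X}(T)$, Proposition \ref{P:binana-bipoints-1}(2) gives a canonical isomorphism $\Bin(a)\simto\aut(\widetilde a)$ (legitimate because the gerbe is abelian, so $\aut(\widetilde a)$ is an abelian sheaf), while Proposition \ref{P:gp-law-nana} recovers the abelian group law on $\Bin(a)$ through the fundamental diagram. By the functoriality of these constructions in the pair $(\ms X,\widetilde a)$, together with the recovery of the restriction maps $\aut(\widetilde a)\to\aut(\widetilde a|_{T'})$ furnished by Theorem \ref{T:nanas}, the assignment $a\mapsto\Bin(a)$ produces, purely from $F_{\ms X}$, the sheaf of abelian groups $\Aut(\widetilde a)$ with all of its restriction maps. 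In other words, $\Bin$ recovers the local automorphism sheaves, which are precisely the functor-theoretic avatar of $\pi^{\ast}L$.

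Then I would descend. Choose a cover $T\to X$ over which $\ms X$ has a section $a\in F_{\ms X}(T)$ (possible since $\ms X\to X$ is a gerbe). On a suitable refinement $W\to T\times_X T$ the two pullbacks $p_1^{\ast}a$ and $p_2^{\ast}a$ agree in $F_{\ms X}(W)$, again because any two local objects of a gerbe over a common point of the base become locally isomorphic. Consequently $p_1^{\ast}\Bin(a)|_W$ and $p_2^{\ast}\Bin(a)|_W$ are literally the same functor $\Bin(p_1^{\ast}a|_W)=\Bin(p_2^{\ast}a|_W)$, and this equality, combined with the restriction-compatibility of $\Bin$, supplies the gluing isomorphism $p_1^{\ast}\Bin(a)\simto p_2^{\ast}\Bin(a)$ defining a descent datum for $L$ on $X$. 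Descending $\Bin(a)$ along $T\to X$ then produces the band $L$ entirely from $F_{\ms X}$.

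The main obstacle will be this descent step: one must verify that the gluing isomorphisms manufactured by $\Bin$ coincide with the canonical identifications of automorphism sheaves that define the band, and that they satisfy the cocycle condition on $T\times_X T\times_X T$. Both reduce, through Proposition \ref{P:binana-bipoints-1} and its functoriality, to the triviality of conjugation for abelian groups, so the abelian hypothesis is used in an essential way and is exactly the functor-level shadow of the canonicity that makes the band of an abelian gerbe well defined; one must also check that the passage to the refinement $W$ on which the two pullbacks of $a$ coincide is harmless for sheaf descent on $X$.
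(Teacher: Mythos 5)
Your proposal is correct and its engine is the same as the paper's: both use Propositions \ref{P:binana-bipoints-1} and \ref{P:gp-law-nana} (plus the functoriality recorded in Theorem \ref{T:nanas}) to produce from $F_{\ms X}$ the abelian automorphism sheaves of local objects, with their group laws and restriction maps, and both then restrict to a cover of $X$ (recovered as the sheafification of $F_{\ms X}$) over which the gerbe admits a section $a$, so that $\Bin(a)$ computes the band there. The difference is the endgame. The paper performs no descent at all: it packages the recovered data as a functor $\Gamma$ on the fibered category $\Phi$ associated to $F_{\ms X}$, notes that composing with a lift $U\to\ms X$ identifies $\Gamma$ with the values of the band $\Lambda$ on the slice category of $U$-schemes, and then appeals to the statement that an abelian sheaf on $X$ is uniquely determined by its values on $U$-schemes; ``recovery'' is thus established as unique determination up to isomorphism, with no gluing data and no cocycle verification. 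You instead rebuild the band as a sheaf on $X$ by explicit fppf descent of $\Bin(a)$, with gluing data extracted from the local equality of $p_1^{\ast}a$ and $p_2^{\ast}a$ on a refinement $W\to T\times_X T$. The obligations you flag do discharge: since $\Bin(a)\cong\aut(\widetilde a)$ is a sheaf, morphisms between pullbacks of such sheaves can themselves be glued, so the identity maps over refinements where the pullbacks of $a$ literally coincide descend to $T\times_X T$ and satisfy the cocycle condition (both points are checked on further refinements where all pullbacks of $a$ agree); and the comparison of this gluing with the band's canonical identifications is exactly triviality of conjugation in abelian groups, via the functoriality clause of Proposition \ref{P:binana-bipoints-1}. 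What each route buys: the paper's argument is shorter and avoids all bookkeeping, while yours is genuinely constructive --- it produces the band as a sheaf rather than merely pinning down its isomorphism class --- and it makes explicit the descent data that the paper's one-line appeal to the sheaf property leaves implicit.
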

\begin{proof}
Write $\Phi$ for the category fibered in groupoids on $X$ associated to 
the functor $F_{\ms X}$, so that there is a diagram of functors
$$\xymatrix{\ms X\ar[r]^c & \Phi\ar[r]^p & X\Sch.}$$  Let $\Ab$ be the
category of abelian groups.
 
Viewing the inertia stack of $\ms X$ as a sheaf on the natural site of
$\ms X$ yields a functor $\iota:\ms X^{\circ}\to\Ab$.  According to
Propositions \ref{P:binana-bipoints-1} and \ref{P:gp-law-nana}, there is 
a functor $\Gamma:\Phi^{\circ}\to\Ab$
such that $\iota$ is isomorphic to $\Gamma\circ c$.  (The underlying
set of $\Gamma$ is just $\Bin$.)
 
Moreover, since $\ms X$ is an abelian gerbe, there is an abelian sheaf
$\Lambda:X\Sch^{\circ}\to\Ab$ on $X$ (the band of $\ms X$) and an 
isomorphism $\iota\simto
\Lambda\circ p\circ c$.  We find an isomorphism $\psi:\Gamma\circ
c\simto\Lambda\circ p\circ c$.  Let $\chi:U\to X$ be an fppf covering such
that there is a lift $q:U\to\ms X$; let $\widebar q$ denote the
composition $c\circ q$. Composing with $\psi$ yields an
isomorphism $\Gamma\circ c\circ q\simto\Lambda\circ p\circ c\circ q$,
which (via the natural isomorphisms) yields an isomorphism
$\Gamma\circ \widebar q\simto\Lambda\circ \chi$.
 
This isomorphism tells us that (via $\Bin$ and diagram \eqref{eq:2})
we can recover $\Lambda\circ\chi$ for some fppf covering $\chi:U\to
X$.  Since any abelian sheaf on $X$ is uniquely determined by its
values on the category of $U$-schemes (a simple consequence of the
sheaf property), it follows that $\Lambda$ is uniquely determined up
to isomorphism by $F_{\ms X}$, as desired.
\end{proof}
 
\begin{cor}\label{C:sheafy}
The associated functor of a quasi-algebraic stack $\ms X$ is a sheaf if 
and only if $\ms X$ has no non-trivial automorphisms.

In particular, if $\ms X$ is an Artin stack, then the associated functor
is a sheaf if and only if $\ms X$ is an algebraic space.
\end{cor}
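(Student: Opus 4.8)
The plan is to prove the two directions of the main equivalence separately, with the forward implication (sheaf $\Rightarrow$ no non-trivial automorphisms) resting on the binana machinery of Proposition \ref{P:binana-bipoints-1}, and then to deduce the ``in particular'' clause by combining this equivalence with the standard characterization of algebraic spaces among Artin stacks.

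For the easy direction, suppose $\ms X$ has no non-trivial automorphisms. Then every fiber groupoid $\ms X_T$ has only empty or singleton $\Hom$-sets, hence is equivalent to the discrete category on its set of isomorphism classes, so the canonical map $\ms X\to F_{\ms X}$ (viewing $F_{\ms X}$ as a fibered category in sets) is an equivalence of fibered categories. Since $\ms X$ is an fppf stack, descent for objects and isomorphisms translates, under this equivalence and the triviality of automorphisms, directly into the separatedness and gluing axioms for $F_{\ms X}$; hence $F_{\ms X}$ is an fppf sheaf. I expect this direction to be essentially formal.

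The substance is the converse. Assume $F_{\ms X}$ is a sheaf, fix a scheme $T$ and an object $\widetilde a\in\ms X_T$ with isomorphism class $a$, and consider the functor $\Bin(a)$. The key observation is that for any $T'\to T$ the binana $N_{2,T'}$ is covered by the two Zariski opens $N_{2,T'}^0=N_{2,T'}\smallsetminus 0_{T'}$ and $N_{2,T'}^1=N_{2,T'}\smallsetminus 1_{T'}$, and condition (1) in the definition of an $a$-binana says precisely that any $a$-binana restricts to the constant ($a$-trivial) object on each of these opens. Thus every $a$-binana agrees with the trivial binana $a|_{N_{2,T'}}$ on this Zariski cover. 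Since an fppf sheaf is in particular separated for the Zariski topology, the two objects coincide, so $\Bin(a)$ is the terminal functor. Invoking the canonical isomorphism $\Bin(a)\simto\Conj(\Aut(\widetilde a))$ of Proposition \ref{P:binana-bipoints-1}(1), we conclude that $\Conj(\Aut(\widetilde a))$ is terminal, i.e.\ $\Aut(\widetilde a)(T')$ has a single conjugacy class for every $T'$. A group with a single conjugacy class is trivial, so $\Aut(\widetilde a)$ is the trivial group scheme. As $\widetilde a$ was arbitrary, $\ms X$ has no non-trivial automorphisms.

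Finally, the ``in particular'' follows by combining the main equivalence with the standard fact that an Artin stack has trivial automorphism groups (equivalently, trivial inertia, equivalently monomorphic diagonal) if and only if it is an algebraic space. I anticipate that the only delicate point in the whole argument is the converse, and there the one thing to check carefully is that deleting the two node-sections $0_{T'},1_{T'}$ genuinely produces a Zariski open cover of $N_{2,T'}$ on which every binana is forced to be $a$-trivial; once this is in hand, separatedness of the sheaf does all the work and the conjugacy-class computation finishes the proof.
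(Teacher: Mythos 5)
Your proof is correct and takes essentially the same approach as the paper's: both rest on Proposition \ref{P:binana-bipoints-1} together with the observation that every $a$-binana agrees with the trivial one on the Zariski cover $\{N_{2,T}^0,\,N_{2,T}^1\}$, so sheaf separatedness forces $\Bin(a)$, hence $\Conj(\Aut(\widetilde a))$, to be a singleton, and both conclude the Artin case by the standard fact (Corollary 8.1.1 of \cite{l-m-b}) that an Artin stack with trivial automorphism groups is an algebraic space. The only difference is presentational: the paper argues the converse contrapositively (a non-trivial $\Aut(\eta)$ would yield two distinct $\eta$-binanas agreeing on the cover, violating the sheaf condition), while you run the same argument directly.
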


\begin{proof} Certainly, if $\ms X$ has no non-trivial automorphisms, then
the stack condition implies its associated functor is a sheaf. Conversely,
if $\ms X$ is a sheaf, we see from the above argument that every 
automorphism group must be trivial: if $\Aut(\eta) \neq \{1\}$ for
some $T$ and $\eta\in\ms X_T$, we would have at least two non-isomorphic 
$\eta$-binanas, which by definition become isomorphic after restriction
to $N_{2,T}^0$ and $N_{2,T}^1$. This would violate the sheaf condition, 
so $\Aut(\eta)=\{1\}$. 

For the last assertion, we use that by Corollary 8.1.1 of \cite{l-m-b}, 
an Artin stack is an algebraic space if and only if it has no non-trivial
automorphisms. 
\end{proof}

\section{Isonatural stacks}\label{sec:isonatural-stacks}

In this section, we examine several classes of stacks, showing that
within these classes, stacks are uniquely determined by their associated
functors. We also show that one can recognize whether a given 
quasi-algebraic stack lies in each class, proving that a stack lying
in any of the given classes is isonatural.

\subsection{Summary of results}\label{sec:stacks-summary}

In order to give the precise statements of our results, we make the
following definitions.

\begin{defn} We say that an algebraic space $X$ is \emph{strongly R1}
if $X$ is Noetherian, integral, separated, and R1. 
\end{defn}

\begin{defn}\label{def:brauer}
  Let $G=\bigoplus\m_N$ be a diagonalizable group scheme.  A
  cohomology class $\alpha\in\H^2(X,G)=\bigoplus\H^2(X,\m_N)$ will be
  called {\it Brauer\/} if the image of each
  component via $\H^2(X,\m_N)\to\H^2(X,\G_m)$ lies in $\Br(X)$.  A
  $G$-gerbe $\ms X$ will be called Brauer if its cohomology class
  $[\ms X]\in\H^2(X,G)$ is Brauer.
\end{defn}

\begin{remark}
  According to a theorem of Gabber \cite{dejong-gabber}, if the
  connected components of $X$ are 
  quasi-compact separated schemes admitting ample invertible sheaves 
  then every class as in Definition \ref{def:brauer} is Brauer.

  Furthermore, whether or not a cohomology class is Brauer is 
  independent of the choice of representation of $G$ as a direct sum.
\end{remark}

\begin{defn}\label{D:cl}
  Given a quasi-algebraic stack $\ms X$, the \emph{clean locus} of $\ms X$,
  denoted $\clean(\ms X)$ is the locus over which the inertia stack 
  $\ms I(\ms X)\to\ms X$ is an isomorphism (i.e., the locus parametrizing 
  objects with trivial automorphism sheaves) 
\end{defn}

\begin{prop} If $\ms X$ has proper inertia then the clean locus of $\ms X$
  is an open substack, and the inclusion map $\clean(\ms X) \to \ms X$
  is quasi-compact.
\end{prop}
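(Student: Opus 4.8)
The plan is to verify both assertions after base change to an arbitrary scheme $T\to\ms X$, which suffices since an open substack and the quasi-compactness of its inclusion may be checked on all schemes over $\ms X$. By condition (1) of quasi-algebraicity the inertia is representable, so $G:=\ms I(\ms X)\times_{\ms X}T\to T$ is an algebraic space over $T$; it carries a relative group structure with identity section $e$, which is a closed immersion because $\pi\colon G\to T$ is separated. Moreover $\pi$ is proper by hypothesis and finitely presented by condition (2) (proper together with locally of finite presentation). The pullback of $\clean(\ms X)$ to $T$ is the locus $W=\{t\in T:G_t\to\Spec k(t)\text{ is an isomorphism}\}$, and since all constructions below commute with base change, it is enough to show that $W$ is a quasi-compact open subset of $T$.

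It is tempting to take $W$ to be the complement of $\pi(G\smallsetminus e(T))$, the image of the non-identity locus, but this is wrong: $W$ must be the locus where $G_t$ is trivial \emph{as a scheme}, and an automorphism group can be infinitesimally nontrivial while being set-theoretically a single point (e.g.\ $\boldsymbol\mu_p$ in characteristic $p$). I would therefore proceed in two steps. First I isolate the locus of finite inertia. The locus $Q\subseteq G$ where $\pi$ is quasi-finite is open, so $G\smallsetminus Q$ is closed, and since $\pi$ is proper, hence universally closed, its image $\pi(G\smallsetminus Q)$ is closed in $T$. Setting $T_0:=T\smallsetminus\pi(G\smallsetminus Q)$, a point $t$ lies in $T_0$ exactly when every point of $G_t$ has dimension $0$, i.e.\ when $G_t$ is finite; over $T_0$ the map $\pi$ is proper and quasi-finite, hence finite. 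Note $W\subseteq T_0$, since a trivial fiber is finite.

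On $T_0$ the sheaf $\mc A:=\pi_\ast\mathcal O_G$ is a finitely presented $\mathcal O_{T_0}$-algebra, and the unit together with the augmentation $e^{\#}\colon\mc A\to\mathcal O_{T_0}$ (composing to the identity because $\pi\circ e=\id$) splits off the augmentation ideal $\mc I:=\ker e^{\#}$, a finitely presented $\mathcal O_{T_0}$-module. For $t\in T_0$ the fiber $G_t$ is trivial precisely when $\mc A\otimes k(t)=k(t)$, that is when $\mc I\otimes k(t)=0$; thus $W=T_0\smallsetminus\operatorname{Supp}(\mc I)$. Because $\mc I$ is finitely presented, $\operatorname{Supp}(\mc I)=V(\operatorname{Fitt}_0\mc I)$ is cut out by a finite-type ideal, so $W$ is open with retrocompact complement, i.e.\ a quasi-compact open of $T$. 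As $T_0$ is open in $T$, $W$ is open in $T$; Nakayama moreover gives $\mc I|_W=0$, confirming that $\pi$ is an isomorphism over $W$ and hence that $W$ is genuinely the pullback of the clean locus. Letting $T$ range over all schemes over $\ms X$ then exhibits $\clean(\ms X)$ as an open substack with quasi-compact inclusion.

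The main obstacle is the second step, specifically the need to detect \emph{scheme-theoretic} triviality of the inertia rather than mere set-theoretic triviality; this is what forces the reduction to the finite locus and the use of the augmentation/Fitting ideal, and it is exactly here that finite presentation (condition (2)) is indispensable, simultaneously yielding openness and the quasi-compactness of the inclusion.
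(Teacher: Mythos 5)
Your openness argument is correct, and it is a genuinely different route from the paper's: the paper notes that the iso-locus is compatible with base change, uses finite presentation to reduce (by standard limit arguments) to a base of finite type over $\Z$, and then simply cites Proposition 4.6.7(ii) of EGA III; you instead argue directly, via openness of the quasi-finite locus, properness to get the closed set $\pi(G\smallsetminus Q)$, finiteness of $\pi$ over $T_0$, and the Fitting ideal of the augmentation ideal. You also correctly flag the scheme-theoretic subtlety (fibers like $\m_p$ that are set-theoretically trivial but not trivial), and your Nakayama step correctly identifies $W$ with the pullback of the clean locus, so the open-substack half of the statement is fully proved by your method.

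The quasi-compactness half, however, has a genuine gap. Take $T$ affine; you must show that $W$ is quasi-compact, equivalently retrocompact in $T$. What you actually prove is that $W$ is retrocompact in $T_0$, since its complement \emph{in $T_0$} is $V(\operatorname{Fitt}_0\mc I)$ with $\operatorname{Fitt}_0\mc I$ of finite type. But $T_0=T\smallsetminus\pi(G\smallsetminus Q)$ is only known to be an open subset of $T$ with closed complement, and over a non-Noetherian base a closed subset need not be the vanishing locus of a finitely generated ideal; so $T_0$ need not be retrocompact in $T$, and ``retrocompact in $T_0$'' together with ``$T_0$ open in $T$'' does not yield ``retrocompact in $T$'' (your $W$ could be all of a non-quasi-compact $T_0$). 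Nothing in your argument controls $\pi(G\smallsetminus Q)$: to repair this you would either need to show $Q$ is a retrocompact open and invoke Chevalley's theorem on images of constructible sets under finitely presented morphisms, or---what amounts to the same thing, and is exactly the paper's proof---use finite presentation to descend $G\to T$, its group law, and its properness to a scheme of finite type over $\Z$, where every open is automatically retrocompact, and then observe that the formation of $W$ commutes with the base change back to $T$. So your closing remark that finite presentation ``simultaneously yields openness and the quasi-compactness of the inclusion'' is only half realized: your uses of it (finite presentation of $\pi_*\mc O_G$ over $T_0$ and the Fitting ideal) give openness and retrocompactness inside $T_0$, but the quasi-compactness of $T_0\inj T$ itself, and hence of $\clean(\ms X)\to\ms X$, is left unproved.
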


Note that the inclusion of an open substack is representable by
definition, so it makes sense to ask whether or not the inclusion
morphism is quasi-compact, as in (3.10) of \cite{l-m-b}.

\begin{proof} 
  This reduces to the following: if $f:G\to X$ is a proper group
  scheme of finite presentation over an affine scheme then the
  subsheaf $T$ of $X$ over which $f$ is an isomorphism is represented
  by a quasi-compact open immersion $U\inj X$. The (big) sheaf $T$ is
  compatible with base change (by definition).  Since $f$ is of finite
  presentation, we may assume that $X$ is the spectrum of a
  finite-type $\Z$-algebra (and, in particular, Noetherian).  The 
  result is then given by Proposition 4.6.7(ii) of \cite{ega31}.
\end{proof}

\begin{remark}
  In the absence of the hypothesis that $G\to X$ is proper, it is easy
  to see that the clean locus need not be open.
\end{remark}

\begin{defn}
  A quasi-algebraic stack $\ms X$ is \emph{bald\/} if it has proper
  diagonal and the clean locus $\clean(\ms X)\subset\ms X$ is
  schematically dense.
\end{defn}

 Recall that an open substack $\iota:\ms U\inj\ms X$ is
\emph{schematically dense\/} if any closed substack $\ms Z \inj \ms X$
which contains $\ms U$ is necessarily equal to $\ms X$.

The remainder of the present paper is devoted to proving the following.

\begin{thm}\label{thm:summary} The following quasi-algebraic stacks
  are isonatural:
\begin{enumerate}
\item bald Artin stacks; 
\item $\B{G}$, where $G$ is a finite \'etale group space
over a locally Noetherian algebraic space;
\item $\B{G}$, where $G$ is an abelian group space locally of finite
  presentation over an algebraic space;
\item Brauer $G$-gerbes over a strongly R1 algebraic space, with 
$G$ a diagonalizable finite group scheme;
\item Brauer $\G_m$-gerbes over an algebraic space.
\end{enumerate}
\end{thm}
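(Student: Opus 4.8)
The plan is to prove all five cases by a single two-part template that exploits the machinery of Section \ref{sec:rec-props}. Fix a class $\mc C$ among (1)--(5). First I would show that membership in $\mc C$ is itself a functorial property, so that whenever $F_{\ms Y}\simto F_{\ms X}$ with $\ms X\in\mc C$, the stack $\ms Y$ is automatically forced into $\mc C$; the isonaturality results of Corollaries \ref{cor:summary-absolute} and \ref{C:recog-covers} (recognition of Artin, Deligne--Mumford, gerbe, proper inertia, and the standard properties of the diagonal) dispose of most of the structural hypotheses, while the data particular to each class will be read off from the automorphism-recovering constructions of Section \ref{sec:nanas}. Second, and this is where the real work lies, I would exhibit for each class an explicit reconstruction recipe that rebuilds the stack, up to $1$-isomorphism, from the bare functor $F_{\ms X}$. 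Composing the two steps yields isonaturality: if $F_{\ms Y}\cong F_{\ms X}$ then $\ms Y\in\mc C$, and the reconstruction applied to the common functor produces a single stack $1$-isomorphic to both.

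For the bald case (1), the key point is that the clean locus is detectable functorially: by Proposition \ref{P:binana-bipoints-1} an object $\widetilde a$ has trivial automorphism sheaf precisely when the functor $\Bin(a)$ of $a$-binanas is trivial, so $F_{\clean(\ms X)}\subset F_{\ms X}$ is carved out functorially, and the proposition on $\clean(\ms X)$ above (applicable since a proper diagonal forces proper inertia) guarantees it is open. On the clean locus the associated functor is a sheaf (Corollary \ref{C:sheafy}), and so recovers an algebraic space. The reconstruction would then extend this algebraic space across the boundary using the proper diagonal together with schematic density: a bald stack is determined by its schematically dense open clean substack together with the automorphism data supported on the complement, and that data, with its group law, is recovered from the fundamental diagram of nanas (Propositions \ref{P:binana-bipoints-1} and \ref{P:gp-law-nana}).

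For the classifying stacks (2) and (3), the functor is $T\mapsto\H^1(T,G)$. When $G$ is abelian (case 3), Theorem \ref{T:nanas} recovers the automorphism sheaves and their restriction maps, hence recovers $G$ itself and with it $\B{G}$. When $G$ is finite \'etale (case 2), I would instead invoke the anabelian-type reconstruction sketched in the introduction (Proposition \ref{sec:classifying-stacks}): the faithful functor $\beta$ from finite groups to pointed schemes with $\pi_1(\beta(G))\simto G$ lets one extract $G$, up to outer isomorphism, from the functor of $G$-torsors, which suffices to rebuild $\B{G}$. For the gerbe cases (4) and (5), the band is abelian, so Corollary \ref{cor:band-love} recovers it from $F_{\ms X}$; since $\ms X$ is a gerbe over an algebraic space, it is classified by a class in $\H^2(X,G)$, and the Brauer hypothesis (with Gabber's theorem in the relevant cases) is exactly what pins down this class, and hence the gerbe, from the functorial data.

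The main obstacle I anticipate is the reconstruction step in the bald case, and to a lesser extent in the gerbe cases. Recovering the conjugacy presheaf or the abelian automorphism sheaf is already accomplished in Section \ref{sec:nanas}; the delicate point is \emph{globalizing} this local automorphism information into an actual stack structure -- gluing the clean algebraic space to the boundary automorphism data across a schematically dense open, controlled only by the proper diagonal, and, in the gerbe case, promoting the recovered band and its cohomology class back to a $G$-gerbe of the correct $1$-isomorphism type. The two-out-of-three principle for the $2$-Cartesian property (Lemma \ref{sec:finit-prop-1}) and the sheaf-theoretic determination of an abelian band by its values on a cover (as in the proof of Corollary \ref{cor:band-love}) are the tools I expect to carry this through.
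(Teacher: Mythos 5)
Your two-step architecture (detect membership in the class functorially, then reconstruct within the class) is exactly the paper's, and your treatments of cases (2) and (3) match its proofs: Theorem \ref{T:nanas} for abelian $G$, and the functor $\beta$ of Proposition \ref{sec:classifying-stacks} together with the band formalism (Proposition \ref{sec:classifying-stacks-8}) for finite \'etale $G$. The gaps are in cases (1), (4), and (5), precisely the places you flag as the hard part. For bald stacks, your recipe --- that the stack is determined by its schematically dense clean open substack together with the automorphism data on the complement, recovered via nanas --- is not a valid reconstruction principle: a stack is \emph{not} determined by an open substack plus its inertia over the complement (non-isomorphic gerbes with the same band are exactly a failure of this, and cases (4)--(5) of the theorem exist because of it). The paper never reconstructs or glues boundary automorphism data at all; it circumvents it. The actual mechanism is a groupoid-presentation argument: present $\ms X_1$ by a smooth groupoid $R_1\rightrightarrows Z_1$; an isomorphism $F_{\ms X_1}\simto F_{\ms X_2}$ gives an $R_1$-\emph{invariant} object of $F_{\ms X_2}$ over $Z_1$, and Proposition \ref{P:equivariant-invariant-comp} upgrades it to an $R_1$-\emph{equivariant} object of $\ms X_2$, the key point being that the cocycle condition $\delta\alpha=\id$ holds automatically because it holds over the schematically dense preimage of the clean locus (where inertia is trivial) and the diagonal of $\ms X_2$ is separated. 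Proposition \ref{P:maps-is-maps} then produces a morphism $\ms X_1\to\ms X_2$, and symmetry plus uniqueness shows it is an isomorphism. This is the missing idea in your sketch; the two-out-of-three lemma for $2$-Cartesian squares plays no role here.

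For the gerbe cases, recovering the band via Corollary \ref{cor:band-love} is the easy half; the assertion that ``the Brauer hypothesis \dots is exactly what pins down this class'' is where the argument breaks. The functor $F_{\ms X}$ does not determine the class $[\ms X]\in\H^2(X,G)$; it determines only its \emph{support} (Definition \ref{D:van-set}), i.e.\ which $T\to X$ kill it. The substantive content of Sections \ref{sec:recov-abel-gerb} and \ref{sec:g_m-gerbes} is that, for Brauer classes with diagonalizable coefficients over strongly R1 spaces (resp.\ for $\G_m$-gerbes), two classes with the same support generate the same cyclic subgroup: this requires the Picard computation for $\mathbf V(\ms L)^{\ast}$ (Lemma \ref{L:pic-mod}), the Brauer--Severi/Leray arguments (Lemma \ref{L:br-sev-sch-leray}, Corollary \ref{C:diagble-gp-muffin}), and, for $\G_m$, a \v Cech-cohomological analysis relative to $P\to X$ using twisted sheaves. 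Only after this does one obtain an automorphism $\psi$ of the band with $\psi^{\ast}[\ms Y]=[\ms X]$, at which point Giraud's theorem converts the equality of classes into an isomorphism of stacks. Note that the class is recovered only up to such an automorphism (a sign, for $\G_m$), and that the counterexamples of Section \ref{sec:context} show even this fails on small sites; so this step is the real content of cases (4) and (5) and cannot be absorbed into the phrase ``pins down the class.'' Gabber's theorem, incidentally, enters only in guaranteeing classes are Brauer in favorable geometric situations, not in the recovery of the class itself.
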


Part (1) is treated in Section \ref{sec:anti-gerbes}. Part (2) is
Proposition \ref{sec:classifying-stacks-8}, and we have already proved 
part (3) because we can recover abelian stabilizers functorially by 
Theorem \ref{T:nanas}. We prove part (4) in Proposition 
\ref{P:const-band-recov}, and part (5) in Proposition \ref{sec:g_m-gerbes-1}.

Before embarking on the proof of the theorem, we note that the results
of Section \ref{sec:rec-props} are sufficiently fine to sift out all
the classes of stacks appearing in Theorem \ref{thm:summary} from
their functors.

\begin{prop}\label{P:classes-isonatural} 
  Each of the classes of stacks in Theorem \ref{thm:summary} is
  isonatural.
\end{prop}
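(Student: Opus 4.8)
The plan is to treat the five classes of Theorem \ref{thm:summary} one at a time, in each case writing the defining conditions as a conjunction of properties already shown to be isonatural in Section \ref{sec:rec-props}, so that membership in the class can be read off from $F_{\ms X}$. Three recognition tools recur throughout: that being a gerbe over an algebraic space is isonatural (Corollary \ref{cor:summary-absolute}), together with the fact that for a gerbe the base is recovered as the sheafification of $F_{\ms X}$, which is again the functor of an algebraic space $X$; that being Artin, Deligne--Mumford, locally Noetherian, reduced, normal, regular, quasi-compact, and having proper inertia are all isonatural (Corollaries \ref{C:recog-covers} and \ref{cor:summary-absolute}); and that triviality and, in the abelian case, the full structure of the automorphism groups is recoverable (Theorem \ref{T:nanas}), as is the band of an abelian gerbe (Corollary \ref{cor:band-love}). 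A useful observation is that conditions phrased in terms of the base $X$ rather than $\ms X$ can be tested directly on $X$: since the sheafification of $F_{\ms X}$ is canonically determined and is the functor of an algebraic space, the geometric hypotheses on $X$ follow by applying the isonaturality results of Corollary \ref{cor:summary-absolute} to $X$ itself.

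For the classifying-stack classes (2) and (3), I would first recognize that $\ms X$ is a neutral gerbe over a (locally Noetherian, in case (2)) algebraic space $X$; neutrality is detected by testing whether the identity $\id_X$ lies in the image of $F_{\ms X}(X)\to X(X)$, which exhibits a global object and hence an identification $\ms X\simeq\B{G}$ with $G$ the automorphism group of that object. For (2), finiteness and \'etaleness of $G$ are encoded by requiring $\ms X$ to be Deligne--Mumford (so $G$ is unramified) with proper inertia (so $G\to X$ is finite), both isonatural; for (3), abelianness of $G$ is exactly abelianness of the automorphism sheaves, isonatural by Theorem \ref{T:nanas}, while local finite presentation of $G$ is automatic from condition (2) of Definition \ref{def:quasi-algebraic} together with Proposition \ref{P:recog-lfp}.

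For the Brauer-gerbe classes (4) and (5), I would again recognize the gerbe and its base, use that these gerbes are abelian so that the band is recovered by Corollary \ref{cor:band-love}, and then test the arithmetic shape of the recovered band (diagonalizable finite $\bigoplus\m_N$ in (4), $\G_m$ in (5)) and the geometric hypotheses on the base (strongly R1 in (4), tested on $X$ via Noetherian, integral, separated, and R1). The delicate point is the Brauer condition, which is a condition on the class $[\ms X]\in\H^2(X,G)$ rather than a manifest property of $F_{\ms X}$; I would handle it by invoking the theorem of Gabber quoted after Definition \ref{def:brauer}, under which Brauer-ness is automatic once the base and band are of the recognized form, so that no additional detection is required, while recording that outside Gabber's hypotheses one must separately check that Brauer-ness of the class is invariant under passage to the functor.

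The hardest step, and the one I expect to be the real obstacle, is class (1), the bald Artin stacks. Artin-ness is isonatural, and the clean locus $\clean(\ms X)$ is recognizable as the open substack on which automorphisms are trivial, using Theorem \ref{T:nanas} (equivalently, the locus carrying a unique $\eta$-binana). Proper diagonal requires more care than proper inertia alone, since the latter can hold while the diagonal fails to be proper, as for a non-separated algebraic space; I would detect it by combining the isonaturality of proper inertia with the valuative criteria for separatedness of the representable diagonal developed via doubled traits in Section \ref{sec:rec-props}. The genuine difficulty is schematic density of $\clean(\ms X)\subset\ms X$, which is not among the listed isonatural properties. My plan is to reduce it along a recognizable smooth atlas (Corollary \ref{C:recog-covers}) to schematic density of an open subscheme, and then to detect the latter through its characterization by associated points, which are governed by the local-finite-presentation and deformation data visible to the functor (Proposition \ref{P:func-lpf}); making this detection precise and genuinely functorial is where the main work will lie.
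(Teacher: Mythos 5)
Your overall strategy---expressing membership in each class as a conjunction of properties already shown to be isonatural, with the base $X$ recovered as the sheafification of $F_{\ms X}$ and the band recovered via Corollary \ref{cor:band-love}---is exactly the paper's, and much of what you write (gerbe recognition, neutrality as existence of a global section, abelianness via Theorem \ref{T:nanas}) matches the paper's proof. However, there are two genuine gaps. The serious one is the Brauer condition in classes (4) and (5). You propose to make it ``automatic'' by Gabber's theorem, but that theorem requires the connected components of $X$ to be quasi-compact separated \emph{schemes} admitting ample invertible sheaves, whereas the bases in Theorem \ref{thm:summary}(4),(5) are algebraic spaces, so it does not apply; and even where it applies it only concerns torsion classes, which says nothing for a $\G_m$-gerbe whose class in $\H^2(X,\G_m)$ need not be torsion. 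Your fallback (``one must separately check that Brauer-ness is invariant under passage to the functor'') is precisely the assertion to be proved, and you leave it unaddressed. The paper's detection is a one-line idea you are missing: $\ms X$ is Brauer if and only if $F_{\ms X}(P)\neq\emptyset$ for some Brauer--Severi space $P$ over $X$. Since $X$ is determined by $F_{\ms X}$, Brauer--Severi spaces over $X$ are available independently of $\ms X$, and non-emptiness of $F_{\ms X}(P)$ is manifestly functorial; the equivalence holds because an object over $P$ forces $[\ms X]$ to die in $\H^2(P,\G_m)$, whose kernel under pullback is the cyclic subgroup of $\Br(X)$ generated by $[P]$ (Lemma \ref{L:br-sev-sch-leray}), while conversely one may take $P$ representing the Brauer class of $\ms X$ itself.

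The second gap is the schematic density of $\clean(\ms X)$ in class (1). Your reduction along a smooth cover $U\to\ms X$ is correct (it is Lemma \ref{L:dense-flat} in the paper), but your plan to then detect density ``through associated points governed by the local-finite-presentation and deformation data of Proposition \ref{P:func-lpf}'' is not the right tool, and you concede it is incomplete. The missing observation is much simpler: by Proposition \ref{P:binana-bipoints-1}, a point $t:T\to U\to\ms X$ factors through the clean locus if and only if $\Bin(t)$ is the singleton functor, so the functor determines \emph{exactly which} open subscheme $U'\subseteq U$ is the preimage of the clean locus; once $U'$ is identified inside the honest scheme $U$, its schematic density is an absolute scheme-theoretic condition and needs no further functorial interpretation. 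Finally, a remark rather than a gap: for class (2) your criterion (Deligne--Mumford plus proper inertia) differs from the paper's, which instead tests \'etaleness of a section $X\to\ms X$ via Proposition \ref{P:formal-crit-affine} and finiteness via separatedness of $\ms X\to X$; your variant does work, but only after noting that the inertia of a gerbe which is an Artin stack is flat and locally of finite presentation, so that unramified (from DM) upgrades to \'etale, and proper plus quasi-finite gives finite.
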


We will use the following technical lemma.

\begin{lem}\label{L:dense-flat} Let $X$ be a scheme, and $U$ an open 
  subscheme such that the inclusion $\iota: U \to X$ is quasi-compact.
  Let $f: T \to X$ be a flat morphism. 
  \begin{enumerate}
  \item If $U$ is schematically dense in $X$ then $f^{-1}(U)$ is
    schematically dense in $T$.
  \item If $f$ is faithfully flat, then $U$ is schematically dense in
    $X$ if and only if $f^{-1}(U)$ is schematically dense in $T$.
  \end{enumerate}
\end{lem}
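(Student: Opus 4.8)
The plan is to reduce both parts to the standard cohomological characterization of scheme-theoretic density --- namely, injectivity of a single morphism of quasi-coherent sheaves --- and then to transport that injectivity across $f$ using flatness (respectively faithful flatness).

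First I would record the characterization itself. Since $\iota:U\to X$ is quasi-compact and (being an open immersion) quasi-separated, the pushforward $\iota_*\mathcal{O}_U$ is quasi-coherent, and the scheme-theoretic closure of $U$ in $X$ is the closed subscheme cut out by the ideal sheaf $\mathcal{K}:=\ker(\mathcal{O}_X\to\iota_*\mathcal{O}_U)$. By the notion of schematic density recalled just before the lemma, this closure (being the smallest closed subscheme through which $U\to X$ factors) equals $X$ precisely when $\mathcal{K}=0$; that is, $U$ is schematically dense in $X$ if and only if $\mathcal{O}_X\to\iota_*\mathcal{O}_U$ is injective. The identical statement holds on $T$: forming the Cartesian square
$$\xymatrix{f^{-1}(U)\ar[r]^-{g}\ar[d]_{j} & U\ar[d]^{\iota}\\ T\ar[r]_-{f} & X,}$$
the immersion $j$ is again quasi-compact because quasi-compactness is stable under base change, so $f^{-1}(U)$ is schematically dense in $T$ if and only if $\mathcal{O}_T\to j_*\mathcal{O}_{f^{-1}(U)}$ is injective.

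Next I would invoke flat base change. Because $f$ is flat and $\iota$ is quasi-compact and quasi-separated, the natural base-change map $f^*\iota_*\mathcal{O}_U\to j_*g^*\mathcal{O}_U=j_*\mathcal{O}_{f^{-1}(U)}$ is an isomorphism. Under this identification, $f^*$ applied to $\mathcal{O}_X\to\iota_*\mathcal{O}_U$ is exactly the map $\mathcal{O}_T\to j_*\mathcal{O}_{f^{-1}(U)}$. For part (1), flatness of $f$ makes $f^*$ exact, hence injection-preserving, so injectivity of $\mathcal{O}_X\to\iota_*\mathcal{O}_U$ forces injectivity of $\mathcal{O}_T\to j_*\mathcal{O}_{f^{-1}(U)}$, giving schematic density of $f^{-1}(U)$. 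For part (2), when $f$ is moreover faithfully flat, $f^*$ also reflects injectivity: $f^*$ commutes with kernels (flatness) and a quasi-coherent sheaf vanishes if and only if its pullback does (faithful flatness), so $\mathcal{O}_X\to\iota_*\mathcal{O}_U$ is injective if and only if its pullback is, and the two density conditions become equivalent.

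I expect the only real subtlety to be bookkeeping with the quasi-compactness hypothesis, which is precisely what licenses the two uses above: it guarantees that $\iota_*\mathcal{O}_U$ (and $j_*\mathcal{O}_{f^{-1}(U)}$) are quasi-coherent so that ``scheme-theoretic closure equals vanishing of $\mathcal{K}$'' applies, and it is the hypothesis under which flat base change for $\iota_*$ is valid. Everything else --- exactness of $f^*$ for flat $f$, reflection of injectivity for faithfully flat $f$, and the identification $g^*\mathcal{O}_U=\mathcal{O}_{f^{-1}(U)}$ --- is formal, so no genuine obstacle remains beyond verifying that $j$ is quasi-compact, which is immediate.
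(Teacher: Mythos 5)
Your proposal is correct and follows essentially the same route as the paper: both reduce schematic density to injectivity of $\mc O_X \to \iota_*\mc O_U$ (valid because quasi-compactness of $\iota$ makes the pushforward quasi-coherent), identify the map on $T$ with the pullback of this map via flat base change, and then use exactness of $f^*$ for part (1) and faithfulness for part (2). Your write-up just makes explicit a few steps the paper compresses, such as the base-change isomorphism $f^*\iota_*\mc O_U \simeq \iota_{T*}\mc O_{f^{-1}(U)}$ and the quasi-compactness of the base-changed immersion.
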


\begin{proof} Because $\iota$ is quasi-compact and separated, we have
  that $\iota_*\mc O_U$ is quasi-coherent on $\mc O_X$, so schematic
  density of $U$ in $X$ is equivalent to injectivity of $\mc O_X \to
  \iota_* \mc O_U$.  Similarly, $f^{-1}(U)$ is schematically dense in
  $T$ if and only if $\mc O_T \to \iota_{T*} \mc O_{f^{-1}(U)}$ is
  injective, where $\iota_T:f^{-1}(U) \to T$ is the natural
  inclusion.  Now suppose $f$ is flat.  The commutativity of pushforward
  with flat base change shows that $\iota_T$ is the pullback of
  $\iota$.  Thus, if $\iota$ is injective then so is $\iota_T$ (as $f$
  is flat).  Moreover, if $f$ is faithfully flat then $\iota$ is
  injective if and only if $\iota_T$ is injective.  This establishes
  the lemma.
\end{proof}

\begin{remark}
  The arguments in the proof of Lemma \ref{L:dense-flat} also show 
  that an open substack $\ms
  U\to\ms X$ of an Artin stack is schematically dense if and only if
  its preimage in any smooth cover of $\ms X$ is schematically dense.
\end{remark}

\begin{proof}[Proof of Proposition \ref{P:classes-isonatural}] 
  We first wish to see that being a bald Artin stack is an isonatural
  property. That being an Artin stack is isonatural follows from
  Corollary \ref{cor:summary-absolute}. Moreover, it is clear from
  Lemma \ref{L:dense-flat} that if $\ms X$ is an Artin stack, then the
  clean locus is schematically dense if and only if for some (and
  hence any) smooth cover $U \to \ms X$ by a scheme $U$, the preimage
  of the clean locus is schematically dense.  But the preimage of the
  clean locus in $U$ can be tested on the level of functors, since by
  Corollary \ref{P:binana-bipoints-1} a point $t:T \to \ms X$ (which
  we will choose to factor through $U$) factors through the clean
  locus if and only if $\Bin(t)$ is isomorphic to the singleton
  functor on the category of $T$-schemes. Thus we conclude that
  baldness is isonatural.

  Next, we recall that Corollary \ref{cor:summary-absolute} tells us
  that we can recognize gerbes over a given algebraic space $X$ from
  their associated functors, with $X$ recovered as the sheafification
  of the functor. Because $X$ is an algebraic space determined by
  $F_{\ms X}$, we can impose any conditions we wish on it, so we see
  for instance that being a gerbe over a strongly R1 algebraic space
  is an isonatural property. We next note that being a neutral gerbe
  is isonatural, since neutrality is equivalent to having a global 
  section.

  We also remark that the gerbe classes (2)-(5) all consist of Artin
  stacks which are locally of finite presentation over $X$. Both of
  these properties are isonatural by Corollary \ref{cor:summary-absolute}
  and Theorem \ref{thm:summary-mors} respectively.

  For (2), we already know that being of the form $\B{G}$ is
  isonatural.  We also know that $G$ is \'etale if and only if any
  section $X\to\ms X$ is \'etale, which is isonatural (by the
  criterion of Proposition \ref{P:formal-crit-affine}, applied to
  \'etale-local affines on $X$).  Finally, $G$ is finite if and only
  if $\ms X$ is separated over $X$ (as then the diagonal of $\B{G}$ --
  whose pullback to $X$ is just $G$ -- is proper), which is isonatural
  by Theorem \ref{thm:summary-mors} because $X$ is assumed to be
  locally Noetherian, and we are assuming that $\ms X$ is
  locally of finite presentation over $X$.

  For (3), we note that if $\ms X$ is a neutral gerbe with global
  section having automorphism group sheaf $G$, then $\ms X \cong \B{G}$.
  Thus, to test whether $\ms X \cong \B{G}$ with $G$ abelian, it 
  suffices to find a global section having abelian automorphism sheaf,
  which is an isonatural property by Theorem \ref{T:nanas}.

  The property that a gerbe $\ms X$ is Brauer is isonatural, since
  $\ms X$ is Brauer if and only if $F_{\ms X}(P)$ is non-empty for
  some Brauer-Severi space $P$ over $X$.

  Finally, we can test whether a given gerbe is a $G$-gerbe for $G$
  diagonalizable or equal to $\G_m$ by first verifying that all
  automorphism groups are abelian, and then using Corollary
  \ref{cor:band-love} to recover the automorphism sheaf.
\end{proof}

Assuming Theorem \ref{thm:summary}, we can finish the proof that
common moduli problems are isonatural.

\begin{proof}[Proof of Theorem \ref{thm:main-examples}] 
  Indeed, the moduli stacks of marked curves are bald (as
  $\widebar{\ms M}_g$ is smooth and geometrically connected
  \cite{d-m}, and the generic curve has trivial automorphism group),
  the Picard stack and the stacks of stable vector bundles are Brauer
  $\G_m$ gerbes (see Remark \ref{R:bass-ackwards} below), while the
  stacks of coherent sheaves with fixed determinant and of $n$th roots
  of an invertible sheaf are Brauer $\m_n$ gerbes over strongly R1
  algebraic spaces (Theorem 9.3.3 and Theorem 9.4.3 of \cite{h-l} for
  stable sheaves, clear for roots of an invertible sheaf).  Thus, all
  cases follow from Theorem \ref{thm:summary}.
\end{proof}

\begin{remark}\label{R:bass-ackwards}
  Let $X\to S$ be a flat projective morphism of finite presentation
  which is cohomologically flat in degree $0$.  The stack $\ms S_r$ of stable
  sheaves of rank $r$ on $X$ is a $\G_m$-gerbe over a separated
  algebraic space $S_r$.  Langer's work \cite{la2} shows that
  the connected components of $\ms S_r$ are quasi-compact.  Choose such a
  component $\Sigma$ of $\ms S_r$, with sheafification (i.e., image in
  $S_r$) $\widebar{\Sigma}$.  Let $\ms V$ be the universal family on
  $X\times\Sigma$.  For sufficiently large $n$ the sheaf
  $(\pr_2)_\ast\ms V(n)$ is a locally free $\Sigma$-twisted sheaf (in
  the terminology of \cite{li1}).  Taking its projectivization yields
  a Brauer-Severi space $P\to\widebar{\Sigma}$ with Brauer class equal
  to the class of $\Sigma$.  This shows that $\ms S_r\to S_r$ is a
  Brauer $\G_m$-gerbe.  Fixing the determinant yields a Brauer $\m_r$-gerbe.
\end{remark}

\subsection{Bald stacks}
\label{sec:anti-gerbes}

We now show that bald Artin stacks are determined by their associated
functors; that is, we prove Theorem \ref{thm:summary}(1).  We make use
of the formalism of groupoids in this section; this is clearly
described in paragraph 2.4.3 of \cite{l-m-b} and section 2 of \cite{k-m}.

\begin{remark}
  There is a mild precursor to the main result of this section.
  Vistoli's proof of Proposition 2.8 of \cite{vi1} implies that a tame
  regular separated Deligne-Mumford stack ${\ms X}$ locally of finite
  type over a locally Noetherian algebraic space with trivial automorphism
  groups in codimension $1$ is determined {\it among stacks of this
    form\/} by its coarse moduli space.  Our more general result
  arises from the extra information made available by the associated
  functor.
\end{remark}

\begin{defn} 
  Let $R\rightrightarrows Z$ be a groupoid object in the category of
  algebraic spaces, $\ms Y$ a stack, and $F$ a functor.  Write
  $p,q:R\to Z$ for the two structure maps.
  \begin{enumerate}
  \item An \emph{$R$-equivariant object of $\ms Y$ over $Z$\/} is a
    pair $(\phi,\alpha)$ with $\phi:Z\to\ms Y$ and $\alpha:\phi
    p\simto\phi q$ an isomorphism of arrows $R\to\ms Y$, such that the
    coboundary $\delta\alpha$ equals $\id$ on $R\times_{p,Z,q} R$.
  \item An \emph{$R$-invariant object of $F$ over $Z$\/} is a
    map $\psi:Z\to F$ such that $\psi p=\psi q:R\to F$.
  \end{enumerate}
\end{defn}

\begin{remark}
  We remind the reader of the definition of the coboundary
  $\delta\alpha$.  The groupoid structure yields three maps $R\times_Z
  R\to R$: the two projections $\pr_1,\pr_2$ and the multiplication
  map $m$.  We then have
  $\delta\alpha=\pr_1^{\ast}(\alpha)m^{\ast}(\alpha)^{-1}\pr_2^{\ast}(\alpha)$.
  Setting the coboundary equal to the identity is the same as
  requiring that the multiplication in $R$ correspond to composition
  of arrows.
\end{remark}

It is clear that any $R$-equivariant object of $\ms Y$ over $Z$ yields
an $R$-invariant object of $F_{\ms Y}$ over $Z$.  We will show that
when $\ms Y$ is bald, there
is an equivalence between these two notions as long as $Z$ dominates the
clean locus and the groupoid structure maps $p$ and $q$
are flat.

Given a groupoid $R\rightrightarrows Z$ as above, let $[Z/R]$ be the
stackification of the category fibered in groupoids whose fiber over
$T$ is the groupoid $R(T)\rightrightarrows Z(T)$.  We make no
assumptions on the structure maps of $R\rightrightarrows Z$, but we
assume that $[Z/R]$ is the stackification in the big fppf site of the
base scheme.

The $R$-equivariant objects of $\ms Y$ over $Z$ form a groupoid, in
which the isomorphisms $(\phi,\alpha)\simto(\phi',\alpha')$ are given
by isomorphisms $\phi\simto\phi'$ which are compatible with $\alpha$
and $\alpha'$ in the obvious way.

A proof of the following proposition may be found in Proposition 3.2 of
\cite{k-l2}.

\begin{prop}\label{P:maps-is-maps}
  The map sending $f:[Z/R]\to\ms Y$ to the associated $R$-equivariant
  object of $\ms Y$ over $Z$ is an equivalence of categories.
\end{prop}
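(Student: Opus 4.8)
The plan is to prove this purely formally, resting on the universal property of stackification together with the $2$-Yoneda lemma; no geometric hypotheses on $R\rightrightarrows Z$ are needed. Write $[Z/R]^{\mathrm{pre}}$ for the category fibered in groupoids whose fiber over a scheme $T$ is the groupoid $R(T)\rightrightarrows Z(T)$, so that $[Z/R]$ is its stackification in the big fppf site and there is a canonical map $j:[Z/R]^{\mathrm{pre}}\to[Z/R]$. I would factor the asserted equivalence through the intermediate category $\Hom([Z/R]^{\mathrm{pre}},\ms Y)$.

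First I would invoke the universal property of the associated stack: since $\ms Y$ is a stack, precomposition with $j$ induces an equivalence of categories
$$\Hom([Z/R],\ms Y)\risom\Hom([Z/R]^{\mathrm{pre}},\ms Y).$$
This reduces the problem to identifying morphisms out of the prestack with $R$-equivariant objects of $\ms Y$ over $Z$, and the advantage is that the prestack is assembled directly from the groupoid data. Next I would unwind a cartesian functor $g:[Z/R]^{\mathrm{pre}}\to\ms Y$ fiberwise: over each $T$ it is a functor from $R(T)\rightrightarrows Z(T)$ to $\ms Y_T$, sending each object $z\in Z(T)$ to an object of $\ms Y_T$ and each arrow $r\in R(T)$, regarded as a morphism $p(r)\to q(r)$, to an isomorphism in $\ms Y_T$, compatibly with composition and with pullback along $T'\to T$. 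By $2$-Yoneda the assignment on objects, being cartesian in $T$, is the same datum as a single $1$-morphism $\phi:Z\to\ms Y$; applying $2$-Yoneda again over $R$, the assignment on arrows is the same as an isomorphism $\alpha:\phi p\risom\phi q$ of morphisms $R\to\ms Y$. The requirement that $g$ respect composition of arrows in the groupoid translates, after pulling back along the two projections and the multiplication map $R\times_{p,Z,q}R\to R$, into exactly the coboundary condition $\delta\alpha=\id$. Conversely, any pair $(\phi,\alpha)$ with $\delta\alpha=\id$ reconstitutes such a cartesian functor, and the two constructions are visibly mutually quasi-inverse.

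Finally I would check that the correspondence is an equivalence and not merely a bijection on objects: a $2$-morphism between maps $[Z/R]^{\mathrm{pre}}\to\ms Y$ restricts to a natural isomorphism of the associated fiber functors, which under $2$-Yoneda is precisely an isomorphism $\phi\simto\phi'$ compatible with $\alpha$ and $\alpha'$, i.e.\ a morphism of $R$-equivariant objects in the sense defined above. The hard part will be the bookkeeping rather than any conceptual difficulty: one must verify that the coboundary $\delta\alpha=\pr_1^{\ast}(\alpha)\,m^{\ast}(\alpha)^{-1}\,\pr_2^{\ast}(\alpha)$ matches the composition law of the groupoid in the correct order, and that the $2$-Yoneda identifications made over $Z$ and over $R$ are compatible with restriction along $p$ and $q$. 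Once these compatibilities are pinned down, composing the two equivalences yields the proposition.
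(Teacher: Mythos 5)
Your argument is correct, but note that the paper does not actually prove this proposition at all: it simply cites Proposition 3.2 of \cite{k-l2} and moves on. Your proposal therefore supplies a self-contained proof where the paper defers to a reference, and it is the standard one: since the paper defines $[Z/R]$ precisely as the fppf stackification of the prestack $[Z/R]^{\mathrm{pre}}$ with fibers $R(T)\rightrightarrows Z(T)$, the $2$-categorical universal property of stackification gives the equivalence $\Hom([Z/R],\ms Y)\risom\Hom([Z/R]^{\mathrm{pre}},\ms Y)$ whenever $\ms Y$ is a stack, and the second equivalence $\Hom([Z/R]^{\mathrm{pre}},\ms Y)\risom\{R\text{-equivariant objects}\}$ is a double application of $2$-Yoneda (over $Z$ for the objects, over $R$ for the arrows), with functoriality of the fiberwise functors translating into the condition $\delta\alpha=\id$ on $R\times_{p,Z,q}R$, exactly as the paper's remark following the definition indicates (``the multiplication in $R$ corresponds to composition of arrows''). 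Two small points deserve the care you flag at the end: first, preservation of identity arrows is not a separate condition, since in a groupoid any idempotent isomorphism is the identity, so the cocycle condition alone suffices; second, the orientation conventions in $\delta\alpha=\pr_1^{\ast}(\alpha)m^{\ast}(\alpha)^{-1}\pr_2^{\ast}(\alpha)$ must be matched against the convention $p\circ m=p\circ\pr_2$, $q\circ m=q\circ\pr_1$ for composable pairs, which is pure bookkeeping. With those verified, your two-step factorization is a complete and correct proof, and arguably a more useful one for the reader than the paper's bare citation.
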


\begin{notn}
  Given $Z,R,\ms Y,F$ as above, we will write $\ms Y_Z^R$ for the set
  of isomorphism classes of $R$-equivariant objects of $\ms Y$ over
  $Z$ and $F_Z^R$ for the set of $R$-invariant objects of $F$ over $Z$.
\end{notn}

It is a standard result (see Corollary 8.1.1, \cite{l-m-b}) that
$\clean(\ms X)$ (see Definition \ref{D:cl} above for the definition of
$\clean$) is isomorphic to an algebraic space when $\ms X$ is an Artin
stack. Thus, suppose $\ms X$ is a quasi-algebraic stack and $\ms
U\inj\ms X$ is a quasi-compact open immersion from an algebraic space.

\begin{prop}\label{P:equivariant-invariant-comp}
  Given a flat groupoid of algebraic spaces $R\rightrightarrows Z$,
  the functor $F$ defines a bijection between the set of elements
  $[(\phi,\alpha)]\in\ms X_Z^R$ such that $\phi^{-1}(\ms U)\subseteq Z$
  is schematically dense and the set of elements $\psi\in (F_{\ms
    X})_Z^R$ such that $\psi^{-1}(F_{\ms U})\subseteq Z$ is
  schematically dense.
\end{prop}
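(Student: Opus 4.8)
The plan is to show that the natural comparison map is well defined and bijective, exploiting two features of the situation: first, that $F_{\ms X}$ is the \emph{presheaf} of isomorphism classes, so that by Yoneda a map $Z\to F_{\ms X}$ (resp.\ $R\to F_{\ms X}$) lifts to an honest object of $\ms X_Z$ (resp.\ to an honest isomorphism over $R$); and second, that over the clean locus $\ms U$ objects are rigid, while the diagonal of $\ms X$ is separated by the quasi-algebraicity hypothesis. The comparison map sends an $R$-equivariant object $(\phi,\alpha)$ to $\psi:=[\phi]\in F_{\ms X}(Z)$; this is $R$-invariant because $\alpha:\phi p\simto\phi q$ forces $[\phi p]=[\phi q]$, and it respects the density condition since $\phi^{-1}(\ms U)=\psi^{-1}(F_{\ms U})$ (as $\ms U$ is an open substack and $F_{\ms U}$ the corresponding open subfunctor, cf.\ Lemma \ref{L:open-subs}). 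At the outset I record the geometric input used repeatedly: because $p,q$ are flat and $\ms U\inj\ms X$ is a quasi-compact open immersion, the fact that $V:=\psi^{-1}(F_{\ms U})=\phi^{-1}(\ms U)$ is schematically dense in $Z$ implies, via Lemma \ref{L:dense-flat}(1), that its preimages in $R$ and in $R\times_{p,Z,q}R$ (under the flat structure maps) are schematically dense; and $R$-invariance of $\psi$ gives $p^{-1}(V)=q^{-1}(V)$ since $\ms U$ is open.

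For surjectivity, given an $R$-invariant $\psi$ with $V$ schematically dense, Yoneda produces $\phi\in\ms X_Z$ with $[\phi]=\psi$, and automatically $\phi^{-1}(\ms U)=V$ is schematically dense. From $\psi p=\psi q$ we get $[\phi p]=[\phi q]$, so there exists an isomorphism $\alpha:\phi p\simto\phi q$, and I claim that \emph{any} such $\alpha$ satisfies the cocycle condition $\delta\alpha=\id$. Indeed $\delta\alpha$ is a section of the automorphism group space $\Aut$ of a pullback of $\phi$ over $R\times_{p,Z,q}R$, which is separated over its base because the diagonal of $\ms X$ is separated. Over the preimage of $V$ all the relevant pullbacks of $\phi$ land in $\ms U$, where automorphism sheaves are trivial, so $\delta\alpha$ and $\id$ agree there; as this preimage is schematically dense and the equalizer of two sections of a separated morphism is closed, the equalizer is everything, giving $\delta\alpha=\id$. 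Thus $(\phi,\alpha)$ is $R$-equivariant and maps to $\psi$.

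For injectivity, suppose $(\phi,\alpha)$ and $(\phi',\alpha')$ both map to $\psi$. Then $[\phi]=[\phi']=\psi$ in $F_{\ms X}(Z)$, so there is an isomorphism $\beta:\phi\simto\phi'$ in $\ms X_Z$; it remains to check that $\beta$ is compatible with the equivariant structures, i.e.\ that $\beta q\circ\alpha=\alpha'\circ\beta p$ as isomorphisms $\phi p\simto\phi' q$ over $R$. Both sides are sections of $\Isom_R(\phi p,\phi' q)\to R$, which is separated. Over $p^{-1}(V)=q^{-1}(V)$ the objects $\phi p$ and $\phi' q$ lie in $\ms U$, where there is at most one isomorphism between any two objects, so the two sides coincide there; since this locus is schematically dense and the equalizer is closed, the two sides agree on all of $R$. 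Hence $\beta$ is an isomorphism of $R$-equivariant objects, and $[(\phi,\alpha)]=[(\phi',\alpha')]$.

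The step I expect to require the most care is not the existence of the lift $\phi$ — which is immediate once one remembers that $F_{\ms X}$ is the presheaf, rather than the sheaf, of isomorphism classes — but the verification that all the coherence data (the cocycle identity for $\alpha$, the equivariance of $\beta$) is forced. Both reduce to the same principle: a section of a separated automorphism- or $\Isom$-space that agrees with the canonical one over the schematically dense clean locus must agree everywhere. Making this rigorous requires checking that the clean loci over $R$ and $R\times_{p,Z,q}R$ really are schematically dense, for which I rely on flatness of $p,q$, the quasi-compactness of $\ms U\inj\ms X$, and Lemma \ref{L:dense-flat}, and that these loci are genuinely $R$-invariant, which follows from $R$-invariance of $\psi$ together with $\ms U$ being an open substack. (Note that the argument itself uses only separatedness of the diagonal; the full force of the proper-diagonal hypothesis in \emph{bald} enters earlier, in guaranteeing that $\clean(\ms X)=\ms U$ is a quasi-compact open substack and an algebraic space.)
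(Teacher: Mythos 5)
Your proposal is correct and follows essentially the same route as the paper's proof: lift $\psi$ to an object $\phi$, use flatness of the structure maps together with Lemma \ref{L:dense-flat} to propagate schematic density to $R$ and $R\times_{p,Z,q}R$, use triviality of automorphisms over $\ms U$ to get the cocycle identity and the equivariance of $\beta$ on the dense locus, and use separatedness of the diagonal to extend these identities everywhere. The only difference is one of exposition — you spell out the equalizer/separatedness argument and the identity $p^{-1}(V)=q^{-1}(V)$ that the paper leaves implicit.
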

\begin{proof}
  Given an $R$-invariant object $\psi$ of $F_{\ms X}$ over $Z$, choose
  a lift to $\phi:Z\to\ms X$.  Since $\psi$ is $R$-invariant, there is
  some isomorphism $\alpha:\phi p\simto\phi q$.  If $\psi^{-1}(F_{\ms
    U})$ is schematically dense in $Z$, then because $R\times_Z R\to
  Z$ is flat, it follows from Lemma \ref{L:dense-flat} that 
  $W:=p^{-1}\psi^{-1}(F_{\ms U})$ is schematically dense.
  Since the inertia stack of $\ms U$ is trivial, we
  thus see that $\delta\alpha|_W=\id$, and since the diagonal of $\ms
  X$ is separated this implies that $\delta\alpha=\id$ on $R\times_Z
  R$.  Thus, the natural map is surjective.  To see that it is
  injective, suppose $(\phi_i,\alpha_i)$, $i=1,2$ are $R$-equivariant
  objects of $\ms X$ over $Z$ such that $\phi_i^{-1}(\ms U)$ is
  schematically dense.  If their images in $(F_{\ms X})_Z^R$ are
  equal, there is some isomorphism $\beta:\phi_1\simto\phi_2$.  It
  follows immediately from the schematic density hypothesis that the
  diagram
$$\xymatrix{\phi_1 p\ar[r]^{\alpha_1}\ar[d] & \phi_1 q\ar[d] \\
\phi_2 p\ar[r]^{\alpha_2} & \phi_2 q}$$
commutes, which means that $\beta$ is an isomorphism of
$R$-equivariant objects.
\end{proof}

\begin{proof}[Proof of Theorem \ref{thm:summary}(1)]
Given two bald Artin stacks $\ms X_1$ and $\ms X_2$, let
$R_i\rightrightarrows Z_i$ be a smooth presentation of $\ms X_i$,
$i=1,2$.  Given an isomorphism $\phi:F_{\ms X_1}\simto F_{\ms X_2}$,
there results an $R_1$-invariant object of $F_{\ms X_2}$ over $Z_1$.
Moreover, this object must come from a smooth surjection $Z_1\to\ms
X_2$.  Applying Proposition \ref{P:equivariant-invariant-comp}, there results a
unique map (up to isomorphism) $\psi:\ms X_1\to\ms X_2$ giving rise to
$\phi$.  Reversing the roles of $1$ and $2$ yields $\widebar\psi:\ms
X_2\to\ms X_1$ inducing $\phi^{-1}$.  The composition
$\widebar\psi\psi:\ms X_1\to\ms X_1$ corresponds to the
$R_1$-invariant object of $F_{\ms X_1}$ over $Z_1$, hence must be an
isomorphism.  Reversing the roles of $1$ and $2$ again, we see that
$\psi$ is an isomorphism.  That the association $\phi\mapsto\psi$
yields a retraction $\Isom(F_{\ms X_1},F_{\ms X_2})\to\Isom(\ms
X_1,\ms X_2)$ follows from the uniqueness in
Proposition \ref{P:equivariant-invariant-comp} and is left to the reader.
\end{proof}

\subsection{Classifying stacks}
\label{sec:class}

In this section we show that given an algebraic space $X$ and a finite
\'etale group space $G\to X$, the group $G$ is uniquely determined up
to inner forms by the functor associated to the classifying stack
$\B{G}$.  (For the reader familiar with Giraud's terminology, this
says that the functor associated to $\B{G}$ uniquely determines the
isomorphism class of the band associated to $G$.)  This will ultimate
show that classifying stacks for finite \'etale group spaces are
isonatural.

We will recover $\B{G}$ by (in essence) recovering the functor of points of
$G$ (in the stack of bands) from a subcategory of pointed
schemes.  This subcategory arises from a functorial construction for a
pointed scheme with a given finite fundamental group.
(There are of course subleties associated to doing this over
schemes which are not geometric points.)  We thus begin with some
results pertaining to the \'etale fundamental group.

\begin{lem}\label{L:section-lemma}
  Suppose $X$ is an algebraic space, $Y_1,Y_2\to X$ are finite 
  \'etale morphisms, and $f:P\to X$ is a faithfully flat morphism
  with geometrically connected fibers.  Pullback induces a bijection
  between $X$-morphisms $Y_1\to Y_2$ and $P$-morphisms $Y_1\times_X P\to
  Y_2\times_X P$.
\end{lem}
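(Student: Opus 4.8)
The plan is to reduce the two $\Hom$-sets to sets of sections of a single finite étale cover, and then use the geometric connectedness of the fibres of $f$ to force every section over $P$ to descend to $X$. Concretely, I would first replace $\Hom$-sets by sections of the scheme of homomorphisms. Since $Y_1,Y_2\to X$ are finite étale, the presheaf on $X$-schemes sending $T$ to $\Hom_T(Y_1\times_X T,\,Y_2\times_X T)$ is representable by an algebraic space $H\to X$ which is again finite étale: étale-locally on $X$ the cover $Y_1$ becomes a finite constant sum $\coprod_{i=1}^n X$, whereupon the presheaf is the $n$-fold fibre product $Y_2\times_X\cdots\times_X Y_2$, and representability and finite étaleness of $H$ follow by étale descent and the étale-local nature of these properties. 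Taking $T=X$ and $T=P$ identifies $\Hom_X(Y_1,Y_2)$ with $H(X)$ and $\Hom_P(Y_1\times_X P,\,Y_2\times_X P)$ with $\Hom_X(P,H)$, the pullback along $f$ becoming $g\mapsto g\circ f$. It therefore suffices to show that $H(X)\to\Hom_X(P,H)$, $s\mapsto s\circ f$, is a bijection.

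The heart of the matter is to prove that every $X$-morphism $\psi\colon P\to H$ is constant along the fibres of $f$: over a geometric point $\bar x$ of $X$ the fibre $H_{\bar x}$ is a finite discrete set while $P_{\bar x}$ is connected, so $\psi$ sends $P_{\bar x}$ to a single point. To upgrade this to a scheme-theoretic identity I would set $W:=P\times_X P$ and compare $\psi\circ\pr_1$ and $\psi\circ\pr_2$. Their equalizer $E\subseteq W$ is the preimage of the diagonal of $H\times_X H$; because $H\to X$ is finite étale, this diagonal is an open and closed immersion, so $E$ is a clopen subscheme of $W$. The diagonal $P\to W$ factors through $E$ and, since $f$ is surjective, meets every fibre of $W\to X$; as each fibre $W_{\bar x}=P_{\bar x}\times_{\bar x}P_{\bar x}$ is geometrically connected, $E$ meets it in a nonempty clopen subset and hence contains it. Thus $E=W$ and $\psi\circ\pr_1=\psi\circ\pr_2$.

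Finally I would descend. Since $H$ is representable, $\Hom_X(-,H)$ is an fppf sheaf, so faithfully flat descent presents $H(X)$ as the equalizer of $\Hom_X(P,H)\rightrightarrows\Hom_X(W,H)$; by the previous step every $\psi$ lies in this equalizer, yielding a unique $s\in H(X)$ with $s\circ f=\psi$. This gives surjectivity, while injectivity of $s\mapsto s\circ f$ is the separatedness of the same sheaf sequence, i.e.\ the fact that $f$ is an epimorphism. Concretely, writing $H=\Spec_X\mathcal A$ for a finite étale $\mathcal O_X$-algebra $\mathcal A$, the morphism $\psi$ is an idempotent of $\mathcal A\otimes_{\mathcal O_X}\mathcal O_P$ whose two pullbacks to $W$ coincide, and faithfully flat descent of sections of the quasi-coherent sheaf $\mathcal A$ supplies the corresponding idempotent over $X$.

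The main obstacle is the fibre-wise constancy step, which is the only place the connectedness hypothesis is used and which I would carry out via the clopen equalizer on $W$ as above. A secondary technical point is the descent in the last paragraph: when $f$ fails to be quasi-compact one must note that only faithfully flat descent of sections of a quasi-coherent sheaf is required, which needs no finiteness hypothesis, so no real difficulty arises. The representability of the $\Hom$-scheme and the clopenness of the diagonal of $H$ I would treat as routine facts about finite étale covers.
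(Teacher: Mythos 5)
Your argument is correct and reaches the lemma by a genuinely different route than the paper. The paper never forms a Hom-space: it studies the map of \'etale sheaves $\chi:\shom_X(Y_1,Y_2)\to f_{\ast}\shom_P((Y_1)_P,(Y_2)_P)$, shows $\chi$ is an epimorphism by checking \'etale-locally on $X$ (where $Y_1,Y_2$ become disjoint unions of copies of $X$ and connectedness of the fibres of $f$ forces any morphism over $P$ to factor through $X$), shows $\chi$ is a monomorphism by faithfully flat descent (citing SGA~1, VIII.5.2), and then takes global sections of the resulting isomorphism. You instead globalize at the outset by representing the Hom-functor by a finite \'etale space $H\to X$, use connectedness of the fibres of $W=P\times_X P\to X$ (rather than of $P\to X$ directly) together with clopen-ness of the equalizer to show that every $X$-morphism $P\to H$ automatically equalizes the two projections, and conclude by the sheaf property of $H$. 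Your version isolates the use of connectedness in one clean geometric step and makes the descent-theoretic content explicit, at the cost of invoking representability of the Hom-functor (standard, but not proved in the paper); the paper's version avoids representability but needs the epi--mono sheaf bookkeeping. One caveat, which you share with the paper rather than introduce: the final descent step (your equalizer sequence for $H$, the paper's monomorphism claim and its \'etale-local factorization) genuinely requires $f$ to be an fpqc or fppf covering, not merely faithfully flat --- SGA~1, VIII.5.2 assumes quasi-compactness, and your fallback via descent of sections of quasi-coherent sheaves is automatic only in the affine case, since one cannot simply pass to an infinite affine cover of $P$ (infinite products of flat algebras need not be flat). As every application of the lemma in the paper has $f$ fppf, this is harmless, but your proof and the paper's stand or fall together on that point.
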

\begin{proof}
  First suppose $Y_1\to X$ and $Y_2\to X$ are disjoint unions of
  copies of $X$ and $X$ is connected.  Since the fibers of $P\to X$
  are geometrically connected, it follows that any $X$-morphism
  $(Y_1)_P\to Y_2$ factors through a morphism $Y_1\to Y_2$.  Since
  $Y_1\to X$ and $Y_2\to X$ have this form \'etale locally on $X$, we
  see that the natural map of \'etale sheaves
  $\chi:\shom_X(Y_1,Y_2)\to f_{\ast}\shom_P((Y_1)_P,(Y_2)_P)$ is an
  epimorphism.  On the other hand, since $P\to X$ is faithfully flat,
  $\chi$ is also a monomorphism (cf.\ Theorem VIII.5.2 of
  \cite{sga1}).  Thus, $\chi$ is an isomorphism, and the result
  follows.
\end{proof}
\begin{remark}
  Lemma \ref{L:section-lemma} applies notably when $Y_1=X$, and thus
  to sections of a given finite \'etale covering.
\end{remark}

\begin{prop}\label{L:fund-gp-lem}
  Suppose $Z$ is a normal connected algebraic space and
  $Y\to Z$ is a smooth surjective map of finite presentation between
  algebraic spaces with connected geometric fibers.  Let
  $\ast:\Spec\kappa\to Y$ be a geometric point over the generic point
  $\theta$ of $Z$.  Then the natural sequence of groups
$$\pi_1(Y_{\widebar\theta},\ast)\to\pi_1(Y,\ast)\to\pi_1(Z,\ast)\to 1$$
is exact.
\end{prop}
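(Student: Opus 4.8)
The plan is to translate the assertion into the Galois-theoretic dictionary between $\pi_1$ and the category of finite \'etale covers, and to verify the three conditions characterizing exactness of a sequence of profinite groups $G'\to G\to G''\to 1$: surjectivity of $G\to G''$, vanishing of the composite $G'\to G''$, and the containment $\ker(G\to G'')\subseteq\operatorname{im}(G'\to G)$. First I would reduce to the case that $Y$ and $Z$ are integral and normal: since $Z$ is normal and $Y\to Z$ is smooth, $Y$ is normal, and since $Y\to Z$ is open (smooth) and surjective with connected fibers over the connected base $Z$, the space $Y$ is connected, hence integral. The vanishing of the composite $\pi_1(Y_{\widebar\theta})\to\pi_1(Z)$ is immediate, as it factors through $\pi_1(\widebar\theta)=\pi_1(\Spec\kappa)=1$. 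For surjectivity of $\pi_1(Y)\to\pi_1(Z)$, I would invoke Lemma \ref{L:section-lemma} directly: applied with $X=Z$, $P=Y$ (faithfully flat with geometrically connected fibers), and $Y_1,Y_2$ finite \'etale over $Z$, it shows that pullback $\Hom_Z(Y_1,Y_2)\simto\Hom_Y((Y_1)_Y,(Y_2)_Y)$ is a bijection, i.e.\ that the pullback functor on finite \'etale covers is fully faithful; this is exactly equivalent to surjectivity of the induced map on $\pi_1$.

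The heart of the matter is the containment $\ker(\pi_1(Y)\to\pi_1(Z))\subseteq\operatorname{im}(\pi_1(Y_{\widebar\theta})\to\pi_1(Y))$, which I would phrase as a descent statement: every connected finite \'etale cover $g\colon Y'\to Y$ whose restriction to the fiber $Y_{\widebar\theta}$ is a trivial (completely split) cover is the pullback of a finite \'etale cover of $Z$ along $f$. To produce the descending cover I would use faithfully flat descent along $f\colon Y\to Z$: since finite \'etale morphisms are affine, descent is effective, so it suffices to equip $Y'$ with a descent datum, i.e.\ an isomorphism $\lambda\colon\pr_1^{\ast}Y'\simto\pr_2^{\ast}Y'$ of finite \'etale covers of $Y\times_Z Y$ satisfying the cocycle condition on $Y\times_Z Y\times_Z Y$.

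I would construct $\lambda$ first over the generic fibre and then extend by normality. Over the generic point $\theta=\Spec\kappa(\theta)$ of $Z$, the homotopy sequence in the base case of a geometrically connected scheme over the field $\kappa(\theta)$ (the classical exact sequence $\pi_1(Y_{\widebar\theta})\to\pi_1(Y_\theta)\to\Gal(\widebar{\kappa(\theta)}/\kappa(\theta))\to 1$ of \cite{sga1}) together with the hypothesis that $Y'$ is split over $Y_{\widebar\theta}$ shows that the generic fibre $Y'_\theta\to Y_\theta$ is pulled back from a finite \'etale cover $\Spec\kappa(\theta)'\to\Spec\kappa(\theta)$ of the point $\theta$. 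Consequently $\pr_1^{\ast}Y'$ and $\pr_2^{\ast}Y'$ have canonically isomorphic restrictions to the fibre $(Y\times_Z Y)_\theta=Y_\theta\times_{\kappa(\theta)}Y_\theta$, both being $(Y_\theta\times_{\kappa(\theta)}Y_\theta)\times_{\kappa(\theta)}\Spec\kappa(\theta)'$; this canonical isomorphism is the desired $\lambda$ over the generic fibre. Since $Y\times_Z Y$ is integral and normal (being smooth over the normal $Z$ and connected with geometrically connected fibres), and since an isomorphism of finite \'etale covers of a normal integral scheme is determined by, and extends from, its restriction to the dense generic fibre (the surjectivity-from-normality that complements the injectivity of \cite{sga1}, Theorem VIII.5.2 used in Lemma \ref{L:section-lemma}), $\lambda$ extends uniquely to all of $Y\times_Z Y$. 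The cocycle identity holds over the dense generic fibre of $Y\times_Z Y\times_Z Y$ by the canonicity of the construction, hence everywhere by the same density-plus-normality principle. Thus the descent datum is effective and yields a finite \'etale $Z'\to Z$ with $Y\times_Z Z'\cong Y'$, where finiteness and \'etaleness of $Z'\to Z$ follow by fppf descent of these properties along $f$.

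The main obstacle is precisely this last step of middle exactness: passing from the split-ness of $Y'$ on the geometric generic fibre to an actual descent of $Y'$ to $Z$. The two subtleties to watch are (a) the appeal to the field-theoretic case of the homotopy sequence to trivialise $Y'_\theta$ along the fibres, which is legitimate because $Y_\theta$ is geometrically connected over $\kappa(\theta)$, and (b) the extension of the canonical generic isomorphism to a global descent datum, where normality of $Y\times_Z Y$ (inherited from $Z$ through smoothness of $f$) is exactly what guarantees that isomorphisms of finite \'etale covers are controlled by their generic fibres; without the normality hypothesis on $Z$ this extension can genuinely fail.
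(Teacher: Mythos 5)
Your treatment of two of the three conditions is fine, and your proof of the descent statement itself is sound; indeed it takes a genuinely different route from the paper's (you build a descent datum along $Y\to Z$ and invoke fppf descent, whereas the paper identifies $W$ with the normalization of $Y$ in a field extension pulled back from $\kappa(Z)$ and uses that normalization commutes with smooth pullback, which forces it to first reduce to $Z$ excellent by a limiting argument — your version avoids that reduction). The problem is the group-theoretic reduction at the very start. You assert that the containment $\ker(\pi_1(Y)\to\pi_1(Z))\subseteq\operatorname{im}(\pi_1(Y_{\widebar\theta})\to\pi_1(Y))$ ``can be phrased as'' the statement that every connected cover of $Y$ which is completely split on $Y_{\widebar\theta}$ is pulled back from $Z$. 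That equivalence is false. Write $G=\pi_1(Y)$, $H=\ker(G\to\pi_1(Z))$, $N=\operatorname{im}(\pi_1(Y_{\widebar\theta})\to G)$, and let $N'$ be the closed normal closure of $N$. A connected cover $G/U$ is completely split on the geometric fiber if and only if $N$ acts trivially on $G/U$, i.e.\ if and only if $N'\subseteq U$; and (using surjectivity of $G\to\pi_1(Z)$ and normality of $H$) it is pulled back from $Z$ if and only if $H\subseteq U$. Since a closed subgroup of a profinite group is the intersection of the open subgroups containing it, your descent statement is equivalent to $H\subseteq N'$, not to $H\subseteq N$. Combined with the trivial composite ($N\subseteq H$) this only yields $H=N'$: exactness up to normal closure. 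This is not a pedantic distinction: abstractly, $N\subseteq H\subseteq N'$ with $H$ normal does not force $H=N$ (take $G=S_4$, $N$ generated by a double transposition, $H=N'=V_4$).

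The missing ingredient — exactly the step the paper supplies before its descent argument — is that $N$ is normal in $G$. The paper obtains this as follows: $\pi_1(Y_\theta,\ast)\to\pi_1(Y,\ast)$ is surjective (Proposition V.8.2 of \cite{sga1}; here $Y_\theta$ is the generic, not geometric, fiber, and one uses that $Y$ is integral and normal so connected covers of $Y$ restrict to connected covers of $Y_\theta$), while by the known field case (Theorem IX.6.1 of \cite{sga1}) the image of $\pi_1(Y_{\widebar\theta})$ in $\pi_1(Y_\theta)$ equals $\ker\bigl(\pi_1(Y_\theta)\to\Gal(\overline{\kappa(\theta)}/\kappa(\theta))\bigr)$, hence is normal; the image of a normal subgroup under a surjection is normal, so $N\trianglelefteq G$ and $N=N'$. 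With that observation inserted, your argument closes. Note that you cannot instead repair the reduction by working with the class of covers whose restriction to $Y_{\widebar\theta}$ merely admits a section (the class corresponding to open subgroups $U\supseteq N$, which does characterize $H\subseteq N$): your key field-theoretic step — descending $Y'_\theta$ to an \'etale $\kappa(\theta)$-algebra — needs $\pi_1(Y_{\widebar\theta})$ to act trivially on the fiber, not merely to have a fixed point. So the normality of the image must be established separately, as in the paper.
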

\begin{proof} We may first reduce to the case that $Z$ is excellent,
  using standard limiting arguments.  This statement is known when $Z$
  is the spectrum of a field (Theorem IX.6.1 of \cite{sga1}).
  Thus, as $\pi_1(Y_{\theta},\ast)\to\pi_1(Y,\ast)$ is surjective
  (Proposition V.8.2 of [{\it ibid\/}.]), it follows that the left-hand
  map in the sequence has normal image.  To show that the sequence is
  exact in the middle, it suffices to show that any Galois cover $W\to
  Y$ which is trivial on the geometric generic fiber comes by pullback from
  $Z$.  Since this is already known over the function field of $Z$, we
  find a field extension $L/\kappa(Z)$ such that the normalization of
  $Y_{\theta}$ in $L|_{Y_{\theta}}$ is isomorphic to $W_{\theta}$.  Since
  $W_{\theta}$ is the generic fiber of $W$ and $Z$ (and thus $Y$) is
  normal, we see that the normalization of $Y$ in $L_{Y_{\theta}}$ is
  isomorphic to $W$.  But since $Y\to Z$ is smooth, this is just the
  pullback of the normalization of $Z$ in $L$.  Writing $Z'\to Z$ for
  this normalization, we thus have that $Z'\times_Z Y\to Y$ is
  \'etale, from which it follows that $Z'\to Z$ is \'etale.  Thus,
  $W\to Y$ is the pullback of an \'etale (in fact, Galois) covering of
  $Z$.  It follows from Proposition V.6.11 of [{\it ibid\/}.] that the
  sequence is exact in the middle.  Exactness on the right follows
  from the fact that the fibers of $Y$ are geometrically connected
  (Corollary IX.5.6 of [{\it ibid\/}.]).  (Cf.\ the proof of
  Theorem X.1.3 and Corollary X.1.4 of [{\it ibid\/}.].)
\end{proof}

\begin{cor}\label{sec:classifying-stacks-4}
  Let $f:Y\to X$ be a smooth surjective morphism of finite presentation between
  connected algebraic spaces with geometrically connected fibers.  Let
  $y\to Y$ be a geometric point of $Y$.  If $\pi_1(Y_{f(y)},f(y))=0$ then
  the natural map $\pi_1(Y,y)\to\pi_1(X,f(y))$ is an isomorphism.
\end{cor}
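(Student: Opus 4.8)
The plan is to obtain the corollary as an essentially formal consequence of a homotopy exact sequence of the shape provided by Proposition \ref{L:fund-gp-lem}. Granting an exact sequence
$$\pi_1\bigl(Y_{f(y)},y\bigr)\to\pi_1(Y,y)\to\pi_1(X,f(y))\to 1,$$
the hypothesis $\pi_1(Y_{f(y)},f(y))=0$ kills the left-hand term, so that right-exactness gives surjectivity of $\pi_1(Y,y)\to\pi_1(X,f(y))$ and middle-exactness gives injectivity; together these are exactly the claimed isomorphism. Thus once the sequence is in hand the proof is a one-line diagram chase, and all of the work lies in producing it in the generality stated.

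First I would record the two inputs that are robust and do not depend on the finer hypotheses of Proposition \ref{L:fund-gp-lem}. Surjectivity of $\pi_1(Y,y)\to\pi_1(X,f(y))$ holds here in complete generality: since $f$ is surjective with geometrically connected fibers, the pullback of any connected finite \'etale cover of $X$ remains connected, which is Corollary IX.5.6 of \cite{sga1}, exactly as invoked in the proof of the proposition, and it needs neither normality of $X$ nor that $f(y)$ be generic. Independently, full faithfulness of the pullback functor $f^{\ast}$ on finite \'etale covers follows at once from Lemma \ref{L:section-lemma} applied with $P=Y$ (which is faithfully flat over $X$ with geometrically connected fibers), giving $\Hom_X(X_1,X_2)\simto\Hom_Y(f^{\ast}X_1,f^{\ast}X_2)$ for all finite \'etale $X_1,X_2\to X$; this is a second route to surjectivity on $\pi_1$. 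The entire content is therefore concentrated in injectivity, i.e.\ in essential surjectivity of $f^{\ast}$, which is precisely middle-exactness of the sequence above.

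The hard part will be securing that middle-exactness at our given point in the stated generality. Proposition \ref{L:fund-gp-lem} is proved only for a geometric point lying over the generic point of a \emph{normal connected} base, and the obvious move is to reduce to that case by base change; but replacing $X$ by, say, a normalization of the closure of the image of $f(y)$ alters the very group $\pi_1(X,f(y))$ that we must compute, so a naive reduction does not prove the corollary. The genuine task is thus to establish middle-exactness of
$$\pi_1\bigl(Y_{f(y)},y\bigr)\to\pi_1(Y,y)\to\pi_1(X,f(y))\to 1$$
directly for the fiber over the given point. I would attempt this by adapting the normalization argument in the proof of Proposition \ref{L:fund-gp-lem} — where a Galois cover of $Y$ trivial on the geometric generic fiber is shown, using normality of the base, to descend from the base — to the fiber $Y_{f(y)}$ in place of the generic fiber, after first reducing $X$ to be reduced (finite \'etale covers being insensitive to nilpotents) and exploiting that smoothness of $f$ together with geometric connectedness of its fibers lets one spread a splitting on $Y_{f(y)}$ out to a descent datum for the cover. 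Making this spreading-out argument work at a non-generic point, over a base that need not be normal, is the step I expect to be the main obstacle.
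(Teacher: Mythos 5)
Your reduction is sound as far as it goes: surjectivity of $\pi_1(Y,y)\to\pi_1(X,f(y))$ is indeed immediate from geometric connectedness of the fibers, and full faithfulness of $f^{\ast}$ on finite \'etale covers follows from Lemma \ref{L:section-lemma}. But the proposal stops exactly where the proof has to start. For the one step that carries all the content --- essential surjectivity of $f^{\ast}$, i.e.\ that every finite \'etale cover of $Y$ descends to $X$ --- you offer only the hope of ``adapting the normalization argument to the fiber $Y_{f(y)}$,'' and you yourself flag this as the main obstacle. That is a genuine gap, and in fact the plan cannot be carried out: at a non-generic fiber the desired middle-exactness is simply false. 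Take $X=\A^1_{\C}$ and $Y=\A^2_{\C}\setminus\{xy=1\}$ with $f$ the first projection; then $f$ is smooth, surjective, of finite presentation, with geometrically connected fibers, and the fiber over $0$ is $\A^1_{\C}$, so the hypothesis $\pi_1(Y_{f(y)})=0$ holds for $y$ lying over $0$. Yet $(x,y)\mapsto xy$ maps $Y$ onto $\A^1_{\C}\setminus\{1\}$ with section $c\mapsto(c,1)$, so $\pi_1(Y,y)\surj\hat{\Z}$ while $\pi_1(X,f(y))=0$. Any correct argument must therefore use triviality of $\pi_1$ of the geometric \emph{generic} fiber; this is what the paper's proof implicitly invokes when it applies Proposition \ref{L:fund-gp-lem}, and it is harmless in the intended application, where every geometric fiber is the complement of codimension $\geq 2$ linear subspaces in a projective space.

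The second thing you missed is that your objection to normalization --- that replacing $X$ by $X'$ ``alters the very group $\pi_1(X,f(y))$'' --- rules out only the naive base-change argument, not the one the paper actually runs. The paper uses the normalization $X'\to X$ not as a replacement for $X$ but as a \emph{descent cover}: after reducing by limit arguments to $X$ excellent and Noetherian, $X'\to X$ is finite and surjective, hence a morphism of effective descent for finite \'etale covers by Theorem IX.4.7 of \cite{sga1}. Given finite \'etale $W\to Y$, Proposition \ref{L:fund-gp-lem} applied over the normal space $X'$ (at its generic point) shows that $W\times_X X'$ is pulled back from some finite \'etale $W'\to X'$; Lemma \ref{L:section-lemma} transports the canonical descent datum on $W\times_X X'$ to a descent datum on $W'$ relative to $X'\to X$; and effectivity then produces a finite \'etale $U\to X$ with $U\times_X Y\cong W$. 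At no point is $\pi_1(X')$ compared with $\pi_1(X)$ --- descent theory, not functoriality of fundamental groups under normalization, is what makes $X'$ usable, and that is precisely the idea absent from your proposal.
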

\begin{proof}
  By standard methods, we may assume that $X$ is excellent and
  Noetherian; thus, the normalization $X'\to X$ is a finite surjective
  morphism of finite presentation.  Let $W\to Y$ be a finite \'etale
  morphism.  By Theorem IX.4.7 of
  \cite{sga1}, we know that $X'\to X$ is a morphism of effective
  descent for finite \'etale covers.  Applying Proposition
  \ref{L:fund-gp-lem}, we see that there is a finite \'etale cover
  $W'\to X'$ and an isomorphism $W\times_X X'\simto W'\times_{X}Y$
  over $Y\times_X X'$.  The descent datum on $W$ thus gives rise to a
  descent datum on $W'\times_X Y$ (with respect to the morphism
  $Y\times_X X'\to Y$). By Lemma \ref{L:section-lemma}, there is an
  induced descent datum on $W'$ (with respect to the morphism $X'\to
  X$).  Since $X'\to X$ is effective, there is a finite \'etale
  covering $U\to X$ giving rise to the descent datum on $W'$.
  Applying Lemma \ref{L:section-lemma} once more, we see that
  $U\times_X Y$ and $W\times_X X'$ are isomorphic via an isomorphism preserving
  the descent data.  Applying the effectivity to $Y\times_X X'\to Y$
  once again, we see that there is an isomorphism $U\times_X Y\simto
  W$.

  This shows that the Galois categories of finite \'etale covers of
  $Y$ and $X$ (with the fiber functors induced by the given points)
  are equivalent, which implies that the fundamental groups are
  naturally isomorphic.
\end{proof}

\begin{prop}\label{sec:classifying-stacks-1}
  Suppose $f:Y\to X$ is a smooth surjective morphism of finite
  presentation with connected geometric fibers between connected
  algebraic spaces.  Suppose $y\to Y$
  is a geometric point.  Let $G$ be a finite group with a free
  $X$-action on $Y$.  If $\pi_1(Y_{f(y)},y)=0$, then there is a
  natural isomorphism $\pi_1(Y/G,y)\simto\pi_1(X,f(y))\times G$.
\end{prop}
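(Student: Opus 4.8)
The plan is to realize $\pi_1(Y/G)$ as an extension governed by two complementary fibrations and then to read off the product structure from a single diagonal map. Since $G$ is finite and acts freely, the quotient $Y/G$ exists as an algebraic space and $h\colon Y\to Y/G$ is a connected Galois covering with group $G$ (a $G$-torsor, finite \'etale of degree $|G|$; here I use that $Y$, hence $Y/G$, is connected). Because the $G$-action is over $X$, the map $f$ is $G$-invariant and so factors as $Y\xrightarrow{h}Y/G\xrightarrow{g}X$. I would fix the geometric point $y$ of $Y$, with images $\bar y$ in $Y/G$ and $f(y)$ in $X$, as compatible basepoints throughout.

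I would then assemble two inputs. First, Corollary \ref{sec:classifying-stacks-4} applies verbatim to $f$: its hypotheses (smooth surjective of finite presentation between connected algebraic spaces, geometrically connected fibers, and $\pi_1(Y_{f(y)},y)=0$) are exactly those assumed, so $f_*\colon\pi_1(Y,y)\simto\pi_1(X,f(y))$ is an isomorphism. Second, the Galois covering $h$ yields the short exact sequence
$$1\to\pi_1(Y,y)\xrightarrow{h_*}\pi_1(Y/G,\bar y)\to G\to 1,$$
and functoriality gives $g_*\circ h_*=f_*$.

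The product structure now follows by elementary group theory. Write $N=h_*\pi_1(Y,y)$ and $K=\ker g_*$ inside $P:=\pi_1(Y/G,\bar y)$. Since $h_*$ is injective and $f_*=g_*\circ h_*$ is an isomorphism, the restriction $g_*|_N$ is an isomorphism onto $\pi_1(X,f(y))$; consequently $g_*$ is surjective, $K\cap N=\ker(g_*|_N)=1$, and $KN=P$ (any $\gamma$ differs from the unique $n\in N$ with $g_*n=g_*\gamma$ by an element of $K$). As $N=\ker(P\to G)$ is normal, the projection $P\to G$ restricts to an isomorphism $K\simto G$. Therefore the homomorphism
$$P\longrightarrow \pi_1(X,f(y))\times G,\qquad \gamma\mapsto\bigl(g_*(\gamma),\,\text{image of }\gamma\text{ in }G\bigr)$$
has trivial kernel $K\cap N$ and image containing both $g_*(N)\times\{1\}=\pi_1(X,f(y))\times\{1\}$ and $\{1\}\times G$ (the image of $K$), hence is an isomorphism, manifestly natural in the given data.

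The main obstacle is not the group theory but the geometric bookkeeping underlying the two inputs: confirming in the algebraic-space setting that $Y/G$ is an algebraic space, that $h$ is a connected Galois cover so that the Galois exact sequence is available, and that all basepoints ($y$, $\bar y$, $f(y)$) are chosen compatibly so that $g_*\circ h_*=f_*$ holds on the nose. Once these are in place, the observation that the single diagonal map $\gamma\mapsto(g_*\gamma,\bar\gamma)$ is simultaneously injective and surjective makes both the splitting of the extension and the triviality of the conjugation action automatic, so no separate verification that $G$ acts trivially on $\pi_1(Y)$ is needed.
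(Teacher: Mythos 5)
Your proof is correct and follows essentially the same route as the paper's: the Galois exact sequence for $Y\to Y/G$, the isomorphism $\pi_1(Y,y)\simto\pi_1(X,f(y))$ from Corollary \ref{sec:classifying-stacks-4}, and the observation that the induced map $\pi_1(Y/G,y)\to\pi_1(X,f(y))$ splits the sequence as a direct product. You merely spell out the elementary group theory (that a retraction onto the kernel of $P\to G$ forces $P\simeq\pi_1(X,f(y))\times G$) that the paper leaves implicit.
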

\begin{proof}
  The Galois covering $Y\to Y/G$ induces an exact sequence
$$1\to\pi_1(Y,y)\to\pi_1(Y/G,y)\to G\to 1.$$
By Corollary \ref{sec:classifying-stacks-4}, the natural morphism
$\pi_1(Y,y)\to\pi_1(X,f(y))$ is an isomorphism.  But then the natural
map $\pi_1(Y/G,y)\to\pi_1(X,f(y))$ yields a splitting of the left-hand
map of the exact sequence, which yields a splitting
$\pi_1(Y/G,y)\simto\pi_1(Y,y)\times G$, as required.
\end{proof}

Given a scheme $S$, the category $S\Sch_\bullet$ of \emph{pointed
  $S$-schemes\/} is the category of $S$-schemes under $S$ (i.e.,
arrows $S\to X$ in the category of $S$-schemes).  Any functor 
$F:S\Sch^\circ\to\Set$ naturally yields a functor
$F_\bullet:S\Sch_\bullet^\circ\to F(S)\textrm{-}\mathbf{Set}$.  We will
freely use the associated functors $F_\bullet$ in studying
reconstruction of stacks; since these associated functors arise
abstractly from the original functor $F$, no additional information is
introduced in their formation.

\begin{prop}\label{sec:classifying-stacks}
  There is a functor $\beta:\fingps\to\Z\Sch_{\bullet}$ such that for
  any connected algebraic space $X$ with geometric generic
  point $\widebar{\theta}:\spec\widebar{\kappa}\to X$, there is an
  isomorphism
  $\pi_1(X\times\beta(G),\widebar{\theta})\simto\pi_1(X,\widebar{\theta})\times
  G$ which is functorial in $G$.
\end{prop}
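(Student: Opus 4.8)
The plan is to realize $\beta(G)$ as a quotient $Y_G/G$, where $Y_G$ is a $\Z$-scheme carrying a free $G$-action whose geometric fibers over $\Spec\Z$ are connected and simply connected, and then to read off the fundamental-group formula from Proposition \ref{sec:classifying-stacks-1}. Concretely, once such a $Y_G$ is in hand, I would apply that proposition to the projection $f\colon X\times Y_G\to X$, equipped with the free $G$-action induced from $Y_G$ (trivial on the $X$-factor). Here $X\times Y_G$ is connected (as $Y_G$ is geometrically connected and $X$ is connected), the map $f$ is smooth, surjective, and of finite presentation with connected geometric fibers, and the geometric fiber $Y_{G,\widebar\kappa}$ over the chosen point is simply connected. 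Since $G$ acts trivially on $X$, one has a canonical identification $(X\times Y_G)/G\simto X\times(Y_G/G)=X\times\beta(G)$, so Proposition \ref{sec:classifying-stacks-1} directly yields $\pi_1(X\times\beta(G),\widebar\theta)\simto\pi_1(X,\widebar\theta)\times G$.

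For $Y_G$ itself, let $\Z[G]$ be the regular representation (left $G$-action permuting the basis $G$), and let $Y_G\subset\A(\Z[G])^{\oplus 2}$ be the open locus on which $G$ acts freely. This $Y_G$ is open in the affine space $\A^{2|G|}_{\Z}$, hence smooth and of finite presentation over $\Z$; its complement is the finite union $\bigcup_{h\neq e}\operatorname{Fix}(h)^{\oplus 2}$, each term a linear subspace of codimension $2\operatorname{codim}\operatorname{Fix}(h)\geq 2$, so every geometric fiber is the complement of a closed set of codimension $\geq 2$ in affine space and is therefore connected with trivial $\pi_1$ by Zariski--Nagata purity. The action is free by construction, and since $Y_G$ is quasi-affine and $G$ is finite, the categorical quotient $\beta(G)=Y_G/G$ exists as a quasi-affine $\Z$-scheme (it is the open image of $Y_G$ in $\Spec(A^G)$, where $A$ is the coordinate ring of $\A^{2|G|}_{\Z}$). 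Finally, the point $(e,e)\in Y_G(\Z)$, whose $G$-stabilizer is trivial in every fiber, maps to a canonical $\Z$-point of $\beta(G)$, supplying the required base point.

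The remaining, and principal, difficulty is to promote $G\mapsto\beta(G)$ to an actual functor: to each homomorphism $\rho\colon G\to H$ one must attach a morphism $\beta(G)\to\beta(H)$ of pointed schemes inducing $\rho$ on fundamental groups. By the naturality of the splitting in Proposition \ref{sec:classifying-stacks-1}, it suffices to produce a $\rho$-equivariant morphism $Y_G\to Y_H$ (with $G$ acting on $Y_H$ through $\rho$) that carries the free locus into the free locus and the base point to the base point; the induced map of exact sequences $1\to\pi_1(Y_G)\to\pi_1(\beta(G))\to G\to 1$ then realizes $\rho$ because both $\pi_1(Y_G)$ and $\pi_1(Y_H)$ vanish. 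For an injective $\rho$ the evident inclusion $\Z[G]\hookrightarrow\Z[H]$ is $\rho$-equivariant, fixes $(e,e)$, and preserves free loci by a short support/stabilizer computation. The hard case is a surjection $\rho\colon G\to Q$: the naive push-forward $\Z[G]\to\Z[Q]$ collapses free points to non-free ones, so one must instead descend to the (again quasi-affine) free quotient $Y_G/\ker\rho$, on which $Q$ acts freely, and build a $Q$-equivariant map into a free locus of copies of $\Z[Q]$ by spreading out functions $h\mapsto\widebar h$, $\widebar h(m)=\sum_{q}h(q^{-1}m)\,q$, chosen to separate the $Q$-orbits (available since $Y_G/\ker\rho$ is quasi-affine). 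I expect this to be the main obstacle: orbit separation a priori demands a number of copies of $\Z[Q]$ growing with $\dim Y_G$, so making $\beta$ a strict functor forces one either to pass to the colimit over the number of copies (an ind-model in which $\pi_1$ is already computed at the finite stage) or to engineer a uniform equivariant construction. Reconciling this uniformity with the requirement that each $\beta(G)$ be an honest $\Z$-scheme, while keeping the base point and the composition law strictly functorial, is the delicate point the construction must resolve.
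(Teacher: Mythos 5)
Your overall strategy is the same as the paper's (a free $G$-action on a $\Z$-scheme with simply connected geometric fibers, quotient by $G$, then Proposition \ref{sec:classifying-stacks-1}), but your model for $Y_G$ has a fatal flaw: affine space is \emph{not} simply connected in positive characteristic. Artin--Schreier coverings (e.g.\ $y^p-y=x$ pulled back from $\A^1$) make $\pi_1(\A^{2|G|}_{\overline{\F}_p})$ nontrivial, and Zariski--Nagata purity gives $\pi_1(\A^{2|G|}\smallsetminus Z)\cong\pi_1(\A^{2|G|})$ for $Z$ of codimension $\geq 2$ --- it does not give vanishing. Since $X$ in the statement is an arbitrary connected algebraic space, it may be an $\F_p$-variety, in which case the geometric generic point $\widebar{\theta}$ has characteristic $p$ and the fiber $(Y_G)_{\widebar{\kappa}}$ fails the hypothesis $\pi_1(Y_{f(y)},y)=0$ of Proposition \ref{sec:classifying-stacks-1}. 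This is precisely why the paper replaces the affine model by the projectivization $P(G)$ (see Remark \ref{R:linear-ok-char-0}, which notes your affine shortcut is valid only in characteristic $0$): projective space is simply connected in every characteristic, so complements of codimension $\geq 2$ unions of linear subspaces remain so. Note also that projectivizing is not cost-free: a point of $P(G)$ is fixed by $g$ as soon as the underlying vector is an eigenvector for \emph{any} eigenvalue, which is what forces the paper's corank analysis of the eigenspaces $W(G)^{g,\lambda}$ (corank $\geq N\geq 2$, whence $N\geq 2$ copies of the regular representation).

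The second gap is the one you flag but do not close: functoriality under surjections $G\to \widebar{G}$. Your proposed fix (descend to $Y_G/\ker\rho$ and embed it $Q$-equivariantly via orbit-separating functions) does not produce a strict functor, as you concede, and no colimit or ind-scheme device is needed. The paper's resolution is different: instead of trying to push the free locus forward, it shrinks the open set from the outset. The subscheme $U(G)\subset P(G)$ is defined by removing, for \emph{every} quotient $G\to\widebar{G}$, both the projectivized kernel of $W(G)\to W(\widebar{G})$ (codimension $\geq N$) and the preimage of every eigenvector locus $W(\widebar{G})^{\bar g,\lambda}$, $\bar g\neq 1$ (Lemma \ref{sec:classifying-stacks-5}). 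There are only finitely many quotients, all removed loci have fiberwise codimension $\geq 2$ by the corank estimates, and Lemma \ref{L:fixed-pts-lem} guarantees that an arbitrary homomorphism $G\to G'$ (factored as a quotient followed by an inclusion) carries $U(G)$ into $U(G')$. Functoriality of $\beta$ is then inherited from the strict functoriality of the linear maps $W(G)\to W(G')$, $e_g\mapsto e_{f(g)}$, with base points preserved. Fixing your proposal essentially forces you to reproduce this construction: the characteristic-$p$ problem demands projectivization, projectivization demands the eigenspace analysis, and functoriality demands the extra removals indexed by quotients.
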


To construct $\beta(G)$, we will make a functorial pointed
quasi-projective $\Z$-scheme with a (functorial) free $G$-action.
This comes in a straightforward way from the regular representation of
$G$.

Given a finite group $G$, let $V(G)$ be the geometric vector bundle
associated to the regular representation
with its functorial $A$-basis $\{e_g\}_{g\in G}$.  Fix a positive
integer $N\geq 2$ and let $W(G)=\Hom(A^N,V(G))$, where $A$ is the
trivial representation with basis $1$ (so that $A^N$ has a natural
basis $b_1,\ldots,b_N$).  Let $\ast$ be the $A$-point of $W(G)$
consisting of the map sending each $b_i$ to $e_1$.  The formation of
$V(G)$, $W(G)$, and $\ast$ is clearly functorial in $G$ and $A$.  Let
$P(G)\to\spec A$ denote the projectivization of $W(G)$.  The action of
$G$ on $W(G)$ induces an action on $P(G)$, and the $A$-point $\ast$ of
$W(G)$ gives
rise to a natural section (which we will also denote $\ast$) of
$P(G)\to\spec A$.

\begin{prop}\label{sec:classifying-stacks-3}
  There is an open subscheme $U(G)\subset P(G)$ such that 
  \begin{enumerate}
  \item $\ast$ is contained in $U(G)$;
  \item $U(G)\to\spec A$ is surjective;
  \item for each geometric point $x\to\spec A$, the inclusion
    $U(G)_x\subset P(G)_x$ is the complement of a union of hyperplanes
    of codimension at least $2$;
  \item the action of $G$ on $U(G)$ is free;
  \item given a map $\epsilon:G\to G'$ of finite groups, the induced map
    $W(G)\to W(G')$ induces a map $U(G)\to U(G')$ of pointed
    $A$-schemes which is $\epsilon$-equivariant.
  \end{enumerate}
\end{prop}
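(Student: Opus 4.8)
The plan is to take $U(G)$ to be a $G$-stable open locus of $P(G)$ cut out not merely by the condition that $G$ act freely, but by the stronger requirement that the image of a point under \emph{every} quotient map remain defined and free; this stronger condition is exactly what functoriality forces. First I would unwind $W(G)=\Hom(A^N,V(G))$ as the space of functions $w\colon G\to A^N$ (the $i$-th coordinate recording the $e_g$-coefficients of $v_i$), on which $h\in G$ acts by $(h\cdot w)(g)=w(h^{-1}g)$ and under which $\ast$ is the function supported at the identity with value $(1,\dots,1)$. For a normal subgroup $K\triangleleft G$ the linear ``coset-sum'' map $V(G)\to V(G/K)$ sends $w$ to $\bar w_K$, where $\bar w_K(\bar g)=\sum_{x\in gK}w(x)$, and I would record the single compatibility that drives everything: for any homomorphism $\epsilon\colon G\to G'$ and any $K'\triangleleft G'$, setting $K=\epsilon^{-1}(K')$, one has $\bar w_{K'}=\bar\epsilon_\ast(\bar w_K)$, where $\bar\epsilon\colon G/K\hookrightarrow G'/K'$ is the \emph{injective} map induced by $\epsilon$.

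With this in hand I would define $U(G)$ to be the locus of $[w]\in P(G)$ such that for every $K\triangleleft G$ the image $\bar w_K$ is nonzero and $[\bar w_K]\in P(G/K)$ has trivial $G/K$-stabilizer. Taking $K=\{1\}$ shows $U(G)$ lies in the free locus, giving (4); the computation that $\ast$ maps under each coset-sum to the function supported at the identity coset of $G/K$ (visibly nonzero and with free image in every fiber) gives (1), and since $\ast$ is then a section of $P(G)\to\spec A$ landing in $U(G)$ it gives (2). Openness is immediate, as there are finitely many $K$ and each condition is open over $A$ (the non-free and kernel loci are closed, cut out respectively by $2\times 2$ minors of $[w\mid hw]$ and by the linear kernel equations).

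For the codimension statement (3) I would argue in each geometric fiber. The complement of $U(G)$ is the finite union, over $K\triangleleft G$, of the kernel locus $\PP(\ker(V(G)\to V(G/K))^{\oplus N})$ together with the preimage under the linear projection $P(G)\dashrightarrow P(G/K)$ of the non-free locus of $G/K$. The kernel locus has codimension $N|G|/|K|\geq N\geq 2$. For the non-free locus of any finite group $H$ acting on $\PP(A[H]^{\oplus N})$, a point is fixed up to scalar by $h\neq 1$ exactly when it lies in some $\PP(\ker(h-\lambda))$; counting $\langle h\rangle$-orbits on $H$ shows $\dim\ker(h-\lambda)\leq N|H|/\operatorname{ord}(h)$ in every characteristic (the recursion $a_{hg}=\lambda^{-1}a_g$ propagates values around each orbit), so each such linear subspace has codimension $\geq N|H|(1-1/\operatorname{ord}(h))\geq N\geq 2$. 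Since linear projections preserve the codimension of linear subspaces, the preimages also have codimension $\geq 2$, and a finite union of codimension-$\geq 2$ linear subspaces is again of that form.

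The heart of the matter, and the step I expect to be the main obstacle, is functoriality (5); indeed the naive free locus fails it already for $G=(\Z/2)^2$ mapping to the trivial group, where the quotient map collapses the augmentation locus $\PP(\ker^{\oplus N})$ to zero and yet that locus contains $G$-free points. This is precisely why the definition of $U(G)$ builds in all quotients. Given $\epsilon\colon G\to G'$, I would verify via the compatibility above that for $[w]\in U(G)$ and any $K'\triangleleft G'$ one has $\bar w_{K'}=\bar\epsilon_\ast(\bar w_K)$ with $\bar\epsilon$ injective; the case $K'=\{1\}$ shows $\epsilon_\ast w\neq 0$, so the projectivized map is defined at $[w]$, while the general $K'$ shows $[\bar w_{K'}]$ is $G'/K'$-free. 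The only remaining input is a short lemma that an injective homomorphism $\iota\colon H\hookrightarrow H'$ induces a closed immersion $\PP(A[H]^{\oplus N})\hookrightarrow\PP(A[H']^{\oplus N})$ carrying free points to free points: if $h'\in H'$ fixes the class of a point supported on $\iota(H)$, then $h'$ preserves its support, and comparing two support points forces $h'\in\iota(H)$, whence freeness for $H$ gives $h'=1$. Applying this lemma to each $\bar\epsilon$ shows $\epsilon_\ast$ carries $U(G)$ into $U(G')$; $\epsilon$-equivariance and the identity $\epsilon_\ast(\ast)=\ast$ are transparent from the construction, completing (5).
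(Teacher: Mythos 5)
Your proposal is correct and follows essentially the same route as the paper: you define $U(G)$ by requiring that the image in every quotient $P(G/K)$ be defined and have trivial stabilizer (exactly the paper's $U'(G)\setminus Z(G)$), you bound codimensions by the same orbit-counting eigenspace estimate, and you prove functoriality by factoring $\epsilon$ through its image and showing that a projective stabilizer of a point supported on a subgroup must lie in that subgroup, which is precisely the content of the paper's Lemma \ref{L:fixed-pts-lem}. The only differences are presentational: you work projectively with the coset-sum maps and stabilizers, while the paper works upstairs in the affine cone $W_0(G)$ with explicit eigenvalue bookkeeping for the spaces $W(G)^{\bar g,\lambda}$.
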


\begin{proof}[Proof of Proposition \ref{sec:classifying-stacks}, given
  Proposition \ref{sec:classifying-stacks-3}]
  The proof applies immediately by applying Proposition
  \ref{sec:classifying-stacks-1} to the family $U(G)\times_{\spec\Z}X$
  given by Proposition \ref{sec:classifying-stacks-3} (when $A=\Z$).
\end{proof}

We now give a proof of Proposition \ref{sec:classifying-stacks-3}.

\begin{lem}\label{sec:classifying-stacks-2}
  Let $U'(G)\subset P(G)$ be the largest open subscheme such that for
  any map of finite groups $G\to G'$, the rational morphism
  $P(G)\dashrightarrow P(G')$ is regular on $U'(G)$.  Then $U'(G)$ is
  the complement of a union of flat families of linear subspaces of
  $P(G)$ of codimension at least $2$.
\end{lem}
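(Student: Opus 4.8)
The plan is to make the rational maps $P(G)\dashrightarrow P(G')$ completely explicit and read off their indeterminacy loci. A homomorphism $\epsilon\colon G\to G'$ induces, by functoriality of the regular representation, a linear map $L^0_\epsilon\colon V(G)\to V(G')$ sending $e_g\mapsto e_{\epsilon(g)}$, and hence (applying $\Hom(A^N,-)$) a linear map $L_\epsilon\colon W(G)\to W(G')$ whose projectivization is precisely the rational map in the statement. The indeterminacy locus of the projectivization of a linear map is exactly the projectivized kernel: away from $\PP(\ker L_\epsilon)$ the formula visibly defines a morphism, while across a point of the center the map is, in suitable coordinates, a linear projection $[x:y]\mapsto[y]$, and distinct approach directions give distinct limits, so no regular extension exists. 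Consequently the largest open on which a single $\epsilon$ is regular is $P(G)\smallsetminus\PP(\ker L_\epsilon)$, and intersecting over all $\epsilon$ gives
\[
U'(G)=P(G)\smallsetminus\bigcup_{\epsilon}\PP(\ker L_\epsilon).
\]

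First I would reduce this a priori infinite union to a finite one. Factor $\epsilon$ as a surjection $\pi\colon G\surj G/\ker\epsilon$ followed by an injection $\iota$. An injection of groups induces an injective map of regular representations, so $L^0_\iota$ is injective and $\ker L^0_\epsilon=\ker L^0_\pi$; thus the kernel depends only on the normal subgroup $H=\ker\epsilon$, and every nontrivial $H\trianglelefteq G$ is realized by the quotient map $G\to G/H$. Hence the union runs over the finitely many nontrivial $H\trianglelefteq G$, the subspace attached to $H$ being $\PP\big((\ker L^0_{\pi_H})^{\oplus N}\big)$ under the identification $W(G)=V(G)^{\oplus N}$.

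It remains to check flatness and the codimension bound, and it is in the latter that the hypothesis $N\geq2$ enters. For flatness I would note that $\ker L^0_{\pi_H}\subseteq V(G)=A^G$ is spanned by the differences $e_g-e_{g'}$ with $gH=g'H$; choosing one coset representative splits off a free complement isomorphic to $A^{G/H}$, so $\ker L^0_{\pi_H}$ is a direct summand, hence so is its $N$-fold sum in $W(G)$, and its projectivization is a sub-projective-bundle, i.e.\ a flat family of linear subspaces over $\Spec A$. For the codimension, rank-nullity gives $\dim\ker L^0_{\pi_H}=|G|-|G/H|$, so $\dim\ker L_{\pi_H}=N(|G|-|G/H|)$, and since $\dim P(G)=N|G|-1$ the codimension of $\PP(\ker L_{\pi_H})$ is exactly $N\,|G/H|\geq N\geq2$ for every $H\neq\{1\}$. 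Together these exhibit $U'(G)$ as the complement of a finite union of flat families of linear subspaces of codimension at least $2$, as claimed.

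I expect the main obstacle to be the first paragraph's assertion that the indeterminacy locus is \emph{exactly} the projectivized kernel: one must verify that the map genuinely fails to extend across every point of the center (not merely that the chosen formula degenerates there), and that this holds relatively over $\Spec A$ so that "largest open subscheme" is correctly identified with the complement of the family of centers. The remaining ingredients — the factorization reducing to a finite union, the direct-summand argument for flatness, and the rank count for codimension — are routine.
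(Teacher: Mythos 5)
Your proposal is correct and follows essentially the same route as the paper's proof: factor any $G\to G'$ through its image so that the indeterminacy loci come only from the finitely many quotients $G\surj G/H$, observe that each kernel $\ker\bigl(W(G)\to W(G/H)\bigr)$ is a (free, direct-summand) subbundle whose projectivization is a flat family of linear subspaces, and count codimension $N\,|G/H|\geq N\geq 2$. You additionally verify two points the paper treats as implicit --- that the indeterminacy locus is \emph{exactly} the projectivized kernel (valid here since every $L_\epsilon$ has rank $N|\epsilon(G)|\geq 2$) and the direct-summand argument for flatness --- which strengthens rather than changes the argument.
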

\begin{proof}
  Any map $G\to G'$ factors through a quotient group $G\to\widebar G$.
  The kernel of the induced map $W(G)\to W(\widebar G)$ is a linear
  space of codimension at least $N$ (as $W(\{1\})$ has dimension
  $N$).  For each such quotient $G\to\widebar G$ there is a subbundle
  $W_{\widebar G}\subset W(G)$ parametrizing the family of kernels;
  removing the union of the corresponding subspaces of $P(G)$ yields
  $U'(G)$, as desired.
\end{proof}

Define a closed subscheme $Z(G)\subset U'(G)$ by taking the
scheme-theoretic union of all preimages under (surjective) quotient morphisms
$U'(G)\to U'(\widebar G)$ of all fixed loci for the action of
non-identity elements of $\widebar G$.

\begin{lem}\label{sec:classifying-stacks-5}
 With the immediately preceding notation, the closed subscheme
 $Z(G)\subset U'(G)$ has codimension $2$ in every geometric fiber of
 $U'(G)\to\spec A$.  Moreover, $\ast$ factors through $U'(G)\setminus
 Z(G)$.  Finally, for every map of finite groups $G\to G'$, the
 induced map $U'(G)\to U'(G')$ induces a map $U'(G)\setminus Z(G)\to
 U'(G')\setminus Z(G')$.
\end{lem}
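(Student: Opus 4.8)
The plan is to make the fixed loci explicit in linear-algebraic terms, and then handle the three assertions in turn. Fix a surjection $G\twoheadrightarrow\widebar G$ and write $V'=V(\widebar G)$, so that $W(\widebar G)=(V')^{\oplus N}$ and $P(\widebar G)$ has dimension $N|\widebar G|-1$. A non-identity $\bar h\in\widebar G$ acts on $V'$ by the left-multiplication permutation of the basis $\{\bar e_{\bar g}\}$; as a permutation this is a product of $|\widebar G|/d$ disjoint $d$-cycles, where $d=\operatorname{ord}(\bar h)\geq 2$ and the action is fixed-point-free. Over a geometric point $x\to\spec A$, a point $[(w_1,\dots,w_N)]\in P(\widebar G)_x$ is fixed by $\bar h$ exactly when all $w_i$ lie in one eigenspace $V'_\lambda$, so $\mathrm{Fix}(\bar h)_x$ is the union over eigenvalues $\lambda$ of the projectivizations $\P\bigl((V'_{\lambda})^{\oplus N}\bigr)$.

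For the codimension assertion I would bound the eigenspace dimensions. Since a single cyclic block is non-derogatory, in every geometric fiber $\dim\ker(\bar h-\lambda)\leq$ (number of cycles) $=|\widebar G|/d$ for each $\lambda$ (in characteristics dividing $d$ it is smaller still), so each component $\P\bigl((V'_\lambda)^{\oplus N}\bigr)$ of $\mathrm{Fix}(\bar h)_x$ has codimension at least $N|\widebar G|(1-1/d)\geq N|\widebar G|/2\geq 2$ in $P(\widebar G)_x$, using $N\geq 2$, $|\widebar G|\geq 2$, $d\geq 2$ (with equality only when $N=|\widebar G|=d=2$). The defining piece $\pi_{\widebar G}^{-1}(\mathrm{Fix}(\bar h))$ is cut out inside $U'(G)$ by the linear surjection $W(G)\twoheadrightarrow W(\widebar G)$, and the preimage of a linear subspace of codimension $c$ under a linear projection again has fibrewise codimension $c$. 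As $G$ has only finitely many quotients, $Z(G)$ is a finite union of such pieces and therefore has codimension $\geq 2$ in every geometric fiber.

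For the base point, I would compute the image of $\ast$ under each quotient map. Because $e_1\mapsto\bar e_1$, the image of $\ast=(e_1,\dots,e_1)$ in $W(\widebar G)$ is $(\bar e_1,\dots,\bar e_1)\neq 0$; this shows $\ast$ avoids every projectivized kernel, hence lies in $U'(G)$. Moreover $\bar h\cdot\bar e_1=\bar e_{\bar h}$, and for $\bar h\neq 1$ the distinct basis vectors $\bar e_{\bar h}$ and $\bar e_1$ are not proportional, so the image of $\ast$ in $P(\widebar G)$ is fixed by no non-identity element and thus misses $\mathrm{Fix}(\bar h)$. Ranging over all quotients and all $\bar h\neq 1$ shows $\ast$ factors through $U'(G)\setminus Z(G)$.

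The functoriality statement is the delicate part, and it is exactly what forces the definition of $Z(G)$ to involve all quotients rather than $G$ alone; here I must show $\phi^{-1}(Z(G'))\subseteq Z(G)$ for $\phi:G\to G'$. A defining piece of $Z(G')$ is $(\pi'_{G''})^{-1}(\mathrm{Fix}(h''))$ for a quotient $G'\twoheadrightarrow G''$ and $h''\neq 1$, and its $\phi$-preimage is $(\pi'_{G''}\circ\phi)^{-1}(\mathrm{Fix}(h''))$, where $\pi'_{G''}\circ\phi$ is induced by the composite $G\to G'\to G''$. Factoring this composite through its image as $G\twoheadrightarrow\widebar{G''}\hookrightarrow G''$, the induced map factors as $U'(G)\xrightarrow{q}U'(\widebar{G''})\xrightarrow{\iota}U'(G'')$, where $q$ is a quotient map and $\iota$ is the closed immersion coming from the injection of coordinate subspaces $W(\widebar{G''})\hookrightarrow W(G'')$. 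The key step is to analyze $\iota^{-1}(\mathrm{Fix}(h''))$: since $h''$ carries the coordinate subspace $W(\widebar{G''})$ onto $W(h''\widebar{G''})$, when $h''\notin\widebar{G''}$ these two coordinate subspaces meet only in $0$, so no point of $P(\widebar{G''})$ can be fixed and the preimage is empty; when $h''\in\widebar{G''}$, the element $h''$ preserves $W(\widebar{G''})$ and acts there by left multiplication, so $\iota^{-1}(\mathrm{Fix}(h''))=\mathrm{Fix}_{\widebar{G''}}(h'')$. In the latter case $q^{-1}(\mathrm{Fix}_{\widebar{G''}}(h''))$ is, by definition, a piece of $Z(G)$ attached to the quotient $G\twoheadrightarrow\widebar{G''}$ and the non-identity element $h''$. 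Taking the union over all pieces of $Z(G')$ gives $\phi^{-1}(Z(G'))\subseteq Z(G)$, so $\phi$ restricts to $U'(G)\setminus Z(G)\to U'(G')\setminus Z(G')$. The main obstacle is precisely this comparison of fixed loci across $\widebar{G''}\hookrightarrow G''$; once one sees that fixed loci over a subgroup pull back from fixed loci over the corresponding quotient of $G$, defining $Z(G)$ as a union over all quotients is exactly what closes the argument.
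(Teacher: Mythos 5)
Your proof is correct and takes essentially the same approach as the paper's: the codimension bound comes from the same eigenspace/cycle count for the free permutation action of $\bar h$ on the basis, and your disjoint-coset analysis of $\iota^{-1}(\mathrm{Fix}(h''))$ after factoring $G\to G''$ through its image is exactly the content of the paper's Lemma \ref{L:fixed-pts-lem} (an eigenvector supported on the image subgroup forces $h''\in\widebar{G''}$, with the fixed locus pulled back from the quotient $G\twoheadrightarrow\widebar{G''}$). The only cosmetic difference is that the paper packages the argument at the level of cones, removing the eigenspace preimages $W(G)^{\bar g,\lambda}$ from $W(G)$ to form $W_0(G)$ and then projectivizing, whereas you argue piece by piece with the projectivized fixed loci.
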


The proof of Lemma \ref{sec:classifying-stacks-5} requires a bit of
analysis of the eigenvectors for the elements of $G$ acting on $W(G)$.

\begin{notn}
  Given a linear representation $R$ of $G$, a scalar $\lambda\in k$,
  and an element $g\in G$, let $R^{g,\lambda}$ denote the submodule of
  $R$ on which $g$ acts as multiplication by $\lambda$.
\end{notn}

If $g$ has order $\nu$, it is clear that $R^{g,\lambda}$ is $0$ if 
$\lambda$ is not a $\nu$th root of unity.  

\begin{lem}
  With the above notation, let $\nu$ be the order of $g\in G$.  Assume
  $g\neq 1$ (for the sake of non-stupidity).  For any
  $\lambda\in\m_{\nu}(k)$, the submodule $V(G)^{g,\lambda}$ is a
  locally direct summand of $V(G)$ of corank $|G|(1-1/\nu)$.  The submodule
  $W(G)^{g,\lambda}$ is a locally direct summand of $W(G)$ of corank 
  $N|G|(1-1/\nu)\geq N\geq 2$.
\end{lem}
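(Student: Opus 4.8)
The plan is to reduce everything to the regular representation of the cyclic group $\langle g\rangle$ and then make the $\lambda$-eigenspace completely explicit, so that both the locally-direct-summand claim and the corank computation can be verified over an arbitrary base — in particular in the bad characteristics dividing $\nu$, where $g$ fails to be diagonalizable and a naive appeal to semisimplicity would break down. First I would identify $V(G)$ with the regular representation $A[G]=\bigoplus_{h\in G}A e_h$, on which $g$ permutes the basis by $g\cdot e_h=e_{gh}$. The $\langle g\rangle$-orbits on $G$ are the right cosets $\langle g\rangle h$, each of cardinality $\nu$, so there are $|G|/\nu$ of them; choosing one representative per coset exhibits a $\langle g\rangle$-equivariant isomorphism $V(G)\cong A[\langle g\rangle]^{\oplus |G|/\nu}$, where $A[\langle g\rangle]\cong A[x]/(x^\nu-1)$ with $x$ acting as $g$. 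Since the formation of $R^{g,\lambda}=\ker(g-\lambda\colon R\to R)$ is additive in $R$, this reduces both assertions to the single module $M:=A[x]/(x^\nu-1)$ together with the operator $x-\lambda$.

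Then I would compute $\ker(x-\lambda)$ on $M$ by hand. Writing $f=\sum_{i=0}^{\nu-1}a_i x^i$, the relation $xf=\lambda f$ is equivalent to the recurrence $a_{i-1}=\lambda a_i$ (indices mod $\nu$); since $\lambda$ is a unit (as $\lambda^\nu=1$) this forces $a_i=\lambda^{-i}a_0$, and the cyclic consistency $a_{\nu-1}=\lambda a_0$ holds automatically because $\lambda^{-(\nu-1)}=\lambda$. Hence $\ker(x-\lambda)=A\cdot v_\lambda$ is free of rank $1$, generated by $v_\lambda=\sum_{i=0}^{\nu-1}\lambda^{-i}x^i$. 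It is moreover a direct summand: the functional extracting the constant term sends $v_\lambda\mapsto 1$, so $w\mapsto(\text{constant term of }w)\,v_\lambda$ is an $A$-linear retraction of $M$ onto $A v_\lambda$, giving $M=A v_\lambda\oplus\ker(\text{retraction})$.

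Finally I would reassemble. The reduction shows $V(G)^{g,\lambda}$ is a (locally) free direct summand of $V(G)$ of rank $|G|/\nu$, hence of corank $|G|-|G|/\nu=|G|(1-1/\nu)$. For $W(G)$, the trivial $G$-action on $A^N$ gives a $G$-equivariant identification $W(G)=\Hom(A^N,V(G))\cong V(G)^{\oplus N}$, so $W(G)^{g,\lambda}\cong(V(G)^{g,\lambda})^{\oplus N}$ is a locally free direct summand of corank $N|G|(1-1/\nu)$. The concluding inequality is immediate: $g\neq 1$ forces $\nu\geq 2$, and $|G|\geq\nu$, so $|G|(1-1/\nu)\geq\nu-1\geq 1$ and therefore $N|G|(1-1/\nu)\geq N\geq 2$.

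The main obstacle — really the only point requiring care — is the direct-summand assertion in characteristics $p\mid\nu$, where the shift on each cyclic block is a single non-semisimple Jordan block, so $g$ is not diagonalizable and $\ker(x-\lambda)$ is strictly smaller than the generalized eigenspace; it is precisely the explicit generator $v_\lambda$ and its retraction, rather than any diagonalization, that make the rank-$1$ summand statement hold uniformly over the base. I would also flag the mild abuse in the phrase ``$\lambda\in\m_\nu(k)$'': the computation is performed over any base ring over which $\lambda$ is defined (after base change to $k$, or over $\Z[\m_\nu]$), and the retraction argument is insensitive to the characteristic, which is exactly what is needed for the codimension bound in geometric fibers used in Lemma \ref{sec:classifying-stacks-5}.
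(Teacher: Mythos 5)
Your proof is correct and follows essentially the same route as the paper: both decompose $G$ into the $|G|/\nu$ orbits of left multiplication by $g$ (your right cosets $\langle g\rangle h$) and observe that the coefficients of an eigenvector along each orbit are forced to be powers of $\lambda$ times the coefficient at a chosen representative, which immediately gives a free eigenspace of rank $|G|/\nu$ and the stated coranks. Your explicit constant-term retraction exhibiting $V(G)^{g,\lambda}$ as a genuine direct summand is a useful extra detail --- the paper leaves that part implicit --- but it is the same underlying computation, not a different argument.
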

\begin{proof}
  The group decomposes into $|G|/\nu$ left orbits for the action of
  $g$ of size exactly $\nu$.  Choosing an element $h_i$ in each orbit,
  we see that for an element $\sum\alpha_he_h\in V(G)^{g,\lambda}$,
  the coefficient of $e_{g^sh_i}$ must be $\lambda^s\alpha_{h_i}$.  Thus,
  each element of $V(G)^{g,\lambda}$ is uniquely determined by the set
  of coordinates $\alpha_{h_i}$, $i=1,\ldots,|G|/\nu$.  This gives the
  statement for $V(G)$, and the assertion on $W(G)$ follows.
\end{proof}

Let $f:G\to G'$ be a homomorphism of finite groups.  There are induced
maps $f_\ast:V(G)\to V(G')$ and $f_{\ast}:W(G)\to W(G')$ of
$A$-modules which are equivariant over $f$ in the standard sense.

\begin{lem}\label{L:fixed-pts-lem}
Given a non-zero $\alpha\in V(G)$, suppose there is some $g'\in G'$ and
$\lambda\in A$ such that
$f_\ast\alpha\in V(G')^{g',\lambda}$.  Then $g'\in f(G)$ and
$f_\ast\alpha\in\iota_\ast V(f(G))^{g',\lambda}$, where $\iota:f(G)\to G'$ is
the natural inclusion.
\end{lem}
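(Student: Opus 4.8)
The plan is to exploit the fact that $f_\ast$ is built from the map on basis vectors $e_g \mapsto e_{f(g)}$, so that the image of $f_\ast$ is supported entirely on the subgroup $H := f(G) \subseteq G'$. Concretely, writing $V(G')$ as the free $A$-module on $\{e_{g''}\}_{g'' \in G'}$ with $G'$ acting by $g' \cdot e_{g''} = e_{g'g''}$, I would decompose $V(G') = M_H \oplus M'$ as $A$-modules, where $M_H = \bigoplus_{g'' \in H} A\, e_{g''}$ and $M'$ is spanned by the remaining basis vectors. Two observations drive the whole argument. First, since $f_\ast e_g = e_{f(g)}$ and $f(g) \in H$, we have $f_\ast(V(G)) \subseteq M_H$, and $M_H$ is canonically identified with $\iota_\ast V(H)$; in particular $f_\ast\alpha \in \iota_\ast V(H)$, which is already half of the second conclusion. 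Second, $M_H$ is stable under the action of $H$, and under this identification the $H$-action is exactly the regular representation of $H$, i.e.\ $\iota_\ast$ is $H$-equivariant.

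To obtain $g' \in f(G)$, I would argue on supports. Here one must assume $f_\ast\alpha \neq 0$ (this is the relevant case: it is precisely the condition that $\alpha$ survives to a point of $P(G')$, and the degenerate case $f_\ast\alpha = 0$, in which the first conclusion fails, is excluded in all applications). Choose a basis vector $e_{g''_0}$ occurring with nonzero coefficient $\beta_{g''_0}$ in $f_\ast\alpha$; by the first observation $g''_0 \in H$. Now compare coefficients in the eigenvector equation $g' \cdot f_\ast\alpha = \lambda\, f_\ast\alpha$: since $v \mapsto g' \cdot v$ permutes the basis, the coefficient of $e_{g'g''_0}$ on the left is exactly $\beta_{g''_0} \neq 0$, so the coefficient of $e_{g'g''_0}$ on the right is also nonzero, forcing $e_{g'g''_0}$ into the support of $f_\ast\alpha$, hence $g'g''_0 \in H$. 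Combined with $g''_0 \in H$ this gives $g' \in H = f(G)$.

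Finally, once $g' \in H$ is known, the $G'$-action of $g'$ on the element $f_\ast\alpha \in M_H$ coincides, via the identification $M_H \cong \iota_\ast V(H)$, with the action of $g'$ in the regular representation of $H$. Since $f_\ast\alpha$ is by hypothesis a $\lambda$-eigenvector for $g'$, it therefore lies in $\iota_\ast V(H)^{g',\lambda} = \iota_\ast V(f(G))^{g',\lambda}$, which is the second conclusion. I expect the only genuine subtlety to be the bookkeeping in the support comparison, together with the need to isolate the hypothesis $f_\ast\alpha \neq 0$; everything else is the formal fact that the regular representation of $G'$ restricts, on the span of a subgroup's worth of basis vectors, to the regular representation of that subgroup. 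The argument is insensitive to whether $A$ is a field, since it uses only the $A$-linear independence of the basis $\{e_{g''}\}$.
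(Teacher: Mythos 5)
Your proof is correct and follows essentially the same route as the paper's: both arguments rest on comparing supports/coefficients of $f_\ast\alpha$ under the permutation action of $g'$ to force $g'\in f(G)$, and then read off the eigenspace statement from the $f(G)$-equivariant identification of the span of $\{e_{g''}\}_{g''\in f(G)}$ with the regular representation $\iota_\ast V(f(G))$. If anything you are more careful than the paper: its one-line proof asserts ``for all $h$ with $\alpha_h\neq 0$ we have $g'f(h)\in f(G)$,'' which conflates the support of $\alpha$ with the support of $f_\ast\alpha$ (coefficients can cancel when $f$ is not injective) and never rules out the degenerate case $f_\ast\alpha=0$, in which the stated conclusion actually fails; your explicit hypothesis $f_\ast\alpha\neq 0$ --- harmless in the intended application, where the relevant vectors in $W_0(G)$ have nonzero image in every quotient --- is precisely the patch the paper's statement needs.
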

\begin{proof}
  Write $\alpha=\sum\alpha_he_h$.  By assumption, for all $h\in G$
  such that $\alpha_h\neq 0$ we have that $g'f(h)\in f(G)$.  Since
  there is some $h$ with $\alpha_h\neq 0$, we see that $g'\in f(G)$.
  The lemma follows immediately.
\end{proof}

\begin{proof}[Proof of Lemma \ref{sec:classifying-stacks-5}]
  Since the closed subset underlying $Z(G)$ is compatible with base
  change, it suffices to prove the lemma assuming that $A=k$ is an
  algebraically closed field.  A fixed point for the action of $g$ on
  $U'(G)$ is the image of an eigenvector in $W(G)$.  Given a quotient
  $\widebar G$, an element $\bar g\in\widebar G\setminus\{1\}$, and a
  scalar $\lambda\in A$, define $W(G)^{\bar g,\lambda}$ to be the
  preimage of $W(\widebar G)^{\bar g,\lambda}$ under the natural
  surjection $W(G)\to W(\widebar G)$.  Considering all non-trivial
  quotients of $G$ at once, we define
$$W_0(G)=
W(G)\setminus\left(\bigcup_{\overset{G\surj \widebar G\neq\{1\}}
  {\bar g\in\widebar G\setminus\{1\},\lambda\in k}}
W(G)^{\bar g,\lambda}\right).$$ It is easy to see that $W_0(G)$ is the
complement of the union of finitely many (locally direct summand)
vector subbundles of $W(G)$ of codimension at least $N$ and that
$\ast\in W_0(G)$.  Thus, $W_0(G)$ is an open cone in $W(G)$ whose
complement has codimension at least $2$.  Moreover, the image of
$W_0(G)$ in $P(G)$ is precisely $U'(G)\setminus Z(G)$, as desired.

Applying Lemma \ref{L:fixed-pts-lem}, we see that given a map $G\to H$, the
induced map $W(G)\to W(H)$ sends $\ast$ to $\ast$ and $W_0(G)$ into
$W_0(H)$, yielding an induced pointed map $\widebar W_0(G)\to\widebar
W_0(H)$.  This gives the final functoriality statement of Lemma
\ref{sec:classifying-stacks-5}
\end{proof}

\begin{proof}[Proof of Proposition \ref{sec:classifying-stacks-3}]
  Using the notation of Lemma \ref{sec:classifying-stacks-5}, setting
  $U(G)=U'(G)\setminus Z(G)$ yields a functorial open subscheme with a
  free action (as all fixed loci have been removed) which is the
  complement of a union of linear subspaces of codimension at least
  $2$ in every fiber.
\end{proof}

\begin{remark}\label{R:linear-ok-char-0}
 The reader will note that we could have avoided the use of both the
 projective space $P(G)$ and the eigenspaces
 $W(G)^{g,\lambda}$ in characteristic $0$.  In that case, since $W(G)$
 is itself simply connected, it suffices to simply remove the fixed
 point loci directly and take the quotient by $G$.  In this guise, our
 construction looks more similar to that which arises in the study of
 equivariant cohomology, as in \cite{e-g2}. 
\end{remark}

We next recall a few facts about bands which will be useful in the sequel.
The reader is referred to Chapter IV of \cite{gi4} for the definitive 
treatment of the subject (and further context).

Given a site $S$ (the reader may think of the Zariski or \'etale site
of a scheme), the stack of bands is defined as the stackification of a
quotient of the stack of groups as follows: Given two sheaves of
groups $G$ and $H$ over an object $T$ of $S$, there is natural right
action of $\saut(G)$ (resp.\ left action of $\saut(H)$) on
$\Isom(G,H)$.  Define a new fibered category $\widebar B$
over $S$ by taking as objects over $T$ the set of sheaves of groups on
$T$, but with homomorphism sheaf
$\shom_B(G,H)=\saut(H)\setminus\shom(G,H)/\saut(G)$.  The stack
$\bands_S$ of bands on $S$ is then defined to be the stackification of
$\widebar B$.  

\begin{lem}\label{L:band-lem}
  Given an object $T$ of $S$, a sheaf of groups $G$ on $T$, and an
  inner form $G'$ of $G$, there is an isomorphism $G\simto G'$ in the
  category of bands.
\end{lem}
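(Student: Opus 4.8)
The plan is to produce the desired band-isomorphism by descending local isomorphisms of group sheaves, using that $\bands_S$ is a stack. An isomorphism $G\simto G'$ in the fiber of $\bands_S$ over $T$ is a global section of the isomorphism sheaf underlying $\shom_B$; since $\bands_S$ is a stack this sheaf satisfies descent, so such a section may be specified on a covering of $T$ and glued, provided the local pieces agree on overlaps. It therefore suffices to exhibit local isomorphisms of group sheaves whose classes in $\shom_B$ agree on overlaps.

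First I would unpack the hypothesis that $G'$ is an inner form of $G$. By definition this yields a covering $\{U_i\to T\}$ in $S$ and isomorphisms of group sheaves $\phi_i\colon G|_{U_i}\simto G'|_{U_i}$ whose transition automorphisms $\phi_i\circ\phi_j^{-1}$ over $U_{ij}:=U_i\times_T U_j$ are inner automorphisms of $G'|_{U_{ij}}$ --- this is exactly the condition separating an inner form from an arbitrary form. As $\phi_i$ is an isomorphism, it determines a section $[\phi_i]$ of $\shom_B(G,G')$ over $U_i$ that is an isomorphism in the category of bands, with inverse the class of $\phi_i^{-1}$.

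The crux is the overlap comparison $[\phi_i]=[\phi_j]$ over $U_{ij}$. Writing $\phi_i=(\phi_i\circ\phi_j^{-1})\circ\phi_j$ and passing to the quotient $\saut(G'|_{U_{ij}})\backslash\shom(G|_{U_{ij}},G'|_{U_{ij}})/\saut(G|_{U_{ij}})$ that defines $\shom_B$, the factor $\phi_i\circ\phi_j^{-1}$ --- being inner, hence in particular an automorphism of $G'|_{U_{ij}}$ --- is absorbed by the left $\saut(G')$-quotient, so $[\phi_i]$ and $[\phi_j]$ coincide. Thus the local band-isomorphisms agree on overlaps and glue, by the stack property of $\bands_S$, to a global isomorphism $G\simto G'$ in the category of bands.

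The only point requiring genuine care is this overlap comparison (and the parallel check that the glued morphism is an isomorphism, not merely a morphism), which reduces to the single observation that bands are built precisely so as to quotient group homomorphisms by automorphisms, so that the inner-twisting data relating $G'$ to $G$ becomes trivial. I anticipate no serious obstacle beyond this descent bookkeeping; indeed one can repackage the argument cohomologically, observing that an inner form is classified by a torsor under the inner automorphism sheaf whose image in the automorphism sheaf of the associated band vanishes, but the local gluing above is the most direct route.
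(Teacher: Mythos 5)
Your proof is correct and follows essentially the same route as the paper's: both unpack the inner-form hypothesis into local isomorphisms of group sheaves whose coboundary (transition) automorphisms are inner, observe that such automorphisms are absorbed in the quotient defining $\shom_B$, and then descend/glue the resulting local band-isomorphisms using the stack property of $\bands_S$. The only difference is cosmetic: the paper phrases the argument with a single covering $U\to T$ and the triviality of the coboundary in bands, whereas you use a covering family $\{U_i\}$ with an explicit \v{C}ech-style overlap check.
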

\begin{proof}
  Since $G'$ is an inner form of $G$, there is a covering $U\to T$ and
  an isomorphism $\phi:G|_U\simto G'|_U$ whose coboundary, viewed as
  an automorphism of $G|_{U\times_T U}$, is conjugation by a section
  of $G$.  But any such automorphism is trivial in the category of
  bands, so $\phi$ descends to an isomorphism $G\to G'$ in
  $\bands_S(T)$.
\end{proof}

\begin{lem}\label{L:iso-band-inner}
  Suppose $G$ and $H$ are sheaves of groups on $T$.  If $\phi:G\cong H$ in
  $\bands_S(T)$ then there is an inner form $H'$ of $H$ and an
  isomorphism of sheaves of groups $\psi:G\to H'$
\end{lem}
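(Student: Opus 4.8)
The plan is to lift the band isomorphism to an honest isomorphism of sheaves of groups over a cover and then descend, with the obstruction to descent recording precisely the inner twist; this is the converse construction to Lemma \ref{L:band-lem}. First I would use that $\bands_S$ is the stackification of $\widebar B$: the isomorphism $\phi$ is a section of the sheafification of $\Isom_{\widebar B}(G,H)$, so after refining to a covering $U\to T$ it is represented by a section of $\Isom_{\widebar B}(G|_U,H|_U)$. Since the quotient map $\Isom(G|_U,H|_U)\to\Isom_{\widebar B}(G|_U,H|_U)$ is a torsor under the freely acting sheaf of inner automorphisms of $H|_U$, I may refine $U$ once more so that $\phi|_U$ lifts to an honest isomorphism of sheaves of groups $\tilde\phi:G|_U\simto H|_U$.

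Next I would measure the failure of $\tilde\phi$ to descend. Writing $p_1,p_2:U\times_T U\to U$ for the two projections, set $\theta:=p_2^*\tilde\phi\circ(p_1^*\tilde\phi)^{-1}\in\saut(H|_{U\times_T U})$. Because $p_1^*\tilde\phi$ and $p_2^*\tilde\phi$ represent the single band morphism $\phi|_{U\times_T U}$, they agree as morphisms of bands, so $\theta$ is a section of the sheaf of inner automorphisms of $H$. Writing $\phi_i$ for the pullback of $\tilde\phi$ along the $i$-th projection $U\times_T U\times_T U\to U$, the pullbacks of $\theta$ along the face maps $p_{12},p_{23},p_{13}$ are $\phi_2\circ\phi_1^{-1}$, $\phi_3\circ\phi_2^{-1}$, and $\phi_3\circ\phi_1^{-1}$, so the cocycle identity $p_{13}^*\theta=p_{23}^*\theta\circ p_{12}^*\theta$ is immediate.

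Since sheaves of groups satisfy descent, the datum $(H|_U,\theta)$ descends to a sheaf of groups $H'$ on $T$. As $\theta$ lies in the sheaf of inner automorphisms of $H$, the sheaf of groups $H'$ is by construction an inner form of $H$ in the sense of Lemma \ref{L:band-lem}. Moreover $\tilde\phi$ intertwines the trivial descent datum on $G|_U$ with the datum $\theta$ on $H|_U$ --- this compatibility is exactly the equation $\theta\circ p_1^*\tilde\phi=p_2^*\tilde\phi$ defining $\theta$ --- so $\tilde\phi$ descends to an isomorphism of sheaves of groups $\psi:G\simto H'$, which is the desired conclusion.

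The step I expect to be the crux is the identification, in the second paragraph, of $\theta$ as a section of the inner automorphism sheaf rather than of the full automorphism sheaf $\saut(H)$. This is the one place where the definition of morphisms in $\widebar B$ (passage to the quotient by inner automorphisms) is genuinely used, and it is precisely this feature that forces $H'$ to be an inner form rather than a more general twisted form of $H$. By contrast, the local liftability of $\phi$ to an honest isomorphism and the effectivity of descent for sheaves of groups are routine, so once the coboundary is seen to be inner the argument closes quickly.
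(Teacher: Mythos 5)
Your proposal is correct and takes essentially the same route as the paper's own proof: both lift the band isomorphism to an honest isomorphism of sheaves of groups over a covering $U\to T$, observe that the coboundary on $U\times_T U$ is inner (the one place the definition of morphisms in $\widebar B$ is used), use that inner cocycle to twist $H$ into an inner form $H'$, and then descend the lifted isomorphism to $\psi:G\simto H'$. The only difference is expository --- you spell out the cocycle identity and the intertwining of descent data that the paper compresses into the phrases ``it is formal that $\sigma$ satisfies the $1$-cocycle condition'' and ``yields an isomorphism $G|_U\to H'|_U$ with trivial coboundary.''
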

\begin{proof}
  There is a covering $U\to T$ and an isomorphism $\alpha:G|_U\simto
  H|_U$ whose coboundary on $U\times_T U$, viewed as an automorphism
  of $H$, is conjugation by a section $\sigma\in H(U\times_T U)$.
  Moreover, it is formal that $\sigma$ satisfies the $1$-cocycle
  condition, and thus yields an inner form $H'$ of $H$.  Composing
  with the natural isomorphism $H|_U\simto H'|_U$ yields an
  isomorphism $G|_U\to H'|_U$ with trivial coboundary, yielding the
  result.
\end{proof}

When $S$ is the punctual site (e.g., the small \'etale site of a separably
closed field), the stack of bands is just the quotient category of the
category of groups which replaces $\Isom(G,H)$ with the set of
conjugacy classes of such isomorphisms.

\begin{prop}\label{P:fund-bands}
  Let $X$ be a Galois category with fiber functor $\ast$.  For any
  finite group $G$, there is a natural isomorphism between
  $\Hom_{\bands}(\pi_1(X,\ast),G)$ and the set of isomorphism classes
  of (right) $G$-torsors over the final
  object of $X$.
\end{prop}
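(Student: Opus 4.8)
The plan is to reduce everything to the standard description of the Galois category $X$ in terms of its fundamental group and then to match two a priori different quotients of the set of continuous homomorphisms into $G$ by conjugation. Write $\Pi=\pi_1(X,\ast)$. First I would invoke the fundamental theorem of Galois categories (SGA1, \cite{sga1}): the fiber functor $\ast$ induces an equivalence between $X$ and the category of finite continuous $\Pi$-sets, under which the final object $e$ corresponds to the one-point set with trivial action. Since $\ast$ is exact and conservative, an object $P$ of $X$ equipped with a right action of the constant group $G$ is a right $G$-torsor over $e$ if and only if $\ast(P)$ is a free and transitive right $G$-set carrying a commuting continuous left $\Pi$-action; thus it suffices to classify such $(\Pi,G)$-bisets up to isomorphism.

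Next I would set up the torsor-to-homomorphism correspondence. Given such a biset $\ast(P)$, a choice of point $p\in\ast(P)$ determines a map $c_p\colon\Pi\to G$ by the rule $\gamma\cdot p=p\cdot c_p(\gamma)$; the commutation of the two actions makes $c_p$ a continuous homomorphism, and replacing $p$ by $ph$ replaces $c_p$ by $h^{-1}c_ph$. Hence $P\mapsto[c_p]$ is a well-defined element of the quotient of $\Hom_{\operatorname{cont}}(\Pi,G)$ by conjugation by $G$, independent of all choices and constant on isomorphism classes of torsors. Conversely, a continuous homomorphism $c$ produces the torsor whose underlying $\Pi$-set is $G$, with $\Pi$ acting through $c$ on the left and $G$ acting by right translation; I would check that these two constructions are mutually inverse, giving a bijection between isomorphism classes of right $G$-torsors over $e$ and $\Hom_{\operatorname{cont}}(\Pi,G)/G$.

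It then remains to identify $\Hom_{\bands}(\Pi,G)$ with the same set. Here I would use the description of the stack of bands over the punctual site recalled just above the statement: a morphism of bands $\Pi\to G$ is a conjugacy class of continuous homomorphisms, i.e.\ an element of $\Hom_{\operatorname{cont}}(\Pi,G)$ modulo post-composition with inner automorphisms of $G$. The identity $\phi\circ\operatorname{inn}_\gamma=\operatorname{inn}_{\phi(\gamma)}\circ\phi$ shows that quotienting by inner automorphisms of the target already absorbs those of the source, so this quotient is exactly $\Hom_{\operatorname{cont}}(\Pi,G)/G$. Composing the two bijections yields the desired identification.

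Finally, for naturality in $G$ I would observe that a homomorphism $\lambda\colon G\to G'$ acts on torsors by pushforward $P\mapsto(P\times G')/G$ and on the right-hand side by $c\mapsto\lambda\circ c$, and that both operations descend to conjugacy classes and agree under the above bijections; thus the isomorphism is a natural transformation of functors $\fingps\to\Set$. The main obstacle, and the step requiring the most care, is keeping the two sources of conjugation ambiguity synchronized: on the torsor side it comes from the absence of a canonical base point (equivalently, from automorphisms of the torsor, which act by right translation and hence conjugate $c_p$), while on the band side it is built into the definition of $\Hom_{\bands}$. Making this matching precise—together with the verification that $\ast$ genuinely detects the torsor condition, which relies on its exactness and conservativity—is where the real content lies; the remaining homomorphism bookkeeping is routine.
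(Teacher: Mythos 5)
Your proposal is correct and takes essentially the same approach as the paper's (very terse) proof: pass to the category of $\pi_1(X,\ast)$-sets, let a choice of point of the torsor define a homomorphism $\pi_1(X,\ast)\to G$, observe that changing the point changes the homomorphism by an inner automorphism of $G$, and conversely equip the underlying set of $G$ with the left $\pi_1$-action induced by a homomorphism. The paper leaves implicit exactly the details you spell out (the Galois-category equivalence, detection of the torsor condition by the fiber functor, the identification of $\Hom_{\bands}$ over the punctual site with conjugacy classes of continuous homomorphisms, and naturality in $G$).
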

\begin{proof}
  Given a $G$-torsor $T$ in the category of $\pi$-sets, the choice of
  a point $t\in T$ yields a homomorphism $\pi\to G$ which sends
  $\alpha$ in $\pi$ to $g$ in $G$ such that $\alpha t=tg$.  Changing
  the choice of $t$ changes the map by an inner automorphism.
  Conversely, given such a map, one gets a left action of $\pi$ on the
  underlying set of $G$ which commutes with the natural right
  $G$-action.
\end{proof}

Note that the functor $F_{\B{G}}$ (over a space $X$) is naturally
pointed by the isomorphism class of the trivial torsor; we will use
$\ast$ to denote the canonical point.  (The reader with logical qualms
should note that the fact that a stack has the form $\B{G}$ with $G$ a
finite \'etale group space can be detected from the functor by
Proposition \ref{P:classes-isonatural}, and thus the pointing is
isonatural.)  There is a natural subfunctor $F^{\ast}_{\B{G}}$ on
$X\Sch_{\bullet}$ whose value on $\sigma:X\to Y$ (where $Y$ is an
$X$-scheme and $\sigma$ is a section of the structure morphism) is the
preimage of $\ast$ under the restriction map $F_{\B{G}}(Y)\to
F_{\B{G}}(X)$.

\begin{defn}\label{sec:classifying-stacks-7}
  An isomorphism $\psi:F_{\B{G}}\to F_{\B{H}}$ is \emph{pointed\/} if
  $\psi$ sends the isomorphism class of the trivial torsor to the
  isomorphism class of the trivial torsor.
\end{defn}

We will write $\isom_{\ast}(F_{\B{G}},F_{\B{H}})$ for the subgroup of
pointed isomorphisms.  It is clear that any pointed isomorphism
$F_{\B{G}}\to F_{\B{H}}$ induces an isomorphism $F^{\ast}_{\B{G}}\to F^{\ast}_{\B{H}}$.

\begin{lem}\label{sec:classifying-stacks-6}
  Let $X$ be an algebraic space.  Given finite groups $G$ and $H$,
  there is a map 
$$\isom_{\ast}(F_{\B{G}},F_{\B{H}})\to\isom_{\bands}(G,H)$$
such that the composition
$\isom(G,H)\to\isom_{\ast}(F_{\B{G}},F_{\B{H}})\to\isom_{\bands}(G,H)$
is the natural map.
\end{lem}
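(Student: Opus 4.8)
The plan is to construct the map by evaluating $\psi$ on a single \emph{universal} torsor and reading off the induced homomorphism of fundamental groups. Since Proposition \ref{sec:classifying-stacks} requires a connected base with a geometric generic point, I would first reduce to the case that $X$ is connected (treating connected components separately) and fix such a geometric point $\widebar\theta$ as basepoint. For each finite group $G$, Proposition \ref{sec:classifying-stacks} supplies the pointed $X$-scheme $X\times\beta(G)$ together with an isomorphism $\pi_1(X\times\beta(G),\widebar\theta)\simto\pi_1(X,\widebar\theta)\times G$. The second projection $\pi_1(X\times\beta(G),\widebar\theta)\to G$ is a homomorphism and hence determines a $G$-torsor $u_G$ on $X\times\beta(G)$. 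Because the basepoint section $s\colon X\to X\times\beta(G)$ induces the inclusion $\gamma\mapsto(\gamma,1)$ on fundamental groups, the composite $\pi_1(X,\widebar\theta)\to\pi_1(X,\widebar\theta)\times G\to G$ is trivial, so $u_G$ restricts to the trivial torsor along $s$ and defines a canonical element of $F^{\ast}_{\B G}(X\times\beta(G))$.

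Given $\psi\in\isom_{\ast}(F_{\B G},F_{\B H})$, the induced map $F^{\ast}_{\B G}\to F^{\ast}_{\B H}$ sends $u_G$ to an element $\psi(u_G)\in F^{\ast}_{\B H}(X\times\beta(G))$, which by naturality of $\psi$ is again trivialized along $s$. By Proposition \ref{P:fund-bands} this $H$-torsor corresponds to a band homomorphism $\pi_1(X,\widebar\theta)\times G\to H$; the trivialization along $s$ forces any representative to kill $\pi_1(X,\widebar\theta)\times\{1\}$, so it factors through the projection to $G$ and yields a band homomorphism $\rho(\psi)\colon G\to H$. This $\rho$ is the desired map. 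The key computational identity, which I would verify using the functoriality of both $\beta$ and the fundamental-group isomorphism in the variable $G$, is that
$$\psi(u_G)\cong(\id_X\times\beta(\rho(\psi)))^{\ast}u_H \quad\text{in } F_{\B H}(X\times\beta(G)),$$
since both sides correspond under Proposition \ref{P:fund-bands} to the same band homomorphism $\pi_1(X,\widebar\theta)\times G\to H$.

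With this identity in hand, multiplicativity follows formally. For composable pointed isomorphisms $\psi\colon F_{\B G}\to F_{\B H}$ and $\psi'\colon F_{\B H}\to F_{\B K}$, naturality of $\psi'$ applied to the morphism $\id_X\times\beta(\rho(\psi))$ gives
$$\psi'(\psi(u_G))\cong(\id_X\times\beta(\rho(\psi)))^{\ast}\psi'(u_H)\cong(\id_X\times\beta(\rho(\psi')\circ\rho(\psi)))^{\ast}u_K,$$
using $\beta(\rho(\psi'))\circ\beta(\rho(\psi))=\beta(\rho(\psi')\circ\rho(\psi))$; hence $\rho(\psi'\circ\psi)=\rho(\psi')\circ\rho(\psi)$. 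Since $u_G$ corresponds to the projection, which factors as $\id_G$, we have $\rho(\id)=\id$, and applying multiplicativity to $\psi$ and $\psi^{-1}$ shows $\rho(\psi)$ is a band isomorphism; thus $\rho$ indeed lands in $\isom_{\bands}(G,H)$. For the final compatibility, a genuine isomorphism $\lambda\colon G\to H$ maps under the natural functor to the extension-of-structure-group map $\lambda_{\ast}$, and $\lambda_{\ast}u_G$ corresponds to $\lambda$ composed with the projection, which factors as $\lambda$ itself; therefore $\rho(\lambda_{\ast})=[\lambda]$, the band class of $\lambda$, as required.

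The main obstacle is establishing the identity $\psi(u_G)\cong(\id_X\times\beta(\rho(\psi)))^{\ast}u_H$ cleanly, and more generally checking that $\rho(\psi)$ is independent of the auxiliary choices, namely the geometric basepoint and the chosen representative of the band homomorphism. Here the band formalism is essential: changing the basepoint conjugates $\pi_1$ and hence the homomorphism, but the band class (a conjugacy class) is unchanged, and Lemmas \ref{L:band-lem} and \ref{L:iso-band-inner} ensure that only this class is well defined to begin with. I would also need to confirm that $X\times\beta(G)$ is connected with the expected fundamental group, so that Proposition \ref{P:fund-bands} applies; this follows from the connectedness of the geometric fibers of $\beta(G)\to\Spec\Z$ together with the presence of the basepoint section.
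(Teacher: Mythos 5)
Your proposal is correct and is essentially the paper's own proof: the paper identifies $F^{\ast}_{\B{G}}(\beta(\Gamma))$ with $\Hom_{\bands}(\underline{\Gamma},\underline{G})$ using Propositions \ref{sec:classifying-stacks} and \ref{P:fund-bands}, and then simply invokes the Yoneda lemma on the subcategory of bands coming from constant finite groups. Your universal torsor $u_G$ (the image of $\id_G$ under that identification), the key identity $\psi(u_G)\cong(\id_X\times\beta(\rho(\psi)))^{\ast}u_H$, and the multiplicativity argument are precisely the unwinding of that Yoneda argument, so you have supplied explicitly the details the paper leaves implicit.
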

\begin{proof}
  We may assume without loss of generality that $X$ is connected.  The
  functor $\beta:\fingps\to X\Sch_{\bullet}$ yields a subcategory of
  $X\Sch_{\bullet}$ which is equivalent to the category of finite
  groups.  Moreover, for any finite group $\Gamma$, we have by
  Proposition \ref{P:fund-bands} that
  $F^{\ast}_{\B{G}}(\beta(\Gamma))\cong\Hom_{\bands_X}(\underline
  \Gamma,\underline{G})$.  The result thus follows from the Yoneda
  lemma (applied to the subcategory of bands associated to constant groups).
\end{proof}

\begin{lem}\label{sec:classifying-stacks-9}
  Given a section $a\in F_{\B{H}}(X)$, there is an inner form $H'$ of
  $H$ and an isomorphism $F_{\B{H}}\simto F_{\B{H}}$ carrying $\ast$
  to $a$.
\end{lem}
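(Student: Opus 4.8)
The plan is to realize the section $a$ by an honest torsor and then untwist by it. First I would use that $F_{\B{H}}(X)$ is by definition the set of isomorphism classes of $H$-torsors on $X$ to choose an $H$-torsor $P$ over $X$ representing $a$. I then set $H'=\underline{\Aut}_H(P)$, the sheaf of $H$-equivariant automorphisms of $P$, which is again a sheaf of groups on $X$. The structural point to verify is that $H'$ is an inner form of $H$: any trivialization of $P$ over a cover $U\to X$ identifies $H'|_U$ with $H|_U$, and the resulting descent datum on $U\times_X U$ is conjugation by the transition section of $H$, hence takes values in inner automorphisms. Thus $H'$ is an inner form of $H$ in the sense used in Lemma \ref{L:band-lem}, which supplies the inner form demanded by the statement.

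Next I would construct the untwisting equivalence. Since $H'=\underline{\Aut}_H(P)$ acts on $P$ commuting with the $H$-action, $P$ is naturally an $(H',H)$-bitorsor. For any $X$-scheme $Y$ and any $H'$-torsor $R$ over $Y$, the contracted product $R\wedge^{H'}P$ is then an $H$-torsor over $Y$, and the assignment $R\mapsto R\wedge^{H'}P$ defines a morphism of stacks $\Phi\colon\B{H'}\to\B{H}$. This $\Phi$ is an equivalence: its inverse is given by contracting with the inverse bitorsor $P^{-1}$, so that $\Phi$ and the corresponding $\Phi^{-1}$ are mutually inverse up to canonical isomorphism. Passing to isomorphism classes yields an isomorphism of associated functors $F_\Phi\colon F_{\B{H'}}\simto F_{\B{H}}$.

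It then remains to track the distinguished point. The trivial $H'$-torsor $H'$ is carried by $\Phi$ to $H'\wedge^{H'}P\cong P$, so $F_\Phi$ sends $\ast$ to the class $a$, which is exactly the required behavior; the pair $(H',F_\Phi)$ is the data asserted by the lemma, and because $H'$ is an inner form of $H$ its band agrees with that of $H$ via Lemma \ref{L:band-lem}, which is all that is needed when this construction is fed into the pointed-isomorphism machinery of Lemma \ref{sec:classifying-stacks-6}. I expect the main obstacle to be the two standard-but-delicate verifications carried out in the generality of a finite \'etale group space over an arbitrary algebraic space rather than over a field: that $H'=\underline{\Aut}_H(P)$ is genuinely an inner form, and that the contracted-product functor is an equivalence. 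Both are routine for torsors under a sheaf of groups, but they must be phrased sheaf-theoretically (using an \'etale cover trivializing $P$) rather than pointwise. By contrast, the identification of the image of the trivial torsor is immediate once the contracted-product formalism is in place.
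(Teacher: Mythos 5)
Your proposal is correct and follows essentially the same route as the paper: realize $a$ by an $H$-torsor, take $H'=\aut_H$ of that torsor as the inner form, and untwist, the only cosmetic difference being that the paper phrases the untwisting as $S\mapsto\isom(S,T)$ (going from $H$-torsors to $H'$-torsors, the inverse of your contracted product $R\mapsto R\wedge^{H'}P$). Both directions establish the lemma, since each carries the distinguished point to the other's image of it.
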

\begin{proof}
  If $T$ is an $H$-torsor with isomorphism class $a$, it is standard
  that $H'=\aut_H(T)$ is an inner form of $H$.  Sending an $H$-torsor
  $S$ to the $H'$-torsor $\isom(S,T)$ gives the isomorphism in question.
\end{proof}

\begin{prop}\label{sec:classifying-stacks-8}
  Let $G$ and $H$ be finite \'etale group spaces over an 
  algebraic space $X$.  There is a map 
$$\isom(F_{\B{G}},F_{\B{H}})\to\isom_{\bands}(G,H)$$
whose composition with the natural map
$\isom(G,H)\to\isom(F_{\B{G}},F_{\B{H}})$ is the natural map.
\end{prop}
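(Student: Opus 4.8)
The plan is to reduce an arbitrary isomorphism of functors to the \emph{pointed} case already handled in Lemma \ref{sec:classifying-stacks-6}, using Lemma \ref{sec:classifying-stacks-9} to absorb the failure of pointedness into an inner form. As in the proof of Lemma \ref{sec:classifying-stacks-6}, I may assume $X$ is connected.

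Given $\psi\in\isom(F_{\B{G}},F_{\B{H}})$, first I would set $a:=\psi(\ast)\in F_{\B{H}}(X)$, the image of the canonical point. By Lemma \ref{sec:classifying-stacks-9} there is an inner form $H'$ of $H$ and an isomorphism $\omega\colon F_{\B{H}}\simto F_{\B{H}}$ carrying $\ast$ to $a$ (obtained by twisting by an $H$-torsor $T$ whose class is $a$, via $S\mapsto\isom(S,T)$, with $H'=\aut_H(T)$). Then $\omega^{-1}\circ\psi\colon F_{\B{G}}\to F_{\B{H}}$ is a pointed isomorphism in the sense of Definition \ref{sec:classifying-stacks-7}, so Lemma \ref{sec:classifying-stacks-6} produces an element of $\isom_{\bands}(G,H)$. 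I would define the image of $\psi$ under the asserted map to be this class.

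The required compatibility is then immediate: if $\psi$ is induced by an honest isomorphism $f\colon G\to H$ of group spaces, then $\psi$ is already pointed, so $a=\ast$ and one may take $T$ trivial and $\omega=\id$; Lemma \ref{sec:classifying-stacks-6} then returns exactly the natural band class of $f$, which is the assertion about the composition with $\isom(G,H)\to\isom(F_{\B{G}},F_{\B{H}})$.

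The main obstacle is well-definedness — showing the resulting band isomorphism is independent of the choice of $T$, and hence of $\omega$. Two torsors representing $a$ are isomorphic, and the isomorphisms between them form a torsor under $\aut_H(T)=H'$; consequently two admissible choices of $\omega$ differ by post-composition with the automorphism of $F_{\B{H}}$ induced by an inner automorphism of $H$. The key point is that such an inner automorphism becomes trivial in the category of bands — this is precisely the mechanism underlying Lemma \ref{L:band-lem} and Lemma \ref{L:iso-band-inner} — so the two pointed isomorphisms $\omega^{-1}\psi$ differ by a band-trivial automorphism and determine the same element of $\isom_{\bands}(G,H)$. Making this precise requires checking that the map of Lemma \ref{sec:classifying-stacks-6} carries composition with such automorphisms to composition of band morphisms, which one verifies by tracing its construction through the functor $\beta$ and the identification of Proposition \ref{P:fund-bands}; I expect this bookkeeping, rather than any conceptual difficulty, to be the crux of the argument.
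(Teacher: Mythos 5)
Your reduction to the pointed case is sound, and in fact parallels part of what the paper does: setting $a=\psi(\ast)$, twisting by a torsor representing $a$ as in Lemma \ref{sec:classifying-stacks-9}, and using the band-triviality of inner automorphisms (Lemmas \ref{L:band-lem} and \ref{L:iso-band-inner}) to handle well-definedness are all fine. The genuine gap is the step where you then invoke Lemma \ref{sec:classifying-stacks-6}: that lemma is stated and proved only for \emph{constant} finite groups, whereas in Proposition \ref{sec:classifying-stacks-8} the groups $G$ and $H$ are arbitrary finite \'etale group spaces over $X$. The whole content of the proposition beyond Lemma \ref{sec:classifying-stacks-6} is precisely the passage from constant groups to their twisted forms, so the lemma cannot be cited as a black box here.

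Nor does the proof of Lemma \ref{sec:classifying-stacks-6} extend verbatim to non-constant $G$. That proof rests on the identification $F^{\ast}_{\B{G}}(\beta(\Gamma))\cong\Hom_{\bands_X}(\underline{\Gamma},\underline{G})$ followed by the Yoneda lemma in the subcategory of constant bands. For a non-constant finite \'etale group space $G$, corresponding to a finite group $\widebar{G}$ with $\pi_1(X)$-action, a cocycle computation using $\pi_1(X\times\beta(\Gamma))\cong\pi_1(X)\times\Gamma$ shows that a pointed $G$-torsor on $X\times\beta(\Gamma)$ amounts to a homomorphism $\Gamma\to\widebar{G}^{\pi_1(X)}=G(X)$ up to $G(X)$-conjugacy. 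Thus the functor $\Gamma\mapsto F^{\ast}_{\B{G}}(\beta(\Gamma))$ only sees the abstract group of global sections $G(X)$, and Yoneda recovers at best the band of $G(X)$, not the band of $G$ over $X$ (note that $G(X)$ can be trivial while $G$ is far from trivial). This is exactly why the paper's proof instead passes to an \'etale surjection $U\to X$ that simultaneously trivializes $G$, $H$, and the pointing, applies Lemma \ref{sec:classifying-stacks-6} over $U$ to the constant groups $\underline{\widebar{G}}_U$ and $\underline{\widebar{H}}_U$, and then carries out the ``laborious diagram chase'' showing that the resulting band isomorphism $\underline{\widebar{G}}_U\simto\underline{\widebar{H}}_U$ respects the two descent data, so that it descends to an isomorphism $G\simto H$ in the stack of bands over $X$. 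Your proposal is missing this local-to-global step entirely, and it is the crux of the proposition.
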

\begin{proof} There is an \'etale surjection $U\to X$ such that
\begin{enumerate}
\item there are finite groups $\widebar G$ and $\widebar H$ with
  isomorphisms $G_U\simto\underline{\widebar G}_U$ and
  $H_U\simto\underline{\widebar H}_U$;
\item the restriction of $\psi$ to the category of $U$-schemes is
  pointed.
\end{enumerate}
There is a resulting diagram of isomorphisms of functors on $U\Sch$
$$\xymatrix{{F_{\B{G}}}_U\ar[r]\ar[d] & {F_{\B{H}}}_U\ar[d]\\
  {F_{\B{\underline{\widebar G}}}}_U\ar[r] &
  {F_{\B{\underline{\widebar H}}}}_U.  }$$ The isomorphism
$G_U\simto\underline{\widebar G}_U$ induces a pointed automorphism of
$F_{\B{\underline{\widebar G}}}|_{U\times_X U}$ whose image in
$\isom_{\bands_{U\times_X U}}(\underline{\widebar
  G},\underline{\widebar G})$ is the descent datum for $G$ (as a form
of $\underline{\widebar G}$); there is a similar automorphism of
$F_{\B{\underline{\widebar H}}}|_{U\times_X U}$.  A straightforward
(but somewhat laborious) diagram chase, starting with the global
isomorphism $F_{\B{G}}\simto F_{\B{H}}$, shows that the lower horizontal
arrow in the above diagram respects the descent data on both sides.

Applying Lemma \ref{sec:classifying-stacks-9}, we thus find an
isomorphism $\underline{\widebar G}_U\simto\underline{\widebar H}_U$
which is compatible with the descent data for $G$ and $H$ (in the
stack of bands).  This gives the desired map 
$\isom(F_{\B{G}},F_{\B{H}})\to\isom_{\bands}(G,H)$.
\end{proof}

\subsection{Gerbes with finite diagonalizable bands}
\label{sec:recov-abel-gerb}

Having treated classifying stacks, the next natural class of stacks to
consider is more general non-neutral gerbes. In this section we will
show Theorem \ref{thm:summary}(4): if $D$ is diagonalizable then any
Brauer $D$-gerbe is isonatural.

Let $X$ be an algebraic space and $A$ an abelian sheaf on
$X$.  In this section, the phrase ``$D$ is a diagonalizable finite
group scheme'' will mean that $D$ is isomorphic to a finite direct sum
of the form $\bigoplus \m_n$. Given an element $g$ in a group $G$,
write $\langle g\rangle\subseteq G$ for the cyclic subgroup generated
by $g$.

\begin{defn}
  Given an integer $n$, the \emph{cohomology presheaf (of degree $n$
    associated to $A$)\/} is the presheaf $\ms H^n(A)$ on $X$-schemes
  such that $\ms H^n(A)(Y\to X)=\H^n(Y,A_Y)$.
\end{defn}
By common abuse of notation, we will often write simply $\ms
H^n(A)(Y)$, the $X$-structure on $Y$ being implicit.

\begin{defn}\label{D:van-set}
  With the above notation, given a cohomology class
  $\alpha\in\H^n(X,A)$, the \emph{vanishing set of $\alpha$\/} is the
  set of arrows $T\to X$ such that $\alpha\in\ker(\ms H^n(A)(X)\to\ms
  H^n(A)(T))$.  The complement of the vanishing set of $\alpha$ is the
  \emph{support of $\alpha$\/}.  Two classes $\alpha$ and $\beta$ have
  \emph{the same support\/} if their supports are equal.
\end{defn}

The sheafification of the cohomology presheaf is well-known to vanish
for $n>0$ (see for example Proposition 2.5 of Chapter II of
\cite{artin-gr-top}).  The motivating question for this section is
``How much information about a cohomology class can we recover from
its support in the cohomology presheaf?''  A few moments of thought
will convince the reader that the best one can hope to do is recover
the cyclic subgroup generated by the class, and this only in the case
of a cyclic coefficient sheaf.  As we will show, in various cases this
actually works.  However, the methods we employ are specific to the
sheaves and (low) cohomological degrees in question.  Further
investigation of this question seems potentially interesting.

\begin{lem}\label{L:pic-mod}
  Let $X$ be a strongly R1 algebraic space and let $\ms
  L$ be an invertible sheaf on $X$.  Let $\mathbf V(\ms L)=\rspec\sym^\ast\ms
  L$ be the geometric line bundle associated to $\ms L$.  Let
  $Z\subseteq\mathbf V(\ms L)$ be the $0$ section and let $\mathbf
  V(\ms L)^\ast=\mathbf V(\ms L)\setminus Z$.
  Then $\Pic \mathbf V(\ms L)^\ast$ is identified via pullback with
  $\Pic(X)/\langle\ms L\rangle$.
\end{lem}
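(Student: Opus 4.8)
The plan is to exhibit $\mathbf V(\ms L)^\ast$ as the complement of a well-behaved divisor and run the standard excision and homotopy-invariance package for the class group, which is exactly tailored to the strong R1 hypothesis. Write $\pi:\mathbf V(\ms L)\to X$ for the projection and $s_0:X\to\mathbf V(\ms L)$ for the zero section, with image $Z$. First I would record the local geometry: since $\mathbf V(\ms L)=\rspec\sym^\ast\ms L$ is Zariski-locally $\A^1_U=\Spec\mathcal O_U[t]$ with $Z=\{t=0\}$, the zero section is an effective Cartier divisor, cut out locally by a single nonzerodivisor, and its normal bundle is a twist of $\ms L$, so that $\mathcal O_{\mathbf V(\ms L)}(Z)\cong\pi^\ast\ms L^{\vee}$. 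The same local description shows that $\mathbf V(\ms L)$ is again integral, Noetherian, separated, and regular in codimension one (its codimension-one points are the generic point of $Z$ together with the generic points lying over codimension-one points of $X$, and the relevant local rings are all discrete valuation rings), i.e.\ strongly R1.

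Next I would invoke the two basic facts about the Weil divisor class group of a scheme that is integral, Noetherian, separated, and regular in codimension one (the setting of Hartshorne, Chapter II.6). Homotopy invariance gives that $\pi^\ast:\operatorname{Cl}(X)\to\operatorname{Cl}(\mathbf V(\ms L))$ is an isomorphism, with inverse $s_0^\ast$; this is where the R1 hypothesis is essential. Excision along the irreducible Cartier divisor $Z$ gives a right-exact sequence $\Z\xrightarrow{\,1\mapsto[Z]\,}\operatorname{Cl}(\mathbf V(\ms L))\to\operatorname{Cl}(\mathbf V(\ms L)^\ast)\to 0$, the restriction being surjective because every codimension-one cycle on $\mathbf V(\ms L)^\ast$ extends by closure to one on $\mathbf V(\ms L)$, and its kernel being generated by $[Z]$. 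Combining these, and using that $[Z]=\pi^\ast[\ms L^{\vee}]$ generates the same cyclic subgroup as $\pi^\ast[\ms L]$, identifies $\operatorname{Cl}(\mathbf V(\ms L)^\ast)$ with $\operatorname{Cl}(X)/\langle\ms L\rangle$, compatibly with pullback.

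Finally I would transfer this from class groups back to Picard groups, which is the delicate step. For the kernel, one inclusion is immediate and coordinate-free: on $\mathbf V(\ms L)^\ast$ the tautological nowhere-zero point of the fibre trivializes $\pi^\ast\ms L$, so $\langle\ms L\rangle$ maps to $0$ in $\Pic(\mathbf V(\ms L)^\ast)$; the reverse inclusion follows because a line bundle whose pullback is trivial off the irreducible Cartier divisor $Z$ must be $\cong\mathcal O(kZ)=(\pi^\ast\ms L^{\vee})^{\otimes k}$, and applying $s_0^\ast$ recovers a power of $\ms L$. The genuine obstacle is surjectivity of $\Pic(X)\to\Pic(\mathbf V(\ms L)^\ast)$, i.e.\ the passage from $\operatorname{Cl}$ to $\Pic$: every invertible sheaf on $\mathbf V(\ms L)^\ast$ must be shown to descend from $X$ up to a power of $\ms L$. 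I would obtain this by extending such a sheaf across the Cartier divisor $Z$ and then applying the homotopy-invariance isomorphism $\Pic(X)\simto\Pic(\mathbf V(\ms L))$, the point being that the strong R1 hypothesis is precisely what aligns the divisor-theoretic extension argument with honest invertible sheaves. This compatibility of $\Pic$ with the $\operatorname{Cl}$-level excision is where I expect the real work to lie.
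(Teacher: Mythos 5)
Your proposal follows the same route as the paper's own proof: homotopy invariance in the style of Hartshorne II.6.6, excision along the irreducible divisor $Z$, and the identification of $\ms O(Z)$ with $\pi^{\ast}\ms L^{\vee}$ (the paper records this after restriction, as $\ms O(-Z)|_Z\cong\ms L$; your global statement $\ms O(-Z)\cong\pi^{\ast}\ms L$ is correct and implies it). Through the computation $\operatorname{Cl}(\mathbf V(\ms L)^{\ast})\cong\operatorname{Cl}(X)/\langle\ms L\rangle$ the two arguments coincide.

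The gap is precisely the step you flag as ``where the real work lies'': passing from $\operatorname{Cl}$ back to $\Pic$, above all surjectivity of $\Pic(X)\to\Pic(\mathbf V(\ms L)^{\ast})$. You leave this open, and in fact it cannot be closed at the stated level of generality: strong R1 is not enough. Take $R=k+\mathfrak m^{2}\subset k[x,y]$ with $\mathfrak m=(x,y)$ and $X=\Spec R$. Then $X$ is Noetherian, integral, separated, and R1 (its non-normal locus is a single point, of codimension $2$), but it is not seminormal, and a conductor-square (Milnor patching) computation gives $\Pic(X)\cong k^{\oplus 2}$ while, for $\ms L=\ms O_X$, one finds $\Pic(\mathbf V(\ms L))=\Pic(X\times\A^{1})\cong k[t]^{\oplus 2}$ and $\Pic(\mathbf V(\ms L)^{\ast})=\Pic(X\times\G_m)\cong k[t,t^{-1}]^{\oplus 2}$, the pullbacks corresponding to $k\subset k[t]\subset k[t,t^{-1}]$. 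So both the $\Pic$-level homotopy invariance and the conclusion of the lemma fail, even though the $\operatorname{Cl}$-level statements hold (all three class groups vanish here). Note also that your kernel argument (``a line bundle trivial off $Z$ must be $\ms O(kZ)$'') tacitly uses injectivity of $\Pic(\mathbf V(\ms L))\to\operatorname{Cl}(\mathbf V(\ms L))$, which likewise fails for non-normal R1 spaces. To be fair to you, the paper's proof has exactly the same gap: it asserts the $\Pic$-level statements outright, with only the remark that ``the hypothesis on $X$ allows us to work with Weil divisor classes,'' thereby conflating $\Pic$ with $\operatorname{Cl}$; that identification needs normality (or at least seminormality together with $\Pic\inj\operatorname{Cl}$), not just R1. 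Under the additional hypothesis that $X$ is normal, your outline does close up --- $\Pic$ embeds in $\operatorname{Cl}$ for every space in sight, and either your extension-across-$Z$ argument or a Leray argument for the $\G_m$-torsor $\mathbf V(\ms L)^{\ast}\to X$ finishes surjectivity --- and normality holds in the situations where the paper actually applies the lemma. In short: same approach as the paper, and the step you honestly left unfinished is not a formality but the exact point at which the lemma's stated hypotheses are insufficient.
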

\begin{proof}
  The hypothesis on $X$ allows us to work with Weil divisor classes.
  It is well-known (with the same proof as Proposition II.6.6 of
  \cite{ha1}) that pullback induces an isomorphism
  $\Pic(X)\to\Pic(\mathbf V(\ms L))$.  On the other hand,
  $Z\subseteq\mathbf V(\ms L)$ is an irreducible divisor, so
  $\Pic(\mathbf V(\ms L)^{\ast})$ is isomorphic to $\Pic(\mathbf V(\ms
  L))/\langle \ms O(Z)\rangle$.  It remains to show that $\ms
  O(Z)|_Z\cong\ms L^{\vee}$ (via the natural identification of $Z$
  with $X$).  To compute this, note that $\ms O(-Z)$ is equal to
  $\bigoplus_{i>0}\ms L^{\tensor i}$.  Restricting this to $Z$ is the
  same as tensoring with $\bigoplus_{i\geq 0}\ms L^{\tensor
    i}/\bigoplus_{i>0}\ms L^{\tensor i}$.  This simply divides out by
  $\bigoplus_{i>1}\ms L^{\tensor i}$, and thus we see that $\ms
  O(-Z)|_Z\cong\ms L$, as required.
\end{proof}

\begin{cor}\label{C:recover-line-bdls}
  Suppose $X$ is a strongly R1 algebraic space. If $L_1$
  and $L_2$ are invertible sheaves whose classes in
  $\H^1(X_{\et},\G_m)$ have the same support then $\langle
  L_1\rangle=\langle L_2\rangle$.  
\end{cor}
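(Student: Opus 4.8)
The plan is to reduce the statement to a single containment by symmetry and then to exploit the total space of $L_1$ as a distinguished test object on which $L_1$ becomes trivial. Since $\langle L_1\rangle = \langle L_2\rangle$ is equivalent to the pair of containments $L_2 \in \langle L_1\rangle$ and $L_1 \in \langle L_2\rangle$, and since the hypothesis of equal support is symmetric in $L_1$ and $L_2$, it suffices to prove $L_2 \in \langle L_1\rangle$; the reverse containment then follows by interchanging the two sheaves.

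To prove $L_2 \in \langle L_1\rangle$, I would consider the projection $p\colon \mathbf V(L_1)^\ast \to X$. By Lemma \ref{L:pic-mod}, pullback along $p$ identifies $\Pic(X)/\langle L_1\rangle$ with $\Pic(\mathbf V(L_1)^\ast)$; in particular $p^\ast L_1$ is trivial, since the class of $L_1$ lies in $\langle L_1\rangle$. Reading this through Definition \ref{D:van-set}, the arrow $p$ lies in the vanishing set of the class of $L_1$ in $\H^1(X_\et,\G_m)=\Pic(X)$. Because $L_1$ and $L_2$ have the same support, and hence the same vanishing set, $p$ must also lie in the vanishing set of $L_2$, so $p^\ast L_2$ is trivial as well. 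Applying the identification of Lemma \ref{L:pic-mod} once more, the vanishing of $p^\ast L_2$ in $\Pic(\mathbf V(L_1)^\ast)$ says exactly that the class of $L_2$ maps to $0$ in $\Pic(X)/\langle L_1\rangle$, i.e.\ $L_2 \in \langle L_1\rangle$. Interchanging $L_1$ and $L_2$ gives $L_1 \in \langle L_2\rangle$, and the two containments together yield $\langle L_1\rangle = \langle L_2\rangle$.

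The step requiring the most care is the use of $\mathbf V(L_1)^\ast$ itself as the test object in the support. Since $X$ is only an algebraic space, $\mathbf V(L_1)^\ast$ need not be a scheme, so one must read the vanishing set of Definition \ref{D:van-set} as allowing algebraic-space test objects $T \to X$ with $\ms H^1(\G_m)(T) = \H^1(T,\G_m) = \Pic(T)$; this is harmless and is exactly the generality in which Lemma \ref{L:pic-mod} was already stated. It is essential to evaluate the support directly on the total space rather than on a scheme cover of it: every invertible sheaf is Zariski-locally, hence fppf-locally, trivial, so triviality of $p^\ast L_2$ on a cover would carry no information, whereas testing on $\mathbf V(L_1)^\ast$ itself lets Lemma \ref{L:pic-mod} convert triviality of the pullback into membership in $\langle L_1\rangle$ on the nose.
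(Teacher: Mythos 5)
Your proof is correct and is essentially identical to the paper's argument: pull back along $\mathbf V(L_1)^\ast\to X$, use Lemma \ref{L:pic-mod} together with the equal-support hypothesis to conclude $L_2\in\langle L_1\rangle$, then reverse the roles of $L_1$ and $L_2$. You have simply spelled out the details (including the harmless point about algebraic-space test objects) that the paper leaves implicit.
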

\begin{proof}
  Pulling back to $\mathbf V(L_1)^\ast$ and using Lemma \ref{L:pic-mod} and
  the support hypothesis, we see that $L_2\in\langle L_1\rangle$.
  Reversing the roles of $L_1$ and $L_2$ shows that $L_1\in\langle
  L_2\rangle$.  The result follows.
\end{proof}

\begin{remark}
  Note that it is essential that the support of the cohomology classes
  be considered on the entire category of schemes and not merely on
  e.g.\ Zariski open subsets.  An example is provided by $\ms O(1)$
  and $\ms O(2)$ on $\P^1$.  Any scheme mapping to $\P^1$ whose image
  excludes a single point will trivialize both $\ms O(1)$ and $\ms
  O(2)$.  Only by considering surjective morphisms to $\P^1$ from
  larger (connected) schemes can we hope to recover enough information
  from the support.
\end{remark}

\begin{lem}\label{L:br-sev-sch-leray}
  If $f:P\to X$ is a Brauer-Severi space then the pullback map
  $\H^2(X,\m_n)\to\H^2(P,\m_n)$ is injective for all $n$.  The kernel
  of the map $\Br(X)\to\Br(P)$ is the cyclic subgroup generated by the
  Brauer class of $P$.
\end{lem}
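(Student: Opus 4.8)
The plan is to run the Leray spectral sequence for $f$ twice, once with $\m_n$ coefficients and once with $\G_m$ coefficients, and to read each assertion off the edge map $\H^2(X,-)\to\H^2(P,-)$. Since everything localizes on connected components, I may assume $X$ is connected, so that $f$ is proper with geometrically connected fibres which are étale-locally isomorphic to $\PP^d$ for the relative dimension $d$.

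First I would treat the $\m_n$ statement using $E_2^{p,q}=\H^p(X,R^qf_*\m_n)\Rightarrow\H^{p+q}(P,\m_n)$. The only input I need is that $R^1f_*\m_n=0$: by proper base change its stalks are the groups $\H^1(\PP^d_{\bar s},\m_n)$, which vanish (from the Kummer sequence, since on projective space over a separably closed field $\mathcal O^\ast$ is $n$-divisible and $\Pic=\Z$ is torsion-free). Hence $E_2^{0,1}=\H^0(X,R^1f_*\m_n)=0$, so no differential maps into $E_2^{2,0}=\H^2(X,\m_n)$, while the differentials out of $E_2^{2,0}$ land in negative second index and vanish. Thus $E_2^{2,0}=E_\infty^{2,0}$, and the edge map $\H^2(X,\m_n)\to\H^2(P,\m_n)$, which factors as $E_2^{2,0}\twoheadrightarrow E_\infty^{2,0}\hookrightarrow\H^2(P,\m_n)$, is injective, giving the first assertion.

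For the Brauer statement I would use the same spectral sequence with $\G_m$ coefficients, where $R^0f_*\G_m=\G_m$ and $R^1f_*\G_m\cong\Z$ (the relative Picard sheaf, constant and generated étale-locally by $\mathcal O(1)$, as $\Pic\PP^d=\Z$ is rigid). Its five-term exact sequence contains $\H^0(X,R^1f_*\G_m)\xrightarrow{d_2}\H^2(X,\G_m)\xrightarrow{f^\ast}\H^2(P,\G_m)$, whose exactness at the middle term gives $\ker(f^\ast)=\operatorname{im}(d_2)$ directly (no higher-differential analysis is needed here). Since $d_2\colon\Z\to\H^2(X,\G_m)$ is a homomorphism, $\operatorname{im}(d_2)=\langle d_2(1)\rangle$ is the cyclic subgroup generated by the transgression of the canonical generator $1$.

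The crux, and the step I expect to be the main obstacle, is the identification $d_2(1)=[P]$. The generator $1$ is the fibrewise class of $\mathcal O(1)$, and its transgression is exactly the obstruction to lifting it to a global invertible sheaf on $P$ restricting to $\mathcal O(1)$ on each fibre, i.e.\ to realizing $P$ as $\PP(E)$ for a vector bundle $E$ on $X$; this obstruction is by definition (equivalently, via the standard comparison with the $\mathrm{PGL}$-torsor / Azumaya-algebra description) the Brauer class $[P]\in\Br(X)\subseteq\H^2(X,\G_m)$. Granting this, $\operatorname{im}(d_2)=\langle[P]\rangle$. Finally, as $\Br(X)\hookrightarrow\H^2(X,\G_m)$ and $\Br(P)\hookrightarrow\H^2(P,\G_m)$ compatibly with $f^\ast$, the kernel of $\Br(X)\to\Br(P)$ is $\Br(X)\cap\langle[P]\rangle=\langle[P]\rangle$, since $[P]$ itself lies in $\Br(X)$. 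Everything outside the transgression computation is routine spectral-sequence bookkeeping.
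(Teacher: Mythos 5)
Your overall strategy (Leray spectral sequences for $\m_n$ and for $\G_m$, reading both claims off the edge maps) matches the paper's, but the first half has a genuine gap: the paper's $\H^2(X,\m_n)$ is \emph{fppf} cohomology, and the lemma must hold for all $n$, including $n$ divisible by residue characteristics (this is not a pedantic point --- the lemma is later applied to arbitrary diagonalizable group schemes $\bigoplus\m_n$ and to stacks of $n$th roots of invertible sheaves on regular algebraic spaces, with no invertibility hypothesis on $n$). Your justification of $\R^1f_{\ast}\m_n=0$ is an \'etale-topology argument: proper base change is a theorem about \'etale cohomology with torsion (locally constant) coefficients, and it does not compute fppf higher direct images of $\m_p$ in characteristic $p$. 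Moreover, the specific input you invoke is false in that case: for a separably closed but imperfect field $k$ of characteristic $p$, the group $k^{\ast}$ is \emph{not} $p$-divisible (indeed $\H^1_{\mathrm{fppf}}(\Spec k,\m_p)=k^{\ast}/(k^{\ast})^p\neq 0$), so the fibrewise vanishing you want is not available by your route when $p\mid n$. The paper's argument repairs exactly this step, working at the level of sheaves on $X$ rather than stalks: from the flat Kummer sequence, $f_{\ast}\G_m=\G_m$ and multiplication by $n$ is an fppf epimorphism on $\G_m$, so $\R^1f_{\ast}\m_n$ injects into the $n$-torsion of $\R^1f_{\ast}\G_m=\Pic_{P/X}=\underline{\Z}$, which is torsion-free; hence $\R^1f_{\ast}\m_n=0$ for \emph{every} $n$. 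With that substitution your spectral-sequence bookkeeping for the first assertion goes through unchanged.

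For the $\G_m$ statement your argument is fine and is in fact a legitimate unpacking of what the paper handles by citation: the paper simply refers to Gabber's thesis for the identification of $\ker(\Br(X)\to\Br(P))$ with $\langle[P]\rangle$. Your reduction via the five-term exact sequence to the single identity $d_2(1)=\pm[P]$ is correct, and you rightly flag that transgression computation as the crux; it is a standard (if nontrivial) comparison between the Leray transgression and the $\mathrm{PGL}$-torsor description of $[P]$, and the sign ambiguity is harmless since only the cyclic subgroup is at stake. So: second half acceptable (modulo the standard fact you quote rather than prove), first half needs the flat Kummer argument to cover non-invertible $n$.
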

\begin{proof}
  The first statement follows from the fppf Leray spectral sequence for
  $\m_n$ combined with the fact that $\R^1f_{\ast}\m_n=0$.  (By the
  flat Kummer sequence, we know that the latter sheaf is isomorphic to
  the $n$-torsion subspace of the relative Picard space of $P$ over
  $X$, hence vanishes.)  The second statement comes from the Leray
  spectral sequence applied to $\G_m$, and may be found in Theorem 2
  of Part 2 of Chapter II of \cite{gabber-thesis} (p.\ 193).
\end{proof}

\begin{lem}\label{L:cyclic-gp-muffin}
  Let $X$ be a strongly R1 algebraic space.  If
  $\alpha,\beta\in\H^2(X,\m_n)$ are two Brauer cohomology classes with the
  same support then
  $\langle\alpha\rangle=\langle\beta\rangle\subseteq\H^2(X,\m_n)$.
\end{lem}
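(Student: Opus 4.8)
The plan is to deduce the lemma from a single-cover construction that parallels the line-bundle argument of Corollary \ref{C:recover-line-bdls}, but one cohomological degree higher. The key reduction is the following claim: \emph{for any Brauer class $\alpha\in\H^2(X,\m_n)$ there is an $X$-scheme $Y_\alpha$ with $\alpha|_{Y_\alpha}=0$ and with $\ker\bigl(\H^2(X,\m_n)\to\H^2(Y_\alpha,\m_n)\bigr)\subseteq\langle\alpha\rangle$.} Granting this, the lemma follows formally: since $\alpha|_{Y_\alpha}=0$, the arrow $Y_\alpha\to X$ lies in the vanishing set of $\alpha$ (Definition \ref{D:van-set}), hence, the supports being equal, in the vanishing set of $\beta$; thus $\beta|_{Y_\alpha}=0$, so $\beta$ lies in the displayed kernel and $\beta\in\langle\alpha\rangle$. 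Exchanging the roles of $\alpha$ and $\beta$ gives $\alpha\in\langle\beta\rangle$, and hence $\langle\alpha\rangle=\langle\beta\rangle$.

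To construct $Y_\alpha$ I would peel off the Brauer part and then the residual Picard part. Since $\alpha$ is Brauer, its image $\bar\alpha\in\H^2(X,\G_m)$ lies in $\Br(X)$ and so is the class of a Brauer--Severi space $f\colon P\to X$. By Lemma \ref{L:br-sev-sch-leray} the pullback $\H^2(X,\m_n)\to\H^2(P,\m_n)$ is injective and $\bar\alpha|_P=0$ in $\Br(P)$; since the kernel of $\H^2(P,\m_n)\to\Br(P)$ is, by the Kummer sequence, the image of $\Pic(P)/n$, we may write $\alpha|_P=c(L)$ for a line bundle $L$ on $P$, where $c$ denotes the Kummer (first Chern) map $\Pic(P)\to\H^2(P,\m_n)$. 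Now set $Y_\alpha=\mathbf V(L)^{\ast}$, the complement of the zero section in the total space of $L$, exactly as in Lemma \ref{L:pic-mod}. Writing $\pi\colon Y_\alpha\to P$, the tautological section trivializes $\pi^{\ast}L$ on $Y_\alpha$, so $\alpha|_{Y_\alpha}=c(\pi^{\ast}L)=0$; note also that $P$, being smooth over the strongly R1 space $X$ with integral fibers, is again strongly R1.

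It remains to bound the kernel, which is where the real work lies. The morphism $\pi$ is a $\G_m$-torsor, and I would compute $\ker\bigl(\H^2(P,\m_n)\to\H^2(Y_\alpha,\m_n)\bigr)=\langle c(L)\rangle$ using the Leray (equivalently Gysin) sequence for $\pi$: with $n$ invertible on $X$ one has $\R^1\pi_{\ast}\m_n\cong\Z/n$, generated by the Kummer class of the tautological coordinate, and the transgression $\H^0(P,\R^1\pi_{\ast}\m_n)\to\H^2(P,\m_n)$ carries this generator to the class $c(L)$ of the torsor. Exactness identifies the kernel with the image of the transgression, namely $\langle c(L)\rangle=\langle\alpha|_P\rangle$; that $\alpha|_P$ itself lies in the kernel we have already seen directly. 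Pulling back along the injection of Lemma \ref{L:br-sev-sch-leray} and using that $\gamma|_P\in\langle\alpha|_P\rangle$ forces $\gamma\in\langle\alpha\rangle$ by injectivity, we obtain $\ker\bigl(\H^2(X,\m_n)\to\H^2(Y_\alpha,\m_n)\bigr)=\langle\alpha\rangle$, proving the claim.

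The main obstacle is precisely this last cohomological computation for the $\G_m$-torsor: one must verify the structure of $\R^1\pi_{\ast}\m_n$ and the identification of the transgression with the Kummer class $c(L)$, keeping careful track of the Tate twist. This is clean in the convention $\m_n=\m_n(1)$, where the twist in $\pi_1(\G_m)\cong\hat\Z(1)$ cancels that of the coefficients, so that the relevant $\H^0$ is that of the constant sheaf $\Z/n$ and is all of $\Z/n$ for $P$ connected. Everything else --- the reduction via equal supports, the existence of the Brauer--Severi space, and the injectivity input --- is formal once Lemma \ref{L:br-sev-sch-leray} is in hand. I would also flag that the argument uses the hypothesis that $n$ is invertible on $X$ for the \'etale-cohomological inputs (purity for the regular codimension-one immersion $Z\hookrightarrow\mathbf V(L)$ and $\A^1$-invariance).
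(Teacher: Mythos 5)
Your architecture closely parallels the paper's: both arguments kill the Brauer part of $\alpha$ with a Brauer--Severi space $P$ and the residual Picard part with $\mathbf V(L)^{\ast}$, and your one-class-at-a-time formulation (find $Y_\alpha$ with $\alpha|_{Y_\alpha}=0$ and $\ker\bigl(\H^2(X,\m_n)\to\H^2(Y_\alpha,\m_n)\bigr)\subseteq\langle\alpha\rangle$, then apply equality of supports twice) is a clean repackaging of the paper's symmetric reduction over $P_\alpha\times_X P_\beta$. Where you genuinely diverge is in the key computation: the paper never leaves the Picard group, using Lemma \ref{L:pic-mod} (a Weil-divisor argument, for which strongly R1 is exactly the needed hypothesis) together with the Kummer sequence to get $L_\beta\in\langle L_\alpha\rangle+n\Pic$, whereas you compute $\ker\bigl(\H^2(P,\m_n)\to\H^2(Y_\alpha,\m_n)\bigr)$ by the Leray transgression for the $\G_m$-torsor $Y_\alpha\to P$. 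That substitution costs you generality in $n$, and this is a genuine gap: all of your inputs ($\R^1\pi_{\ast}\m_n\cong\Z/n\Z$, smooth base change, transgression $=$ mod-$n$ Chern class of the torsor) are \'etale-cohomological facts requiring $n$ invertible on $X$, as you yourself flag. But the lemma carries no such hypothesis, and it cannot afford one: the cohomology in this section is fppf (the proof of Lemma \ref{L:br-sev-sch-leray} runs through the flat Kummer sequence precisely so that $\m_n$ makes sense for arbitrary $n$), and the lemma feeds into Theorem \ref{thm:summary}(4) for arbitrary diagonalizable finite group schemes $\bigoplus\m_n$ over strongly R1 spaces, and into Theorem \ref{thm:main-examples}(6), neither of which ties $n$ to residue characteristics. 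The failure for $p\mid n$ is not cosmetic: in the fppf topology $\R^1\pi_{\ast}\m_n$ is governed by $\pi_{\ast}\G_m/n$ and $(\R^1\pi_{\ast}\G_m)[n]$, and already over a base like $\Spec k[\epsilon]/(\epsilon^2)$ in characteristic $p$ the unit $1+t\epsilon$ (with $t$ the fiber coordinate) is not an $n$th power after any fppf cover of the base, so $\R^1\pi_{\ast}\m_n$ is strictly larger than $\Z/n\Z$ and your identification of the kernel with $\langle c(L)\rangle$ breaks down. As written, your proof covers only the prime-to-characteristic case; the paper's Picard-theoretic route is what makes the statement uniform in $n$.

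A secondary, repairable point: even when $n$ is invertible, your appeal to purity for $Z\inj\mathbf V(L)$ is not available as stated, because strongly R1 does not make $X$ (hence $\mathbf V(L)$) regular --- R1 without further hypotheses is far from regularity, and absolute purity needs a regular ambient scheme. Fortunately you do not need purity or a Gysin sequence at all: a $\G_m$-torsor is Zariski-locally trivial, so smooth base change alone gives $\R^1\pi_{\ast}\m_n\cong\Z/n\Z$ and identifies the transgression of the canonical generator with $\pm c(L)$, which is all your kernel computation uses. With that substitution, and with the restriction to $n$ invertible on $X$ made explicit, your argument is a correct proof of that special case of the lemma.
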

\begin{proof}
  Since $\Br(X)=\Br'(X)$, there are Brauer-Severi spaces $P_\alpha\to
  X$ and $P_\beta\to X$ representing the images of $\alpha$ and
  $\beta$ in $\Br(X)$.  Applying Lemma \ref{L:br-sev-sch-leray} to the
  maps $P_\alpha\times_X P_\beta\to P_\alpha\to X$, we see that it
  suffices to prove the lemma under the additional assumption that the
  Brauer classes associated to $\alpha$ and $\beta$ are trivial.
  Thus, there are invertible sheaves $L_\alpha$ and $L_\beta$ with
  $\alpha=c_1(L_\alpha)$ and $\beta=c_1(L_\beta)$.  Using the Kummer
  sequence, we see that the support hypothesis is equivalent to the
  statement that for an algebraic space $T\to X$, the preimage
  $L_\alpha|_T$ is in $n\Pic(T)$ if and only if $L_\beta|_T$ is in
  $n\Pic(T)$.  Let $T=\mathbf V(L_\alpha)^{\ast}$ as in Lemma
  \ref{L:pic-mod}, so that $\Pic(T)=\Pic(X)/\langle L_\alpha\rangle$.
  We conclude that $L_\beta\in\langle L_\alpha\rangle + n\Pic(X)$.
  Reversing the roles of $\alpha$ and $\beta$, we find that
  $L_\alpha\in\langle L_\beta\rangle + n\Pic(X)$.  It follows that the
  images of $L_\alpha$ and $L_\beta$ generate the same cyclic subgroup
  of $\Pic(X)/n\Pic(X)$, and this yields the result.
\end{proof}

Another way to understand the statement of Lemma \ref{L:cyclic-gp-muffin} is
that there is an automorphism $\psi:\m_n\to\m_n$ such that
$\psi^{\ast}\beta=\alpha$.  In this form, the statement obviously
generalizes to diagonalizable finite group schemes.

\begin{cor}\label{C:diagble-gp-muffin}
  Let $X$ be a strongly R1 algebraic space and $D$ a diagonalizable
  finite group scheme.  If $\alpha,\beta\in\H^2(X,D)$ are Brauer
  cohomology classes with the same
  support then there is an automorphism $\psi:D\simto D$ such that
  $\psi^{\ast}\beta=\alpha$.
\end{cor}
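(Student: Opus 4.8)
The plan is to imitate the proof of Lemma \ref{L:cyclic-gp-muffin}, replacing the cyclic group $\m_n$ by a general diagonalizable $D$ and tracking the extra combinatorics introduced by the several factors. Write $D=\bigoplus_{i=1}^r\m_{n_i}$ with character group $M=\bigoplus_i\Z/n_i$, so that $\Aut(D)\cong\Aut(M)$ and $\H^2(X,D)\cong\bigoplus_i\H^2(X,\m_{n_i})$, the action of $\psi\in\Aut(D)$ being the induced action of $\Aut(M)$ on the right-hand side. Writing $\alpha=(\alpha_i)$ and $\beta=(\beta_i)$, it suffices to produce $\psi\in\Aut(M)$ with $\psi^\ast\beta=\alpha$.

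First I would reduce to the case in which all the Brauer classes are trivial. Each $\alpha_i,\beta_i$ has Brauer image in $\Br(X)$, and since there are only finitely many of them I can form the fibre product $Y\to X$ of the corresponding Brauer--Severi spaces. By Lemma \ref{L:br-sev-sch-leray} the pullback $\H^2(X,\m_{n_i})\to\H^2(Y,\m_{n_i})$ is injective for every $i$, hence so is $\H^2(X,D)\to\H^2(Y,D)$; therefore it is enough to produce the automorphism $\psi$ after restriction to $Y$. Since each Brauer--Severi space is smooth and proper with geometrically integral (projective-space) fibres, $Y$ is again strongly R1, and the hypothesis that $\alpha$ and $\beta$ have the same support on $X$ restricts to the same hypothesis over $Y$. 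Finally, by the kernel statement of Lemma \ref{L:br-sev-sch-leray} every $\alpha_i,\beta_i$ becomes Brauer-trivial over $Y$, so after renaming $Y$ as $X$ I may assume $\alpha_i=c_1(L_i)$ and $\beta_i=c_1(N_i)$ for line bundles $L_i,N_i$, i.e.\ that $\alpha,\beta$ lie in $\Pic(X)\otimes M=\bigoplus_i\Pic(X)/n_i\Pic(X)$.

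Next I would translate the support hypothesis into group theory using Lemma \ref{L:pic-mod}. Applying it to a line bundle $\ms L$ gives $\Pic(\mathbf V(\ms L)^\ast)=\Pic(X)/\langle[\ms L]\rangle$, and iterating over a fibre product of several such complements realizes, for any finitely generated $H\subseteq\Pic(X)$, a strongly R1 test space $T_H\to X$ with $\Pic(T_H)=\Pic(X)/H$. Since $\alpha|_{T_H}=0$ exactly when $\alpha$ lies in the image of $H\otimes M$, a direct computation shows that $T_H$ lies in the vanishing set of $\alpha$ precisely when $[L_i]\in H+n_i\Pic(X)$ for all $i$. Crucially, allowing $\ms L$ to be non-primitive forces torsion into $\Pic(T_H)$, and it is exactly this that lets the support detect divisibility. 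Thus equality of supports yields the combinatorial condition that, for every finitely generated $H\subseteq\Pic(X)$, one has $[L_i]\in H+n_i\Pic(X)$ for all $i$ if and only if $[N_i]\in H+n_i\Pic(X)$ for all $i$.

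It then remains to prove the purely group-theoretic assertion that this combinatorial data determines the $\Aut(M)$-orbit of $\alpha$ in $\Pic(X)\otimes M$, and this is the step I expect to be the main obstacle. Taking $H=\langle[L_1],\dots,[L_r]\rangle$ shows immediately that $\alpha$ and $\beta$ generate the same subgroup of $\Pic(X)\otimes M$ (equivalently, that the homomorphisms $\check M\to\Pic(X)\otimes\Q/\Z$ attached to $\alpha$ and $\beta$ have equal image, the analogue of the conclusion $\langle L_1\rangle=\langle L_2\rangle$ of Corollary \ref{C:recover-line-bdls}). Equality of images is not by itself enough, however---as the pair $(1,0),(0,p)$ in $\Z/p\oplus\Z/p^2$ already shows---so one must extract the finer divisibility type of $\alpha$ from the full pattern of $H$ for which membership holds. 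I would carry this out one prime at a time, reducing to the case where $M$ is a finite abelian $p$-group; there the $\Aut(M)$-orbit of an element of $\Pic(X)\otimes M$ is classified by its divisibility type, and I would show that the distinguished subgroups $H$ (chosen $p$-divisible to the various depths) recover exactly this type, whence $\alpha$ and $\beta$ are conjugate under $\Aut(M)$. By the injectivity arranged in the reduction, the resulting $\psi$ then satisfies $\psi^\ast\beta=\alpha$ on the original $X$, completing the proof.
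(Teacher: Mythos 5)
Your first two reductions are sound: the Brauer--Severi fiber product argument (imitating Lemma \ref{L:cyclic-gp-muffin}) correctly reduces to the case $\alpha_i=c_1(L_i)$, $\beta_i=c_1(N_i)$, and iterating Lemma \ref{L:pic-mod} does produce strongly R1 test spaces $T_H$ with $\Pic(T_H)=\Pic(X)/H$ for any finitely generated $H$. The proof fails at exactly the step you flagged as the main obstacle, and not merely because it is unfinished: the ``purely group-theoretic assertion'' you reduce to is false. Take $X=\P^1_{\C}$, $D=\m_{p^2}\oplus\m_p$, $\alpha=(c_1(\ms O(1)),0)$ and $\beta=(0,c_1(\ms O(1)))$ in $\H^2(X,D)=\Z/p^2\oplus\Z/p$; both classes are Brauer since $\Br(\P^1_{\C})=0$. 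For $H=a\Z\subseteq\Pic(X)=\Z$, your membership condition for $\alpha$ and your membership condition for $\beta$ are each equivalent to $\gcd(a,p)=1$, so the combinatorial data attached to $\alpha$ and $\beta$ coincide. Yet no $\psi\in\Aut(D)\cong\Aut(\Z/p^2\oplus\Z/p)$ carries $\beta$ to $\alpha$: any homomorphism $\Z/p\to\Z/p^2$ lands in $p\Z/p^2$, so every automorphism sends $(0,1)$ to an element of the form $(pb,d)$, whereas $\alpha$ has unit first coordinate. The corollary itself is not contradicted, because the genuine support hypothesis is strictly stronger than your combinatorial condition: a degree-$p$ cover $E\to\P^1_{\C}$ by an elliptic curve pulls $\ms O(1)$ back to a degree-$p$ line bundle, which lies in $p\Pic(E)$ but not in $p^2\Pic(E)$ (since $\Pic^0(E)$ is divisible), so $E$ lies in the vanishing set of $\beta$ but not of $\alpha$. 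The information you discarded when restricting the support to the spaces $T_H$ is exactly the information needed. Indeed, your remark that torsion in $\Pic(T_H)$ ``lets the support detect divisibility'' is where the intuition breaks: in a quotient $\Pic(X)/H$ the image of $[L]$ is divisible by $p$ if and only if $[L]\in H+p\Pic(X)$, and then it is divisible by every power of $p$; quotients can never separate depth-$p$ from depth-$p^2$ divisibility, while covers with genuinely new Picard classes can.

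For comparison, the paper's proof is a one-line reduction in the opposite spirit: write $D=\bigoplus\m_{n_i}$ and apply Lemma \ref{L:cyclic-gp-muffin} to each factor, defining $\psi$ factor by factor. Your instinct that the several factors can mix is actually a legitimate concern which that terse argument glosses over (equality of supports of the total classes does not formally yield equality of supports componentwise: already for $D=\m_p\oplus\m_p$, the classes $(\gamma,0)$ and $(\gamma,\gamma)$ have equal total supports, unequal componentwise supports, and are related only by a factor-mixing automorphism). But any repair along your lines must retain the full strength of the hypothesis --- for instance by testing against finite covers or other spaces whose Picard groups are not quotients of $\Pic(X)$ --- rather than only against the bundles $\mathbf V(\ms L)^{\ast}$ of Lemma \ref{L:pic-mod}.
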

\begin{proof}
  The proof is immediate, since $D$ breaks up as a finite product of group
  schemes of the form $\m_n$ and $\psi$ can be defined on each factor.
\end{proof}

Using Corollary \ref{C:diagble-gp-muffin}, we will 
prove Theorem \ref{thm:summary}(4).  It is 
important to note that by forgetting the automorphism data, there is no 
hope of recovering the gerbe structure, which consists of a specified
trivialization of the inertia stack.  Thus, the best we can hope for
is recovery of the abstract stack, and this is indeed possible in
certain situations.

\begin{lem}
  Suppose $X$ is an algebraic space and $P\to X$ is faithfully flat with
  geometrically connected fibers.  For any finite \'etale group space
  $G\to X$, the
  natural map $\H^1(X,G)\to\H^1(P,G)$ is injective.
\end{lem}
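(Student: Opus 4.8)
The plan is to translate the statement into the language of torsors and reduce it to two applications of Lemma \ref{L:section-lemma}. Recall that $\H^1(X,G)$ is the pointed set of isomorphism classes of $G$-torsors, and since $G\to X$ is finite \'etale we may compute it in the \'etale topology. Because a $G$-torsor is \'etale-locally isomorphic to $G$, and being finite \'etale descends, any $G$-torsor $T\to X$ is itself a finite \'etale algebraic space over $X$; likewise $T\times_X G\to X$ is finite \'etale. To prove injectivity it suffices to show that if $T_1,T_2$ are $G$-torsors over $X$ whose pullbacks $(T_1)_P,(T_2)_P$ are isomorphic as $G_P$-torsors over $P$, then $T_1\cong T_2$ already over $X$.

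First I would apply Lemma \ref{L:section-lemma} with $Y_1=T_1$ and $Y_2=T_2$, obtaining that pullback along $f:P\to X$ is a bijection between $X$-morphisms $T_1\to T_2$ and $P$-morphisms $(T_1)_P\to(T_2)_P$. Let $\Phi:(T_1)_P\simto(T_2)_P$ be the given $G_P$-torsor isomorphism, with corresponding $X$-morphism $\phi:T_1\to T_2$, and let $\psi:T_2\to T_1$ be the $X$-morphism corresponding to $\Phi^{-1}$. To see that $\phi$ is an isomorphism I would avoid invoking faithfully flat descent of the isomorphism property directly; instead, note that $(\psi\circ\phi)_P=\Phi^{-1}\circ\Phi=\id$ and $(\id_{T_1})_P=\id$, so $\psi\circ\phi$ and $\id_{T_1}$ pull back to the same $P$-morphism. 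Applying the injectivity on morphisms in Lemma \ref{L:section-lemma} (with $Y_1=Y_2=T_1$) gives $\psi\circ\phi=\id_{T_1}$, and symmetrically $\phi\circ\psi=\id_{T_2}$, so $\phi$ is an isomorphism.

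It remains to check that $\phi$ is $G$-equivariant, which is the one step requiring genuine care, since a priori the pullback bijection on bare morphisms need not respect the $G$-action. Equivariance of $\phi$ is the equality of two $X$-morphisms $T_1\times_X G\to T_2$ (namely $\phi\circ\mathrm{act}$ and $\mathrm{act}\circ(\phi\times\id)$). Since $T_1\times_X G$ is finite \'etale over $X$, a second application of Lemma \ref{L:section-lemma}, with $Y_1=T_1\times_X G$ and $Y_2=T_2$, shows that these two morphisms agree over $X$ if and only if they agree after pullback to $P$; and they do agree over $P$ because $\Phi$ is $G_P$-equivariant. Hence $\phi$ is an isomorphism of $G$-torsors over $X$, giving $[T_1]=[T_2]$ in $\H^1(X,G)$ and proving injectivity.

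The only real obstacle is the equivariance descent in the third paragraph, and this is precisely what the finite \'etale hypothesis on $G$ buys us: it makes $T_1\times_X G$ finite \'etale, bringing the equivariance equation within the scope of Lemma \ref{L:section-lemma}. Everything else is formal torsor bookkeeping, so I expect no difficulty beyond keeping track of which cover plays the role of $Y_1$ and $Y_2$ in each invocation of the lemma.
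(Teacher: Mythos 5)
Your proof is correct, and it runs on the same engine as the paper's --- Lemma \ref{L:section-lemma} --- but with a different decomposition. The paper's proof is a one-liner: for two $G$-torsors $T,T'$ it forms the finite \'etale $X$-space $\Isom_G(T,T')$ of $G$-equivariant isomorphisms and applies Lemma \ref{L:section-lemma} once, to sections (i.e.\ with $Y_1=X$), concluding that this space has a section over $X$ exactly when it has one over $P$. Encoding equivariance into the object being descended makes both the isomorphism property and the $G$-equivariance of the descended map automatic. You instead apply the lemma to the underlying finite \'etale spaces three times: once to descend $\Phi$ to an $X$-morphism $\phi$, once (via injectivity of pullback on endomorphisms of $T_1$ and of $T_2$) to see that $\phi$ is an isomorphism, and once (comparing the two maps $T_1\times_X G\to T_2$) to see that $\phi$ is equivariant. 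Your version avoids having to know that $\Isom_G(T_1,T_2)$ is itself finite \'etale over $X$ --- a small check the paper leaves implicit, since that space is a pseudo-torsor under the finite \'etale group space $\Aut_G(T_1)$ --- at the cost of extra bookkeeping; the paper's version buys brevity by making the equivariance structural rather than something to verify. Both arguments are complete, and your identification of equivariance as the one point where the finite \'etale hypothesis on $T_1\times_X G$ is needed is exactly right.
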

\begin{proof}
  By Lemma \ref{L:section-lemma}, given two $G$-torsors $T$ and $T'$ on $X$,
  the finite \'etale $X$-space $\Isom_G(T,T')$ has a section if and
  only its pullback to $P$ has a section.  The result follows.
\end{proof}

\begin{lem}\label{L:faithfully-flat-splitting}
  Let $X$ be a strongly R1 algebraic space and $D$ a diagonalizable
  finite group scheme.  Given a Brauer class $\alpha\in\H^2(X,A)$, there is
  a faithfully flat morphism $P_\alpha\to X$ with geometrically
  connected fibers such that $\alpha|_{P_\alpha}=0\in\H^2(P_\alpha,A)$.
\end{lem}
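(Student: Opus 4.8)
The plan is to annihilate $\alpha$ in two successive stages, first its image in the Brauer group and then the residual Picard-theoretic part, assembling $P_\alpha$ as a tower of faithfully flat morphisms with geometrically connected fibers. Write $D=\bigoplus_{i=1}^r\m_{n_i}$ and decompose $\alpha=(\alpha_1,\dots,\alpha_r)$ with $\alpha_i\in\H^2(X,\m_{n_i})$. Since each $\alpha_i$ is Brauer, its image $\bar\alpha_i\in\Br(X)$ is the class of a Brauer--Severi space $P_i\to X$, and this is the structure we will exploit.

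First I would form $P=P_1\times_X\cdots\times_X P_r$. Each $P_i\to X$ is smooth, proper and surjective with fibers forms of projective space, hence faithfully flat with geometrically connected fibers, and these properties pass to the fiber product (flatness and surjectivity being stable under base change and composition, and the fibers being products of projective spaces over a field). By Lemma \ref{L:br-sev-sch-leray} the class $\bar\alpha_i$ dies in $\Br(P_i)$, hence in $\Br(P)$; since the Azumaya Brauer group injects into $\H^2(\cdot,\G_m)$, the image of $\alpha_i|_P$ under $\H^2(P,\m_{n_i})\to\H^2(P,\G_m)$ vanishes. The Kummer sequence $\Pic(P)\xrightarrow{\delta}\H^2(P,\m_{n_i})\to\H^2(P,\G_m)$ then exhibits $\alpha_i|_P=\delta(L_i)$ for some invertible sheaf $L_i$ on $P$, where $\delta$ is the connecting homomorphism.

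Next I would trivialize each $L_i$ by passing to its associated $\G_m$-torsor. Writing $\mathbf V(L_i)^*$ for the complement of the zero section in the geometric line bundle $\mathbf V(L_i)$, I set $Q=\mathbf V(L_1)^*\times_P\cdots\times_P\mathbf V(L_r)^*$. The pullback of $L_i$ to $\mathbf V(L_i)^*$, and hence to $Q$, carries a tautological nowhere-vanishing section and is therefore trivial, so $\alpha_i|_Q=\delta(L_i|_Q)=0$ for each $i$ and thus $\alpha|_Q=0\in\H^2(Q,D)$. Each $\mathbf V(L_i)^*\to P$ is smooth and surjective with fibers $\G_m$, so $Q\to P$ is faithfully flat with geometrically connected fibers. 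Taking $P_\alpha=Q$ with the composite $Q\to P\to X$, it remains only to note that the composite of two faithfully flat morphisms with geometrically connected fibers again has geometrically connected fibers: over a geometric point $\bar x\to X$ the fiber $Q_{\bar x}\to P_{\bar x}$ is a fibration with connected fibers over the connected base $P_{\bar x}$, hence is itself connected.

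The main obstacle is conceptual rather than computational: one must recognize that a Brauer--Severi cover only annihilates the Brauer part of a $\m_n$-class (indeed Lemma \ref{L:br-sev-sch-leray} shows the pullback on $\H^2(\cdot,\m_n)$ is \emph{injective}), so the second stage using $\G_m$-torsors is genuinely needed to dispose of the Picard-theoretic remainder $\delta(L_i)$. The remaining ingredients---stability of ``faithfully flat with geometrically connected fibers'' under composition and base change, and the triviality of a line bundle on the total space of its $\G_m$-torsor---are routine.
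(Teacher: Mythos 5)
Your proof is correct and follows essentially the same route as the paper's: kill the Brauer part of each $\m_{n_i}$-component by pulling back to a Brauer--Severi space, then kill the residual Picard class by passing to the complement of the zero section of the associated line bundle. The only cosmetic differences are that the paper first reduces to $D=\m_n$ (rather than handling all summands at once via fiber products) and trivializes the line bundle by citing Lemma \ref{L:pic-mod} rather than via the tautological section.
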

\begin{proof}
  Writing $D$ as a direct sum of group schemes of the form $\m_n$, it
  immediately follows that we may assume $D=\m_n$.  The class $\alpha$
  has an image $\widebar\alpha\in\Br'(X)=\Br(X)$ (since $X$ is
  tasty).  Let $P_0\to X$ be a Brauer-Severi space representing
  $\widebar\alpha$, so that $\alpha|_{P_0}$ is the first Chern class of an
  invertible sheaf $L\in\Pic(P_0)$.  Applying Lemma \ref{L:pic-mod}, we see
  that there is a faithfully flat map $P_\alpha\to P_0$ such that $L$
  becomes an $n$th power (in fact, trivial) on $P_\alpha$.  It follows
  that $\alpha|_{P_\alpha}=0$, as required. 
\end{proof}

\begin{prop}\label{P:const-band-recov}
  Let $D$ be a diagonalizable finite group scheme and let $\ms X$ be a
  $D$-gerbe over a strongly R1 algebraic space $X$.  If $\ms Y$ is a
  quasi-algebraic stack and $F_{\ms X}$ is isomorphic to $F_{\ms Y}$
  then $\ms X$ is isomorphic to $\ms Y$.
\end{prop}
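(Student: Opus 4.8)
The plan is to reduce the problem to the classification of Brauer $D$-gerbes by cohomology classes in $\H^2(X,D)$ and then to apply Corollary \ref{C:diagble-gp-muffin}. First I would use Proposition \ref{P:classes-isonatural}: since $\ms X$ is a Brauer $D$-gerbe over the strongly R1 algebraic space $X$ and $F_{\ms Y}$ is isomorphic to $F_{\ms X}$, the stack $\ms Y$ is again a Brauer gerbe over $X$, where $X$ is recovered as the sheafification of the common functor. Since $D$ is diagonalizable and hence abelian, $\ms X$ is abelian, so Corollary \ref{cor:band-love} recovers its band from $F_{\ms X}$, and the band of $\ms Y$ is identified with the same $D$. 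Thus $\ms X$ and $\ms Y$ are both Brauer $D$-gerbes over $X$, classified by Brauer classes $\alpha=[\ms X]$ and $\beta=[\ms Y]$ in $\H^2(X,D)$.

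The key step is to see that $\alpha$ and $\beta$ have the same support in the sense of Definition \ref{D:van-set}. A $D$-gerbe pulled back along $T\to X$ is neutral exactly when it admits a section, that is, when its associated functor is nonempty on $T$; hence the vanishing set of $\alpha$ is precisely $\{T\to X : F_{\ms X}(T)\neq\varnothing\}$, and likewise for $\beta$. Because $X$ is recovered as the sheafification, the given isomorphism $F_{\ms X}\simto F_{\ms Y}$ is automatically compatible with the structure maps to $X$, so for every $T\to X$ we have $F_{\ms X}(T)\neq\varnothing$ if and only if $F_{\ms Y}(T)\neq\varnothing$. Therefore $\alpha$ and $\beta$ have equal vanishing sets, hence equal support.

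Now Corollary \ref{C:diagble-gp-muffin} applies and produces an automorphism $\psi:D\simto D$ with $\psi^{\ast}\beta=\alpha$. It remains to translate this cohomological identity into an isomorphism of stacks. By Giraud's theory, two gerbes banded by $D$ whose classes in $\H^2(X,D)$ differ by an automorphism of $D$ have isomorphic underlying stacks: re-banding the gerbe of class $\beta$ through $\psi$ produces a $D$-gerbe of class $\alpha$ with the same underlying stack. Since $\ms X$ has class $\alpha$ and $\ms Y$ has class $\beta$, this gives $\ms X\cong\ms Y$ as stacks, which is the best one can hope for, as the banding structure itself is genuinely lost upon passing to $F_{\ms X}$.

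The main obstacle I anticipate is the bookkeeping of the first paragraph: verifying that Proposition \ref{P:classes-isonatural} delivers $\ms Y$ as a gerbe over the \emph{same} space $X$ with the \emph{same} band $D$ in a manner compatible with the functorial isomorphism. A pleasant feature here is that the ambiguity in identifying the band of $\ms Y$ with $D$ is itself an automorphism of $D$, which is exactly the ambiguity absorbed by $\psi$, so no information is lost at this stage. Once the two stacks are pinned down as Brauer $D$-gerbes over a common $X$, the remaining steps are an essentially formal application of Corollary \ref{C:diagble-gp-muffin} together with the standard dictionary between $\H^2$-classes and gerbes.
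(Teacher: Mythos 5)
Your proof is correct, and its core coincides with the paper's: both arguments culminate in showing that the classes $\alpha,\beta\in\H^2(X,D)$ have the same support, invoking Corollary \ref{C:diagble-gp-muffin} to produce $\psi\in\Aut(D)$ with $\psi^{\ast}\beta=\alpha$, re-banding $\ms Y$ through $\psi^{-1}$, and concluding with Giraud's classification of $D$-gerbes by $\H^2(X,D)$. (Note that, like you, the paper's proof uses the Brauer hypothesis on $[\ms X]$ even though the proposition's statement omits the word ``Brauer''; your reading matches the intended statement, Theorem \ref{thm:summary}(4).) Where you genuinely diverge is in the preliminary normalization. The paper does not cite Proposition \ref{P:classes-isonatural} or Corollary \ref{cor:band-love} here: it shows directly that $\ms Y$ is a gerbe over $X$ (Corollary \ref{cor:summary-absolute}), that its band $G$ is a form of $D_X$ (comparing automorphism groups at geometric points via Proposition \ref{sec:classifying-stacks-8}), and then trivializes this form by observing that it is classified by $\H^1(X,\Aut(D))$ with $\Aut(D)$ finite \'etale, so that triviality can be tested (Lemma \ref{L:section-lemma}) after pullback along the faithfully flat morphism $P\to X$ with geometrically connected fibers furnished by Lemma \ref{L:faithfully-flat-splitting}, over which both gerbes become neutral; it also deduces Brauer-ness of $\beta$ from the support equality rather than from isonaturality of the Brauer condition. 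Your shortcut -- abelianness is isonatural, so Corollary \ref{cor:band-love} recovers the band of $\ms Y$ from the common functor and identifies it with $D$ -- is legitimate and is in fact exactly how the paper handles the $\G_m$ case (Lemma \ref{sec:g_m-gerbes-2}); it avoids the descent-of-forms argument entirely and even sidesteps the slightly awkward appeal to Proposition \ref{sec:classifying-stacks-8}, which is stated only for finite \'etale group spaces while a diagonalizable $D$ need not be \'etale. What the paper's longer route buys is an explicit, self-contained construction of the band trivialization; what yours buys is brevity and systematic reuse of the isonaturality package. Your closing observation that the $\Aut(D)$-ambiguity in choosing the banding of $\ms Y$ is precisely absorbed by $\psi$ is also the right remark to make, and is implicit in the paper's final step.
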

\begin{proof}
  Since $F_{\ms X}$ and $F_{\ms Y}$ are isomorphic, we know by
  Corollary \ref{cor:summary-absolute}(3) that there is a $1$-morphism
  $\ms Y\to\ms X$ making $\ms Y$ a gerbe. By Proposition
  \ref{sec:classifying-stacks-8} the automorphism groups at geometric
  points of $\ms Y$ are all (non-canonically) isomorphic to $A$.  It
  follows that the band $G$ of $\ms Y/X$ is a form of $D_X$.  Since
  $D$ is abelian (so that bands and groups are equivalent), we have
  that $G$ is classified by an element of $\H^1(X,\Aut(D))$.  Since
  $\Aut(D)$ is a finite \'etale group scheme, it follows from Lemma
  \ref{L:section-lemma} that we can detect triviality of this
  cohomology class after pulling back along any faithfully flat
  morphism with geometrically connected fibers.  Thus, applying Lemma
  \ref{L:faithfully-flat-splitting}, we may pull back to $P\to X$ so
  that $\ms X|_P\cong\B{D}$.  In this case, $\ms Y$ also has a global
  section, via $\phi$, so that $\ms Y\cong\B{G}$.  Now the triviality
  of the band follows from Proposition \ref{sec:classifying-stacks-8}.
  We may thus choose an identification of the band of $\ms Y$ (on $X$,
  by applying Lemma \ref{L:section-lemma} to $\isom_X(G,D)$) with $D$.

  Write $\alpha\in\H^2(X,D)$ for the class corresponding to $\ms X$
  and $\beta\in\H^2(X,D)$ for the class corresponding to $\ms Y$.  Via
  $\phi$, we see that $\alpha$ and $\beta$ have the same support.
  Using the fact that $\alpha$ is Brauer, this then implies that
  $\beta$ is also Brauer. 
  By Corollary \ref{C:diagble-gp-muffin} there is an isomorphism $\psi:D\simto
  D$ such that $\psi^{\ast}\beta=\alpha$.  Composing the
  $D$-structure on $\ms Y$ with $\psi^{-1}$ produces a new $D$-gerbe
  structure on $\ms Y$ for which the cohomology classes associated to
  $\ms X$ and $\ms Y$ agree.  By Giraud's fundamental theorem 
  (Theorem 3.4.2(i) of Chapter IV of \cite{gi4}),
  there is an ($D$-linear) isomorphism $\ms X\to\ms Y$ over $X$,
  yielding the result.
\end{proof}

\subsection{$\G_m$-gerbes}
\label{sec:g_m-gerbes}

Let $X$ be an algebraic space and $\ms X_i\to X$, $i=1,2$ two
$\G_m$-gerbes such that $[\ms X_1]\in\Br(X)\subseteq\H^2(X,\G_m)$.
Write $n$ for the order of $[\ms X_1]$. Our final task is to prove
Theorem \ref{thm:summary}(5), which is the following statement.

\begin{prop}\label{sec:g_m-gerbes-1}
  If $F_{\ms X_1}$ and $F_{\ms X_2}$ are isomorphic then $\ms X_1$
  and $\ms X_2$ are isomorphic. 
\end{prop}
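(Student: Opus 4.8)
The plan is to mirror the structure of the proof of Proposition \ref{sec:classifying-stacks-8}: trivialize both gerbes over a single faithfully flat cover, and then track descent data, but now with $\G_m$ in place of a finite group and with the band recovered functorially via the results of Section \ref{sec:rec-props}. First I would put $\ms X_2$ on the same footing as $\ms X_1$. By Corollary \ref{cor:summary-absolute}(3), the isomorphism $\phi\colon F_{\ms X_1}\simto F_{\ms X_2}$ shows that $\ms X_2$ is again a gerbe over an algebraic space, and since the sheafification of either functor recovers the base, I may identify this base with $X$ (absorbing the induced automorphism of $X$ into the comparison). Theorem \ref{T:nanas} shows $\ms X_2$ has abelian inertia, so it is an abelian quasi-algebraic gerbe, and Corollary \ref{cor:band-love} recovers its band from $F_{\ms X_2}\cong F_{\ms X_1}$; since the band of $\ms X_1$ is $\G_m$, so is that of $\ms X_2$. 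Finally, Proposition \ref{P:classes-isonatural} shows the Brauer condition is isonatural, so $\ms X_2$ is a Brauer $\G_m$-gerbe with class $\alpha_2\in\Br(X)\subseteq\H^2(X,\G_m)$; write $\alpha_1$ for the class of $\ms X_1$.

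The key geometric input is a single cover on which both classes die, and which exists for a general algebraic space precisely because the classes are Brauer. Since $\alpha_1$ is Brauer it is represented by a Brauer--Severi space $f\colon P\to X$, which is faithfully flat with geometrically connected fibres; by Lemma \ref{L:br-sev-sch-leray} the kernel of $\Br(X)\to\Br(P)$ is $\langle\alpha_1\rangle$, so $\alpha_1|_P=0$ and hence $\ms X_1|_P\cong\B{\G_m}$. Applying $\phi$ over $P$ shows $F_{\ms X_2}(P)$ is nonempty as well, so $\alpha_2|_P=0$ and $\ms X_2|_P\cong\B{\G_m}$ too. Note that this reduction uses no R1 hypothesis, which is why the support arguments of Lemma \ref{L:cyclic-gp-muffin} and Corollary \ref{C:recover-line-bdls} are not needed (and not available) here.

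Choosing trivialising objects identifies $F_{\ms X_i}|_P$ with the Picard functor and presents each $\alpha_i$ by \v{C}ech descent data $g_i\in\H^1(P\times_X P,\G_m)$, with $\alpha_i$ the image of $g_i$ under the \v{C}ech coboundary into $\H^2(X,\G_m)$. I would then run the analogue of the diagram chase of Proposition \ref{sec:classifying-stacks-8}: the restriction of $\phi$ to $P$ recovers, via Corollary \ref{cor:band-love} applied over $P$, an isomorphism of bands $\G_m\simto\G_m$, which over a connected base is $\pm 1$; compatibility of $\phi$ over $P\times_X P$ then forces $g_2=\pm g_1$ up to coboundary, whence $\alpha_2=\pm\alpha_1$ in $\H^2(X,\G_m)$. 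To finish, when $\alpha_2=\alpha_1$, Giraud's theorem (Theorem 3.4.2(i) of Chapter IV of \cite{gi4}) produces a $\G_m$-linear isomorphism $\ms X_1\cong\ms X_2$; when $\alpha_2=-\alpha_1$, precomposing the $\G_m$-structure on $\ms X_2$ with the $(-1)$-automorphism of $\G_m$ replaces $\alpha_2$ by $\alpha_1$ and again yields $\ms X_1\cong\ms X_2$ as stacks.

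The main obstacle is the descent-data step. As in Proposition \ref{sec:classifying-stacks-8} the chase itself is ``straightforward but somewhat laborious,'' but its genuine content is subtler than in the finite case: the bare set-valued isomorphism $\phi$ must be shown to respect the torsor structure of $F_{\ms X_i}|_P$ under the Picard functor, since a priori a self-isomorphism of $F_{\B{\G_m}}$ of sets need only preserve the support, which would pin $\alpha_2$ down merely up to the cyclic subgroup $\langle\alpha_1\rangle$ rather than up to sign --- and $\langle\alpha_1\rangle=\langle\alpha_2\rangle$ is strictly weaker than $\alpha_2=\pm\alpha_1$ once $n>2$, so it would be insufficient to build a stack isomorphism (an abstract isomorphism over $X$ induces a band automorphism, hence $\alpha_2=\pm\alpha_1$). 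What rescues the argument is exactly functorial band recovery: Corollary \ref{cor:band-love}, together with the binana machinery of Propositions \ref{P:binana-bipoints-1} and \ref{P:gp-law-nana} underlying it, makes the $\G_m$-action intrinsic to the functor, so $\phi$ is forced to be equivariant for it, and this same recovery is the source of the harmless sign ambiguity coming from $\Aut(\G_m)=\{\pm 1\}$.
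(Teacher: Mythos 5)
Your overall skeleton matches the paper's: trivialize both gerbes over the Brauer--Severi space $P\to X$ of $[\ms X_1]$, compare \v{C}ech descent data on $P\times_X P$, conclude $[\ms X_2]=\pm[\ms X_1]$, and finish with Giraud's theorem. You also correctly isolate the crux: a set-valued isomorphism $\phi$ a priori only preserves supports, which pins down $[\ms X_2]$ merely up to the cyclic subgroup $\langle[\ms X_1]\rangle$, and that is strictly weaker than what is needed once $n>2$. The problem is your resolution of this crux. You assert that Corollary \ref{cor:band-love} and the binana machinery make ``the $\G_m$-action intrinsic to the functor, so $\phi$ is forced to be equivariant for it.'' This is a non sequitur: what Propositions \ref{P:binana-bipoints-1} and \ref{P:gp-law-nana} recover is the automorphism sheaf of each individual object --- degree-zero inertial data --- and hence the band as an abstract sheaf of groups. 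The action relevant to your descent chase is different: it is the simply transitive twisting action of $\H^1(T,\G_m)=\Pic(T)$ on $F_{\ms X_i}(T)$ (Lemma \ref{sec:g_m-gerbes-8}), which concerns how non-isomorphic objects differ, not automorphisms of a single object. Nothing in Corollary \ref{cor:band-love} shows this action is determined by the functor, nor that $\phi$ intertwines it up to sign. Indeed, the naive argument gives only this: if $b=a\otimes L$, then $a$ and $b$ agree on any cover trivializing $L$, hence so do $\phi(a)$ and $\phi(b)$, whence $\phi(b)=\phi(a)\otimes M$ with $M$ killed on the same covers as $L$ --- exactly the ``same support'' conclusion you yourself recognized as insufficient.

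The paper closes this gap by an argument your proposal does not contain, and it never proves any global equivariance. Instead, Lemma \ref{sec:g_m-gerbes-6} proves a pointwise rigidity statement: any pointed natural self-bijection of $F_{\B\G_m}$ over $P_{\widebar x}\times_{\widebar x}P_{\widebar x}$ acts as $\pm\id$ on $\Z^{\oplus 2}$. This is proved not with binanas but by reconstructing the group law on $\Pic(P_{\widebar x})=\Z$ from geometric constructions over the geometric point (a rational point $y$, the two resulting inclusions $P_{\widebar x}\to P_{\widebar x}\times_{\widebar x}P_{\widebar x}$, and the diagonal), all of which are automatically equivariant by naturality; this argument is special to products of projective spaces over a field and does not extend to $P\times_X P$ itself. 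The weak fiberwise statement then suffices because of Lemma \ref{sec:g_m-gerbes-5}: restriction of the relevant \v{C}ech group $\vH^1(\{P\to X\},\ms H^1(\G_m))$ to the geometric fiber is injective, landing in $(\Z\times\Z)/(n,-n)$. Combined with Lemma \ref{sec:g_m-gerbes-8} and the twisted-sheaf computation of Lemma \ref{sec:g_m-gerbes-9}, this converts $\pm\id$ rigidity over $\widebar x$ into $[\ms X_2]=\pm[\ms X_1]$ in $\H^2(X,\G_m)$. So to repair your proof you must either actually prove your equivariance claim (it is not a consequence of the results you cite) or replace it by the geometric-fiber mechanism of Lemmas \ref{sec:g_m-gerbes-5}, \ref{sec:g_m-gerbes-6}, \ref{sec:g_m-gerbes-8}, and \ref{sec:g_m-gerbes-9}.
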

In other words, $[\ms X_1]=\pm[\ms X_2]$ in $\H^2(X,\G_m)$.

\begin{lem}\label{sec:g_m-gerbes-2}
  If $\ms Y\to X$ and $\ms Z\to X$ are abelian gerbes, then there is
  a natural map
$$\isom_X(F_{\ms Y},F_{\ms Z})\to\isom(L(\ms Y),L(\ms Z)),$$
where $L(\ms Y)$ and $L(\ms Z)$ denote the bands of the gerbes.
\end{lem}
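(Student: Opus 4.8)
The plan is to extract the band isomorphism from the functorial recovery of the band furnished by Propositions~\ref{P:binana-bipoints-1} and~\ref{P:gp-law-nana}. First I would recall that for an abelian (quasi-algebraic) gerbe the binana formalism reconstructs the band: given $T\to X$ and a local object $\widetilde a\in\ms Y_T$ with class $a\in F_{\ms Y}(T)$, Proposition~\ref{P:binana-bipoints-1}(2) gives a canonical isomorphism $\Bin(a)\simto\Aut(\widetilde a)$, and Proposition~\ref{P:gp-law-nana} shows that its group law is recovered by the fundamental diagram of Definition~\ref{sec:definitions-1}. Since $\ms Y$ is an \emph{abelian} gerbe, $\Aut(\widetilde a)$ is canonically identified with $L(\ms Y)|_T$ independently of the choice of $\widetilde a$ (abelianness kills the conjugation ambiguity in comparing automorphism sheaves of locally isomorphic objects). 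Thus $\Bin(a)\cong L(\ms Y)|_T$ canonically as sheaves of groups, and likewise for $\ms Z$.

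The key observation is that the binana and trinana constructions, and the fundamental diagram, are built purely from the functor together with the restriction and pullback maps along morphisms of nana-schemes. Hence an isomorphism $\psi\in\isom_X(F_{\ms Y},F_{\ms Z})$, being a natural transformation compatible with all such maps, carries $a$-binanas bijectively to $\psi(a)$-binanas and respects the fundamental diagram; it therefore induces a group isomorphism $\Bin(a)\simto\Bin(\psi(a))$ for every $a\in F_{\ms Y}(T)$. To globalize, I would choose an fppf cover $\chi:U\to X$ over which $\ms Y$ has a section $\widetilde a_U$ of class $a_U$, and set $b_U=\psi(a_U)\in F_{\ms Z}(U)$, the class of some object $\widetilde b_U\in\ms Z_U$. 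Composing the identifications above yields, over $U$, a chain of canonical isomorphisms of abelian sheaves
$$L(\ms Y)|_U\lisom\Bin(a_U)\simto\Bin(b_U)\simto L(\ms Z)|_U,$$
whose composite I denote $\lambda_U:L(\ms Y)|_U\simto L(\ms Z)|_U$, the middle arrow being induced by $\psi$.

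The main obstacle is to check that $\lambda_U$ descends to $X$, i.e.\ that $\pr_1^*\lambda_U=\pr_2^*\lambda_U$ under the canonical descent data of $L(\ms Y)$ and $L(\ms Z)$ on $U\times_X U$. Every arrow in the chain defining $\lambda_U$ is natural, so the two pullbacks agree up to the discrepancy introduced by comparing the two local objects $\pr_1^*\widetilde a_U$ and $\pr_2^*\widetilde a_U$, which over $U\times_X U$ differ by an isomorphism; the induced ambiguity is a conjugation, hence trivial because the bands are abelian. Thus $\lambda_U$ glues to an isomorphism $\lambda_\psi:L(\ms Y)\simto L(\ms Z)$. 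Finally, independence of the choice of cover and section (pass to a common refinement and use naturality) and compatibility with composition of isomorphisms are routine, and give the desired natural map $\isom_X(F_{\ms Y},F_{\ms Z})\to\isom(L(\ms Y),L(\ms Z))$.
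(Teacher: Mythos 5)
Your proposal is correct and follows essentially the same route as the paper: the paper's proof is simply a pointer to the proof of Corollary \ref{cor:band-love}, which recovers the band via the binana formalism of Propositions \ref{P:binana-bipoints-1} and \ref{P:gp-law-nana} over an fppf cover admitting a section, exactly as you do. Your explicit verification of the descent condition (using abelianness to kill the conjugation ambiguity between the two pullbacks of the chosen local object) spells out what the paper compresses into the remark that an abelian sheaf on $X$ is determined by its values on $U$-schemes.
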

\begin{proof}
  This follows from the proof of Corollary \ref{cor:band-love}.
\end{proof}

Let $\pi:P\to X$ be a Brauer-Severi space with Brauer class $[\ms
X_1]$.  We can view $\pi$ as a \v Cech covering in the fppf topology.
It is elementary that $\vH^2(\{P\to X\},\G_m)=0$.  The \v
Cech-to-derived spectral sequence thus yields an exact sequence
\begin{equation}\label{eq:1}
0\to\vH^1(\{P\to X\},\ms H^1(\G_m))\to\H^2(X,\G_m)\to\vH^0(\{P\to X\},\ms H^2(\G_m)).
\end{equation}

\begin{lem}\label{sec:g_m-gerbes-3}
  The natural map $\vH^0(\{P\to X\},\ms H^2(\G_m))\to\H^2(P,\G_m)$
  is an isomorphism.
\end{lem}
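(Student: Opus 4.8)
The plan is to unwind the definition of the \v Cech group and reduce the statement to an equality of two pullback maps. By definition, $\vH^0(\{P\to X\},\ms H^2(\G_m))$ is the equalizer of the two maps $p_1^{\ast},p_2^{\ast}:\H^2(P,\G_m)\rightrightarrows\H^2(P\times_X P,\G_m)$ induced by the two projections $p_1,p_2:W:=P\times_X P\to P$, and the asserted natural map to $\ms H^2(\G_m)(P)=\H^2(P,\G_m)$ is simply the inclusion of this equalizer. Thus the lemma is equivalent to the assertion that $p_1^{\ast}=p_2^{\ast}$ on all of $\H^2(P,\G_m)$. I would prove this by showing that $p_1^{\ast}$ is already an isomorphism and then playing it off against the diagonal section $\Delta:P\to W$, which satisfies $p_1\circ\Delta=p_2\circ\Delta=\id_P$.

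First I would show that $p_1:W\to P$ is the projectivization of a vector bundle on $P$. Since $W\to P$ is the base change of the Brauer--Severi space $\pi:P\to X$ along $\pi$ itself, it is a Brauer--Severi space over $P$ whose class is $\pi^{\ast}[\ms X_1]\in\Br(P)$. By Lemma \ref{L:br-sev-sch-leray} the kernel of $\pi^{\ast}:\Br(X)\to\Br(P)$ is generated by $[\ms X_1]$, so $\pi^{\ast}[\ms X_1]=0$; a Brauer--Severi space with trivial Brauer class is a projective bundle $\PP(\ms E)\to P$. (The section $\Delta$ already forces this vanishing, and will be reused below.)

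Next I would run the Leray spectral sequence $E_2^{s,t}=\H^s(P,R^t q_{\ast}\G_m)\Rightarrow\H^{s+t}(W,\G_m)$ for $q=p_1:W=\PP(\ms E)\to P$. For a projective bundle one has $q_{\ast}\G_m=\G_m$, $R^1q_{\ast}\G_m=\ZZ$ (the constant sheaf, generated by the relative $\mathcal{O}(1)$, which here is a genuine global line bundle), and $R^tq_{\ast}\G_m=0$ for $t\geq 2$ (the $\G_m$-analogue of the vanishing in Lemma \ref{L:br-sev-sch-leray}). Because $\mathcal{O}(1)$ exists globally, the differential $d_2:E_2^{0,1}=\H^0(P,\ZZ)\to E_2^{2,0}=\H^2(P,\G_m)$ vanishes, so nothing hits $E_2^{2,0}$ and $E_\infty^{2,0}=\H^2(P,\G_m)$. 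Moreover $E_2^{1,1}=\H^1(P,\ZZ)=0$, since the first cohomology of the constant sheaf $\ZZ$ vanishes (a $\ZZ$-torsor over a connected base corresponds to a continuous homomorphism from a profinite group to $\ZZ$, hence is trivial), and $E_2^{0,2}=\H^0(P,R^2q_{\ast}\G_m)=0$. Reading off the terms contributing to degree $2$, I conclude that $q^{\ast}=p_1^{\ast}:\H^2(P,\G_m)\to\H^2(W,\G_m)$ is an isomorphism.

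Finally, the diagonal closes the argument: since $p_1\circ\Delta=p_2\circ\Delta=\id_P$, both $\Delta^{\ast}p_1^{\ast}$ and $\Delta^{\ast}p_2^{\ast}$ are the identity on $\H^2(P,\G_m)$. As $p_1^{\ast}$ is an isomorphism, $\Delta^{\ast}$ is its inverse and in particular injective, so applying $\Delta^{\ast}$ to $p_1^{\ast}\alpha-p_2^{\ast}\alpha$ yields $0$ and forces $p_1^{\ast}\alpha=p_2^{\ast}\alpha$ for every $\alpha$. This proves $\vH^0(\{P\to X\},\ms H^2(\G_m))=\H^2(P,\G_m)$. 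I expect the main obstacle to be the relative computation of $R^tq_{\ast}\G_m$ for the projective bundle $q$ — namely the identification of $R^1q_{\ast}\G_m$ with the constant sheaf $\ZZ$ and the vanishing for $t\geq 2$, together with the bookkeeping showing $d_2=0$; once these are in hand the spectral sequence and the diagonal trick are formal.
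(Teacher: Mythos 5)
You take a genuinely different route from the paper. The paper works \emph{downstairs}: it applies the Leray spectral sequence to $\pi:P\to X$ itself to conclude that $\pi^{\ast}:\H^2(X,\G_m)\to\H^2(P,\G_m)$ is surjective, and then the identity $\pi\circ p_1=\pi\circ p_2$ forces $p_1^{\ast}=p_2^{\ast}$ on the image of $\pi^{\ast}$, i.e.\ on everything, so the equalizer is all of $\H^2(P,\G_m)$; it never analyzes $W=P\times_XP$ as a space over $P$ at all. You work \emph{upstairs}: you trivialize $W\to P$ and make $p_1^{\ast}$ an isomorphism, then close with the diagonal. Several of your steps are correct and nicely done: the reduction of the lemma to $p_1^{\ast}=p_2^{\ast}$, the identification of $p_1:W\to P$ with a projective bundle (via the diagonal section, or via Lemma \ref{L:br-sev-sch-leray}), the vanishing of $d_2$ on $E_2^{0,1}$ because the relative $\ms O(1)$ exists as an honest line bundle, and the final diagonal trick.

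The gap is the step $E_2^{1,1}=\H^1(P_{\et},\Z)=0$, and specifically the reason you give for it. \'Etale $\Z$-torsors are \emph{not} classified by continuous homomorphisms out of the profinite fundamental group in general; that identification (and with it the vanishing) is valid for geometrically unibranch (e.g.\ normal) schemes, but in this section $X$ is an arbitrary algebraic space, so $P$ need not be normal. For a nodal curve, the infinite chain of rational curves is a nontrivial \'etale $\Z$-torsor, so $\H^1(\Z)\neq 0$. This is not a removable technicality in the stated generality: take $X$ the nodal cubic over $\C$ and $P=\PP^1\times X$ (a legitimate Brauer--Severi space for the trivial class). A Mayer--Vietoris computation along the normalization gives $\H^2(X,\G_m)=0$ while $\H^2(P,\G_m)\cong\Z$, the generator coming exactly from $E_2^{1,1}=\H^1(X,\Z)$; its two pullbacks to $P\times_XP$ correspond to $\ms O(1,0)$ and $\ms O(0,1)$ on the glued $\PP^1\times\PP^1$ and are distinct. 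So $p_1^{\ast}$ is not an isomorphism there, and indeed the conclusion of the lemma itself fails for that choice of $P$. In fairness, the paper's one-line argument has the same soft spot: surjectivity of $\pi^{\ast}$ requires killing $E_2^{1,1}=\H^1(X,\Z)$ (and $E_2^{0,2}$), which the paper does not address and which fails in the same example. So your proof is no worse off than the paper's --- both become correct under an additional hypothesis such as $X$ (hence $P$) being geometrically unibranch --- but the justification you offer for $\H^1(P,\Z)=0$ is false as stated, and it is precisely the point where such a hypothesis must enter. A smaller remark: your other input, $R^2q_{\ast}\G_m=0$, is true but is itself of the same depth as the theorem of Gabber that the paper cites in Lemma \ref{L:br-sev-sch-leray}; you were right to flag it as the main remaining obstacle rather than treat it as formal.
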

\begin{proof}
  The presheaf $\ms H^2(\G_m)$ assigns to an $X$-space $Y$ the group
  $\H^2(Y,\G_m)$.  We get $\vH^0$ of the presheaf by forming the
  equalizer of the diagram
$$\xymatrix{\H^2(P,\G_m)\ar@<.5ex>[r]^-{\pr_1^\ast}\ar@<-.5ex>[r]_-{\pr_2^{\ast}}&\H^2(P\times_X
  P,\G_m).}$$ The Leray spectral sequence shows that the pullback map
$\pi^{\ast}:\H^2(X,\G_m)\to\H^2(P,\G_m)$ is surjective.  Since
$\pi\pi_1=\pi\pr_2$, we see that $\pr_1^{\ast}=\pr_2^{\ast}$, so that
the equalizer is $\H^2(P,\G_m)$, as desired.
\end{proof}

\begin{cor}\label{sec:g_m-gerbes-4}
  There is a natural isomorphism $\vH^1(\{P\to X\},\ms H^1(\G_m))\simto\Z/n\Z$
  onto the subgroup of $\H^2(X,\G_m)$ generated by $[\ms X_1]$.
\end{cor}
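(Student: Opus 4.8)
The plan is to read off the group $\vH^1(\{P\to X\},\ms H^1(\G_m))$ from the exact sequence \eqref{eq:1}, identifying it with the kernel of a pullback map, and then to compute that kernel.

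First I would observe that, by the standard description of the edge homomorphism of the \v Cech-to-derived spectral sequence, the second map in \eqref{eq:1} is precisely the composite of the pullback $\pi^{\ast}\colon\H^2(X,\G_m)\to\H^2(P,\G_m)$ with the isomorphism of Lemma \ref{sec:g_m-gerbes-3}: the edge map $\H^2(X,\G_m)\to\vH^0(\{P\to X\},\ms H^2(\G_m))\subseteq\ms H^2(\G_m)(P)=\H^2(P,\G_m)$ is restriction along $P\to X$. Granting this, the exactness of \eqref{eq:1} gives a canonical isomorphism
$$\vH^1(\{P\to X\},\ms H^1(\G_m))\simto\ker\bigl(\pi^{\ast}\colon\H^2(X,\G_m)\to\H^2(P,\G_m)\bigr).$$

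Next I would show that this kernel is exactly the cyclic subgroup generated by $[\ms X_1]$. One inclusion is immediate: since $[\ms X_1]$ is the Brauer class of $P$ and $P\times_X P\to P$ has a section (the diagonal), the class $[\ms X_1]$ dies after restriction to $P$, whence $\langle[\ms X_1]\rangle\subseteq\ker\pi^{\ast}$. For the reverse inclusion I would run the Leray spectral sequence $\H^p(X,\R^q\pi_{\ast}\G_m)\Rightarrow\H^{p+q}(P,\G_m)$. Here $\pi_{\ast}\G_m=\G_m$, while the relative Picard sheaf $\R^1\pi_{\ast}\G_m$ is the constant sheaf $\Z$; the edge description of the kernel in total degree $2$ identifies $\ker\pi^{\ast}$ with the image of the differential $d_2\colon\H^0(X,\R^1\pi_{\ast}\G_m)\to\H^2(X,\G_m)$, and $d_2$ sends the fibrewise class of $\mc O(1)$ to the obstruction to extending it over $X$, namely $[\ms X_1]$. (Equivalently, once this shows $\ker\pi^{\ast}$ is torsion, hence contained in $\Br(X)$, one may simply invoke the second assertion of Lemma \ref{L:br-sev-sch-leray}.) Thus $\ker\pi^{\ast}=\langle[\ms X_1]\rangle$, and since $n$ is by definition the order of $[\ms X_1]$, this is cyclic of order $n$, yielding the asserted isomorphism $\vH^1(\{P\to X\},\ms H^1(\G_m))\simto\Z/n\Z$ onto $\langle[\ms X_1]\rangle\subseteq\H^2(X,\G_m)$.

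The main obstacle I anticipate is the reverse inclusion $\ker\pi^{\ast}\subseteq\langle[\ms X_1]\rangle$: the easy half and the edge-map bookkeeping are routine, but pinning down the full kernel requires the computation $\R^1\pi_{\ast}\G_m\cong\Z$ together with the identification of the transgression $d_2(1)$ with $[\ms X_1]$ (or, going through Lemma \ref{L:br-sev-sch-leray}, the observation that the kernel is torsion). If $X$ is disconnected one should run this argument componentwise, where $\H^0(X,\R^1\pi_{\ast}\G_m)$ is computed one component at a time.
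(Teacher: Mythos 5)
Your proposal is correct and takes essentially the same approach as the paper's proof: the paper likewise identifies $\vH^1(\{P\to X\},\ms H^1(\G_m))$ with $\ker\bigl(\pi^{\ast}\colon\H^2(X,\G_m)\to\H^2(P,\G_m)\bigr)$ via the edge map together with Lemma \ref{sec:g_m-gerbes-3}, and then appeals to the Leray spectral sequence argument for $\G_m$ (cited to Gabber's thesis in the proof of Lemma \ref{L:br-sev-sch-leray}) to conclude that this kernel is exactly $\langle[P]\rangle=\langle[\ms X_1]\rangle\cong\Z/n\Z$. The only difference is presentational: you write out the Leray computation ($\pi_{\ast}\G_m=\G_m$, $\R^1\pi_{\ast}\G_m\cong\Z$, and $d_2(1)=\pm[\ms X_1]$) where the paper cites it.
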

\begin{proof}
  By Lemma \ref{sec:g_m-gerbes-3}, $\vH^1(\{P\to X\},\ms H^1(\G_m))$ is
  identified (via the edge map in the spectral sequence) with the
  kernel of the pullback map $\H^2(X,\G_m)\to\H^2(P,\G_m)$.  The
  argument cited in the proof of Lemma \ref{L:br-sev-sch-leray}
  shows that this kernel is precisely the subgroup generated by
  $[P]=[\ms X_1]$.
\end{proof}

Let $\widebar x\to X$ be a geometric point.  There is a canonical
isomorphism 
\begin{equation}
  \label{eq:3}
  \Z\simto\Pic(P_{\widebar x})
\end{equation}
given by the unique ample generator.

\begin{lem}\label{sec:g_m-gerbes-5}
  There is a natural injection
$$\vH^1(\{P\to X\},\ms H^1(\G_m))\inj(\Z\times\Z)/(n,-n)$$
onto the subgroup spanned by $(1,-1)$ arising from the restriction of
cocycles to the fiber of $P\times_X P$ over $\widebar x$ and the
isomorphism of equation \eqref{eq:3} above.
\end{lem}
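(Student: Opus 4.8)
The plan is to compute the \v{C}ech complex of the one-element covering $\{P\to X\}$ with values in the presheaf $\ms H^1(\G_m)$ and then restrict cochains to the geometric fibre over $\widebar x$. A $1$-cochain is a line bundle $\xi\in\Pic(P\times_X P)$, the coboundary of $L\in\Pic(P)$ is $\pr_1^{\ast}L\otimes\pr_2^{\ast}L^{-1}$, and $\xi$ is a $1$-cocycle when $\pr_{12}^{\ast}\xi\otimes\pr_{13}^{\ast}\xi^{-1}\otimes\pr_{23}^{\ast}\xi$ is trivial on $P\times_X P\times_X P$. Since $\pi:P\to X$ is a Brauer--Severi space, the fibre $P_{\widebar x}$ is a projective space $\PP^r_{\widebar\kappa}$, so restriction to $P_{\widebar x}\times P_{\widebar x}\cong\PP^r_{\widebar\kappa}\times\PP^r_{\widebar\kappa}$, together with the identification of equation \eqref{eq:3}, identifies $\Pic(P_{\widebar x}\times P_{\widebar x})$ with $\Z\times\Z$ and yields a restriction homomorphism $\rho$ on $1$-cochains. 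This $\rho$ is the source of the asserted map.

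Next I would locate where cocycles and coboundaries land. On $\PP^r_{\widebar\kappa}\times\PP^r_{\widebar\kappa}\times\PP^r_{\widebar\kappa}$ the Picard group is free on the three pullbacks of $\ms O(1)$; writing $\rho(\xi)=(a,b)$ and pulling the cocycle relation back to this triple fibre produces the single coefficient identity $a+b=0$, so $\rho$ sends cocycles into the antidiagonal $\Z\cdot(1,-1)$. For coboundaries, $\rho(\pr_1^{\ast}L\otimes\pr_2^{\ast}L^{-1})=(m,-m)$, where $m$ is the image of $L$ under $\Pic(P)\to\Pic(P_{\widebar x})\cong\Z$. The Leray sequence for $\pi$ identifies this image with $\ker\bigl(\H^0(X,\R^1\pi_{\ast}\G_m)\to\H^2(X,\G_m)\bigr)=n\Z$, which is exactly the input already used in Corollary \ref{sec:g_m-gerbes-4}. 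Hence $\rho$ carries coboundaries \emph{onto} $\Z\cdot(n,-n)$, so it descends to a well-defined homomorphism $\bar\rho:\vH^1(\{P\to X\},\ms H^1(\G_m))\to\Z(1,-1)/\Z(n,-n)$, whose target is precisely the subgroup of $(\Z\times\Z)/(n,-n)$ spanned by $(1,-1)$.

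It then remains to prove $\bar\rho$ is an isomorphism onto $\langle(1,-1)\rangle$. By Corollary \ref{sec:g_m-gerbes-4} the source is cyclic of order $n$, and the target $\Z(1,-1)/\Z(n,-n)$ is likewise cyclic of order $n$ (since $k(1,-1)\equiv 0$ iff $n\mid k$); so it suffices to show $\bar\rho$ sends a generator to a generator, equivalently that it is surjective. The route I would take is to identify $\bar\rho$ with the edge isomorphism of Corollary \ref{sec:g_m-gerbes-4}: under the comparison of the \v{C}ech-to-derived spectral sequence of $\{P\to X\}$ with the Leray sequence of $\pi$, the generator of $\vH^1$ corresponds to the ample relative class $s$ generating $\R^1\pi_{\ast}\G_m$ with $d_2(s)=[\ms X_1]$ of order $n$, and $s$ restricts on the fibre to the ample generator of $\Pic(P_{\widebar x})$, i.e.\ to the class matched with $(1,-1)$ by the antidiagonal computation above; tracking $s$ through both maps gives $\bar\rho(\text{generator})=\pm(1,-1)$. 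I expect the main obstacle to be exactly this comparison: an individual relative $\ms O(1)$ on $P\times_X P$ is defined only up to pullbacks from the two factors and in general fails the cocycle condition, so one cannot simply write down a generating cocycle by hand. The honest argument must match the two edge maps and check that the fibrewise ambiguity is absorbed into the coboundary subgroup $\Z(n,-n)$, so that the generator-to-generator statement is well defined on cohomology.
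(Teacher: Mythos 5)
Your first two-thirds coincides with the paper's own proof. The paper organizes exactly the same information into the three-row diagram \eqref{eq:4} (the \v{C}ech complexes of $\Pic(X)$, of the fiber powers of $P$, and of the constant relative Picard groups $\Z^{\ell}$): its ``straightforward calculation'' is your antidiagonal identity $a+b=0$, its ``standard'' fact that the image of $\Pic(P)$ in $\Z$ is $n\Z$ is your Leray computation, and its reduction of the lemma to surjectivity via Corollary \ref{sec:g_m-gerbes-4} is your counting step. The genuine gap is precisely the step you flag as the main obstacle and then do not carry out: producing a \v{C}ech $1$-cocycle whose fiber class is $\pm(1,-1)$. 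Your proposed substitute --- matching the edge map of the \v{C}ech-to-derived spectral sequence of $\{P\to X\}$ against the Leray differential $d_2$ for $\pi$ --- is left as a wish, and it is not a routine comparison: the two spectral sequences abut to different groups ($\H^{\ast}(X,\G_m)$ and $\H^{\ast}(P,\G_m)$ respectively), so ``tracking $s$ through both maps'' has no immediate meaning without substantial additional work.

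The paper fills this hole with an elementary construction, and your parenthetical claim that ``one cannot simply write down a generating cocycle by hand'' is exactly what it refutes. Either projection $P\times_X P\to P$ is a Brauer--Severi space with a section (the diagonal), so its Brauer class $\pi^{\ast}[\ms X_1]$ vanishes (compare Lemma \ref{L:br-sev-sch-leray}) and it carries an honest relative $\ms O(1)$; after twisting by a pullback from the base factor, the relative $\ms O(1)$ for $\pr_2$ has fiber class exactly $(1,-1)$. (The paper phrases this as surjectivity of $\Pic(P\times_X P)\to\Z^{2}$, with the two relative $\ms O(1)$'s realizing $(1,0)$ and $(0,1)$; that is overstated --- pulling back along the diagonal shows every fiber class $(a,b)$ of a line bundle on $P\times_X P$ satisfies $n\mid a+b$ --- but the class $(1,-1)$, which is all the argument needs, is certainly realized.) Such a bundle $\xi$ need not satisfy the cocycle condition, but $\delta\xi$ has fiber class $\delta(1,-1)=0$ on the triple product, so by exactness of the columns of \eqref{eq:4} one has $\delta\xi=\pr^{\ast}\eta$ for some $\eta\in\Pic(X)$; since the degree-one \v{C}ech differential acts as the identity on classes pulled back from $X$ (acyclicity of the top row of \eqref{eq:4}), the twist $\xi\tensor\pr^{\ast}\eta^{\vee}$ is a genuine cocycle with unchanged fiber class $(1,-1)$. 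So the ambiguity you worried about is corrected at the cochain level by a single class from $\Pic(X)$ --- not absorbed into $\Z(n,-n)$ after passing to cohomology --- and this one correction is the entire content of the paper's ``simple chase through diagram \eqref{eq:4}.'' With that step supplied, the rest of your write-up stands.
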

\begin{proof}
  The \v Cech cohomology group is the cohomology group at the middle
  node of the diagram
$$\xymatrix{\Pic(P)\ar@<.5ex>[r]\ar@<-.5ex>[r]&\Pic(P\times_XP)\ar@<1ex>[r]\ar[r]\ar@<-1ex>[r]&\Pic(P\times_XP\times_XP)}.$$
We know that the relative Picard space of an $\ell$-fold product of
$P$ over $X$ is $\Z^\ell$.  Sending a cohomology class to the
associated section of the relative Picard space yields a diagram
\begin{equation}
  \label{eq:4}
  \xymatrix{0\ar[d]&0\ar[d]&0\ar[d]\\
\Pic(X)\ar[d]\ar@<.5ex>[r]\ar@<-.5ex>[r]&\Pic(X)\ar[d]\ar@<1ex>[r]\ar[r]\ar@<-1ex>[r]&\Pic(X)\ar[d]\\
\Pic(P)\ar[d]\ar@<.5ex>[r]\ar@<-.5ex>[r]&\Pic(P\times_XP)\ar[d]\ar@<1ex>[r]\ar[r]\ar@<-1ex>[r]&\Pic(P\times_XP\times_XP)\ar[d]\\
\Z\ar@<.5ex>[r]\ar@<-.5ex>[r]&\Z^2\ar@<1ex>[r]\ar[r]\ar@<-1ex>[r]&\Z^3}
\end{equation}
with exact columns and whose top and bottom rows are acyclic at the
middle node.  Moreover, the bottom vertical maps agree with the ones
given by restricting to the geometric fibers over $\widebar x$.  A
straightforward calculation shows that the horizontal 
kernel at $\Z^2$ is the subgroup generated by $(1,-1)$.  In addition,
it is standard that the image of $\Pic(P)$ in $\Z$ is the subgroup
generated by $n$.  This yields a map 
$$\vH^1(\{P\to X\},\ms H^1(\G_m))\to \Z^2/(n,-n),$$
which we claim is an isomorphism onto the subgroup generated by
$(1,-1)$.  That the image lies in that subgroup follows from the
preceding sentences.  By Corollary \ref{sec:g_m-gerbes-4}, it is
enough to show that $(1,-1)$ is in the image of $\Pic(P\times_X
P)\to\Z^2$; a simple chase through diagram \eqref{eq:4} then shows
that one can find such an element which is in the horizontal kernel at
$\Pic(P\times_X P)$.

We claim that in fact the map $\Pic(P\times_X P)\to\Z^2$ is
surjective.  To see this, first note that $P\times_X P\to P$ (by
either projection) is a trivial Brauer-Severi space, so that there is
an invertible sheaf $\ms L$ on $P\times_X P$ which is a relative $\ms
O(1)$ for such a projection.  More precisely, the canonical relatively
ample section $1$ in $\Z_X$ (the relative Picard space) lifts to a
section of the Picard stack (i.e., an invertible sheaf) upon pullback
to $P$, and it is this sheaf which we take for $\ms L$.  It follows
from this description that the restriction of $\ms L$ to the geometric
fiber over $\widebar x$ has class $(1,0)$.  Similarly, we can find
$\ms M$ mapping to $(0,1)$.  The claim follows, and with it the
result.
\end{proof}

\begin{lem}\label{sec:g_m-gerbes-6}
  Given an isomorphism $\gamma:F_{\B\G_m}|_{P\times_X P}\simto
  F_{\B\G_m}|_{P\times_X P}$ such that $\gamma(P\times_X
  P)(\ast)=\ast$, we have that $\gamma(P_{\widebar x}\times_{\widebar
    x} P_{\widebar x})=\id$ or $-\id$ as automorphisms of the set
  $\Z^{\oplus 2}=F_{\B\G_m}(P_{\widebar x}\times_{\widebar x}
  P_{\widebar x})$.
\end{lem}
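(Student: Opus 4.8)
The plan is to identify $F_{\B\G_m}$ with the Picard functor and then determine $\gamma$ on the geometric fiber purely by naturality, testing it against a few explicit morphisms into $P_{\widebar x}\times_{\widebar x}P_{\widebar x}$.

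First I would use that $F_{\B\G_m}(Y)=\H^1(Y,\G_m)=\Pic(Y)$ for every scheme $Y$, so that $\gamma$ is a natural automorphism of the set-valued functor $Y\mapsto\Pic(Y)$ on the slice category of $(P\times_X P)$-schemes. The hypothesis $\gamma(P\times_X P)(\ast)=\ast$ says that $\gamma$ fixes the trivial line bundle over $P\times_X P$, and by naturality of restriction it therefore fixes the trivial class over every object lying below $P\times_X P$. Since $P\to X$ is a Brauer--Severi space, its fiber over $\widebar x$ is isomorphic to $\PP^m$ over the algebraically closed residue field $\widebar\kappa$, so that $B:=P_{\widebar x}\times_{\widebar x}P_{\widebar x}\cong\PP^m\times\PP^m$ and $\Pic(B)=\Z e_1\oplus\Z e_2$, with $e_1,e_2$ the ample generators $\ms O(1,0),\ms O(0,1)$ of equation \eqref{eq:3}. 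I must show that the bijection $\gamma_B\colon\Z^{\oplus2}\to\Z^{\oplus2}$ is $\pm\id$.

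The heart of the argument is to feed three morphisms $\PP^m\to B$ into naturality, each of which is a morphism of $B$-schemes to the terminal object $(B,\id_B)$ (and all of which are $(P\times_X P)$-schemes via the fiber inclusion): the two sections $s=(\id,x_0)$ and $s'=(x_0,\id)$, and the diagonal $\Delta$. Pullback along $s$ sends $(a,b)\mapsto a$, along $s'$ sends $(a,b)\mapsto b$, and along $\Delta$ sends $(a,b)\mapsto a+b$. Writing $\gamma_B(a,b)=(u(a,b),v(a,b))$ and applying naturality against $s$ and $s'$ gives $u(a,b)=\phi(a)$ and $v(a,b)=\psi(b)$, where $\phi,\psi\colon\Z\to\Z$ are the components of $\gamma$ at $(\PP^m,s)$ and $(\PP^m,s')$, both bijections fixing $0$; hence $\gamma_B(a,b)=(\phi(a),\psi(b))$. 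Applying naturality against $\Delta$ then yields $\rho(a+b)=\phi(a)+\psi(b)$, where $\rho$ is the component of $\gamma$ at $(\PP^m,\Delta)$, again fixing $0$. Setting $b=0$ and then $a=0$ gives $\rho=\phi=\psi$, so $\rho(a+b)=\rho(a)+\rho(b)$; an additive bijection of $\Z$ fixing $0$ is multiplication by $\pm1$, whence $\gamma_B(a,b)=\pm(a,b)$.

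The main obstacle, and the point that needs care, is the bookkeeping in the slice category: the components of $\gamma$ at $(\PP^m,s)$, $(\PP^m,s')$ and $(\PP^m,\Delta)$ are a priori three unrelated bijections of $\Z$, and it is precisely the base-point hypothesis (forcing each to fix $0$) together with the additivity extracted from the diagonal that collapses them to a single $\pm1$. I would also want to double-check the elementary but essential inputs that the fiber of a Brauer--Severi space over a geometric point is genuinely $\PP^m$ and that $\Pic(\PP^m\times\PP^m)=\Z^{\oplus2}$ with the stated generators, so that the three pullback formulas above are correct.
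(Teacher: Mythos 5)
Your proof is correct and takes essentially the same route as the paper's: both split $\Pic(P_{\widebar x}\times_{\widebar x}P_{\widebar x})\simeq\Z^{\oplus 2}$ using the two section maps through a chosen point, recover the group law via pullback along the diagonal, and then use naturality of $\gamma$ together with the base-point hypothesis to force the action to be a group automorphism of $\Z$, hence $\pm\id$. Your explicit collapsing of the three a priori distinct components $\phi$, $\psi$, $\rho$ into a single additive bijection is precisely the bookkeeping the paper carries out implicitly when it asserts that the geometrically constructed maps are $\gamma$-equivariant and concludes that $\gamma$ acts by a group automorphism.
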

\begin{proof}
  Choosing a section $y\in P_{\widebar x}(\widebar x)$ yields two maps
  $P_{\widebar x}\to P_{\widebar x}\times_{\widebar x} P_{\widebar x}$
  (identifying the fibers of the projections over $y$) which, by cohomology
  and base change, yield an isomorphism $\Pic(P_{\widebar
    x}\times_{\widebar x} P_{\widebar x})\simto\Pic(P_{\widebar
    x})\times\Pic(P_{\widebar x})$.  Composing this with pullback
  along the diagonal embedding $P_{\widebar x}\to P_{\widebar x}\times_{\widebar x}
  P_{\widebar x}$ yields the group law on
  $\Pic(P_{\widebar x})=\Z$.  Since
  all of the maps in question are derived from geometric
  constructions, they are $\gamma$-equivariant (as maps of sets).  It
  follows using the fact that $\gamma(\ast)=\ast$ that the action of
  $\gamma$ on $F_{\B\G_m}(P_{\widebar x})$ is by a group automorphism
  of $\Z$.  Thus, $\gamma$ acts on $F_{\B\G_m}(P_{\widebar
    x})$ as $\id$ or $-\id$.  Since the proof produces a $\gamma$-equivariant
  isomorphism $F_{\B\G_m}(P_{\widebar x}\times_{\widebar x} P_{\widebar x})\simto
  F_{\B\G_m}(P_{\widebar x})\times F_{\B\G_m}(P_{\widebar x})$, the
  result follows.
\end{proof}

\begin{cor}\label{sec:g_m-gerbes-7}
  Let $a$ and $b$ be two elements of $F_{\ms X_2}(P\times_X P)$. If
  $$f,g:F_{\B\G_m}|_{P\times_X P}\simto F_{\ms X_2}|_{P\times_X P}$$ are
  two isomorphisms sending $\ast$ to $a$ then $f^{-1}(b)|_{(P\times
    P)_{\widebar x}}=\pm g^{-1}(b)|_{(P\times P)_{\widebar x}}$.
\end{cor}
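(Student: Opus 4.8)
The plan is to reduce the entire statement to Lemma \ref{sec:g_m-gerbes-6} by forming the composite automorphism $\gamma := g^{-1}\circ f$ of $F_{\B\G_m}|_{P\times_X P}$. First I would check that $\gamma$ is pointed: since $f(\ast)=a=g(\ast)$, applying $g^{-1}$ gives $\gamma(\ast)=g^{-1}(f(\ast))=g^{-1}(a)=\ast$. Thus $\gamma$ satisfies the hypothesis of Lemma \ref{sec:g_m-gerbes-6}, and so its restriction to the geometric fiber $(P\times P)_{\widebar x}$ acts on $F_{\B\G_m}((P\times P)_{\widebar x})=\Z^{\oplus 2}$ as either $\id$ or $-\id$; the same then holds for $\gamma^{-1}$.

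Next I would rewrite the two quantities being compared in terms of $\gamma$. Since $\gamma^{-1}=f^{-1}\circ g$, we have the identity $f^{-1}(b)=\gamma^{-1}(g^{-1}(b))$ as elements of $F_{\B\G_m}(P\times_X P)$. Restricting along the inclusion $(P\times P)_{\widebar x}\to P\times_X P$ of the geometric fiber, and using that $\gamma^{-1}$, as a morphism of functors on $(P\times_X P)$-schemes, is natural with respect to this restriction, I obtain
$$f^{-1}(b)|_{(P\times P)_{\widebar x}}=\gamma^{-1}|_{(P\times P)_{\widebar x}}\bigl(g^{-1}(b)|_{(P\times P)_{\widebar x}}\bigr).$$
Combining this with the conclusion of the previous paragraph that $\gamma^{-1}|_{(P\times P)_{\widebar x}}=\pm\id$ yields $f^{-1}(b)|_{(P\times P)_{\widebar x}}=\pm g^{-1}(b)|_{(P\times P)_{\widebar x}}$, as desired.

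There is essentially no obstacle here beyond bookkeeping: the real content lives in Lemma \ref{sec:g_m-gerbes-6}, and this corollary is a formal consequence of it. The only points deserving a little care are the verification that $\gamma$ is genuinely pointed (this is what forces the sharp $\pm$ dichotomy rather than an arbitrary automorphism of $\Z^{\oplus 2}$) and the naturality of $\gamma^{-1}$ under restriction to the fiber; both are immediate.
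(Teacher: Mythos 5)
Your proposal is correct and is essentially identical to the paper's proof: both form the pointed automorphism $\gamma=g^{-1}f$ of $F_{\B\G_m}|_{P\times_X P}$, invoke Lemma \ref{sec:g_m-gerbes-6} to see that $\gamma$ acts as $\pm\id$ on the geometric fiber, and then translate this back into the statement about $f^{-1}(b)$ and $g^{-1}(b)$. The only difference is cosmetic bookkeeping (you phrase the last step via naturality of $\gamma^{-1}$ under restriction, the paper via $g=\pm f$ on the fiber), so nothing further is needed.
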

\begin{proof}
  Consider the diagram of isomorphisms
$$\xymatrix{F_{\B\G_m}\ar[dr]^f\ar[dd]_\gamma&\\
&F_{\ms X_2}\\
F_{\B\G_m},\ar[ur]_g&}$$
where $\gamma=g^{-1}f$.  
By assumption $\gamma(P\times_X P)(\ast)=\ast$.  We conclude by Lemma
\ref{sec:g_m-gerbes-6} that $\gamma(P_{\widebar x}\times_{\widebar x} P_{\widebar
  x})=\pm\id$, so that $g(P_{\widebar x}\times_{\widebar x} P_{\widebar
  x})=\pm f(P_{\widebar x}\times_{\widebar x} P_{\widebar x})$.  Thus, we conclude
that $f^{-1}(b)|_{(P\times P)_{\widebar x}}=\pm g^{-1}(b)|_{(P\times
  P)_{\widebar x}}$.  
\end{proof}

\begin{lem}\label{sec:g_m-gerbes-8}
  An element $F_{\ms X_i}(T)$ determines a unique isomorphism
  $F_{\B\G_{m}|_T}\simto F_{\ms X_i|_T}$ whose induced morphism of
  bands is $\id:\G_m\to\G_m$.  
\end{lem}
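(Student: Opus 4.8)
The plan is to produce the isomorphism directly from a lift of the given element, to verify that it is independent of the lift, and then to establish uniqueness by showing that the only band-preserving automorphism of $F_{\B\G_m|_T}$ fixing the canonical point $\ast$ is the identity. For existence, given $a\in F_{\ms X_i}(T)$ I would choose an object $\widetilde a\in(\ms X_i)_T$ representing $a$. Because $\ms X_i\to X$ is a $\G_m$-gerbe, the object $\widetilde a$ neutralizes $\ms X_i$ over $T$: there is a canonical equivalence of stacks $t_{\widetilde a}\colon\B(\underline\Aut(\widetilde a))\simto\ms X_i|_T$ sending the trivial torsor to $\widetilde a$ and inducing the identity on inertia. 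The gerbe structure provides a canonical identification $\underline\Aut(\widetilde a)\simto(\G_m)_T$ of bands, and composing gives an equivalence $\B\G_m|_T\simto\ms X_i|_T$. Passing to isomorphism classes yields an isomorphism of functors $\psi_{\widetilde a}\colon F_{\B\G_m|_T}\simto F_{\ms X_i|_T}$ with $\psi_{\widetilde a}(\ast)=a$, whose induced morphism of bands (in the sense of Lemma \ref{sec:g_m-gerbes-2}, i.e.\ via the binana of Proposition \ref{P:binana-bipoints-1}) is $\id\colon\G_m\to\G_m$ by construction.

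For well-definedness, if $\widetilde a'$ is a second representative of $a$, any isomorphism $\phi\colon\widetilde a\simto\widetilde a'$ in $(\ms X_i)_T$ induces a $2$-isomorphism $t_{\widetilde a}\simto t_{\widetilde a'}$; since $\ms X_i$ is abelian (Definition \ref{d:notat-basic-defin-1}), conjugation by $\phi$ acts trivially, so the isomorphism $\underline\Aut(\widetilde a)\simto\underline\Aut(\widetilde a')$ it induces is compatible with the two canonical band identifications. Hence $\psi_{\widetilde a}$ and $\psi_{\widetilde a'}$ agree after passing to isomorphism classes, and we obtain a map $\psi_a$ depending only on $a$, with $\psi_a(\ast)=a$ and band morphism $\id$.

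For uniqueness, suppose $\psi,\psi'$ are band-$\id$ isomorphisms with $\psi(\ast)=\psi'(\ast)=a$. Using the existence construction I may take $\psi=\psi_a$ to come from a stack equivalence, so it suffices to show that $\gamma:=\psi_a^{-1}\circ\psi'$, a band-$\id$ automorphism of $F_{\B\G_m|_T}$ fixing $\ast$, is the identity. The content here is that band-preserving self-equivalences of the gerbe $\B\G_m$ over $T$ are exactly the translations by classes of $\Pic(T)=\H^1(T,\G_m)$ (twisting a torsor by a fixed line bundle), and the only such translation fixing $\ast$ is the identity; in particular the sign ambiguity appearing in Lemma \ref{sec:g_m-gerbes-6} is excluded precisely because inversion of $\G_m$-torsors induces $-\id$, not $\id$, on the band. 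Granting this, $\gamma=\id$, so $\psi'=\psi_a=\psi$, and $\psi\mapsto\psi(\ast)$ is a bijection between band-$\id$ isomorphisms $F_{\B\G_m|_T}\simto F_{\ms X_i|_T}$ and elements of $F_{\ms X_i}(T)$.

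The main obstacle will be the uniqueness step, and specifically transporting the stack-theoretic classification of band-preserving self-equivalences to the level of functors: a priori $\gamma$ is merely an automorphism of the set-valued functor $F_{\B\G_m|_T}=\Pic|_T$, not visibly induced by a self-equivalence of $\B\G_m$. To close this gap I would recover the relevant structure functorially --- the group law on $\Pic$ over geometric fibers via the diagonal-pullback construction used in Lemma \ref{sec:g_m-gerbes-6}, together with the automorphism sheaf and its canonical identification supplied by the binana and trinana apparatus of Propositions \ref{P:binana-bipoints-1} and \ref{P:gp-law-nana} --- and then argue that a fixed-$\ast$ automorphism respecting this data and inducing $\id$ (rather than $-\id$) on bands must be trivial. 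Verifying this compatibility carefully, so that the band-$\id$ hypothesis genuinely forces the translation class to vanish rather than merely pinning down a sign, is where I expect essentially all of the work to lie.
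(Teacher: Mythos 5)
Your existence and well-definedness steps are, up to packaging, exactly the paper's proof. The paper constructs the isomorphism by twisting: $[\ms L]\mapsto[\sigma\tensor\ms L]$ for a representative $\sigma$ of $a$, observes that this is well defined on isomorphism classes in both variables, is functorial in $T$, and is bijective because the twisting action is simply transitive (``any $\tau$ has the form $\sigma\tensor\ms L$ for a unique $\ms L$''). Your neutralization $\B(\underline{\Aut}(\widetilde a))\simto\ms X_i|_T$ composed with the band identification $\underline{\Aut}(\widetilde a)\simto\G_m$ is the same construction phrased at the stack level (the equivalence sends a torsor to the corresponding twist of $\widetilde a$). Crucially, that is the \emph{entire} content of the paper's proof: the word ``unique'' in the statement is cashed out there as canonicity of this construction --- independence of the choice of $\sigma$ --- not as the assertion that every band-$\id$ isomorphism of functors carrying $\ast$ to $a$ coincides with it. The weak reading is also the one used downstream: Lemma \ref{sec:g_m-gerbes-9} invokes only ``the isomorphism arising from $\alpha$ as in Lemma \ref{sec:g_m-gerbes-8},'' and when an \emph{arbitrary} isomorphism agreeing at $\ast$ must be compared with the canonical one, the paper falls back on Corollary \ref{sec:g_m-gerbes-7}, which controls the discrepancy only up to sign and only on geometric fibers of $P\times_X P$.

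The gap is therefore exactly where you locate it, and it is real: your third and fourth paragraphs attempt a rigidity statement (every band-$\id$ automorphism of the set-valued functor $F_{\B\G_m|_T}$ fixing $\ast$ is the identity) that the paper neither proves nor needs, and the tools you cite cannot close it. Lemma \ref{sec:g_m-gerbes-6} recovers the group law only on $\Pic$ of geometric Brauer--Severi fibers (via cohomology and base change and the ample generator of $\Pic(P_{\widebar x})\cong\Z$), so the most it can give is that your $\gamma$ acts as $\pm\id$ \emph{there}; nothing in the paper recovers the group structure on $\Pic(T')$ for arbitrary $T'$. Moreover, the band-$\id$ hypothesis, unwound via Proposition \ref{P:binana-bipoints-1} and Lemma \ref{sec:g_m-gerbes-2}, only constrains how $\gamma$ permutes the binana subsets of $\Pic(N_{2,T''})$ (classes trivial on the peels and on $N^0_{2,T''}$, $N^1_{2,T''}$); it places no direct constraint on the class $\gamma(\ms M)\in\Pic(T')$ itself, so ``band-$\id$ plus fixing $\ast$ implies identity'' does not follow formally and would require a genuinely new argument. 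Indeed, were such functor-level rigidity available, the sign bookkeeping in Corollary \ref{sec:g_m-gerbes-7}, Lemma \ref{sec:g_m-gerbes-9}, and Proposition \ref{sec:g_m-gerbes-1} would be largely unnecessary. The fix is interpretive rather than technical: take the lemma in its weak (and intended) sense, in which case your first two paragraphs already constitute a complete proof matching the paper's, and drop the strong uniqueness claim.
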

\begin{proof}
  Given an invertible sheaf $\ms L$ and an object $\sigma$ of $(\ms
  X_i)_T$, there results a new object $\sigma\tensor\ms L$ of $(\ms
  X_i)_T$ by standard methods (e.g., one can use the descent datum for
  $\ms L$ and the fact that $\ms X_i$ is a $\G_m$-gerbe to produce a
  form of $\sigma$ with the same cocycle). Moreover, replacing $\sigma$ by an
  isomorphic object $\sigma'$ yields an isomorphic object
  $\sigma'\tensor\ms L$, and likewise for $\ms L$.  Finally, any
  object $\tau$ of $(\ms X_i)_T$ has the form $\sigma\tensor\ms L$ for
  a unique $\ms L$.  These statements are also functorial in $T$.  The
  result follows.
\end{proof}

\begin{lem}\label{sec:g_m-gerbes-9}
  Let $\alpha$ be an element of $F_{\ms X_2}(P)$.  Suppose the two
  pullbacks of $\alpha$ to $F_{\ms X_2}(P_{\widebar x}\times_{\widebar x}
  P_{\widebar x})$ are $a$ and $b$.  For any isomorphism
  $\phi:F_{\B\G_m\times(P_{\widebar x}\times_{\widebar x} P_{\widebar x})}\simto
  F_{\ms X_2\times (P_{\widebar x}\times_{\widebar x} P_{\widebar x})}$ sending
  $\ast$ to $a$, we have that the image of $\phi^{-1}(b)$ under the map
$$\Pic(P_{\widebar x}\times_{\widebar x} P_{\widebar x})\to (\Z\times \Z)/(n,-n)$$ 
represents either $[\ms X_2]$ or $-[\ms X_2]$ via the injection of
Lemma \ref{sec:g_m-gerbes-5}.
\end{lem}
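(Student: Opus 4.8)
The plan is to identify $\phi^{-1}(b)$, up to the sign ambiguity furnished by Lemma \ref{sec:g_m-gerbes-6}, with the \v{C}ech $1$-cocycle for the covering $\{P\to X\}$ representing $[\ms X_2]$, and then to read off its image in $(\Z\times\Z)/(n,-n)$ through the injection of Lemma \ref{sec:g_m-gerbes-5}.

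First I would produce a canonical comparison isomorphism over all of $P\times_X P$. Write $\pr_1^\ast\alpha,\pr_2^\ast\alpha\in F_{\ms X_2}(P\times_X P)$ for the two pullbacks of $\alpha$, so that $a$ and $b$ are their restrictions to the geometric fiber. By Lemma \ref{sec:g_m-gerbes-8}, the object $\pr_1^\ast\alpha$ determines a distinguished isomorphism $f:F_{\B\G_m}\simto F_{\ms X_2}$ over $P\times_X P$ inducing the identity on bands, sending $\ast$ to $\pr_1^\ast\alpha$, and given concretely by $\ms M\mapsto\pr_1^\ast\alpha\tensor\ms M$. Consequently $f^{-1}(\pr_2^\ast\alpha)$ is the invertible sheaf $\delta(\alpha)\in\Pic(P\times_X P)$ characterized by $\pr_1^\ast\alpha\tensor\delta(\alpha)\cong\pr_2^\ast\alpha$; this is exactly the descent $1$-cocycle attached to the local trivialization $\alpha$ of $\ms X_2$.

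Next I would reduce the given (fiber-level) $\phi$ to $f$. Restricting $f$ to the fiber gives an isomorphism $\phi_0:=f|_{(P\times_X P)_{\widebar x}}$ sending $\ast$ to $a$, so that $\gamma:=\phi_0^{-1}\circ\phi$ is an automorphism of $F_{\B\G_m}$ over $P_{\widebar x}\times_{\widebar x}P_{\widebar x}$ fixing $\ast$. By Lemma \ref{sec:g_m-gerbes-6} (whose argument applies to any such automorphism of $F_{\B\G_m}$ over the geometric fiber), $\gamma$ acts on $\Pic(P_{\widebar x}\times_{\widebar x}P_{\widebar x})=\Z\times\Z$ as $\id$ or $-\id$, whence $\phi^{-1}(b)=\pm\phi_0^{-1}(b)=\pm\,\delta(\alpha)|_{(P\times_X P)_{\widebar x}}$. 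Finally, the existence of $\alpha\in F_{\ms X_2}(P)$ forces $[\ms X_2]|_P=0$, so by Corollary \ref{sec:g_m-gerbes-4} the class $[\ms X_2]$ lies in $\langle[\ms X_1]\rangle$, the image of the edge map, and is represented in $\vH^1(\{P\to X\},\ms H^1(\G_m))$ precisely by the cocycle $\delta(\alpha)$. Restricting to the fiber and using the identification $\Pic(P_{\widebar x}\times_{\widebar x}P_{\widebar x})=\Z\times\Z$ of Lemma \ref{sec:g_m-gerbes-5}, that lemma sends the class of $\delta(\alpha)$ to the element of $(\Z\times\Z)/(n,-n)$ representing $[\ms X_2]$; combined with the sign above, $\phi^{-1}(b)$ maps to $\pm[\ms X_2]$.

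The step I expect to be the main obstacle is the identification of $f^{-1}(\pr_2^\ast\alpha)=\delta(\alpha)$ with the \v{C}ech cocycle that the spectral sequence \eqref{eq:1} attaches to $[\ms X_2]$. This amounts to unwinding the edge map and checking that the twisting operation of Lemma \ref{sec:g_m-gerbes-8} reproduces the gluing data describing $\ms X_2$ relative to $\alpha$; one must also keep track of the signs introduced by the choice of which projection yields $a$ versus $b$ and by the $\pm\id$ of Lemma \ref{sec:g_m-gerbes-6}, though both affect the conclusion only up to the asserted sign.
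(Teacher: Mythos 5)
Your outline coincides with the paper's own proof: normalize the given $\phi$ against the distinguished isomorphism of Lemma \ref{sec:g_m-gerbes-8} induced by $\alpha$, so that $\phi^{-1}(b)$ becomes, up to sign, the restriction to the geometric fiber of the ``difference'' invertible sheaf $\delta(\alpha)$ on $P\times_X P$, and then identify the class of $\delta(\alpha)$ in $\vH^1(\{P\to X\},\ms H^1(\G_m))$ with $[\ms X_2]$. Your handling of the sign is correct and is essentially how the paper proceeds (its Corollary \ref{sec:g_m-gerbes-7} is proved from Lemma \ref{sec:g_m-gerbes-6} in exactly the way you re-derive it, and your remark that the argument of Lemma \ref{sec:g_m-gerbes-6} applies verbatim to automorphisms defined only over the geometric fiber is a genuine, if minor, improvement in precision). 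Your observation that the existence of $\alpha$ forces $[\ms X_2]|_P=0$, hence places $[\ms X_2]$ in the image of the edge map by Corollary \ref{sec:g_m-gerbes-4}, is also right.

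However, the step you defer --- that $\delta(\alpha)$ is ``exactly the descent $1$-cocycle'' whose class maps to $[\ms X_2]$ under the injection coming from \eqref{eq:1} --- is the entire content of the lemma, and as written it is asserted rather than proved: nothing established earlier in the paper relates the twisting operation of Lemma \ref{sec:g_m-gerbes-8} to the \v{C}ech-to-derived spectral sequence, so this is a genuine gap at the crux. The paper does not fill it by unwinding the edge map; it uses twisted sheaves (\cite{li1}). Namely, $\alpha$ corresponds to an $\ms X_2$-twisted invertible sheaf $\ms L$ on $P$, and $\delta(\alpha)$ is (up to the harmless sign) $\pr_1^{\ast}\ms L\tensor\pr_2^{\ast}\ms L^{\vee}$; since $[\ms X_2]=d[\ms X_1]$ for some $d$ by Lemma \ref{L:br-sev-sch-leray}, $\ms X_2$-twisted sheaves are identified with $d$-fold $\ms X_1$-twisted sheaves, so one may take $\ms L=\ms M^{\tensor d}$ with $\ms M$ an $\ms X_1$-twisted invertible sheaf on $P$. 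Then $\delta(\alpha)$ is $d$ times the cocycle $\pr_1^{\ast}\ms M\tensor\pr_2^{\ast}\ms M^{\vee}$, whose class is known from the theory of twisted sheaves to be the image of $[\ms X_1]$ in $\vH^1(\{P\to X\},\ms H^1(\G_m))$; independence of choices follows because changing $\ms L$ by a line bundle pulled back from $P$ changes the cocycle by a coboundary. Hence $\delta(\alpha)$ represents $d[\ms X_1]=[\ms X_2]$, which is exactly the identification you need. To complete your argument you would have to supply either this reduction or an actual proof (or citation) of the general compatibility between gerbe classes and descent cocycles of local trivializations.
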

\begin{proof}
In this proof we freely use the theory of twisted sheaves as developed
in \cite{li1}.  
Suppose first that $\phi$ is the restriction of the isomorphism
arising from $\alpha$ as in Lemma \ref{sec:g_m-gerbes-8}.  We can
compute $\phi^{-1}(b)$ as follows: the element $\alpha$ corresponds to
an $\ms X_2$-twisted invertible sheaf $\ms L$ on $P$ (up to isomorphism), and
$\phi^{-1}(b)$ is the isomorphism class of the restriction of
$\pr_1^{\ast}\ms L\tensor\pr_2^{\ast}\ms L^{\vee}$ to $P_{\widebar
  x}\times_{\widebar x}P_{\widebar x}$.  As an element of $\Z^{\oplus
  2}$, this lies in the kernel of the coboundary map in the bottom row of
diagram \eqref{eq:4} above and is thus a multiple of $(1,-1)$ (i.e., a
cocycle).
Furthermore, changing $\ms L$ by an invertible sheaf on $P$ changes
the resulting cocycle by a coboundary, leaving the cohomology class
(in $\vH^1(\{P\to X\},\ms H^1(\G_m))$ invariant).

On the other hand, since $\ms X_2$ is trivial on $P$, we know by Lemma
\ref{L:br-sev-sch-leray} that $[\ms X_2]=d[\ms X_1]$ for some $d$, and
this implies that we can identify $\ms X_2$-twisted sheaves with
$d$-fold $\ms X_1$-twisted sheaves.  In particular, if $\ms M$ is an
$\ms X_1$-twisted invertible sheaf on $P$, we may assume (for the
purposes of computing the cohomology class) that $\ms L=\ms M^{\tensor
  d}$.  But then we find that $\phi^{-1}(b)$ is $d$ times the class of
$\pr_1^{\ast}\ms M\tensor\pr_2^{\ast}\ms M^{\vee}$.  Since the latter
is precisely the image of $[\ms X_1]$ in $\vH^1(\{P\to X\},\ms
H^1(\G_m))$, the result then follows from Lemma \ref{sec:g_m-gerbes-5}.

When $\phi$ is not the isomorphism induced by $\alpha$, we can apply
Corollary \ref{sec:g_m-gerbes-7} to compare the two, yielding the
possible change of sign.
\end{proof}

\begin{proof}[Proof of Proposition \ref{sec:g_m-gerbes-1}] 
  Let $\psi:F_{\ms X_1}\to F_{\ms X_2}$ be an isomorphism and let
  $\pi:P\to X$ be a Brauer-Severi space with cohomology class $[\ms
  X_1]$, as above.  The isomorphism $\psi$ induces an isomorphism of
  bands $L_{\psi}:L_1\simto L_2$, so that $\ms X_2$ is also a
  $\G_m$-gerbe.  

  Since $\ms X_1$ is trivial on $P$, there is an isomorphism
  $F_{\B\G_m\times P}\simto F_{\ms X_1\times_X P}$ coming from an
  element $\alpha\in F_{\ms X_1}(P)$.  Composing with $\psi$ yields an
  isomorphism $F_{\B\G_m\times P}\to F_{\ms X_2\times P}$ sending
  $\ast$ to $\psi(\alpha)$.  By Lemma \ref{sec:g_m-gerbes-9} (applied
  to the pairs $\ms X_1,\ms X_1$ and $\ms X_2,\ms X_2$), we know that
  the two preimages of $\alpha$ (resp.\ $\psi(\alpha)$) in
  $F_{\B\G_m}(P_{\widebar x}\times_{\widebar x} P_{\widebar x})$
  differ by a representative for the cohomology class of $[\ms X_1]$
  (resp.\ $\pm[\ms X_2]$) in $\vH^1(P,\ms H^1(\G_m))$.  By Corollary
  \ref{sec:g_m-gerbes-4}, we conclude that $[\ms X_1]=\pm[\ms X_2]$,
  as desired (as the change of sign corresponds to changing the
  trivialization of the band of $\ms X_2$ and does not change the
  underlying stack structure).
\end{proof}

\subsection{Counterexamples}\label{sec:context}

On sites smaller than $S\Sch$, one can construct various examples 
of stacks which are not isonatural.  

(1) On the small \'etale site of an algebraically closed field $k$,
the stacks $\B{G}$ for any group $G$ all have the singleton sheaf as
associated functor.  When $G$ is finite, these stacks have
representable diagonals, satisfy the kind of limiting property we
require for quasi-algebraic stacks, etc.  In this case, the underlying
site clearly does not contain the kind of ``anabelian'' structures
needed to reconstruct anything.

(2) If $X$ is a geometrically unibranch scheme and $F$ is a (discrete) torsion
free abelian group, then $\B{F}$ again has associated functor
represented by $X$.  The diagonal is again
representable, and the diagonal of $\B{F}$ again satisfies the desired
limiting property with respect to inverse systems of objects of the
small \'etale site of $X$.

(3) The small Zariski site also lacks anabelian structure: the 
stack $\B{\G_m}$ has singleton associated functor on the small
Zariski site of $\A^1$, while $\B{G}$ has singleton associated functor
on the small Zariski site of any irreducible scheme for any discrete group $G$.

(4) Using the techniques developed in Section \ref{sec:g_m-gerbes},
one can also make families of examples where the associated functor is
(marginally) more complicated.  Let us show that as long as there is
an element $\alpha\in\Br(k)$ of order invertible in $k$ and larger
than $4$, there are $\G_m$-gerbes $\ms X$ and $\ms Y$ on the small
\'etale site of $k$ such that $F_{\ms X}$ is isomorphic to $F_{\ms Y}$
but $\ms X$ is not isomorphic to $\ms Y$ (as a stack).

There are two important sheaves on $k_{\et}$: the sheaf of
multiplicative groups $\G_m$ and the sheaf $\m_{\infty}$ of all roots
of unity.  It is clear that restriction defines a natural map
$\saut(\G_m)\to\saut(\m_{\infty})$.  Kummer theory shows that the
natural inclusion $\mu_{\infty}\inj\G_m$ induces an isomorphism
$\H^2(k_{\et},\m_{\infty})'\simto\H^2(k_{\et},\G_m)'$ (where the
superscripts indicate the prime-to-characteristic parts of the groups
in question).

\begin{lem}\label{sec:count-diff-sites-1}
  The natural map $\Z/2\Z\to\saut(\m_{\infty})$ sending $1$ to 
  inversion is an isomorphism.
\end{lem}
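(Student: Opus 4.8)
The plan is to treat injectivity and surjectivity of $\Z/2\Z\to\saut(\m_\infty)$ separately, analyzing an arbitrary automorphism through the canonical filtration of $\m_\infty$ by its torsion subsheaves $\m_n=\m_\infty[n]$. Injectivity is immediate: inversion restricts to the nontrivial automorphism $\zeta\mapsto\zeta^{-1}$ of $\m_3$, so the homomorphism out of $\Z/2\Z$ is nonzero and hence injective. The real content therefore lies in surjectivity.

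For surjectivity, I would first note that each $\m_n\subset\m_\infty$ is characteristic, being precisely the kernel of multiplication by $n$, so any $\phi\in\saut(\m_\infty)$ restricts to an automorphism $\phi_n$ of $\m_n$ and is completely determined by the compatible system $(\phi_n)_n$. The next step is to identify $\saut(\m_n)$ with the constant sheaf $\underline{(\Z/n\Z)^\times}$: \'etale-locally $\m_n$ is the constant cyclic sheaf of order $n$, whose automorphism group is $(\Z/n\Z)^\times$, and since that group is abelian the conjugation action of the Galois group through the cyclotomic character is trivial, so no twisting occurs. Hence $\phi_n$ is given by a unit $u_n\in(\Z/n\Z)^\times$, and compatibility forces $u_{mn}\equiv u_n\pmod n$, so that $\phi$ corresponds to an element $u$ of $\varprojlim_n(\Z/n\Z)^\times$ (the prime-to-characteristic part of $\widehat{\Z}^\times$).

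It then remains to show $u=\pm1$, and this rigidity is where I expect the real difficulty to lie. The natural tool is the inclusion $\m_\infty\inj\G_m$ together with the rigidity of the torus: since $\operatorname{End}(\G_m)=\Z$ consists only of the integer power maps, the only automorphisms of $\G_m$ are $\pm\id$, whose restrictions to $\m_\infty$ are exactly $\zeta\mapsto\zeta^{\pm1}$, so that the map of the lemma is nothing but the restriction $\saut(\G_m)\to\saut(\m_\infty)$. The main obstacle is therefore to show that every sheaf automorphism of $\m_\infty$ is the restriction of an endomorphism of $\G_m$ --- i.e.\ that the unit $u$ is realized by a genuine integer power map rather than by an a priori exotic profinite exponent. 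I would attempt this using that $\m_\infty$ is schematically dense in $\G_m$, so that a morphism on $\m_\infty$ can be recognized and extended through its effect on $\G_m$; controlling this passage from the torsionwise exponents $u_n$ to a single integer exponent is the crux of the argument.
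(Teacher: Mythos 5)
Your reduction is correct as far as it goes, and it is in fact decisive --- but not in the direction you hope. Each $\m_n$ is indeed characteristic in $\m_{\infty}$, and $\saut(\m_n)$ is indeed the constant sheaf $(\Z/n\Z)^{\times}$, because the abelianness of $(\Z/n\Z)^{\times}$ kills the conjugation twist by the cyclotomic character. (Minor fix: for injectivity take $n>2$ invertible in $k$; in characteristic $3$ the sheaf $\m_3$ is trivial.) Passing to the limit, $\saut(\m_{\infty})$ has constant value $\varprojlim_n(\Z/n\Z)^{\times}$ (limit over $n$ invertible in $k$), the unit group of the prime-to-characteristic part $\widehat{\Z}'$ of $\widehat{\Z}$. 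But the very observation you used to untwist $\saut(\m_n)$ now cuts against you: for \emph{any} unit $u\in(\widehat{\Z}')^{\times}$, the map $\zeta\mapsto\zeta^u$ is a genuine sheaf automorphism of $\m_{\infty}$, since Galois-equivariance is automatic (Galois acts through the same abelian group $(\widehat{\Z}')^{\times}$, and exponents commute). So the rigidity step you correctly isolate as the crux --- that $u$ must be $\pm 1$ --- is not merely difficult, it is false, and no argument can close the gap. In particular your proposed mechanism cannot work: schematic density of $\m_{\infty}$ in $\G_m$ shows that restriction $\operatorname{End}(\G_m)\to\operatorname{End}(\m_{\infty})$ is \emph{injective} (whence $\saut(\G_m)=\{\pm\id\}$, as you say), but it gives no extension of a map defined only on the torsion; and indeed the image of $\operatorname{End}(\G_m)=\Z$ in $\operatorname{End}(\m_{\infty})=\widehat{\Z}'$ is a proper subring, so an automorphism with exponent $u\notin\{\pm 1\}$ extends to no endomorphism of $\G_m$ at all. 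The restriction map $\saut(\G_m)\to\saut(\m_{\infty})$ is injective with image exactly $\{\pm 1\}$, and the lemma asserts that this image is everything.

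For comparison, the paper's proof identifies $\saut(\m_{\infty})$ with continuous Galois-equivariant automorphisms of $\widehat{\Z}$ (via a choice of trivialization of the Tate module) and then asserts that ``the continuous automorphism group of $\widehat{\Z}$ is $\Z/2\Z$, generated by inversion.'' That assertion is precisely the point your computation contradicts: the continuous automorphism group of the profinite group $\widehat{\Z}$ is $\widehat{\Z}^{\times}$ (multiplication by any unit is continuous); it is the discrete group $\Z$ whose automorphism group is $\{\pm 1\}$. So your instinct about where the real difficulty lies was exactly right, and the paper's own proof resolves it only by an assertion that conflates $\widehat{\Z}$ with $\Z$; the lemma as stated does not appear to be correct. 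What \emph{is} true, and what the intended application (Corollary \ref{sec:count-diff-sites-2}, on the orbits of $\Aut(\G_m)$ acting on $\Br(k)'$) actually needs, is the statement about $\G_m$ rather than $\m_{\infty}$: namely $\saut(\G_m)=\{\pm\id\}$, so that automorphisms of $\G_m$ act on $\H^2(k_{\et},\G_m)'$ only through $\pm 1$. That corollary survives with the lemma replaced by this weaker (and true) statement, which your density argument does establish.
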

\begin{proof}
  With respect to any chosen (non-canonical) isomorphism
  $\m_{\infty}(\widebar k)\simto\widehat{\Z}$, the sheaf of
  automorphisms gets identified with continuous Galois-equivariant
  automorphisms of $\widehat{\Z}$.  But the continuous automorphism
  group of $\widehat{\Z}$ is $\Z/2\Z$, generated by inversion.  Since
  inversion is clearly Galois-equivariant, the result follows.
\end{proof}

\begin{cor}\label{sec:count-diff-sites-2}
  The orbits of the action of $\Aut(\G_m)$ on $\Br(k)'$ are given by
  $\{\alpha,-\alpha\}$ for $\alpha\in\Br(k)'$.
\end{cor}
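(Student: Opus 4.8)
The plan is to show that the action of $\Aut(\G_m)$ on $\Br(k)'=\H^2(k_{\et},\G_m)'$ factors through the restriction homomorphism $\saut(\G_m)\to\saut(\m_\infty)$, and then to read off the orbits from Lemma \ref{sec:count-diff-sites-1}. First I would observe that any automorphism $\phi$ of the sheaf $\G_m$ must carry the torsion subsheaf $\m_\infty$ to itself, so $\phi$ restricts to an automorphism $\phi|_{\m_\infty}$; this is precisely the restriction map recorded above. Moreover the ordinary inversion automorphism $x\mapsto x^{-1}$ of $\G_m$ restricts to inversion on $\m_\infty$, so by Lemma \ref{sec:count-diff-sites-1} this restriction map is surjective onto $\saut(\m_\infty)\cong\Z/2\Z$; in particular the nontrivial element of $\saut(\m_\infty)$ is realized by an honest automorphism of $\G_m$.

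Next I would exploit the naturality of the Kummer isomorphism $\H^2(k_{\et},\m_\infty)'\simto\H^2(k_{\et},\G_m)'$ with respect to the inclusion $\m_\infty\inj\G_m$. Since this inclusion is equivariant for $\phi|_{\m_\infty}$ and $\phi$, the square
$$\xymatrix{\H^2(k_{\et},\m_\infty)'\ar[r]^-{\sim}\ar[d]_-{(\phi|_{\m_\infty})_\ast} & \H^2(k_{\et},\G_m)'\ar[d]^-{\phi_\ast}\\
\H^2(k_{\et},\m_\infty)'\ar[r]^-{\sim} & \H^2(k_{\et},\G_m)'}$$
commutes. Thus the action of $\phi$ on $\Br(k)'$ is completely determined by the action of $\phi|_{\m_\infty}$ on $\H^2(k_{\et},\m_\infty)'$, and so the $\Aut(\G_m)$-action on $\Br(k)'$ factors through $\saut(\m_\infty)$.

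Finally, by Lemma \ref{sec:count-diff-sites-1} each $\phi|_{\m_\infty}$ is either the identity or inversion. Because $\H^2(k_{\et},-)$ is an additive functor on abelian sheaves and inversion is the homomorphism $-\id$ of the abelian sheaf $\m_\infty$, inversion induces multiplication by $-1$ on $\H^2(k_{\et},\m_\infty)'$; transporting through the Kummer isomorphism, the inversion automorphism of $\G_m$ sends $\alpha\in\Br(k)'$ to $-\alpha$, while the identity fixes $\alpha$. Combined with the realization of inversion noted in the first step, this shows that every orbit is exactly $\{\alpha,-\alpha\}$, as claimed. The only genuinely delicate points are the commutativity of the square above and the identification of inversion with $-\id$ on cohomology; both follow from the functoriality of the Kummer sequence and the additivity of $\H^2(k_{\et},-)$, but they are what make the whole reduction go through.
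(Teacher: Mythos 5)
Your proof is correct and follows essentially the same route as the paper's: both restrict an automorphism of $\G_m$ to the torsion subsheaf $\m_{\infty}$, use the Kummer isomorphism $\H^2(k_{\et},\m_{\infty})'\simto\H^2(k_{\et},\G_m)'$ to transfer the action, and then invoke Lemma \ref{sec:count-diff-sites-1} to see that the induced action is by $\pm\id$. Your write-up merely makes explicit two points the paper leaves implicit, namely the commutativity of the naturality square and the fact that inversion on $\G_m$ realizes the class $-\alpha$ in the orbit.
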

\begin{proof}
  Because every automorphism of $\G_m$ induces an automorphism of
  $\m_{\infty}$ compatibly with the respective actions on cohomology, the
  corollary follows from Lemma \ref{sec:count-diff-sites-1} and the fact 
  that $\H^2(k_{\et},\m_{\infty})'\to\H^2(k_{\et},\G_m)'$ is an
  isomorphism.
\end{proof}

Given an element $\alpha\in\Br(k)$, write $\langle\alpha\rangle$ for
the subgroup generated by $\alpha$.

\begin{prop}
  Suppose $\alpha\in\Br(k)'$ has order larger than $4$, so that the
  generators for $\langle\alpha\rangle$ lie in at least two orbits
  under the automorphism group of $\G_m$.  Then there are two
  non-isomorphic stacks $\ms X$ and $\ms Y$ such that $F_{\ms X}$ and
  $F_{\ms Y}$ are isomorphic.
\end{prop}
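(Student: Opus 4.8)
The plan is to realize $\ms X$ and $\ms Y$ as two $\G_m$-gerbes over $\Spec k$ whose cohomology classes are distinct generators of $\langle\alpha\rangle$. By hypothesis the order of $\alpha$ exceeds $4$, so the generators of $\langle\alpha\rangle$ fall into at least two orbits for the action of $\Aut(\G_m)$ on $\H^2(k_{\et},\G_m)$; I would choose generators $\beta,\beta'$ of $\langle\alpha\rangle$ with $\beta'\notin\{\beta,-\beta\}$ and set $[\ms X]=\beta$, $[\ms Y]=\beta'$. It then remains to verify two things: that $\ms X$ and $\ms Y$ are not equivalent as stacks, and that $F_{\ms X}$ and $F_{\ms Y}$ are isomorphic.

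For the first, I would appeal to the functoriality in Giraud's classification (as used in the proof of Proposition \ref{P:const-band-recov}): any equivalence of the underlying stacks $\ms X\simto\ms Y$ induces an isomorphism of their bands, i.e.\ an automorphism of $\G_m$, carrying $[\ms X]$ to $[\ms Y]$. Hence such an equivalence would force $\beta'$ into the $\Aut(\G_m)$-orbit of $\beta$, which by Corollary \ref{sec:count-diff-sites-2} is exactly $\{\beta,-\beta\}$. Since we arranged $\beta'\neq\pm\beta$, no equivalence exists.

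For the second, the essential observation is that on the small \'etale site of a field the associated functor records only splitting data. For any finite separable extension $L/k$, the objects of a $\G_m$-gerbe over $\Spec L$, when they exist, form a torsor under $\Pic(\Spec L)=0$; thus $F_{\ms X}(\Spec L)$ is a singleton when $\beta|_L=0$ and empty otherwise, and on a general finite \'etale $k$-scheme $F_{\ms X}$ is the product of these values over the connected components. Consequently $F_{\ms X}$ is a subfunctor of the terminal functor, namely the sieve of objects over which $\beta$ splits. Because $\beta$ and $\beta'$ generate the same cyclic subgroup, a restriction $\beta|_L$ vanishes if and only if $\beta'|_L$ does, so these two sieves coincide; hence $F_{\ms X}=F_{\ms Y}$ as subfunctors of the terminal functor, and in particular they are isomorphic.

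The only delicate point is the word ``equivalent'' in the first step: I must allow \emph{arbitrary} stack equivalences rather than $\G_m$-equivariant ones, and check that any such equivalence nonetheless descends to a genuine automorphism of the band $\G_m$, so that Corollary \ref{sec:count-diff-sites-2} governs the possible target classes. This is precisely the band-functoriality encoded in Giraud's theorem; once it is in hand, the remainder is the vanishing of $\Pic$ over separable field extensions together with the elementary fact that a homomorphism kills one generator of a cyclic group if and only if it kills them all.
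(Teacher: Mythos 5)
Your proposal is correct and follows essentially the same route as the paper: pick two generators of $\langle\alpha\rangle$ in different $\Aut(\G_m)$-orbits, show the associated functors agree because the classes have the same support and $F(\Spec L)$ is a singleton whenever nonempty (as $\Pic$ vanishes on the small \'etale site of a field), and rule out a stack isomorphism via the induced isomorphism of bands together with Corollary \ref{sec:count-diff-sites-2}. Your extra care about arbitrary (non-equivariant) equivalences and about disjoint unions of field spectra only makes explicit what the paper leaves implicit.
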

\begin{proof}
  Let $\beta$ be a generator for $\langle\alpha\rangle$ which is
  distinct from $\alpha$ and $-\alpha$.  Let $\ms X$ be a $\G_m$-gerbe
  representing $\alpha$ and $\ms Y$ a $\G_m$-gerbe representing
  $\beta$.  It is elementary that the support of $\alpha$ and $\beta$
  in $k_{\et}$ are the same.  On the other hand, if $F_{\ms
    X}(L)\neq\emptyset$ then it is a singleton (since two sections of
  $\B\G_m$ differ by an invertible sheaf, of which there is only one
  on $L_{\et}$), which means that $F_{\ms X}$ and $F_{\ms Y}$ are
  (even canonically!) isomorphic.

  If $\ms X\simto\ms Y$ is an isomorphism, then it induces an
  isomorphism of the bands.  Changing the trivialization of the band
  of $\ms Y$ by an automorphism of $\G_m$ yields two $\G_m$-gerbes
  with the same cohomology class.  But we know from Corollary
  \ref{sec:count-diff-sites-2} that the orbit of $\beta$ for the
  automorphism group of $\G_m$ is $\{\beta,-\beta\}$.  Since $\alpha$
  is neither $\beta$ nor $-\beta$, we see that there cannot be an
  isomorphism between $\ms X$ and $\ms Y$.
\end{proof}

% \bibliographystyle{amsplain}
% \bibliography{gen}

\begin{thebibliography}{10}

\bibitem{artin-gr-top}
Michael Artin, \emph{Grothendieck topologies}, Notes on a seminar at Harvard
  University, 1962.

\bibitem{sga41}
Michael Artin, Alexandre Grothendieck, and Jean-Louis Verdier, \emph{Theorie
  des topos et cohomologie etale des schemas. {T}ome 1}, SGA, no.~4,
  Spring-Verlag, 1972.

\bibitem{dejong-gabber}
A.~J. de~Jong, \emph{A result of gabber}.

\bibitem{d-m}
Pierre Deligne and David Mumford, \emph{The irreducibility of the space of
  curves of given genus}, Institut Des Hautes Etudes Scientifiques Publications
  Mathematiques (1969), no.~36, 75--109.

\bibitem{e-g2}
Dan Edidin and William Graham, \emph{Equivariant intersection theory},
  Inventiones Mathematicae \textbf{131} (1998), no.~5, 595--634.

\bibitem{gabber-thesis}
Ofer Gabber, \emph{Some theorems on {A}zumaya algebras}, The Brauer group
  (Sem., Les Plans-sur-Bex, 1980), Lecture Notes in Math., vol. 844, Springer,
  Berlin, 1981, pp.~129--209. \MR{MR611868 (83d:13004)}

\bibitem{gi4}
Jean Giraud, \emph{Cohomologie non abelienne}, Springer-Verlag, 1971.

\bibitem{ega31}
Alexander Grothendieck and Jean Dieudonn\'e, \emph{{\'E}l\'ements de
  g\'eom\'etrie alg\'ebrique: {III.} \'{E}tude cohomologique des faisceaux
  coh\'erents, premi\'ere partie}, Publications math\'ematiques de
  l'I.H.\'E.S., vol.~11, Institut des Hautes \'Etudes Scientifiques, 1961.

\bibitem{ega43}
\bysame, \emph{{\'E}l\'ements de g\'eom\'etrie alg\'ebrique: {IV.} \'{E}tude
  locale des sch\'emas et des morphismes de sch\'emas, troisi\`eme partie},
  Publications math\'ematiques de l'I.H.\'E.S., vol.~28, Institut des Hautes
  \'Etudes Scientifiques, 1966.

\bibitem{ega44}
\bysame, \emph{{\'E}l\'ements de g\'eom\'etrie alg\'ebrique: {IV.} \'{E}tude
  locale des sch\'emas et des morphismes de sch\'emas, quatri\'eme partie},
  Publications math\'ematiques de l'I.H.\'E.S., vol.~32, Institut des Hautes
  \'Etudes Scientifiques, 1967.

\bibitem{sga1}
Alexandre Grothendieck, \emph{Revetements etales et groupe fondamental}, SGA,
  no.~1, Spring-Verlag, 1971.

\bibitem{ha1}
Robin Hartshorne, \emph{Algebraic geometry}, Springer-Verlag, 1977.

\bibitem{h-l}
Daniel Huybrechts and Manfred Lehn, \emph{The geometry of moduli spaces of
  sheaves}, Max-Planck Institut fur Mathematik, 1997.

\bibitem{k-m}
Sean Keel and Shigefumi Mori, \emph{Quotients by groupoids}, Annals of
  Mathematics \textbf{145} (1997), no.~1, 193--213.

\bibitem{k-l2}
S\'andor Kov\'acs and Max Lieblich, \emph{Boundedness of families of
  canonically polarized manifolds: A higher-dimensional analogue of
  shafarevich's conjecture}, preprint.

\bibitem{la2}
Adrian Langer, \emph{Semistable sheaves in positive characteristic}, Annals of
  Mathematics \textbf{159} (2004), no.~1, 251--276, to appear.

\bibitem{l-m-b}
G\'erard Laumon and Laurent Moret-Bailly, \emph{Champs algebriques},
  Springer-Verlag, 2000.

\bibitem{li1}
Max Lieblich, \emph{Moduli of twisted sheaves}, preprint.

\bibitem{mo6}
Shinichi Mochizuki, \emph{The profinite {G}rothendieck conjecture for closed
  hyperbolic curves over number fields}, Journal of Mathematical Sciences of
  the University of Tokyo \textbf{3} (1996), no.~3, 571--627.

\bibitem{mo7}
\bysame, \emph{Categorical representation of locally noetherian log schemes},
  Advances in Mathematics \textbf{188} (2004), no.~1, 222--246.

\bibitem{ol1}
Martin~C.\ Olsson, \emph{Crystalline cohomology of stacks and {H}yodo-{K}ato
  cohomology}, preprint.

\bibitem{ol4}
\bysame, \emph{On proper coverings of {A}rtin stacks}, Advances in Mathematics
  \textbf{198} (2005), no.~1, 93--106.

\bibitem{ta3}
Akio Tamagawa, \emph{The {G}rothendieck conjecture for affine curves},
  Compositio Mathematica \textbf{109} (1997), no.~2, 135--194.

\bibitem{vi1}
Angelo Vistoli, \emph{Intersection theory on algebraic stacks and on their
  moduli spaces}, Inventiones Mathematicae \textbf{97} (1989), no.~3, 612--670.

\end{thebibliography}

\end{document}